\newtheorem {theorem}{Theorem}[section]
\newtheorem {lemma}[theorem]{Lemma}
\newtheorem {proposition}[theorem]{Proposition}
\newtheorem {corollary}[theorem]{Corollary}
\newtheorem {definition}[theorem]{Definition}
\theoremstyle{remark}
\newtheorem {remark}[theorem]{Remark}
\newtheorem {example}[theorem]{Example}
\newcommand\Z{\mathbb{Z}}
\newcommand\R{\mathbb{R}}
\newcommand\Q{\mathbb{Q}}
\def\red{\operatorname{red}}
\def\H{\mathcal{H}}
\newcommand \bH {\overline{\H}}
\newcommand\Ta{\mathbb{T}_\alpha}
\newcommand\Tb{\mathbb{T}_\beta}
\def\Sym{\mathrm{Sym}}
\def\del {{\partial}}
\def\tH{\tilde{H}}
\def\tC{\tilde{C}}
\def\spinc {{\operatorname{spin^c}}}
\def\fin\qedhere
\def\pr {{\text{pr}}}
\DeclareMathOperator{\id}{id}
\DeclareMathOperator{\Hom}{Hom}
\def\from {{\leftarrow}}
\def\s{\mathfrak s}
\def\t{\mathbf t}
\def\x{\mathbf x}
\def\y{\mathbf y}
\def\Ring {\mathcal R}
\def\S{\mathbf S}
\newcommand\alphas{\boldsymbol\alpha}
\newcommand\betas{\boldsymbol\beta}
\newcommand\gammas{\boldsymbol\gamma}
\def\ws{\mathbf w}
\def\AI {\mathit{AI}}
\def\gr{\mathrm{gr}}
\def\F{\mathbb Z}
\def\ker {{\operatorname{Ker}}}
\def\im {{\operatorname{Im}}}
\def\Id  {{\operatorname{Id}}}
\def\fin\qedhere
\def\from {{\leftarrow}}
\def\ccdot {\! \cdot \!}
\def\pin{\operatorname{Pin}(2)}
\def\Cone{\mathit{Cone}}
\def\Ainf {\mathcal{A}_{\infty}}
\def\Tor{{\operatorname{Tor}}}
\def\Span{{\operatorname{Span}}}
\newcommand{\bunderline}[1]{\underline{#1\mkern-2mu}\mkern2mu }
\def\du {\bar{d}}
\def\dl {\bunderline{d}}
\def\CF {\mathit{CF}}
\def\HF {\mathit{HF}}
\newcommand\CFhat{\widehat{\CF}}
\newcommand \CFp {\CF^+}
\newcommand \CFm {\CF^-}
\newcommand \HFm {\HF^-}
\newcommand \CFinf {\CF^{\infty}}
\newcommand \CFo {\CF^{\circ}}
\newcommand \HFo {\HF^{\circ}}
\def\CFI {\mathit{CFI}}
\def\HFI {\mathit{HFI}}
\newcommand\HFIhat{\widehat{\HFI}}
\newcommand\HFIp {\HFI^+}
\newcommand \CFIp {\CFI^+}
\newcommand \CFIm {\CFI^-}
\newcommand \HFIm {\HFI^-}
\newcommand \HFIinf {\HFI^{\infty}}
\newcommand \CFIo {\CFI^{\circ}}
\newcommand \HFIo {\HFI^{\circ}}
\def\CFKm{\mathit{CFK}^-}
\def\CFKi{\mathit{CFK}^{\infty}}
\def\CFKinfty{\CFKi}
\def\inv{\iota}
\def\delinv{\partial^{\inv}}
\def\Inv{\mathfrak{I}}
\def\RRing{\mathfrak{R}}
\newcommand{\co}{\nobreak\mskip2mu\mathpunct{}\nonscript
  \mkern-\thinmuskip{:}\penalty300\mskip6muplus1mu\relax}
\def\swf{\mathit{SWF}}
\def\wt{\widetilde}
\begin{document}

\title{A connected sum formula for involutive Heegaard Floer homology}

\author[Kristen Hendricks]{Kristen Hendricks}
\author[Ciprian Manolescu]{Ciprian Manolescu}
\author[Ian Zemke]{Ian Zemke}
\thanks {KH was partially supported by NSF grant DMS-1506358. CM and IZ were partially supported by NSF grant DMS-1402914.}

\address {Department of Mathematics, Michigan State University\\ 
East Lansing, MI 48824}
\email {hendricks@math.msu.edu}

\address {Department of Mathematics, University of California\\ 
Los Angeles, CA 90095}
\email {cm@math.ucla.edu}
\email {ianzemke@math.ucla.edu}

\begin{abstract} We prove a connected sum formula for involutive Heegaard Floer homology, and use it to study the involutive correction terms of connected sums. In particular, we give an example of a three-manifold with $\dl(Y) \neq d(Y) \neq \du(Y)$. We also construct a homomorphism from the three-dimensional homology cobordism group to an algebraically defined Abelian group, consisting of certain complexes (equipped with a homotopy involution) modulo a notion of local equivalence. \end{abstract}

\maketitle

\section {Introduction}
Heegaard Floer homology is an invariant of three-manifolds introduced by Ozsv\'ath and Szab\'o in  \cite{HolDisk, HolDiskTwo}. By studying the absolute grading of Heegaard Floer homology, one can associate a ``correction term'' $d$ to any rational homology sphere equipped with a $\spinc$ structure \cite{AbsGraded}.  The correction term is the analogue of the Fr{\o}yshov invariants from \cite{FroyshovYM, FroyshovSW}, and gives constraints on the intersection forms of definite four-manifolds with boundary. Furthermore, the invariant $d$ descends to a surjective homomorphism from the three-dimensional homology cobordism group $\Theta^3_{\Z}$ to the even integers, and thus can be used to show that $\Theta^3_\Z$ has a $\Z$ summand. We recall that the group $\Theta^3_\Z$ plays a fundamental role in the study of triangulations on high dimensional manifolds; see  \cite{GS, Matumoto, Triangulations}.

In \cite{HMinvolutive}, the first and second authors defined a new invariant, called involutive Heegaard Floer homology. Its construction involves a certain homotopy involution on Heegaard Floer complexes, called the conjugation involution and denoted $\iota$. Involutive Heegaard Floer homology is associated to a three-manifold equipped with a spin structure. In the case of rational homology spheres, one can extract two ``involutive correction terms,'' $\dl$ and $\du$, which give more constraints on the intersection forms of definite four-manifolds. By restricting to  integer homology spheres, we obtain maps $\dl, \du \co \Theta^3_\Z \to 2\Z$. It is shown in \cite{HMinvolutive} that $\dl$ and $\du$ contain information beyond that from Heegaard Floer homology; for example, for the Brieskorn sphere $\Sigma(2,3,7)$ we have $\du=d=0$ but $\dl=-2$, so $\dl$ detects that $\Sigma(2,3,7)$ is not null-homology cobordant (a fact that can also be easily seen using the Rokhlin invariant).

The purpose of this paper is to study the behavior of involutive Heegaard Floer homology under taking connected sums. Let us start by recalling the connected sum formula for the usual Heegaard Floer groups, proved by Ozsv\'ath and Szab\'o in \cite[Section 6]{HolDiskTwo}. We will work with mod $2$ coefficients, and focus on the minus version of Heegaard Floer homology, $\HFm$, which is the homology of a free complex $\CFm$ over the polynomial ring $\Z_2[U]$. The connected sum formula says that, for three-manifolds $Y_1$ and $Y_2$ equipped with $\spinc$ structures $\s_1$ and $\s_2$, we have chain homotopy equivalences 
\begin{equation}
\label{eq:KunnethCF}
\CFm(Y_1 \# Y_2, \s_1 \# \s_2) \simeq \CFm(Y_1,\s_1) \otimes_{\Z_2[U]} \CFm(Y_2, \s_2)[-2],
\end{equation}
where $[-2]$ denotes a grading shift.\footnote{This shift was not present in \cite{HolDiskTwo}, where only relatively graded modules were considered. However, the shift is needed when we consider absolute gradings as in \cite{AbsGraded}, i.e., when $\s_1$ and $\s_2$ are torsion. Recall that, for example, $\HF^-(S^3)$ is generated by an element in degree $-2$.} Similar results hold for two other versions of the Heegaard Floer complexes, $\CFhat$ and $\CFinf$, but with the tensoring done over the rings $\Z_2$, resp. $\Z_2[U, U^{-1}]$, and with no grading shift.

Our main theorem is the following.

\begin{theorem} 
\label{thm:ConnSum}
Suppose $Y_1$ and $Y_2$ are three-manifolds equipped with spin structures $\s_1$ and $\s_2$. Let $\iota_1$ and $\iota_2$ denote the conjugation involutions on the Floer complexes $\CFm(Y_1, \s_1)$ and $\CFm(Y_2,\s_2)$. Then, under the equivalence \eqref{eq:KunnethCF}, the conjugation involution $\iota$ on $\CFm(Y_1 \# Y_2,$ $ \s_1 \# \s_2)$ is chain homotopy equivalent, over the ring $\Z_2[U]$, to $\iota_1 \otimes \iota_2.$ 

The analogous statements are also true for the theories $\CFhat$ and $\CFinf$.
\end{theorem}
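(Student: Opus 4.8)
The plan is to evaluate the conjugation involution on a convenient model for $\CFm(Y_1\#Y_2,\s_1\#\s_2)$ and track it through Ozsv\'ath--Szab\'o's proof of \eqref{eq:KunnethCF}. Recall that $\iota$ is built from two pieces. Given a pointed Heegaard diagram $\H=(\Sigma,\alphas,\betas,z)$ for $Y$, its conjugate $\bH=(-\Sigma,\betas,\alphas,z)$ is again a diagram for $Y$, and reversing the orientation of $\Sigma$ and interchanging the roles of $\alphas$ and $\betas$ identifies generators with generators and holomorphic disks with holomorphic disks, yielding a tautological chain isomorphism $\eta_\H\colon\CFm(\H)\to\CFm(\bH)$. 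Since $\bH$ and $\H$ both present $Y$, a choice of intermediate Heegaard moves gives a chain homotopy equivalence $\Phi(\bH\to\H)\colon\CFm(\bH)\to\CFm(\H)$, well defined up to chain homotopy; then $\iota=\Phi(\bH\to\H)\circ\eta_\H$, a self chain homotopy equivalence of $\CFm(\H)$ that is independent of the choices up to homotopy, and all of these maps are $U$-equivariant. So it suffices to compute $\iota$ for a diagram of our choosing.

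I would choose diagrams $\H_i=(\Sigma_i,\alphas_i,\betas_i,z_i)$ for $Y_i$ and form the connected sum diagram $\H_1\#\H_2=(\Sigma_1\#\Sigma_2,\,\alphas_1\cup\alphas_2,\,\betas_1\cup\betas_2,\,z)$, the connected sum of surfaces being performed in small disks around $z_1$ and $z_2$ with $z$ the surviving basepoint. For a sufficiently stretched neck along the connect-sum circle, Ozsv\'ath--Szab\'o's equivalence \eqref{eq:KunnethCF} is realized by the tautological identification of generating sets $\mathbb{T}_{\alphas_1\cup\alphas_2}\cap\mathbb{T}_{\betas_1\cup\betas_2}\cong(\mathbb{T}_{\alphas_1}\cap\mathbb{T}_{\betas_1})\times(\mathbb{T}_{\alphas_2}\cap\mathbb{T}_{\betas_2})$, with the single variable $U$ acting diagonally, which is exactly what produces the tensor product over $\Z_2[U]$. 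The first point is the tautological identity $\overline{\H_1\#\H_2}=\bH_1\#\bH_2$ --- orientation reversal and relabeling commute with taking connected sums --- together with the observation that, under the Künneth identifications on both sides, $\eta_{\H_1\#\H_2}$ agrees with $\eta_{\H_1}\otimes\eta_{\H_2}$; this is immediate from the definitions once one checks that the disk count witnessing the Künneth equivalence is itself preserved by the orientation-reversal symmetry.

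It then remains to show that the change-of-diagram map $\Phi(\bH_1\#\bH_2\to\H_1\#\H_2)$ is chain homotopic over $\Z_2[U]$ to $\Phi(\bH_1\to\H_1)\otimes\Phi(\bH_2\to\H_2)$. I would connect $\bH_i$ to $\H_i$ by a sequence of Heegaard moves --- isotopies, handleslides, index one/two (de)stabilizations --- supported in the complement of $z_i$, and then connect-sum these two sequences in the neck region to obtain a sequence of moves from $\bH_1\#\bH_2$ to $\H_1\#\H_2$ in which each individual move occurs in a single summand. The technical core is a compatibility lemma: a Heegaard move supported in the $\H_1$-side away from the basepoint induces, on the stretched connected sum, a map chain homotopic to the corresponding map on $\CFm(\H_1)$ tensored with the identity on the other factor, compatibly with the Künneth identification. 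Granting this, concatenating along the two sequences of moves factors $\Phi(\bH_1\#\bH_2\to\H_1\#\H_2)$ as the desired tensor product, and hence $\iota\simeq\iota_1\otimes\iota_2$. The cases of $\CFhat$ and $\CFinf$ are identical, with $\Z_2[U]$ replaced by $\Z_2$, resp.\ $\Z_2[U,U^{-1}]$, and with no grading shift.

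The main obstacle is precisely this compatibility lemma. For isotopies and handleslides it amounts to showing that, in the stretched connected-sum diagram, the holomorphic triangles (or continuation disks) contributing to the relevant map are all ``split'' --- a triangle in the $\H_1$-factor times a constant triangle in the $\H_2$-factor --- and that the resulting count matches, up to chain homotopy, the particular chain homotopy equivalence used by Ozsv\'ath--Szab\'o; since that equivalence is itself only pinned down up to homotopy, one must set up the neck-stretching and the energy/area bookkeeping with some care, and this is where most of the work lies. Stabilizations are easier, as they can be performed away from the neck. A more structural route, which I expect to be the one actually required, is to recognize the change-of-diagram maps and the connected-sum map as instances of Zemke's (graph) cobordism maps, use the gluing/composition law for these maps together with the fact that the conjugation symmetry intertwines a cobordism map with the cobordism map of the orientation-reversed, relabeled data, and read off $\iota\simeq\iota_1\otimes\iota_2$ from the connected-sum cobordism; this packages the polygon-counting arguments, at the cost of invoking the attendant naturality statements for $\iota$ itself.
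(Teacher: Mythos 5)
Your proposal outlines a genuinely different route from the paper's. The paper does not work with connected-sum Heegaard diagrams and change-of-diagram maps at all; instead it packages the Künneth equivalence as a graph cobordism map $G^A$ (a $3$-handle cobordism with a trivalent graph) and analyzes the interaction of conjugation with the graph TQFT. The strategy you sketch --- build $\H_1\#\H_2$, observe $\overline{\H_1\#\H_2}=\bH_1\#\bH_2$, factor the change-of-diagrams map by performing moves one side at a time --- is plausible in spirit, and you rightly flag the compatibility lemma as the technical core.

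There is, however, a significant gap that I want you to notice, because it is precisely what the paper's argument is designed around. Every natural version of the Künneth map --- Ozsv\'ath--Szab\'o's $1$-handle-plus-triangle construction and the paper's graph cobordism map alike --- carries a hidden asymmetry between $Y_1$ and $Y_2$ (for Ozsv\'ath--Szab\'o, the ordering in the triangle map; for the graph cobordism, the cyclic order at the trivalent vertex), and the conjugation $\iota$ is seen to \emph{switch} the two sides of this asymmetry. The consequence is that these maps intertwine $\iota$ not with $\iota_1\otimes\iota_2$ but with $\iota_1\otimes\iota_2 + U(\Phi_1\iota_1\otimes\Phi_2\iota_2)$, where $\Phi_i$ is the formal $U$-derivative of the differential. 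This is exactly Proposition~\ref{prop:ConnSum} in the paper, and killing the extra term is not free: it requires a separate algebraic argument (Proposition~\ref{prop:UPhi=0}), using the classification of finitely generated free chain complexes over a PID and, crucially, the relative $\Z$-grading to show that $U\Phi\simeq 0$ $U$-equivariantly. (Indeed, $U\Phi$ is \emph{not} null-homotopic in the non-torsion case, nor in the multi-basepoint setting.) Your ``structural route'' paragraph at the end asserts that one can ``read off $\iota\simeq\iota_1\otimes\iota_2$ from the connected-sum cobordism'' --- this is not what happens; the graph cobordism reading-off gives the corrected formula, and the vanishing of the correction is a nontrivial second step that your proposal does not anticipate.

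Your direct-diagram route could conceivably avoid the correction term altogether, but only if the identification you use is itself symmetric under conjugation, and you have not shown either that it is, or that it agrees (up to homotopy) with the Ozsv\'ath--Szab\'o equivalence. The claim that ``Ozsv\'ath--Szab\'o's equivalence is realized by the tautological identification of generating sets'' on a stretched-neck connected sum diagram is not something you can simply assert: Ozsv\'ath--Szab\'o's map is defined by $1$-handle maps followed by a holomorphic triangle count, and the comparison with a single-basepoint neck-stretching model of $\Sigma_1\#\Sigma_2$ involves a compactness/gluing analysis of disks crossing the neck region that is at least as substantial as the moduli space work in the paper's Lemma~\ref{lem:diffmodelcomp}. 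So even setting aside the correction term, you are in effect proposing to re-prove the K\"unneth theorem from scratch via a different model, and the compatibility lemma for each elementary move is a nontrivial neck-stretching argument. The paper's approach trades all of that geometry for the already-established graph TQFT relations \eqref{eq:R1}--\eqref{eq:R12}, and isolates the single genuinely new input as the algebraic lemma $U\Phi\simeq 0$.
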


The proof of Theorem~\ref{thm:ConnSum} involves analyzing maps induced by graph cobordisms, using the theory developed by the third author in \cite{Zemke2}. Interestingly, what the theory yields at first is that the involution $\iota$ corresponds to 
\begin{equation}
\label{eq:withPhi}
\iota_1 \otimes \iota_2 + U(\Phi_1 \iota_1 \otimes \Phi_2 \iota_2),
\end{equation}
where $\Phi_i$ ($i=1,2$) is the formal derivative of the differential on $\CFm(Y_i, \s_i)$, with respect to the $U$ variable. However, a further algebraic argument shows that, because the complexes $\CFm(Y_i, \s_i)$ are $\Z$-graded, the extra term $U(\Phi_1 \iota_1 \otimes \Phi_2 \iota_2)$ is chain homotopic to zero.

The involutive Heegaard Floer homology $\HFIm(Y,\s)$ is defined in \cite{HMinvolutive} as the homology of the mapping cone complex
$$
\CFIm(Y,\s) := \CF(Y,\s) \xrightarrow{\phantom{o} Q (1+\inv) \phantom{o}} Q \ccdot \CF(Y,\s) [-1],
$$
over the ring $\Ring := \Z_2[Q, U]/(Q^2)$. Theorem~\ref{thm:ConnSum} allows one to compute 
$$ \CFIm(Y_1 \# Y_2, \s_1\#\s_2)$$
from knowledge of the complexes $\CFm(Y_1, \s_1)$ and $\CFm(Y_2,\s_2)$, together with their conjugation involutions. Precisely, the $\mathcal{R}$-equivariant chain homotopy type of the complex $\CFIm(Y_1\# Y_2, \s_1\# \s_2)$ is the cone of the map
\[\CFm(Y_1)\otimes \CFm(Y_2) \xrightarrow{\phantom{o} Q (1+\iota_1\otimes \iota_2) \phantom{o}}Q\cdot (\CFm(Y_1)\otimes \CFm(Y_2)).\] 

\begin{remark}
We expect that there is also a connected sum formula of a different flavor, similar to the one established by Lin in \cite{LinConnSums} for $\pin$ monopole Floer homology. One should be able to turn the ring $\Ring$ into an $\Ainf$-algebra, by introducing higher compositions, and to equip $\HFIm(Y, \s)$ with the structure of an $\Ainf$-module over that $\Ainf$-algebra. Then, $\HFIm(Y_1 \# Y_2, \s_1\#\s_2)$ should be the $\Ainf$ tensor product of $\HFIm(Y_1, \s_1)$ and $\HFIm(Y_2,\s_2)$. However, proving such a formula is beyond the scope of this paper. See Section~\ref{sec:SW} for further remarks.
\end{remark}

Recall that, when applied to integer homology spheres, the correction terms $d$, $\dl$ and $\du$ produce maps $\Theta^3_{\Z} \to 2\Z$. In fact, we can also consider $\Z_2$-homology spheres, and obtain maps $d, \dl, \du \co \Theta^3_{\Z_2} \to \Q$, where $\Theta^3_{\Z_2}$ is the $\Z_2$-homology cobordism group. It is proved in \cite{AbsGraded} that $d$ is a homomorphism. On the other hand, by considering the connected sum $\Sigma(2,3,7) \# \Sigma(2,3,7)$ and its reverse, it is shown in \cite[Section 6.9]{HMinvolutive} that $\dl$ and $\du$ are not homomorphisms. Thus, it remains interesting to study the behavior of the involutive correction terms under connected sum. From Theorem~\ref{thm:ConnSum} we obtain the following inequalities, which are similar to those proved by Stoffregen \cite{Stoffregen2} and Lin \cite{LinConnSums} for the invariants $\alpha$, $\beta$, $\gamma$ coming from $\pin$-equivariant Seiberg--Witten Floer homology.

\begin{proposition} \label{prop:inequalities} Let $(Y_1,\s_1)$ and $(Y_2,\s_2)$ be rational homology spheres together with spin structures. Let $\s$ be the spin structure $\s_1 \# \s_2$ on $Y_1 \# Y_2$. Then
\[
\dl(Y_1,\s_1) + \dl(Y_2,\s_2) \leq \dl(Y_1 \# Y_2, \s) \leq \dl(Y_1, \s_1) + \du(Y_2, \s_2) \leq \du(Y_1\#Y_2, \s)\leq \du(Y_1, \s_1) + \du(Y_2, \s_2). 
\]
\end{proposition}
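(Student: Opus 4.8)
The plan is to derive Proposition~\ref{prop:inequalities} from Theorem~\ref{thm:ConnSum} together with the definitions of $\dl$ and $\du$. Recall that for a rational homology sphere $(Y,\s)$ these correction terms are extracted from the $\Ring$-module $\HFIm(Y,\s)$ --- concretely, from the $U$-nontorsion part of $\HFIm(Y,\s)$ and the action of $Q$ on it --- via the mapping cone description $\CFIm(Y,\s)=\Cone(Q(1+\inv))$; roughly, $\du(Y,\s)$ records, up to a fixed grading shift, where the non-$Q$-divisible part of this tower sits, and $\dl(Y,\s)$ does the same for the $Q$-divisible part. By Theorem~\ref{thm:ConnSum} and the Künneth formula~\eqref{eq:KunnethCF}, the complex $\CFIm(Y_1\#Y_2,\s)$ is $\Ring$-equivariantly chain homotopy equivalent to the mapping cone of
\[
Q(1+\inv_1\otimes\inv_2)\co \CFm(Y_1,\s_1)\otimes_{\Z_2[U]}\CFm(Y_2,\s_2)\longrightarrow Q\cdot\bigl(\CFm(Y_1,\s_1)\otimes_{\Z_2[U]}\CFm(Y_2,\s_2)\bigr)[-1],
\]
so the proposition becomes a purely algebraic statement about the tensor product of two $\inv$-complexes, with all four correction terms computable from this single cone.

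I would first reduce the chain of five terms to two genuine inequalities. Orientation reversal gives $\dl(-Y,-\s)=-\du(Y,\s)$ and $\du(-Y,-\s)=-\dl(Y,\s)$, and connected sum is commutative and compatible with reversal; feeding $(-Y_1,-Y_2)$ into one inequality therefore produces another, and one checks that the two outer inequalities are exchanged this way, as are (using commutativity of $\#$) the two inner ones. Thus it suffices to establish the upper bound $\du(Y_1\#Y_2,\s)\le\du(Y_1,\s_1)+\du(Y_2,\s_2)$ and the crossover bound $\du(Y_1\#Y_2,\s)\ge\dl(Y_1,\s_1)+\du(Y_2,\s_2)$; the remaining two then follow formally.

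For the crossover bound (and for the parallel lower bound $\dl(Y_1\#Y_2,\s)\ge\dl(Y_1,\s_1)+\dl(Y_2,\s_2)$, which is anyway the orientation-reverse of the upper bound below), the approach is constructive. I would select the data realizing $\dl(Y_1,\s_1)$ and $\du(Y_2,\s_2)$ --- cycles in $\CFm(Y_1,\s_1)$ and $\CFm(Y_2,\s_2)$ generating the relevant towers of $\HFIm(Y_i,\s_i)$ --- build the corresponding class in the cone $\CFIm(Y_1\#Y_2,\s)$ out of their tensor product, and verify that it has the predicted grading, is $U$-nontorsion (immediate, since inverting $U$ is multiplicative and each factor survives $U$-localization), and has the correct $Q$-divisibility. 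Two points need care: making the tensor product into an honest cycle in the cone forces a correction term built from the chain homotopy realizing $\inv_i^2\simeq\id$, and here it is exactly the refinement in Theorem~\ref{thm:ConnSum} --- that the a priori extra term $U(\Phi_1\inv_1\otimes\Phi_2\inv_2)$ of~\eqref{eq:withPhi} is null-homotopic --- that spares us any further $\Phi_i$-correction; and checking the $Q$-divisibility of the resulting class uses the relation $Q^2=0$ together with the explicit form of the cone differential.

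For the upper bound $\du(Y_1\#Y_2,\s)\le\du(Y_1,\s_1)+\du(Y_2,\s_2)$ the approach is structural: one must bound the grading of \emph{every} non-$Q$-divisible tower class of $\HFIm(Y_1\#Y_2,\s)$. Projecting the cone onto its $\CFm(Y_1,\s_1)\otimes\CFm(Y_2,\s_2)$ summand is a chain map, and it sends such a class either to a $U$-nontorsion class of $\HFm(Y_1\#Y_2,\s)$ --- whose grading is then controlled by additivity of $d$ under connected sum and the inequality $d\le\du$ --- or to a (possibly nonzero) $U$-torsion class, which is the substantive case: there the class is visible only through the $Q$-slot of the cone, and bounding its grading reduces to controlling, over $Y_1$ and $Y_2$ simultaneously, how $U$-torsion classes in $\HFm(Y_1,\s_1)\otimes\HFm(Y_2,\s_2)$ interact with the conjugation involutions and with $Q$ to produce high-grading non-$Q$-divisible nontorsion classes upstairs. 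I expect this step to be the main obstacle: it is where the two factors, the relation $Q^2=0$, and the fact that $\inv$ is only a homotopy involution all come together, and it is the analogue of the ``which tower gets hit'' bookkeeping of Stoffregen and Lin in $\pin$-equivariant Floer homology. (Alternatively, one can run the whole argument through the local equivalence group $\Inv$: all four inequalities then follow from the monotonicity of $\dl$ and $\du$ under local maps together with the existence of suitable local maps relating the tensor product to simpler complexes with explicit correction terms --- but the essential content is the same.)
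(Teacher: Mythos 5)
Your overall strategy --- invoke Theorem~\ref{thm:ConnSum} to identify $\CFIm(Y_1\#Y_2,\s)$ with the cone of $Q(1+\iota_1\otimes\iota_2)$ on the tensor product, use orientation reversal (Proposition~\ref{prop:orientationsigns}) to cut the four inequalities down to two, and prove those two by direct computation in the cone --- is the right one and matches the paper's. But you have chosen the wrong two inequalities to attack, and this creates the gap you yourself flag as ``the main obstacle.''

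You reduce to the \emph{upper} bound $\du(Y_1\#Y_2)\le\du(Y_1)+\du(Y_2)$ and the crossover bound $\dl(Y_1)+\du(Y_2)\le\du(Y_1\#Y_2)$. An upper bound on $\du$ requires controlling \emph{every} non-$Q$-divisible tower class, which is indeed a hard structural statement with no short proof available from Theorem~\ref{thm:ConnSum} alone. But you observe, in a parenthetical remark, that $\dl(Y_1)+\dl(Y_2)\le\dl(Y_1\#Y_2)$ is precisely the orientation-reverse of your upper bound --- and then you fail to act on that observation. The paper's reduction is to prove \emph{this} inequality together with the crossover bound: both are \emph{lower} bounds on a correction term, hence require only exhibiting one element of the chain complex, and the upper bound on $\du$ then comes for free by applying Proposition~\ref{prop:orientationsigns} to $-Y_1,-Y_2$. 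Your ``structural'' step is entirely unnecessary, and with the right choice of which two inequalities to prove, the whole proof is constructive and there is no bookkeeping over ``which tower gets hit.''

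Concretely, the paper's tool is Lemma~\ref{lemma:reformulation}, which reformulates $\dl$ and $\du$ as maximal gradings of tuples of chain-level elements satisfying explicit conditions (for $\dl$: a cycle $v$ generating the $U$-tail with $(1+\iota)v=\partial w$; for $\du$: a triple $x,y,z$ with $\partial y=(1+\iota)x$, $\partial z = U^m x$, and $U^m y + (1+\iota)z$ generating the $U$-tail). Given such data for $Y_1$ and $Y_2$, one writes down the corresponding data for the tensor product by hand, e.g.\ $v=v_1\otimes v_2$ and $w = v_1\otimes w_2 + w_1\otimes\iota_2(v_2)$ gives $\partial w = (1+\iota_1\otimes\iota_2)v$ directly. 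No correction built from a chain homotopy realizing $\iota_i^2\simeq\id$ is needed, contrary to what you anticipate; the Leibniz-rule cancellation over $\F_2$ does all the work. I would encourage you to (i) commit to the reduction to the two lower bounds, (ii) state and use the chain-level reformulation of the correction terms, and (iii) discard the structural argument for the upper bound entirely.
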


The involutive correction terms are calculated in \cite{HMinvolutive} for several classes of manifolds, including large integral surgeries on $L$-space knots, mirrors of $L$-space knots, and Floer homologically thin knots, e.g., alternating knots. (Here, a surgery is called large if its coefficient is greater or equal than the Seifert genus of the knot.) The methods of the current paper allow us to compute the involutive correction terms for connected sums of such manifolds. For example:

\begin{proposition}\label{prop:237sums}
For the connected sums of $n$ copies of the Brieskorn sphere $\Sigma(2,3,7)$, we have
\[ \dl(\#n \Sigma(2,3,7))=-2, \quad \quad d(\#n\Sigma(2,3,7))=\du(\#n\Sigma(2,3,7)) =0.\]
\end{proposition}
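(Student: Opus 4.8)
\emph{Setup.} The plan is to feed the known model of $(\CFm(\Sigma(2,3,7)),\iota)$ into Theorem~\ref{thm:ConnSum} and read off the correction terms from the resulting mapping cone. By \cite{HMinvolutive}, up to homotopy equivalence over $\Z_2[U]$ the complex $C:=\CFm(\Sigma(2,3,7))$ is $\Z_2[U]\langle v,a,b\rangle$ with $\gr v=\gr b=-2$, $\gr a=-3$, and $\partial a=Ub$, so that $v$ generates the $U$-tower, $\{a,b\}$ is a ``box'' with $H_*(\mathrm{box})=\Z_2\langle[b]\rangle$, and the conjugation involution is homotopic to $\iota$ with $\iota a=a$, $\iota b=b$, $\iota v=v+b$; in particular $1+\iota$ is supported on $v$, where it equals the box generator $b$. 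Applying Theorem~\ref{thm:ConnSum} inductively identifies $\CFm(\#n\Sigma(2,3,7))$ with its conjugation involution, over $\Z_2[U]$, with $(C^{\otimes n},\iota^{\otimes n})$ up to the grading shift $[-2(n-1)]$; hence $\CFIm(\#n\Sigma(2,3,7))$ is the mapping cone of $Q(1+\iota^{\otimes n})$ on $C^{\otimes n}$, with $\partial_{\CFIm}(x)=\partial x+Q(1+\iota^{\otimes n})x$ and $\partial_{\CFIm}(Qx)=Q\partial x$. Since $d$ is a homomorphism, $d(\#n\Sigma(2,3,7))=0$; I normalize the absolute gradings so the tower generator $v^{\otimes n}$ of $\HFm(\#n\Sigma(2,3,7))$ lies in degree $-2$. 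It remains to locate the two $U$-towers of $\HFIm(\#n\Sigma(2,3,7))$: by the dictionary of \cite{HMinvolutive}, $\dl$ equals $2$ plus the degree of the generator of the ``non-$Q$'' tower --- the free $\Z_2[U]$-summand surviving the projection $Q\mapsto0$ to $\HFm$ --- while $\du$ equals $3$ plus the degree of the generator of the complementary tower (which, by parity, lies in odd degrees, $v^{\otimes n}$ being in degree $-2$).

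\emph{The bounds.} For $\du$ and the upper bound on $\dl$ I use only Proposition~\ref{prop:inequalities}. Its rightmost inequality, iterated, gives $\du(\#n\Sigma(2,3,7))\le n\,\du(\Sigma(2,3,7))=0$, and with $\du\ge d=0$ this yields $\du(\#n\Sigma(2,3,7))=0$. Its middle inequality with $Y_1=\#(n-1)\Sigma(2,3,7)$, $Y_2=\Sigma(2,3,7)$ gives $\dl(\#n\Sigma(2,3,7))\le\dl(\#(n-1)\Sigma(2,3,7))+\du(\Sigma(2,3,7))=\dl(\#(n-1)\Sigma(2,3,7))$, so $\dl(\#n\Sigma(2,3,7))\le-2$ by induction. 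The remaining point is $\dl(\#n\Sigma(2,3,7))\ge-2$, i.e. that the non-$Q$ tower carries a non-torsion class in degree $-4$. Here the key computation is $\iota^{\otimes n}(v^{\otimes n})=(v+b)^{\otimes n}=\sum_{S\subseteq\{1,\dots,n\}}b_S$, where $b_S$ is the tensor with $b$ in the factors indexed by $S$ and $v$ elsewhere; thus $(1+\iota^{\otimes n})(v^{\otimes n})=\sum_{\emptyset\ne S}b_S=:\beta$. Since $Ub_S=\partial x_S$ with $x_S$ the tensor having a single $a$ (in the factor $\min S$), $b$'s in $S\setminus\{\min S\}$, and $v$'s elsewhere, the element $\gamma:=\sum_{\emptyset\ne S}x_S$ has $\partial\gamma=U\beta$. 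Therefore $w:=Uv^{\otimes n}+Q\gamma$ satisfies $\partial_{\CFIm}w=UQ\beta+QU\beta=0$, a cycle homogeneous of degree $-4$. Applying the chain map $Q\mapsto0\colon\CFIm\to\CFm$ sends $[w]$ to $U[v^{\otimes n}]\ne0$, so $[w]$ is non-torsion; as its degree is $-4$ and, by parity, it cannot involve the $Q$-tower, the generator of the non-$Q$ tower has degree $\ge-4$ and $\dl(\#n\Sigma(2,3,7))\ge-2$. Combining, $\dl=-2$ and $\du=d=0$.

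\emph{Main difficulty.} With Theorem~\ref{thm:ConnSum} in hand the computation is short; the delicate part is entirely bookkeeping --- fixing (or re-deriving) the exact model $(\CFm(\Sigma(2,3,7)),\iota)$, tracking absolute gradings through the shift $[-2(n-1)]$ and the mapping cone, and justifying the dictionary from tower-generator degrees in $\HFIm$ to $\dl$ and $\du$ --- since an error there changes the numerical answer. Alternatively one may work in the local equivalence group $\Inv$ of \cite{HMinvolutive}, where the statement becomes the computation of $d,\dl,\du$ for the class $[\Sigma(2,3,7)]^n$; the cycle $w$ above is exactly what exhibits the non-$Q$ tower for that class. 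This reformulation is equivalent, not a shortcut.
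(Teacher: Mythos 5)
Your argument is correct and arrives at the right values, but it packages the two bounds differently from the paper, so let me compare.

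\emph{Model.} You use the ``tower plus box with $\iota$-twist'' model ($\partial a = Ub$, $\iota v = v+b$, $\iota$ fixing $a,b$), whereas the paper's Section~9 uses the model with generators $a,b,c$, $\partial c = U(a+b)$, and $\iota$ the swap $a\leftrightarrow b$ fixing $c$. These are equivalent $\iota$-complexes — the isomorphism $v\mapsto a$, $b\mapsto a+b$, $a\mapsto c$ intertwines both the differentials and the involutions — so your starting point is fine; just be aware that the paper computes in the other basis.

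\emph{Upper bounds.} You derive $\du(\#n\Sigma(2,3,7))=0$ and $\dl(\#n\Sigma(2,3,7))\le-2$ from Proposition~\ref{prop:inequalities} together with the $n=1$ values. The paper instead reads these off directly from the model via Lemma~\ref{lemma:reformulation}: every non-torsion cycle at the top of the tower is a sum of an odd number of pure tensors in $\{a_i,b_i\}$, the involution swaps those pure tensors in pairs, and $(1+\iota)$ of such a sum is never a boundary in grading $-2$; hence $\dl<0$, and $\du=\gr(a_1\cdots a_n)+2=0$ since $-2$ is the top grading of the complex. Both routes are legitimate — Proposition~\ref{prop:inequalities} is itself a consequence of Theorem~\ref{thm:ConnSum}, so there is no hidden circularity — but the paper's direct argument keeps the whole computation self-contained inside Lemma~\ref{lemma:reformulation}, which is slightly cleaner than invoking the inequality and inducting.

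\emph{Lower bound for $\dl$.} This is the heart of the matter and you and the paper do the same thing. Your primitive $\gamma=\sum_{\emptyset\ne S}x_S$ with $\partial\gamma=U\beta$ plays exactly the role of the paper's $\sum_{i=1}^n b_1\cdots b_{i-1}c_i a_{i+1}\cdots a_n$ (whose boundary telescopes to $U(a_1\cdots a_n+b_1\cdots b_n)$); these correspond under the change of model. Your verification that $w=Uv^{\otimes n}+Q\gamma$ is a non-torsion cycle in $\CFIm$ in degree $-4$ and has nonzero image under the projection to $\CFm$ is a correct translation of Lemma~\ref{lemma:reformulation}(a) to the mapping-cone picture. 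One small caveat: you adopt the convention that a domain element $x$ of the cone has $\CFIm$-degree $\gr_{\CFm}(x)$ rather than $\gr_{\CFm}(x)+1$ as in the paper. You apply it consistently (so $\dl=2+\deg$, $\du=3+\deg$, $w$ in degree $-4$), and it gives the right numbers, but since Lemma~\ref{lemma:reformulation} is stated with the paper's convention, a reader checking against the source may stumble; it would be safer to state the shift explicitly or adopt the paper's normalization outright.

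In sum: correct, with the essential computation identical to the paper's, the main differences being the choice of model, a shifted (but consistent) grading convention for $\CFIm$, and the use of Proposition~\ref{prop:inequalities} rather than direct inspection for the upper bounds.
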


Interestingly, for all the manifolds for which $\dl$ and $\du$ were computed in \cite{HMinvolutive}, we had either $\dl=d$ or $\du=d$. By considering connected sums, we can now find a three-manifold with $\dl \neq d \neq \du$.

\begin{proposition}\label{prop:alldifferent} The $\Z_2$-homology sphere $Y = S^3_{-3}(T_{2,7}) \# S^3_{-3}(T_{2,7}) \# S^3_{5}(-T_{2,11})$, equipped with its unique spin structure, has
\[
\dl(Y) = -2, \quad \quad d(Y) = 0, \quad \quad \du(Y) = 2.
\]
\end{proposition}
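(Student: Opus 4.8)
The plan is to compute the involutive correction terms of each of the three summands individually, assemble them via the connected sum formula (Theorem~\ref{thm:ConnSum}) as a tensor product of $\iota$-complexes, and then read off $\dl$, $d$, $\du$ of $Y$ from the resulting mapping cone. First I would invoke the computations from \cite{HMinvolutive}: the manifolds $S^3_{-3}(T_{2,7})$, $S^3_{-3}(T_{2,7})$, and $S^3_5(-T_{2,11})$ are surgeries on ($\pm$-mirrors of) $L$-space knots, for which $\CFm$ together with $\iota$ has an explicit model (a ``staircase'' complex with its natural flip symmetry, possibly after adding acyclic summands). In particular, for each summand $(Y_i, \s_i)$ one knows $d(Y_i)$ together with the pair $(\dl, \du)$; by the examples in \cite{HMinvolutive} these three summands should be chosen so that one of them has $\du = d$ but $\dl < d$ (the ``$\Sigma(2,3,7)$-type'' behavior), and the mirrored summand $S^3_5(-T_{2,11})$ has $\dl = d$ but $\du > d$ (the reversed behavior), with the $d$-invariants arranged so that they sum to $0$.

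Next I would apply Theorem~\ref{thm:ConnSum}: the $\iota$-complex of $Y$ is chain homotopy equivalent to $\bigl(\CFm(Y_1)\otimes\CFm(Y_2)\otimes\CFm(Y_3),\ \iota_1\otimes\iota_2\otimes\iota_3\bigr)$ over $\Z_2[U]$ (applying the two-summand formula twice, and noting the $U(\Phi\iota\otimes\Phi\iota)$ correction term vanishes by the grading argument quoted after the theorem). Since each $\CFm(Y_i)$ splits as a ``tower'' summand $\Z_2[U]$ (in the appropriate degree, shifted so that the bottom of the tower sits in degree $d(Y_i)$) plus a finite acyclic piece, and since $\iota_i$ respects this up to homotopy, the tensor product's $U$-nontorsion part is a single tower generated in degree $d(Y_1)+d(Y_2)+d(Y_3) = 0$, with an induced involution $\iota_\#$. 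Then $\dl(Y)$ and $\du(Y)$ are computed from the mapping cone $\CFIm$ by the standard recipe of \cite{HMinvolutive}: $\dl$ is (a shift of) the maximal grading of a nontorsion class in $\HFIm$ in the image of $U$-localization coming from the ``$1$'' part of the cone, and $\du$ similarly from the ``$Q$'' part — concretely, these are governed by how $\iota_\#$ acts on the tower and on the lowest-lying torsion generators that interact with it.

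The computation of $\iota_\#$ on the relevant part of the tensor complex is where I expect the real content to lie. Abstractly, for a tensor product of staircase-type $\iota$-complexes one needs to track, for each summand, not just whether $\iota_i$ fixes the tower generator but the precise homotopy $H_i$ with $\partial H_i + H_i\partial = \iota_i^2 - \id$ and how the ``local equivalence class'' of $(\CFm(Y_i),\iota_i)$ behaves under $\otimes$ — this is exactly the local-equivalence-group structure advertised in the abstract. The main obstacle is therefore a careful bookkeeping argument: showing that in the tensor product the leading behavior is controlled by a specific two- or three-step sub-quotient complex (a small ``box'' or staircase in low degrees) on which $\iota_\#$ can be written down explicitly, and verifying that the contributions of the two $S^3_{-3}(T_{2,7})$ factors reinforce to push $\dl$ down by $2$ while the $S^3_5(-T_{2,11})$ factor pushes $\du$ up by $2$, with no cancellation. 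Once $\iota_\#$ is pinned down on this finite model, extracting $\dl(Y) = -2$, $d(Y) = 0$, $\du(Y) = 2$ is the routine mapping-cone computation of \cite{HMinvolutive}, and the inequalities of Proposition~\ref{prop:inequalities} serve as a consistency check (they force $\dl(Y)\ge \dl(Y_1)+\dl(Y_2)+\dl(Y_3)$ and $\du(Y)\le \du(Y_1)+\du(Y_2)+\du(Y_3)$, which should be sharp here).
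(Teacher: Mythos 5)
Your high-level strategy matches the paper's: compute explicit models for the $\iota$-complexes of each summand via the involutive large surgery formula, tensor them using Theorem~\ref{thm:ConnSum}, and extract $\dl$ and $\du$ from the reformulation in Lemma~\ref{lemma:reformulation}. So the outline is correct.

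However, you stop exactly where the proof begins; the entire content of the paper's argument is the explicit computation you defer to ``careful bookkeeping.'' Concretely, the paper shows (after $\iota$- and $U$-equivariant changes of basis and cancellation of acyclic summands) that $\CFm(S^3_{-3}(T_{2,7}))$ is equivalent as an $\iota$-complex to a three-generator complex $\Z_2[U]\langle a,g,d'\rangle$ with $\partial d' = U^2(a+g)$ and $\iota$ swapping $a$ and $g$, and that $\CFm(S^3_{5}(-T_{2,11}))$ is equivalent to $\Z_2[U]\langle o',t,y'\rangle$ with $\partial t = U^3(o'+y')$ and $\iota$ swapping $o'$ and $y'$. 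Tensoring two copies of the first with the second gives a $27$-generator complex, and one must then exhibit explicit chain-level witnesses: an element $v$ with $[U^n v]\neq 0$ and $(1+\iota)v=\partial w$ in grading $-4$ certifying $\dl(Y)\ge -2$, and a triple $(x,y,z)$ satisfying the conditions of Lemma~\ref{lemma:reformulation}(b) with $\gr(x)=-1$ certifying $\du(Y)\ge 2$, together with arguments that nothing in higher degree works. None of this is present in your proposal; without it there is no proof.

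Two specific points are also off. First, your consistency check is miscalculated: the outermost inequalities of Proposition~\ref{prop:inequalities}, applied across the three summands, give only $\dl(Y)\ge -\tfrac{9}{2}-\tfrac{9}{2}+1=-8$ and $\du(Y)\le -\tfrac{1}{2}-\tfrac{1}{2}+7=6$, nowhere near the true values $-2$ and $2$; they are consistent but emphatically not sharp. Second, your hope that the answer can be assembled from the local-equivalence data of the summands (tracking each $\iota_i$ abstractly and combining) is exactly what the paper's Proposition~\ref{prop:undetermined} shows is untenable as stated: there exist $Y_1, Y_2$ with identical $(\dl,d,\du)$ whose connected sums with a fixed $Z$ have different correction terms. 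So one genuinely cannot shortcut the explicit tensor computation with the kind of abstract bookkeeping you describe, at least not with any presently known invariants of the local equivalence class.
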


From here, we get a topological application.

\begin{corollary} The manifold $Y$ from Proposition~\ref{prop:alldifferent} is not $\mathbb Z_2$-homology cobordant to any large surgery on an $L$-space knot, mirror of an $L$-space knot, or Floer homologically thin knot.\end{corollary}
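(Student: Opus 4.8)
The plan is to argue by contradiction, using that the three correction terms $d$, $\dl$, $\du$ are invariants of $\Z_2$-homology cobordism — as recalled above, they descend to maps $\Theta^3_{\Z_2}\to\Q$ — together with the facts that a $\Z_2$-homology sphere carries a unique spin structure and that $\Z_2$-homology cobordism preserves the class of $\Z_2$-homology spheres.

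Suppose then that $Y$ were $\Z_2$-homology cobordant to some manifold $Y'$ which is a large surgery $S^3_n(K)$ on an $L$-space knot, on the mirror of an $L$-space knot, or on a Floer homologically thin knot $K$. First I would note that $Y'$ is then forced to be a $\Z_2$-homology sphere (so $n$ is odd) carrying its unique spin structure, so there is no ambiguity in the invariants; homology cobordism invariance then gives $d(Y)=d(Y')$, $\dl(Y)=\dl(Y')$, and $\du(Y)=\du(Y')$. Next I would invoke the computations of \cite{HMinvolutive}: for each of these three families one always has $\dl=d$ or $\du=d$ (for instance $\du=d$ for large surgeries on $L$-space knots, $\dl=d$ for large surgeries on their mirrors, and for thin knots one of the two equalities holds, the relevant one being governed by the sign of $\tau$). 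It would follow that $\dl(Y)=d(Y)$ or $\du(Y)=d(Y)$. But Proposition~\ref{prop:alldifferent} gives
\[
\dl(Y)=-2<0=d(Y)<2=\du(Y),
\]
so in fact $\dl(Y)\neq d(Y)$ and $\du(Y)\neq d(Y)$, a contradiction; hence no such $Y'$ exists.

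Once these inputs are in place the argument is immediate, so I do not expect a genuine obstacle — only some bookkeeping. The care needed is in extracting the precise statements from \cite{HMinvolutive} for all three families, matching the paper's convention that a surgery is ``large'' when its coefficient is at least the Seifert genus (and, where convenient, passing to positive surgeries on mirrors via orientation reversal, which interchanges $\dl$ and $\du$ up to sign), and recording which of $\dl=d$, $\du=d$ occurs in each case, in particular for thin knots. The genuinely nontrivial ingredients — the $\Z_2$-homology cobordism invariance of $\dl$ and $\du$, and the computations for these families — are already available from \cite{HMinvolutive}, so nothing new is required beyond assembling them.
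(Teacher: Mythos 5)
Your argument is correct and matches the paper's (implicit) reasoning: the paper states the corollary immediately after observing that for all manifolds whose involutive correction terms are computed in \cite{HMinvolutive} one has $\dl=d$ or $\du=d$, and that $Y$ violates both equalities, leaving the $\Z_2$-homology cobordism invariance of $d$, $\dl$, $\du$ as the unstated glue. You have supplied exactly that glue, plus the routine observation that $Y'$ must itself be a $\Z_2$-homology sphere with its unique spin structure, so nothing further is needed.
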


Further, we see that the involutive correction terms of a connected sum are not determined by those  of the summands. The example below shows that $\dl$ is not determined this way; by changing orientations, we can also get an example with $\du$ instead of $\dl$.

\begin{proposition} 
\label{prop:undetermined}
For the manifolds $Y_1 =  S^3_{-3}(T_{2,7}) \# S^3_{-3}(T_{2,7})$, $Y_2 = S^3_{-3}(T_{2,7}) \# (-L(3,1))$ and $Z= S^3_{5}(-T_{2,11})$, we have
$$ \dl(Y_1) = \dl(Y_2)=-5, \ d(Y_1)=d(Y_2)=-1, \ \du(Y_1)=\du(Y_2)=-1,$$
but
$$ \dl(Y_1 \# Z)=-2 \neq 0=\dl(Y_2 \# Z).$$
\end{proposition}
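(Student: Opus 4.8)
The plan is to compute everything from Theorem~\ref{thm:ConnSum} and the correction-term calculations of \cite{HMinvolutive}. Write $A = S^3_{-3}(T_{2,7})$, $B = -L(3,1)$, $C = S^3_5(-T_{2,11})$, each with its unique spin structure, so that $Y_1 = A\#A$, $Y_2 = A\#B$, $Z = C$. Since $S^3_{-3}(T_{2,7}) = -S^3_3(\overline{T_{2,7}})$ and $-T_{2,11} = \overline{T_{2,11}}$ is a mirror of an $L$-space knot, both $A$ and $C$ are surgeries of coefficient equal to the Seifert genus on a mirror of an $L$-space knot, hence ``large'' in the sense of \cite{HMinvolutive}; thus $(\CFm(A),\iota_A)$ and $(\CFm(C),\iota_C)$ are the explicit $\iota$-complexes computed there (for $A$, combined with the behavior under orientation reversal), with correction terms $d(A) = \du(A) = -1/2$, $\dl(A) = -9/2$, $d(C) = \dl(C) = 1$, and $\du(C)$ considerably larger. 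Finally $B$ is a lens space, so $\CFm(B)\simeq\Z_2[U]$ with $\iota_B\simeq\id$; hence $(\CFm(B),\iota_B)$ is locally trivial and $d(B)=\dl(B)=\du(B)=-1/2$.

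I would first establish the first line. Because $\iota_B\simeq\id$, Theorem~\ref{thm:ConnSum} gives $(\CFm(Y_2),\iota_{Y_2})\simeq(\CFm(A),\iota_A)$ up to the grading shift by $d(B)$, so $d(Y_2)=d(A)+d(B)=-1$, $\dl(Y_2)=\dl(A)+d(B)=-5$, $\du(Y_2)=\du(A)+d(B)=-1$. For $Y_1=A\#A$, additivity of $d$ gives $d(Y_1)=2d(A)=-1$; Proposition~\ref{prop:inequalities} gives $\du(Y_1)\le 2\du(A)=-1$, while always $\du(Y_1)\ge d(Y_1)=-1$, so $\du(Y_1)=-1$. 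Proposition~\ref{prop:inequalities} confines $\dl(Y_1)$ only to $[\,2\dl(A),\,\dl(A)+\du(A)\,]=[-9,-5]$, and I would pin down $\dl(Y_1)=-5$ by directly computing the homology of the cone of $Q(1+\iota_A\otimes\iota_A)$ on $\CFm(A)\otimes_{\Z_2[U]}\CFm(A)$ --- which computes $\HFIm(Y_1)$ by Theorem~\ref{thm:ConnSum} --- from the explicit $(\CFm(A),\iota_A)$.

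For the second line I would apply Theorem~\ref{thm:ConnSum} twice, using associativity of $\otimes_{\Z_2[U]}$: $\HFIm(Y_1\#Z)$ is the homology of the cone of $Q(1+\iota_A^{\otimes 2}\otimes\iota_C)$ on $\CFm(A)^{\otimes 2}\otimes\CFm(C)$, and $\HFIm(Y_2\#Z)$ that of the cone of $Q(1+\iota_A\otimes\iota_B\otimes\iota_C)$ on $\CFm(A)\otimes\CFm(B)\otimes\CFm(C)$. Since $\iota_B\simeq\id$, the latter reduces, up to the grading shift by $d(B)$, to $\CFIm(A\#C)$; so the claim $\dl(Y_2\#Z)=0$ is equivalent to $\dl(A\#C)=1/2$, while $\dl(Y_1\#Z)=-2$ is equivalent to $\dl(A\#A\#C)=-2$, each a finite computation of the homology of an explicit mapping cone over $\Ring$. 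The phenomenon is that the ``$\dl$-complexity'' of $A$ is entirely absorbed once $A$ is tensored with the large ``$\du$-part'' of $C$, raising $\dl(A\#C)$ all the way to $d(A\#C)=1/2$, whereas $(\CFm(A)^{\otimes 2},\iota_A^{\otimes 2})$ carries strictly more such complexity than one copy of $A$ --- a difference to which the triple $(d,\dl,\du)$ is blind --- so tensoring with $C$ absorbs only part of it and $\dl(A\#A\#C)=-2<0=d(A\#A\#C)$.

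I expect the main obstacle to be the computation $\dl(A\#A\#C)=-2$: as a threefold tensor product of nontrivial $\iota$-complexes its mapping cone remains large even after reduction to small local-equivalence representatives, and the delicate point is to see how $(\CFm(A)^{\otimes 2},\iota_A^{\otimes 2})$ pairs against the acyclic summand of $\CFm(C)$ under the tensor involution, so that $\dl$ lands strictly between the two bounds of Proposition~\ref{prop:inequalities} and not at the upper one. The computations $\dl(A\#A)=-5$ and $\dl(A\#C)=1/2$ are of the same nature but lighter, and, once Theorem~\ref{thm:ConnSum} is in hand, the collapsing arguments using $\iota_B\simeq\id$ and the extraction of the first line from them are routine.
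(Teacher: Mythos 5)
Your proposal is correct and follows essentially the same route as the paper: use the explicit $\iota$-complex models from \cite{HMinvolutive} (via Proposition~\ref{prop:surgeries2} and duality) for $S^3_{-3}(T_{2,7})$ and $S^3_5(-T_{2,11})$, observe that $\CFm(-L(3,1))\simeq\Z_2[U]$ with $\iota\simeq\id$ so that $Y_2$'s data is just $A$'s shifted by $d(-L(3,1))$, and then apply Theorem~\ref{thm:ConnSum} and Lemma~\ref{lemma:reformulation} to compute the relevant $\dl$'s from the mapping cone on the tensor product. The one small divergence is your derivation of $\du(Y_1)=-1$ from Proposition~\ref{prop:inequalities} and $\du\ge d$, where the paper instead extracts it from the explicit computation in Lemma~\ref{lem:twocopies}; your shortcut is cleaner but the paper needs the explicit tensor-product computation anyway for $\dl(Y_1)$ (and reuses it in Proposition~\ref{prop:alldifferent} for $\dl(Y_1\#Z)=-2$), so the saving is modest.
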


In \cite[Section 4.3]{Stoffregen}, Stoffregen used $\pin$-equivariant Seiberg-Witten Floer chain complexes to define a homomorphism from $\Theta^3_{\Z}$ to an Abelian group, denoted $\mathfrak{CLE}$. The elements of $\mathfrak{CLE}$ are chain complexes of a certain form, modulo a relation called local equivalence. We can imitate Stoffregen's construction in our setting, and define an Abelian group $\Inv$ by considering pairs $(C, \iota)$, where $C$ is a free chain complex over $\Z_2[U]$ whose homology has a single infinite ``$U$-tail,'' and $\iota$ is an endomorphism of $C$ with $\iota^2 \sim \id$. (The model for $C$ is $\CFm(Y)[-2]$, with $Y$ a homology sphere, and with $\inv$ being the conjugation involution.) The elements of $\Inv$ are taken to be pairs $(C, \iota)$ as above, modulo a local equivalence relation that models the existence of maps induced by homology cobordism. 

\begin{theorem}
\label{thm:Inv}
For homology spheres $Y$, the assignment $Y \mapsto \bigl( \CFm(Y)[-2], \iota \bigr )$ produces a homomorphism
$$h: \Theta^3_{\Z} \to \Inv.$$
\end{theorem}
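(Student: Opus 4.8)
The plan is to set up the algebraic category that the target group $\Inv$ lives in, verify that the connected sum formula (Theorem~\ref{thm:ConnSum}) matches the group operation, and then check that local equivalence classes are a homology cobordism invariant. Concretely, I would first make precise the definition sketched in the paragraph before the statement: an object of $\Inv$ is a pair $(C,\iota)$ where $C$ is a free, finitely generated (in each degree, bounded below) $\Z$-graded chain complex over $\Z_2[U]$ with $U$ acting with degree $-2$, such that $U^{-1}H_*(C) \cong \Z_2[U,U^{-1}]$ as a $\Z_2[U,U^{-1}]$-module — this is the ``single $U$-tail'' condition — together with a grading-preserving chain map $\iota\co C \to C$ with $\iota^2 \simeq \id_C$. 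Two pairs $(C,\iota)$ and $(C',\iota')$ are \emph{locally equivalent} if there exist grading-preserving chain maps $f\co C \to C'$ and $g\co C' \to C$ that are $U$-equivariant, commute with the involutions up to homotopy ($f\iota \simeq \iota' f$, $g\iota' \simeq \iota g$), and induce isomorphisms on $U^{-1}H_*$; such maps will be called \emph{local maps}. The first lemma is that local equivalence is an equivalence relation — reflexivity and transitivity are immediate by composing local maps (using that a composition of $U^{-1}H_*$-isomorphisms is one), and symmetry is built into the definition since we require maps in both directions.

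Next I would put an abelian group structure on the set $\Inv$ of local equivalence classes. The operation is $[(C,\iota)] + [(C',\iota')] := [(C \otimes_{\Z_2[U]} C', \iota \otimes \iota')]$, with the grading shift conventions chosen so that the ``$U$-tail'' normalization is preserved; one checks the tensor product of two complexes each with a single $U$-tail again has a single $U$-tail (on $U^{-1}H_*$ the Künneth theorem gives $\Z_2[U,U^{-1}] \otimes \Z_2[U,U^{-1}] \cong \Z_2[U,U^{-1}]$), and that $(\iota \otimes \iota')^2 = \iota^2 \otimes \iota'^2 \simeq \id \otimes \id \simeq \id$. Well-definedness on local equivalence classes follows because tensoring a local map with the identity, or with another local map, is again a local map — here one uses that $C$ and $C'$ are free over $\Z_2[U]$, so $- \otimes_{\Z_2[U]} C$ is exact and preserves chain homotopies and $U^{-1}H_*$-isomorphisms. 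Associativity and commutativity of $+$ are inherited from the tensor product (commutativity up to the standard swap isomorphism, which is a local map). The identity element is the class of $(\Z_2[U], \id)$, modelling $S^3$; and the inverse of $[(C,\iota)]$ is the class of the dual $(C^\vee, \iota^\vee)$ — the point here is that $C \otimes_{\Z_2[U]} C^\vee$ receives and maps to $(\Z_2[U],\id)$ via evaluation and coevaluation, which are local maps, so $[(C,\iota)] + [(C^\vee,\iota^\vee)] = 0$. This matches the fact that $-Y$ induces the dual complex with its involution.

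Then I would bring in the topology. Given a homology sphere $Y$, the complex $\CFm(Y)[-2]$ with the conjugation involution $\iota_Y$ is an object of $\Inv$: it is free and finitely generated over $\Z_2[U]$, $\HFm(Y)$ has a single $U$-tail (equal to $\Z_2[U]$ in high enough degree by the computation of $\HFinf$), and $\iota_Y^2 \simeq \id$ by \cite{HMinvolutive}. The assignment $Y \mapsto (\CFm(Y)[-2], \iota_Y)$ is additive up to local equivalence by Theorem~\ref{thm:ConnSum}: the connected sum formula \eqref{eq:KunnethCF} together with the identification of $\iota$ with $\iota_1 \otimes \iota_2$ says exactly that $(\CFm(Y_1 \# Y_2)[-2], \iota) \simeq (\CFm(Y_1)[-2], \iota_1) \otimes_{\Z_2[U]} (\CFm(Y_2)[-2], \iota_2)$, which is not merely local equivalence but honest equivalence, hence descends to $+$ in $\Inv$. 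It remains to check that the local equivalence class depends only on the homology cobordism class of $Y$. For this I would invoke the cobordism maps: a homology cobordism $W$ from $Y_0$ to $Y_1$ induces $U$-equivariant grading-preserving chain maps $F_W\co \CFm(Y_0) \to \CFm(Y_1)$ and $F_{\bar W}\co \CFm(Y_1) \to \CFm(Y_0)$, each inducing an isomorphism on $\HFinf$ (since $W$ has the homology of a product), and by the naturality of the conjugation involution under cobordism maps — again from the work in \cite{HMinvolutive} and \cite{Zemke2} — these commute with $\iota$ up to chain homotopy. Hence $F_W$ and $F_{\bar W}$ are local maps, so $(\CFm(Y_0)[-2],\iota_{Y_0})$ and $(\CFm(Y_1)[-2],\iota_{Y_1})$ are locally equivalent. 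Combining: $h\co \Theta^3_\Z \to \Inv$, $h([Y]) = [(\CFm(Y)[-2], \iota_Y)]$, is well-defined and additive, i.e.\ a homomorphism.

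The main obstacle I expect is \emph{bookkeeping the involution up to homotopy consistently}: local maps are only required to commute with $\iota$ up to homotopy, $\iota^2$ is only the identity up to homotopy, and none of these homotopies are canonical, so one must be careful that every composition and every tensor-with-identity operation still yields something in the required ``commutes up to homotopy'' class — in particular that the relevant homotopies can be transported across $-\otimes_{\Z_2[U]} C$ and can be composed without obstruction. A secondary technical point is pinning down the grading-shift conventions so that the tensor product operation is strictly unital with identity $(\Z_2[U],\id)$ rather than unital only up to a shift; this is a normalization one has to get right but is not conceptually hard. The naturality of $\iota$ under cobordism maps, which is the other ingredient for homology cobordism invariance, I would cite rather than reprove.
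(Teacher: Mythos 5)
Your overall strategy mirrors the paper's: define $\Inv$ as the set of local equivalence classes, put a group structure on it via the tensor product with the dual complex as inverse, and then deduce the theorem from the connected sum formula (for additivity) and the behavior of cobordism maps under conjugation (for homology cobordism invariance). This is the same decomposition into the same pieces.

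The one place where your sketch glosses over a genuine argument is the inverse: you assert that the evaluation and coevaluation maps $C \otimes_{\Z_2[U]} C^\vee \rightleftarrows \Z_2[U]$ ``are local maps,'' but the nontrivial part of this — that they commute with the involutions $\iota \otimes \iota^\vee$ versus $\id$ up to chain homotopy — is exactly where the hypothesis $\iota^2 \simeq \id$ enters, and it does not follow formally. The paper's verification identifies $C \otimes_{\Z_2[U]} C^\vee$ with $\Hom_{\Z_2[U]}(C,C)$, under which coevaluation sends $1$ to the identity map and the matrix of $\iota^\vee$ is the transpose of that of $\iota$; the difference $(\iota \otimes \iota^\vee)(\gamma(1)) - \gamma(1)$ then corresponds to $\iota^2 - \id$, which is null-homotopic by assumption and hence a boundary. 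Dualizing gives the statement for evaluation. This computation is short but is not a formal consequence of the framework, so stating it as an obvious property of evaluation/coevaluation leaves a real gap. (The paper also observes that the number of free generators of $C$ is odd for Euler characteristic reasons, so that $\zeta\circ\gamma=\id$; this is a convenient shortcut but the essential point is the separate verification that $\gamma$, $\zeta$ are $U^{-1}H_*$-isomorphisms and commute with $\iota$ up to homotopy.) Your last paragraph correctly flags ``bookkeeping the involution up to homotopy'' as the hazard, but the proposal as written doesn't actually discharge it at the one place where it bites.
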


The construction of $\Inv$ is reminiscent to that of the group of knot Floer complexes modulo $\varepsilon$-equivalence, defined by Hom in \cite{Hom1}. That group was denoted $\mathcal{CFK}$ in \cite{Hom2} and was used there to prove the existence of a $\Z^{\infty}$ summand in the smooth concordance group of topologically slice knots, $\mathcal{C}_{TS}$. The argument used a total ordering on $\mathcal{CFK}$, and the fact that the knot Floer complex defines a homomorphism $\mathcal{C}_{TS} \to \mathcal{CFK}$.

In our context, the group $\Inv$ has a purely algebraic definition, but its structure is yet to be understood. There is a homomorphism $d_{\Inv} \co \Inv \to \Z$, but it remains an open problem to construct further such homomorphisms, or a total ordering on $\Inv$. In turn, this may help in uncovering properties of the homology cobordism group $\Theta^3_{\Z}$. For example, it is an open problem whether this group contains a $\Z^{\infty}$ summand.
\medskip

\noindent \textbf{Organization of the paper.}  In Section~\ref{sec:background} we review the definition and some properties of involutive Heegaard Floer homology. In Section~\ref{sec:graphs} we review the graph cobordism maps on Heegaard Floer homology constructed by the third author in \cite{Zemke2}, and explain their interaction with the conjugation symmetry. In Section~\ref{sec:absolute} we study the behavior of the graph cobordism maps with respect to the absolute gradings on Heegaard Floer homology. In Section~\ref{sec:proof} we use these tools to prove the first version of the connected sum formula, where $\iota$ is identified with the expression \eqref{eq:withPhi}. In Section~\ref{sec:nullchainhomotopy} we prove that the second term in \eqref{eq:withPhi} is chain homotopic to zero, and thus arrive at our main result, Theorem~\ref{thm:ConnSum}. In Section~\ref{sec:inequalities} we deduce the inequalities in Proposition~\ref{prop:inequalities}. In Section~\ref{sec:I} we construct the group $\Inv$, and prove Theorem~\ref{thm:Inv}. In Section~\ref{sec:computations} we do some explicit computations, which yield Propositions~\ref{prop:237sums}, \ref{prop:alldifferent}, and \ref{prop:undetermined}. Lastly, in Section~\ref{sec:SW} we make an analogy with Seiberg-Witten theory, and describe a potential  connected sum formula in the $\Ainf$ setting.

\medskip
\noindent \textbf{Acknowledgements.} We are indebted to Matt Stoffregen for several very helpful insights, including the suggestion that we consider the three-manifold in Proposition \ref{prop:alldifferent}; Section~\ref{sec:SW} has also come out of discussions with him. We thank Tye Lidman, Francesco Lin and Charles Livingston for comments on a previous version of the paper.

\section{Background}
\label{sec:background}

\subsection{Review of involutive Heegaard Floer homology}
\label{sec:HFIreview}
We recall the construction and some of the properties of involutive Heegaard Floer homology, following \cite{HMinvolutive}. We assume that the reader is familiar with ordinary Heegaard Floer homology, as in \cite{HolDisk, HolDiskTwo, HolDiskFour, AbsGraded}.

Given a closed, oriented three-manifold $Y$, there is a conjugation symmetry that acts on the set of $\spinc$ structures on $Y$. Involutive Heegaard Floer homology is associated to the three-manifold and any equivalence class of $\spinc$ structures under this involution. However, it is proved in \cite[Proposition 4.5]{HMinvolutive} that, in the case where the equivalence class consists of two distinct $\spinc$ structures, the involutive groups are determined by the ordinary ones. The interesting case is when the equivalence class consists of a single, self-conjugate $\spinc$ structure, i.e., one that comes from a spin structure.\footnote{Different spin structures may give rise to the same self-conjugate $\spinc$ structure. Our constructions only depend on the self-conjugate $\spinc$ structure, but in this paper we refer to the data of a spin structure, for simplicity.} This is the only case that we will consider in this paper.

Let us fix a spin structure $\s$ on $Y$. We choose a pointed Heegaard diagram
$$ \H = (\Sigma, \alphas, \betas, w)$$
representing $Y$. (Unlike in \cite{HMinvolutive}, here we drop almost complex structures from the notation for simplicity.) There are Heegaard Floer complexes $\CFo(\H, \s)$, where $\circ$ denotes any of the four versions: $\widehat{\phantom{a}}$, $+$, $-$, or $\infty$. When there is no possibility of confusion, we write $\CFo(Y, \s)$ for $\CFo(\H, \s)$; however, the chain complex depends on the diagram, and only the homology $\HFo(Y, \s)$ is an invariant of $(Y, \s)$. 

The conjugate Heegaard diagram
$$ \bH = (-\Sigma, \betas, \alphas, w)$$
represents $Y$ as well, and we have a canonical isomorphism between the corresponding Heegaard Floer chain complexes:
\begin{align*}
\eta \co \CFo(\H, \s) \xrightarrow{\phantom{u} \cong \phantom{u}} \CFo(\bH, \s).
\end{align*}

Furthermore, a sequence of Heegaard moves from $\bH$ to $\H$ induces a chain homotopy equivalence
\begin{align*}
\Phi(\bH, \H) \co \CFo(\bH,  \s) \xrightarrow{\phantom{u} \sim \phantom{u}} \CFo(\H, \s).
\end{align*}
We denote by $\inv$ the composition of these two maps:
$$ \inv = \Phi(\bH, \H) \circ \eta \co \CFo(\H, \s) \to \CFo(\H, \s).$$

The involutive Heegaard Floer complex $\CFIo(\H, \s)$, is defined as the mapping cone
\begin{align} \label{eq:involutive}
 \CFo(\H, \s) \xrightarrow{Q (1+\inv)} Q \ccdot \CFo(\H, \s)[-1].
\end{align}
As with Heegaard Floer homology, we sometimes write $\CFIo(Y, \s)$ for $\CFIo(\H, \s)$, when the Heegaard diagram is implicit. The (isomorphism class of the) homology of $\CFIo(\H, \s)$, as a module over $\Ring = \Z_2[Q, U]/(Q^2)$, is a three-manifold invariant, called {\em involutive Heegaard Floer homology}, and denoted $\HFIo(Y, \s)$. 

Note that, if $\x \in \CFo(\H, \s)$ is a homogeneous generator in degree $d$, then the corresponding $\x \in \CFIo(\H, \s)$, in the domain of the map \eqref{eq:involutive}, is in degree $d+1$, whereas $Q \x$, in the target of \eqref{eq:involutive}, is in degree $d$.

\begin{remark}\label{remark:suppress}
When $Y$ is a $\Z_2$-homology sphere, it admits a unique spin structure $\s$, which we usually drop from the notation.
\end{remark}

\begin{remark}
We expect that the modules $\HFIo(Y, \s)$ are natural, i.e., they depend only on $(Y, \s)$ and the basepoint $w$, up to canonical isomorphism. However, in \cite{HMinvolutive} it is only proved that their isomorphism class is an invariant. Therefore, in the following, whenever we talk about maps to or from some $\HFIo(Y, \s)$, what we mean more precisely is that there exist such maps given a choice of a Heegaard diagram.
\end{remark}

Let us list a few properties of $\HFIo(Y, \s)$, which were proved in \cite{HMinvolutive}. The first, Proposition 4.1 in \cite{HMinvolutive}, deals with the relation between the different flavors of involutive Heegaard Floer homology.

\begin{proposition} \label{propn:longexact} The involutive Heegaard Floer groups have long exact sequences
\begin{align*}
\cdots \rightarrow \HFIhat_*(Y,\s) \rightarrow \HFIp_*(Y,\s) \xrightarrow{\cdot U} \HFIp_{*-2}(Y, \s) \rightarrow \cdots \\
\cdots \rightarrow \HFIm_*(Y,\s) \xrightarrow{i} \HFIinf_*(Y, \s) \xrightarrow{\pi} \HFIp_*(Y, \s) \rightarrow \cdots 
\end{align*}
\noindent where $i$ and $\pi$ denote the maps induced by inclusion and projection.
\end{proposition}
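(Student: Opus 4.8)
The plan is to deduce each of the two sequences from the corresponding short exact sequence for ordinary Heegaard Floer homology, by applying the involutive mapping cone construction \eqref{eq:involutive} termwise. Fix a pointed Heegaard diagram $\H$ representing $(Y,\s)$. The starting point is the familiar pair of honest short exact sequences of chain complexes
\begin{align*}
0 \to \CFhat(\H,\s) \to \CFp(\H,\s) \xrightarrow{\ \cdot U\ } \CFp(\H,\s) \to 0,\\
0 \to \CFm(\H,\s) \xrightarrow{\ i\ } \CFinf(\H,\s) \xrightarrow{\ \pi\ } \CFp(\H,\s) \to 0,
\end{align*}
where $i$ is induced by inclusion, $\pi$ by the quotient, and $\cdot U$ drops the Maslov grading by $2$.

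The first step is to check that the conjugation involution $\inv$ commutes, on the nose, with every map appearing in these sequences. This should be immediate from the definition $\inv = \Phi(\bH,\H)\circ \eta$: the canonical isomorphism $\eta$ and the Heegaard-move equivalence $\Phi(\bH,\H)$ are $\Z_2[U]$-equivariant chain maps defined uniformly for all four flavors, so they commute with multiplication by $U$, with the inclusion $\CFm\hookrightarrow\CFinf$, and with the projection $\CFinf\twoheadrightarrow\CFp$. Hence the two displayed sequences are short exact sequences of pairs $(C,\inv)$, where $C$ is a chain complex and $\inv$ a chain involution.

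The second step is to observe that the assignment $(C,\inv)\mapsto \Cone\bigl(Q(1+\inv)\colon C\to Q\ccdot C[-1]\bigr)=\CFIo(\H,\s)$ is an exact functor on such pairs: as a group $\CFIo$ is simply $C\oplus QC$, and its twisted differential $\partial_C + Q(1+\inv)$ is natural in $(C,\inv)$, so a short exact sequence of pairs is sent to a short exact sequence of involutive complexes. Applying this to the sequences above produces short exact sequences of $\Ring$-modules
\begin{align*}
0 \to \CFIhat(\H,\s) \to \CFIp(\H,\s) \xrightarrow{\ \cdot U\ } \CFIp(\H,\s) \to 0,\\
0 \to \CFIm(\H,\s) \xrightarrow{\ i\ } \CFIinf(\H,\s) \xrightarrow{\ \pi\ } \CFIp(\H,\s) \to 0,
\end{align*}
and passing to homology yields precisely the long exact sequences in the statement, with the connecting maps and grading shifts inherited from the ordinary case.

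Since the argument is essentially formal, there is no single hard step; the two places that will require a little care are (i) having the commutation of $\inv$ with the flavor maps hold at the chain level, not merely up to homotopy --- which it does, because $\eta$ and $\Phi(\bH,\H)$ are genuine $U$-equivariant chain maps --- and (ii) the bookkeeping with the $Q$- and $U$-gradings of the cone \eqref{eq:involutive}, so that $\cdot U$ on $\CFIp$ indeed lowers the total grading by $2$ and the sequences line up with the stated shifts. As usual, these complexes, and hence the short exact sequences, depend on the auxiliary choices made in defining $\inv$; this is harmless, since we only assert the resulting long exact sequences at the level of the (isomorphism classes of) homology groups, following \cite{HMinvolutive}.
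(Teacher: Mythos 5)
Your proof is correct and is exactly the standard argument (and, to my knowledge, the one in \cite{HMinvolutive}, which this paper is simply quoting): one checks that $\eta$ and the change-of-diagrams map are $\Z_2[U]$-equivariant chain maps defined compatibly across the flavors, so that $\inv$ commutes strictly with $\cdot U$, the inclusion $\CFm\hookrightarrow\CFinf$, and the projection $\CFinf\twoheadrightarrow\CFp$, and then one applies the mapping-cone construction termwise, which visibly preserves short exactness since it is additive on underlying modules. Your two flagged points — strict (not merely up-to-homotopy) commutation, and the grading bookkeeping — are exactly the places that need care, and you handle them correctly.
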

 
Our next proposition, Proposition 4.6 in \cite{HMinvolutive}, deals with the relationship between $\HFo$ and $\HFIo$.

\begin{proposition} \label{propn:exact} Let $\s$ be a spin structure on $Y$. Then, there is an exact triangle of $U$-equivariant maps relating $\HFIo$ to $\HFo$:
\begin{equation}
\label{pic:exact1}
\begin{tikzpicture}[baseline=(current  bounding  box.center)]
\node(1)at(0,0){$\HFIo(Y,\s)$};
\node(2)at (-2,1){$\HFo(Y, \s)$};
\node(3)at (2,1){$Q \ccdot \HFo(Y,\s)[-1]$};
\path[->](2)edge node[above]{$Q(1+ \inv_*)$}(3);
\path[->](3)edge (1);
\path[->](1)edge(2);
\end{tikzpicture}
\end{equation}
Here, the map $\HFIo(Y, \s) \to \HFo(Y, \s)$ decreases grading by $1$, and the other two maps are grading-preserving.
\end{proposition}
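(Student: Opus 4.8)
The plan is to read the exact triangle directly off the mapping--cone definition of $\CFIo$, using the long exact sequence of a cone. By \eqref{eq:involutive}, $\CFIo(\H, \s) = \Cone\bigl(Q(1+\inv)\colon \CFo(\H, \s) \to Q \ccdot \CFo(\H, \s)[-1]\bigr)$. For any chain map $f\colon A \to B$, the cone fits into a short exact sequence of chain complexes $0 \to B \to \Cone(f) \to A' \to 0$ with $A'$ a grading shift of $A$, and the associated long exact sequence, in its three--periodic form, reads
\[
\cdots \to H_*(A) \xrightarrow{\phantom{o} f_* \phantom{o}} H_*(B) \to H_*(\Cone(f)) \to H_*(A) \xrightarrow{\phantom{o} f_* \phantom{o}} H_*(B) \to \cdots,
\]
where the map $H_*(\Cone(f)) \to H_*(A)$ has degree $-1$ and the other two maps have degree $0$. (The sequence $0 \to B \to \Cone(f) \to A' \to 0$ splits as a sequence of graded modules, so this works for every flavor regardless of flatness.)

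First I would specialize to $A = \CFo(\H, \s)$, $B = Q \ccdot \CFo(\H, \s)[-1]$ and $f = Q(1+\inv)$, then pass to homology. Since $H_*$ commutes with multiplication by $Q$ and with grading shifts, the term $H_*(B)$ is identified with $Q \ccdot \HFo(Y, \s)[-1]$, while $H_*(A) = \HFo(Y, \s)$ and $H_*(\Cone(f)) = \HFIo(Y, \s)$. Folding the three--periodic sequence into a triangle then reproduces exactly the diagram in the statement, with $H_*(\Cone(f)) \to H_*(A)$ as the map $\HFIo(Y, \s) \to \HFo(Y, \s)$ and $H_*(B) \to H_*(\Cone(f))$ as the map $Q \ccdot \HFo(Y, \s)[-1] \to \HFIo(Y, \s)$.

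The one point that is not purely formal is the identification of $f_*$ with $Q(1+\inv_*)$. Here I would invoke the standard fact that the connecting homomorphism in the short exact sequence attached to $\Cone(f)$ is, up to an overall sign (irrelevant over $\Z_2$), the map induced by $f$; with $f = Q(1+\inv)$ this gives $f_* = Q(1+\inv_*)$. I would also record that $\inv_*\colon \HFo(Y, \s) \to \HFo(Y, \s)$ is independent of the auxiliary choices entering the definition of $\inv$, as established in \cite{HMinvolutive}, so that the triangle is canonical; and that $U$-equivariance of all three maps is automatic, since the short exact sequence is one of $\Z_2[U]$-module complexes and $Q(1+\inv)$ is $\Z_2[U]$-linear (in $\Ring$, $Q$ commutes with $U$). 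The same formal argument goes through for each flavor $\circ$, with $\Z_2[U]$ replaced by $\Z_2$, respectively $\Z_2[U, U^{-1}]$, in the hat and infinity cases.

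I do not expect a genuine obstacle: the statement is a purely homological-algebraic consequence of the mapping-cone description of $\CFIo$. The only thing requiring care is the grading bookkeeping --- one must check, against the degree conventions recorded just before Remark~\ref{remark:suppress} (a generator $\x$ of internal degree $d$ in $\CFo$ sits in degree $d+1$ inside $\CFIo$, while $Q\x$ sits in degree $d$), that under the identifications above the map $\HFIo(Y, \s) \to \HFo(Y, \s)$ drops the grading by one, while the maps $Q \ccdot \HFo(Y, \s)[-1] \to \HFIo(Y, \s)$ and $Q(1+\inv_*)\colon \HFo(Y, \s) \to Q \ccdot \HFo(Y, \s)[-1]$ preserve the gradings displayed in the triangle.
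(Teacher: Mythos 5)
Your proof is correct and is essentially the same argument given in \cite[Proposition 4.6]{HMinvolutive} (the present paper only quotes the result): one reads the exact triangle off the short exact sequence of complexes $0 \to Q\ccdot\CFo[-1] \to \CFIo \to \CFo[1] \to 0$ attached to the mapping cone, identifies the connecting map with $Q(1+\inv_*)$, and observes that the whole construction is $\Ring$-linear hence $U$-equivariant. The grading bookkeeping you carry out (a class of internal degree $d$ in $\CFo$ sits in cone-degree $d+1$, while $Q\x$ sits in cone-degree $d$) is exactly what is needed to confirm that $\HFIo \to \HFo$ drops degree by $1$ and the other two maps preserve degree, so nothing is missing.
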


This implies, for example, that when $b_1(Y)=0$, we have $\HFIinf(Y,\s) \simeq \F_2[Q,U,U^{-1}]/(Q^2)$. Next, we consider the behavior of the involutive Floer groups under orientation reversal. This is Proposition 4.4 in \cite{HMinvolutive}.

\begin{proposition}\label{prop:orientation}
 If $\s$ is a spin structure on $Y$, there is an isomorphism
\[
\CFIp_r(Y, \s) \rightarrow \CFI_{-}^{-r-1}(-Y,\s),
\]
where $\CFI_-$ denotes the cochain complex dual to $\CFIm$.
\end{proposition}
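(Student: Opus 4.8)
The statement to prove is Proposition~\ref{prop:orientation}: for a spin structure $\s$ on $Y$, there is an isomorphism $\CFIp_r(Y,\s) \to \CFI_-^{-r-1}(-Y,\s)$ where $\CFI_-$ denotes the dual cochain complex of $\CFIm$.

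The plan: start from the known duality for ordinary Heegaard Floer homology. We have $\CFp_r(Y,\s) \cong \CF^{-r}_{-}(-Y,\s)$ where $\CF_-$ is dual to $\CFm$, coming from the duality between $\CFp$ and $\CFm$ via reversing orientation of the Heegaard surface (swapping $\alpha$ and $\beta$ curves, and dualizing). The key extra ingredient is to check that this duality intertwines the conjugation involution $\iota$ on $\CFp(Y,\s)$ with the dual of the conjugation involution on $\CFm(-Y,\s)$, up to sign/chain homotopy; since we work mod 2, signs are not an issue, but we need the duality to be $\iota$-equivariant on the nose or up to chain homotopy. Then, because $\CFIo$ is built as a mapping cone of $Q(1+\iota)$, dualizing the mapping cone gives a mapping cone of the dual map, which by the equivariance is again of the form $Q(1+\iota)$ up to the grading conventions, and hence is $\CFI_-$ of the reversed manifold.

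The steps in order: (1) Recall/set up the orientation-reversal isomorphism $\eta' \co \CFo(\H,\s) \to \CFo^*(\bar\H',\s)$ at the chain level, where $\bar\H'$ is the Heegaard diagram for $-Y$ obtained by reversing $\Sigma$; pin down exactly how $\CFp$ pairs with $\CFm$ and the resulting grading shift $r \mapsto -r$, and then the further shift $-r \mapsto -r-1$ coming from the degree conventions in the mapping cone \eqref{eq:involutive} (recall $\x$ sits in degree $d+1$ and $Q\x$ in degree $d$, so dualizing the cone introduces the $-1$). (2) Verify that $\inv$ is compatible with this duality. This uses that $\inv = \Phi(\bar\H,\H)\circ\eta$ and that the conjugation diagram $\bar\H$ for $Y$ and the one for $-Y$ are related by a further orientation reversal; one checks $\inv$ for $-Y$ is (chain homotopic to) the dual of $\inv$ for $Y$, using naturality of the Heegaard-move maps $\Phi$ and their known behavior under orientation reversal. (3) Assemble: apply the dualization functor to the mapping cone $\CFIm(Y,\s) = \bigl(\CFm(Y,\s)\xrightarrow{Q(1+\iota)} Q\CFm(Y,\s)[-1]\bigr)$, use that dualizing a mapping cone of $f$ gives (up to shift) the mapping cone of $f^*$, and identify the result with $\CFIp$ of $-Y$ via the plus/minus duality from step (1), keeping track of gradings to land on $-r-1$.

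The main obstacle I expect is step (2): making the compatibility of the conjugation involution $\inv$ with orientation-reversal duality precise. The map $\inv$ involves a choice of Heegaard moves from $\bar\H$ to $\H$, and is only well-defined up to chain homotopy, so the equivariance statement can only hold up to chain homotopy; one must argue that this suffices for the mapping-cone identification to descend to an isomorphism (or at least a quasi-isomorphism / equivalence inducing the stated isomorphism on the relevant graded pieces). This requires care with the functoriality results for $\Phi(\bar\H,\H)$ under orientation reversal, which are implicit in \cite{HMinvolutive}; in fact, since Proposition~\ref{prop:orientation} is quoted directly from \cite[Proposition 4.4]{HMinvolutive}, the cleanest route is simply to cite that source, but the sketch above indicates how one would reprove it from the ordinary-Floer duality together with the $\iota$-equivariance of that duality.
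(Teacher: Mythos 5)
The paper does not reprove this proposition; it is cited directly from \cite[Proposition 4.4]{HMinvolutive}, and your sketch correctly reconstructs the argument given there: dualize the mapping cone defining $\CFIm$ using the standard $\CFp$/$\CFm$ orientation-reversal duality, check that $\iota$ intertwines with this duality (which holds because the conjugate of the reversed diagram is the reversal of the conjugate diagram), and track the extra $-1$ grading shift coming from the mapping-cone degree conventions. Your identification of step (2) as the delicate point and the observation that $\iota$-equivariance need only hold up to chain homotopy are also accurate.
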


The next proposition is the large surgery formula, Theorem 1.5 in \cite{HMinvolutive}. To state it, recall that the knot surgery formula in ordinary Heegaard Floer homology \cite{IntSurg} makes use of a certain quotient complex $A_0^+$ of the knot Floer complex $\CFKinfty(K)$. There is a conjugation map $\iota_K$ on $\CFKinfty(K)$ which induces a conjugation map $\iota_0$ on $A_0^+$, and we let $\AI_0^+$ be the mapping cone
$$
A_0^+ \xrightarrow{\phantom{o} Q (1+\inv_0) \phantom{o}} Q \ccdot A_0^+ [-1]. 
$$

\begin{proposition}\label{prop:surgeries}
Let $K \subset S^3$ be a knot, and let $g(K)$ be its Seifert genus. Then, for all integers $p \geq g(K),$ we have an isomorphism of relatively graded $\Ring$-modules
$$ \HFIp(S^3_p(K), [0]) \cong H_*(\AI_0^+),$$
where $[0]$ denotes the $\spinc$ structure on $S^3_p(K)$ that corresponds to zero under the standard identification $\operatorname{Spin}^c(Y_p(K)) \cong \Z/p\Z$; cf. \cite[Lemma 2.2]{IntSurg}.
\end{proposition}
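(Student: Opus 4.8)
The plan is to promote the ordinary large surgery formula to a conjugation-equivariant statement and then pass to mapping cones. Recall that the integer surgery formula of \cite{IntSurg} produces, for $p \geq g(K)$, a chain homotopy equivalence
\[ \Psi \co \CFp(S^3_p(K),[0]) \xrightarrow{\;\sim\;} A_0^+ \]
of $\Z_2[U]$-modules, obtained by counting holomorphic triangles on a Heegaard triple adapted to $K$ with a sufficiently positive framing; the quotient complex $A_0^+$ of $\CFKinfty(K)$, together with its $U$-action, is read off from the same diagram. The first step is to set up the conjugate picture. As in Section~\ref{sec:background}, the involution $\inv$ on $\CFp(S^3_p(K),[0])$ is the composite of the tautological identification $\eta$ with the conjugate diagram and a sequence of Heegaard moves back; similarly $\iota_K$ on $\CFKinfty(K)$, hence the induced $\iota_0$ on $A_0^+$, arises from conjugating the doubly-pointed knot diagram. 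The key geometric observation is that the large surgery diagram literally contains a doubly-pointed diagram for $K$, so these two conjugation symmetries are both obtained by conjugating one and the same underlying object; only the winding region determined by the framing is extra, and it is conjugation-invariant up to isotopy.

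The heart of the argument --- and the step I expect to be the main obstacle --- is to show that $\Psi$ intertwines the two involutions up to chain homotopy over $\Z_2[U]$:
\[ \Psi \circ \inv \;\simeq\; \iota_0 \circ \Psi. \]
I would prove this by a holomorphic triangle / associativity argument: both composites can be written as the triangle-counting map composed with a change-of-diagram map to the conjugate picture and back, and the two composite triangle counts are compared via the usual degeneration of a quadrilateral, exactly as in the proof that $\Psi$ is natural under Heegaard moves. The subtlety, familiar from the connected sum formula (compare the correction term in \eqref{eq:withPhi}), is that the naive comparison may output lower-order terms involving the formal $U$-derivative $\Phi$ of the differential; one must check that these vanish on $A_0^+$ for grading reasons, or that they can be absorbed into a chain homotopy. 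An alternative, and possibly cleaner, route is to phrase the whole comparison in terms of the graph cobordism maps of \cite{Zemke2}: the surgery map and the conjugation map both admit cobordism-theoretic descriptions there, and the interaction of conjugation with cobordism maps is recorded in Section~\ref{sec:graphs}, so the equivariance of $\Psi$ would follow from functoriality together with the fact that the surgery cobordism commutes with conjugation.

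Granting $\Psi \circ \inv \simeq \iota_0 \circ \Psi$, the conclusion is formal. A chain homotopy equivalence intertwining two endomorphisms up to homotopy induces a chain homotopy equivalence of their mapping cones; applying this to the cones of $Q(1+\inv)$ on $\CFp(S^3_p(K),[0])$ and of $Q(1+\iota_0)$ on $A_0^+$ gives $\CFIp(S^3_p(K),[0]) \simeq \AI_0^+$ as complexes of $\Ring$-modules, and hence $\HFIp(S^3_p(K),[0]) \cong H_*(\AI_0^+)$. Because $\Psi$ is a priori grading-preserving only up to an overall shift, and the cone identification involves the auxiliary variable $Q$, I would record this --- as in the statement --- at the level of relatively graded $\Ring$-modules; determining the absolute grading shift would require separately tracking the absolute grading through the surgery cobordism, which is not needed for the applications in this paper.
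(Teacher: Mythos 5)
This proposition is not proved in the paper; it is cited as Theorem 1.5 of \cite{HMinvolutive} and then restated in the slightly stronger form Proposition~\ref{prop:surgeries2}, which records precisely the intertwining property $F \circ \iota \simeq \iota_0 \circ F$ that your sketch aims to establish. Your high-level strategy --- prove that the large surgery equivalence commutes with the two conjugation maps up to $\Z_2[U]$-equivariant chain homotopy, then pass formally to mapping cones, and track only relative gradings --- is the correct one and matches how the argument in \cite{HMinvolutive} is organized. To that extent the proposal is sound.

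Two points deserve correction. First, the worry you flag about $\Phi$-type correction terms is a red herring here: in this paper those terms arise specifically from the graph cobordism formalism, where conjugation reverses the cyclic ordering at a trivalent vertex and the difference between the $A_\lambda$ and $B_\lambda$ relative homology maps produces a $U(\Phi_1\otimes\Phi_2)$ defect. The large surgery comparison involves no graph cobordism, no trivalent vertex, and no relative homology maps, so no analogous correction term appears; it would be misleading to expect one. Second, the genuine technical content of the proof in \cite{HMinvolutive}, which your sketch defers to a ``holomorphic triangle / associativity argument,'' is the careful verification that conjugation of the large-surgery Heegaard triple matches $\iota_K$ on the knot Floer side. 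This includes choosing the triple so that it is (nearly) self-conjugate in a controlled way and, crucially, tracking $\spinc$-structures under the identification $\operatorname{Spin}^c(S^3_p(K)) \cong \Z/p\Z$ so that the self-conjugate structure $[0]$ lands where it should. That bookkeeping is the bulk of the cited proof, and it is not something that falls out of naturality of the triangle map alone; your alternative route via graph cobordisms is not what \cite{HMinvolutive} does and would require separately developing a cobordism-theoretic description of the surgery quasi-isomorphism compatible with $A_0^+$.
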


In fact, the proof of Theorem 1.5 in \cite{HMinvolutive} gives the following slightly stronger version:
\begin{proposition}\label{prop:surgeries2}
Let $K \subset S^3$ be a knot, and let $g(K)$ be its Seifert genus. Then, for all integers $p \geq g(K),$ we have a chain homotopy equivalence over $\Ring$:
$$ F: \CFIp(S^3_p(K), [0]) \xrightarrow{\simeq} \AI_0^+$$
that interchanges the two conjugation maps up to chain homotopy over $\Ring$:
$$ F \circ \iota \simeq \iota_0 \circ F.$$
\end{proposition}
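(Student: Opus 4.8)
The plan is to revisit the proof of Proposition~\ref{prop:surgeries} (Theorem 1.5 of \cite{HMinvolutive}) and observe that it already operates at the chain level; the work is then to package what it produces. That proof has two ingredients, which I would isolate. First, the large surgery formula of Ozsv\'ath and Szab\'o \cite{IntSurg} furnishes, for $p \geq g(K)$, a chain homotopy equivalence of $\F_2[U]$-modules
\[
f \co \CFp(S^3_p(K),[0]) \xrightarrow{\ \simeq\ } A_0^+ ,
\]
obtained by starting from a doubly-pointed Heegaard diagram for $(S^3,K)$, winding a $\beta$-curve around the meridian $p$ times, and identifying the part of the resulting complex carrying the $[0]$ $\spinc$-structure with the quotient complex $A_0^+$ of $\CFKinfty(K)$. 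Second, the proof of Theorem 1.5 in \cite{HMinvolutive} checks that this identification is compatible with conjugation: following the automorphism of the doubly-pointed diagram that exchanges the two basepoints and reverses the Heegaard surface through the winding construction yields a chain homotopy
\[
f \circ \iota \simeq \iota_0 \circ f
\]
of $\F_2[U]$-module maps, where $\iota$ is the conjugation involution on $\CFp(S^3_p(K),[0])$ and $\iota_0$ the involution it induces on $A_0^+$.

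To finish I would package these two facts as follows. Fix an $\F_2[U]$-linear null-homotopy $H$ of $f\iota + \iota_0 f$. Since $\CFIp(S^3_p(K),[0])$ and $\AI_0^+$ are, by definition, the mapping cones of $Q(1+\iota)$ and $Q(1+\iota_0)$, the data $(f,f,QH)$ assemble, in the evident block notation, into an $\Ring$-linear chain map
\[
F = \begin{pmatrix} f & 0 \\ QH & f \end{pmatrix} \co \CFIp(S^3_p(K),[0]) \longrightarrow \AI_0^+ ,
\]
the chain-map identity being exactly $\partial H + H\partial = f\iota + \iota_0 f$, and $\Ring$-linearity holding because the off-diagonal entry is annihilated by $Q$. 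Because $f$ is a chain homotopy equivalence and both cones are built from it by the same recipe, $F$ is a chain homotopy equivalence over $\Ring$: one takes an $\F_2[U]$-homotopy inverse $g$ of $f$, notes that it then satisfies $g\iota_0 \simeq \iota g$ (conjugate $f\iota \simeq \iota_0 f$ by $g$), forms a map $G$ out of $g$ and a null-homotopy of $g\iota_0 + \iota g$ in the same way, and verifies $FG \simeq \id$ and $GF \simeq \id$ over $\Ring$ by the standard manipulations with maps of mapping cones. Finally, the conjugation involutions on $\CFIp(S^3_p(K),[0])$ and $\AI_0^+$ are $\iota \oplus \iota$ and $\iota_0 \oplus \iota_0$ — these are honest chain maps on the cones, no homotopy needed, since $\iota(1+\iota)=(1+\iota)\iota$ as maps — and one checks directly that $H \oplus H$ is an $\Ring$-linear chain homotopy realizing $F \circ \iota \simeq \iota_0 \circ F$.

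The one genuinely substantive point is the second ingredient above: that the large-surgery identification intertwines $\iota$ with $\iota_0$ up to chain homotopy, and not merely on homology. This, though, is not new work — it is precisely the computation carried out in \cite{HMinvolutive} to prove Theorem 1.5, and for the present proposition I would simply cite it and observe that it is a chain-level statement. All the rest — choosing the null-homotopies, building $G$, and checking that $F$, $G$ and the homotopies are $\Ring$-equivariant — is routine, and I would isolate it as a short lemma on the functoriality of the mapping-cone construction $\Cone\bigl(Q(1+\cdot)\bigr)$ under $\F_2[U]$-linear homotopy equivalence of the underlying complexes.
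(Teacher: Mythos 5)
Your proof is correct and follows the paper's approach: the paper itself offers no argument beyond the remark that the proof of Theorem 1.5 in \cite{HMinvolutive} already establishes the chain-level statement, which is precisely the first ingredient you isolate. The mapping-cone packaging you carry out explicitly — assembling $(f,H)$ into the $\Ring$-linear map $F$, constructing the homotopy inverse $G$, and checking compatibility with the induced involutions — is exactly the routine step the paper leaves implicit.
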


\subsection{Correction terms} From involutive Heegaard Floer homology, one extracts two new invariants of homology cobordism, $\dl, \du \co \Theta^3_{\F_2} \rightarrow \mathbb Q$. In \cite[Section 5]{HMinvolutive}, these maps were defined in terms of the variant $\HFIp(Y,\s)$, as follows.

\begin{definition} \cite[Section 5.1]{HMinvolutive} \label{def:correctionterms} Let $Y$ be a rational homology sphere equipped with a spin structure $\s$. The involutive correction terms are
\[\dl(Y,\s) = \min \{r \mid \exists \ x \in \HFIp_r(Y, \s), x \in \operatorname{Im}(U^n), x \not \in \operatorname{Im}(U^nQ)  \text{ for } n \gg 0\} - 1\]
and
\[\du(Y,\s) = \min \{r \mid \exists \ x \in \HFIp_r(Y, \s), x \neq 0, \ x  \in \operatorname{Im}(U^nQ)  \text{ for } n \gg 0\}.\]
Equivalently,  $\du(Y,\s)=r$ is the minimal grading such that $t \equiv d(Y,\s)$ modulo $2\Z$  and $\pi \co \HFIinf_r(Y,\s) \rightarrow \HFIp_r(Y,\s)$ is nontrivial, and $\dl(Y,\s)+1=r$ is one more than the minimal grading $r$ such that $r \equiv d(Y,\s)+1$ modulo $2\Z$  and $\pi \co \HFIinf_r(Y,\s) \rightarrow \HFIp_r(Y,\s)$ is nontrivial.\end{definition}

In this paper, we work with $\HFIm$, and therefore it will be helpful to rephrase the definition above in terms of $\HFIm(Y,\s)$. 

\begin{lemma} \label{lem:newinv} The involutive correction terms are given by
\[ \dl(Y,\s) = \max \{r \mid \exists \ x \in \HFIm_r(Y, \s), \forall \ n, \ U^nx\neq 0 \ \text{and} \ U^nx \notin \operatorname{Im}(Q)\} + 1 \]
and
\[ \du(Y,\s) = \max \{r \mid \exists \ x \in \HFIm_r(Y,\s),\forall \ n, U^nx\neq 0; \exists \ m\geq 0 \ \operatorname{s.t.} \ U^m x \in \operatorname{Im}(Q)\} +2. \]

\noindent Equivalently, $\du(Y,\s)=r+2$ is two more than the maximal grading $r$ such that $r \equiv d(Y,\s)$ modulo $2\Z$ and the map $i \co \HFIm_r(Y,\s) \rightarrow \HFIinf_r(Y,\s)$ is nonzero, and $\dl(Y,\s)=q+1$ is one more that the maximal grading $q$ such that $q \equiv d(Y,\s)+1$ modulo $2\Z$ and $i \co \HFIm_q(Y,\s) \rightarrow \HFIinf_q(Y,\s)$ is nonzero.

\end{lemma}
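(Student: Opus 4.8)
The plan is to reduce the lemma to the structure of $\HFIinf(Y,\s)$ together with the two maps $i\co\HFIm(Y,\s)\to\HFIinf(Y,\s)$ and $\pi\co\HFIinf(Y,\s)\to\HFIp(Y,\s)$ from the long exact sequence of Proposition~\ref{propn:longexact}. Since $b_1(Y)=0$ we already know $\HFIinf(Y,\s)\cong\F_2[Q,U,U^{-1}]/(Q^2)$; the only point to pin down is the grading. The conjugation involution acts as the identity on $\HFinf(Y,\s)\cong\F_2[U,U^{-1}]$ (being a $U$-equivariant, degree-preserving automorphism of a rank-one free module over $\F_2[U,U^{-1}]$), so the cone \eqref{eq:involutive} splits on homology; tracking the degree shifts in \eqref{eq:involutive}, and using that $\HFinf(Y,\s)$ is supported in gradings $\equiv d(Y,\s)\pmod2$, one finds that the submodule $Q\cdot\HFIinf(Y,\s)$, which I will call the \emph{$Q$-tower}, is supported in gradings $\equiv d(Y,\s)\pmod2$; its $\F_2[U]$-module complement, the \emph{odd tower}, is supported in gradings $\equiv d(Y,\s)+1\pmod2$; and $\HFIinf_r(Y,\s)\cong\F_2$ for every $r$.

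Next I would analyze $i$. Inverting $U$ commutes with forming the mapping cone \eqref{eq:involutive}, so $\CFIm(Y,\s)[U^{-1}]=\CFIinf(Y,\s)$ and $i$ induces an isomorphism $\HFIm(Y,\s)[U^{-1}]\xrightarrow{\ \cong\ }\HFIinf(Y,\s)$. Since $\CFIm(Y,\s)$ is finitely generated and free over $\Ring=\F_2[Q,U]/(Q^2)$, and hence over $\F_2[U]$, the homology $\HFIm(Y,\s)$ is a finitely generated $\F_2[U]$-module, and the localization statement forces $\HFIm(Y,\s)\cong\F_2[U]^2\oplus T$ with $T$ finite $U$-torsion, $\ker i=T$, and $i$ injective on the free part; in particular $\operatorname{Im}(i)$ is a graded $\F_2[U]$-submodule of $\HFIinf(Y,\s)$ of rank $2$. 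Unwinding the two displayed formulas is then routine module algebra: for $x\in\HFIm_r(Y,\s)$, the condition ``$U^nx\neq0$ for all $n$'' holds iff $x\notin T$, i.e.\ iff $i(x)\neq0$; and, expressing an arbitrary element of $\HFIinf(Y,\s)$ as $U^{-m}i(z)$ and using that $U$ acts invertibly on $Q\cdot\HFIinf(Y,\s)$, one checks for such $x$ that ``$U^mx\in\operatorname{Im}(Q)$ for some $m\ge0$'' is equivalent to $i(x)\in Q\cdot\HFIinf(Y,\s)$, while ``$U^nx\notin\operatorname{Im}(Q)$ for all $n$'' is equivalent to $i(x)\notin Q\cdot\HFIinf(Y,\s)$. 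Because $\HFIinf_r(Y,\s)$ is one-dimensional and contained in the $Q$-tower precisely when $r\equiv d(Y,\s)\pmod2$, this identifies the right-hand sides of the displayed formulas with the ``equivalently'' descriptions in the statement, namely
\[
\du(Y,\s)=\max\{\,r\equiv d(Y,\s):\ i_r\neq0\,\}+2,\qquad \dl(Y,\s)=\max\{\,r\equiv d(Y,\s)+1:\ i_r\neq0\,\}+1,
\]
where $i_r\co\HFIm_r(Y,\s)\to\HFIinf_r(Y,\s)$ is the induced map.

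Finally I would match these with Definition~\ref{def:correctionterms}, which is phrased via $\pi$, using exactness in the form $\operatorname{Im}(i)=\ker(\pi)$. Restricting to the $Q$-tower: $\ker(\pi)$ meets it in a graded $\F_2[U]$-submodule, and since $U$ lowers grading by $2$ and the $Q$-tower is concentrated in gradings $\equiv d(Y,\s)$, this intersection consists of all elements of the $Q$-tower in gradings $\le\tau$, for some $\tau\in\Z$; here $\tau\neq-\infty$ because $\operatorname{Im}(i)$ has rank $2$ and cannot lie inside the rank-one odd tower, and $\tau\neq+\infty$ because $\du$ is well-defined by \cite{HMinvolutive}. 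Comparing gradings, $\tau=\max\{r\equiv d(Y,\s):i_r\neq0\}$, and $\pi_r\co\HFIinf_r(Y,\s)\to\HFIp_r(Y,\s)$ is nonzero, for $r\equiv d(Y,\s)$, exactly when $r>\tau$; so its minimal such grading is $\tau+2$, which by Definition~\ref{def:correctionterms} is $\du(Y,\s)$, matching the formula above. The identical argument on the odd tower shows the minimal grading $r\equiv d(Y,\s)+1$ with $\pi_r\neq0$ is $\sigma+2$, where $\sigma=\max\{r\equiv d(Y,\s)+1:i_r\neq0\}$, whence $\dl(Y,\s)=(\sigma+2)-1=\sigma+1$, again matching. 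I expect the main obstacle to be not any single hard step but careful bookkeeping: keeping the grading shifts in \eqref{eq:involutive} straight so that the $Q$-tower of $\HFIinf(Y,\s)$ lands in gradings $\equiv d(Y,\s)$ and the odd tower in gradings $\equiv d(Y,\s)+1$, and verifying rigorously that $\ker i$ is exactly the $U$-torsion submodule of $\HFIm(Y,\s)$ — which is where the compatibility of $i$ with inverting $U$, and hence the identification $\HFIm(Y,\s)[U^{-1}]\cong\HFIinf(Y,\s)$, is essential. Everything else is diagram chasing in the exact sequence of Proposition~\ref{propn:longexact} together with elementary $\F_2[U]$-module theory.
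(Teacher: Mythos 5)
Your proof is correct, and it reaches the statement by a genuinely somewhat different route than the paper's. The paper converts Definition~\ref{def:correctionterms} from $\pi$ to $i$ exactly as you do, via the long exact sequence and the observation that every element of $\HFIp(Y,\s)$ is $U$-torsion; but to relate the condition ``$U^m x\in\operatorname{Im}(Q)$'' to the parity of the grading, the paper works with the exact triangle \eqref{pic:exact2} of Proposition~\ref{propn:exact}, comparing $x$ against $\HFm(Y,\s)$ and $Q\cdot\HFm(Y,\s)$ directly. You instead push everything through $\HFIinf(Y,\s)$: you identify $i$ with the localization map $\HFIm(Y,\s)\to\HFIm(Y,\s)[U^{-1}]\cong\HFIinf(Y,\s)$ (so $\ker i$ is precisely the $U$-torsion and $\HFIm\cong\F_2[U]^2\oplus T$), pin down the grading decomposition of $\HFIinf\cong\F_2[Q,U,U^{-1}]/(Q^2)$ into a $Q$-tower in gradings $\equiv d$ and its complement in gradings $\equiv d+1$, and then read the $\operatorname{Im}(Q)$ condition off from whether $i(x)$ lies in the $Q$-tower. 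Both arguments are sound; your localization framing has the minor advantage of cleanly identifying $\ker i$ as a $U$-torsion submodule, whereas the paper's phrase ``$x$ cannot be in the image of $\pi$'' is a small slip (it should refer to the connecting map out of $\HFIp$). One place you gloss over slightly: deducing ``$U^m x\in\operatorname{Im}(Q)$ for some $m\ge0$'' from ``$i(x)\in Q\cdot\HFIinf$'' requires writing $i(x)=Qz$, choosing $k$ with $U^k z\in\operatorname{Im}(i)$, and then using that the difference of $U$-tower elements is $U$-torsion; it is a routine but nontrivial bit of bookkeeping that would need to be spelled out in a final write-up.
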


\begin{proof} In light of the existence of the long exact sequence
\[
\cdots \rightarrow \HFIm(Y,\s) \xrightarrow{i} \HFIinf(Y, \s) \xrightarrow{\pi} \HFIp(Y, \s) \rightarrow \cdots
\]
and the isomorphism $\HFIinf(Y,\s) \simeq \F_2[Q,U,U^{-1}]/(Q^2)$, the characterization of $\dl$ and $\du$ in terms of the map $i$ follows immediately from the characterization of $\dl$ and $\du$ in terms of the map $\pi$ from Definition \ref{def:correctionterms}. It remains to be shown that this is equivalent to the first description given. However, notice that because the long exact sequence above is $U$-equivariant, $i(x)\neq 0$ if any only if $U^n x \neq 0$ for all $n\geq 0$. (For the if direction, recall that there are no elements $y$ in $\HFIp(Y,\s)$ with $U^n y \neq 0$ for all $n$, and therefore $x$ cannot be in the image of $\pi$. Hence $i(x)\neq 0$.) Furthermore, from the exact triangle
\begin{equation}
\label{pic:exact2}
\begin{tikzpicture}[baseline=(current  bounding  box.center)]
\node(1)at(0,0){$\HFIm(Y,\s)$};
\node(2)at (-2,1){$\HFm(Y, \s)$};
\node(3)at (2,1){$Q \ccdot \HFm(Y,\s)[-1]$};
\path[->](2)edge node[above]{$Q(1+ \inv_*)$}(3);
\path[->](3)edge (1);
\path[->](1)edge(2);
\end{tikzpicture}
\end{equation}
we see that homogeneous elements $x \in \HFIm(Y,\s)$ for which $U^n x \neq 0$ in gradings congruent to $d(Y,\s)$ modulo $2\Z$ must have the property that $U^mx \in \operatorname{Im}(Q \ccdot\HFm(Y,\s))$ for sufficiently large $m$, and homogeneous elements $x \in \HFIm(Y,\s)$ for which $U^nx \neq 0$ which lie in gradings congruent to $d(Y,\s)+1$ modulo $2\Z$ have $U^n x \notin \operatorname{Im}(Q \ccdot \HFm(Y,\s))$ for all $n$. The conclusion follows. \end{proof}

\begin{proposition} \cite[Equation 18 and Proposition 5.1]{HMinvolutive} The involutive correction terms satisfy
\begin{align*}
\dl(Y,\s) \leq & d(Y,\s) \leq \du(Y,\s), \\
\du(Y, \s) \equiv & \dl(Y, \s) \equiv d(Y, \s) \pmod {2\Z}.
\end{align*}
\end{proposition}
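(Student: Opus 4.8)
\medskip
\noindent\textbf{Proof proposal.} I plan to deduce all four assertions from the $\HFm$-side descriptions of $\dl$ and $\du$ in Lemma~\ref{lem:newinv}, together with the exact triangle of Proposition~\ref{propn:exact} in both its $\HFm$ and $\HFinf$ incarnations. Write $\rho\co\HFIm(Y,\s)\to\HFm(Y,\s)$ (lowering grading by $1$) and $\sigma\co Q\ccdot\HFm(Y,\s)[-1]\to\HFIm(Y,\s)$ for the maps in that triangle, and $\rho_\infty,\sigma_\infty$ for the $\infty$ analogues; naturality of the mapping cone under the inclusion $\CFm\hookrightarrow\CFinf$ gives $i\circ\rho=\rho_\infty\circ i_I$ and $i_I\circ\sigma=\sigma_\infty\circ(Q\ccdot i)$, where $i$ and $i_I$ are the maps from the minus to the infinity versions. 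Since $\iota_*$ acts as the identity on $\HFinf(Y,\s)\cong\F_2[U,U^{-1}]$, the map $Q(1+\iota_*)$ vanishes on $\HFinf(Y,\s)$; hence $\sigma_\infty$ is injective, with image the ``$Q$-tower'' lying in gradings $\equiv d(Y,\s)\pmod 2$, and $\rho_\infty$ is surjective, restricting to an isomorphism from the complementary ``non-$Q$ tower'' (gradings $\equiv d(Y,\s)+1\pmod 2$) onto $\HFinf(Y,\s)$. Also, as $Q^2=0$, we have $\im\bigl(Q\co\HFIm\to\HFIm\bigr)=\im\sigma=\ker\rho$, and I will use throughout that $d(Y,\s)=\max\{e\mid\exists\,z\in\HFm_e(Y,\s),\ i(z)\neq0\}+2$.

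With this setup the two congruences are immediate from the alternative formulas in Lemma~\ref{lem:newinv}: there $\du(Y,\s)=r+2$ with $r\equiv d(Y,\s)\pmod{2\Z}$ and $\dl(Y,\s)=q+1$ with $q\equiv d(Y,\s)+1\pmod{2\Z}$, so $\du(Y,\s)\equiv d(Y,\s)$ and $\dl(Y,\s)\equiv d(Y,\s)+2\equiv d(Y,\s)\pmod{2\Z}$.

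For $d(Y,\s)\le\du(Y,\s)$ I will produce a single witnessing class: choose $z\in\HFm_{d-2}(Y,\s)$ with $i(z)\neq0$ and set $x=\sigma(Qz)\in\HFIm_{d-2}(Y,\s)$. Then $i_I(x)=\sigma_\infty(Q\ccdot i(z))\neq0$ because $\sigma_\infty$ is injective, so $U^nx\neq0$ for all $n$ ($U$ acts invertibly on $\HFIinf$), and $x\in\im\sigma=\im(Q)$; the formula for $\du$ in Lemma~\ref{lem:newinv} then gives $\du(Y,\s)\ge(d-2)+2=d(Y,\s)$. For $\dl(Y,\s)\le d(Y,\s)$ I will instead bound every competitor: let $x\in\HFIm_r(Y,\s)$ satisfy $U^nx\neq0$ and $U^nx\notin\im(Q)$ for all $n$. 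First $i_I(x)\neq0$, since otherwise $x$ lies in the image of the connecting map $\HFIp\to\HFIm$, which is $U$-torsion, contradicting $U^nx\neq0$. Next $i_I(x)\notin\im\sigma_\infty$: if $i_I(x)=\sigma_\infty(Qw)$ then $i(\rho(x))=\rho_\infty(i_I(x))=0$, so $\rho(x)$ is $U$-torsion in $\HFm$, and then $U^Nx\in\ker\rho=\im(Q)$ for $N\gg0$, a contradiction. Since $\ker\rho_\infty=\im\sigma_\infty$ this gives $\rho_\infty(i_I(x))\neq0$, i.e.\ $i(\rho(x))\neq0$ with $\rho(x)\in\HFm_{r-1}(Y,\s)$, whence $r-1\le d(Y,\s)-2$. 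Maximizing over such $x$ yields $\dl(Y,\s)=r+1\le d(Y,\s)$.

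The only point that requires real care is the grading bookkeeping: one must fix the conventions for the shift $[1]$ on the $\CFm$-summand of $\CFIm$, the internal degree $-1$ of $Q$, and the degree of $\rho$, and then verify that the two exact triangles, the naturality squares for $i_I$, and the identity $\im(Q)=\im\sigma=\ker\rho$ all hold with the claimed degrees; none of this is hard, but it is where an error would most easily creep in. Since this proposition is exactly \cite[Eq.~18, Prop.~5.1]{HMinvolutive}, one may alternatively just cite that reference; the value of the argument above is that it follows directly from Lemma~\ref{lem:newinv}.
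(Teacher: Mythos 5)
The paper does not prove this proposition---it is imported verbatim from \cite[Eq.~18 and Prop.~5.1]{HMinvolutive}---so there is no internal proof to compare against; your goal of deriving it directly from Lemma~\ref{lem:newinv} and the exact triangles is a natural and essentially sound alternative. One step, however, is stated incorrectly: the identity $\im\bigl(Q\co\HFIm\to\HFIm\bigr)=\im\sigma=\ker\rho$ does not hold in general. On homology $Q\ccdot$ factors as $\sigma\circ(\ccdot Q)\circ\rho$, so $\im(Q\ccdot)=\sigma\bigl(Q\ccdot\im\rho\bigr)$, whereas $\ker\rho=\im\sigma=\sigma\bigl(Q\ccdot\HFm\bigr)$; these agree precisely when $\rho$ is surjective, equivalently when $\iota_*=\id$ on $\HFm(Y,\s)$, which fails for instance for $Y=\Sigma(2,3,7)\#\Sigma(2,3,7)$, where $\iota_*[a_1a_2]=[b_1b_2]\neq[a_1a_2]$. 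The correct, weaker statement you actually need is that $\iota_*=\id$ on $\HFm/\Tors$ (since $i\co\HFm\to\HFinf$ kills exactly the $U$-torsion and $\iota_*=\id$ on $\HFinf$), so $(1+\iota_*)$ takes values in $U$-torsion; consequently every $\alpha\in\ker\rho$ satisfies $U^k\alpha\in\im(Q\ccdot)$ for $k\gg0$. That suffices at both places you invoke the identity: for $d\le\du$ you only need $U^m x\in\im(Q)$ for some $m$, and in the $\dl\le d$ step you only need some $U^N x\in\im(Q)$ to reach a contradiction. (Note also that the proof of Lemma~\ref{lem:newinv} uses ``$\operatorname{Im}(Q)$'' as shorthand for $\operatorname{Im}\bigl(Q\ccdot\HFm(Y,\s)\to\HFIm(Y,\s)\bigr)=\im\sigma$ rather than for $\im(Q\co\HFIm\to\HFIm)$; under that reading your two quotations of the lemma are literally fine, and only the parenthetical identification of $\im(Q\co\HFIm\to\HFIm)$ with $\ker\rho$ is wrong.) The congruence argument and the remaining grading bookkeeping are correct.
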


\begin{proposition} \cite[Proposition 5.2]{HMinvolutive} \label{prop:orientationsigns} Let $(Y,\s)$ be a rational homology sphere with a spin structure $\s$. The involutive correction terms are related under orientation reversal by 
\[\dl(Y,\s)=-\du(-Y,\s).\]
\end{proposition}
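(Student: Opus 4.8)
The plan is to deduce this from the orientation-reversal duality of Proposition~\ref{prop:orientation}, combined with the reformulation of the correction terms in Lemma~\ref{lem:newinv}. The first step is to upgrade Proposition~\ref{prop:orientation} from the chain level to homology. Since we work over the field $\F_2$ and each graded piece of $\CFIm$ is finite-dimensional (as $\CFm$ is finitely generated over $\Z_2[U]$), dualizing commutes with taking homology, so the isomorphism $\CFIp_r(Y,\s)\cong\CFI_-^{-r-1}(-Y,\s)$ yields
\[
\HFIp_r(Y,\s)\;\cong\;\Hom_{\F_2}\!\bigl(\HFIm_{-r-1}(-Y,\s),\F_2\bigr).
\]
More is true: this is the involutive lift of the orientation-reversal isomorphism for ordinary Heegaard Floer homology from \cite{AbsGraded} (see \cite{HMinvolutive}), so it extends compatibly across all four flavors — in particular $\HFIinf_r(Y,\s)\cong\Hom_{\F_2}(\HFIinf_{-r-1}(-Y,\s),\F_2)$ with the same shift — and it intertwines the exact sequence $\HFIm\xrightarrow{i}\HFIinf\xrightarrow{\pi}\HFIp$ of Proposition~\ref{propn:longexact} for $Y$ with the $\F_2$-dual of that sequence for $-Y$. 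Consequently, for every degree $r$, the map $\pi\colon\HFIinf_r(-Y,\s)\to\HFIp_r(-Y,\s)$ is nonzero if and only if its $\F_2$-dual is nonzero, i.e. if and only if $i\colon\HFIm_{-r-1}(Y,\s)\to\HFIinf_{-r-1}(Y,\s)$ is nonzero.

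With this in hand, the rest is bookkeeping. By the reformulation in Definition~\ref{def:correctionterms}, $\du(-Y,\s)$ is the minimal $r$ with $r\equiv d(-Y,\s)\pmod{2\Z}$ for which $\pi\colon\HFIinf_r(-Y,\s)\to\HFIp_r(-Y,\s)$ is nonzero; by the previous paragraph this is the minimal $r$ with $r\equiv d(-Y,\s)\pmod{2\Z}$ for which $i\colon\HFIm_{-r-1}(Y,\s)\to\HFIinf_{-r-1}(Y,\s)$ is nonzero. Substituting $q=-r-1$ turns ``minimal $r$'' into ``$-1$ minus the maximal $q$'', and, using $d(-Y,\s)=-d(Y,\s)$ from \cite{AbsGraded}, the congruence $r\equiv d(-Y,\s)$ becomes $q\equiv d(Y,\s)+1\pmod{2\Z}$. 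Thus $\du(-Y,\s)=-1-q_0$, where $q_0$ is the maximal grading $\equiv d(Y,\s)+1\pmod{2\Z}$ with $i\colon\HFIm_{q_0}(Y,\s)\to\HFIinf_{q_0}(Y,\s)$ nonzero. By Lemma~\ref{lem:newinv}, that maximal grading is exactly $q_0=\dl(Y,\s)-1$, so $\du(-Y,\s)=-1-(\dl(Y,\s)-1)=-\dl(Y,\s)$, which is the claim.

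The one step with real content is the assertion, used above, that the orientation-reversal isomorphism is compatible with the structure maps $i$ and $\pi$ — equivalently, with the exact sequence of Proposition~\ref{propn:longexact} — and not merely with the individual groups; making this precise requires that the isomorphism of Proposition~\ref{prop:orientation} is induced by a chain isomorphism of the ambient complexes respecting inclusion and projection, exactly as in the non-involutive setting of \cite{AbsGraded}. The remaining points are purely arithmetic and only need care, not ideas: keeping the $(-r-1)$ grading shift consistent across flavors, correctly exchanging $\min$ with $\max$ under $r\mapsto -r-1$, and handling the parity conditions via $d(-Y,\s)=-d(Y,\s)$ — the latter mattering precisely because the correction terms of $\F_2$-homology spheres need not be integers.
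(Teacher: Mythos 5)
Your argument is correct and is essentially the same as the proof given for \cite[Proposition~5.2]{HMinvolutive} (which the present paper only cites): apply the chain-level duality of Proposition~\ref{prop:orientation}, pass to homology degree by degree over the field $\F_2$, use compatibility of the orientation-reversal isomorphism with the $i$/$\pi$ maps, and do the bookkeeping with $q=-r-1$ against the characterizations in Definition~\ref{def:correctionterms} and Lemma~\ref{lem:newinv}. You correctly flagged the single substantive point, namely that the duality intertwines the short exact sequences of the three flavors, which holds because the isomorphism in Proposition~\ref{prop:orientation} comes from a chain-level identification respecting the $U$-filtration.
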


For this paper, it will be helpful to have the following reformulation of the definition of the involutive correction terms, which gives a formula for computing $\dl$ and $\du$ from $\CFm(Y,\s)$ and $\iota$, without computing the full complex $\HFIm(Y,\s)$.

\begin{lemma} \label{lemma:reformulation} Let $\iota$ be the conjugation symmetry on $\CFm(\H,\s)$. Then:
\begin{enumerate}[(a)]
\item The lower involutive correction term $\dl(Y,\s)$ is two more than the maximum grading of a homogeneous element $v \in \CFm(\H,\s)$ such that $\partial v = 0$, $[U^nv]\neq 0$, for any $n \geq 0$ and there exists $w \in \CFm(\H,\s)$ such that $\partial w= (1+\iota)v$. 
\smallskip
\item
For the upper involutive correction term, consider sets of homogeneous elements $x,y,z \in \CFm(\H,\s)$, with at least one of $x$ and $y$ nonzero, such that  $\partial y = (1+\iota)x$, $\partial z = U^m x$ for some $m \geq 0$, and $[U^n(U^my + (1+ \iota) z)] \neq 0$ for any $n \geq 0$. Then $\du(Y,\s)$ is either three more than the maximum possible grading of $x$ or, if $x$ is zero, two more than the maximum possible grading of $y$.
\end{enumerate}
\end{lemma}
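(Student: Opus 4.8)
The plan is to reduce everything to Lemma~\ref{lem:newinv}, which already characterizes $\dl(Y,\s)$ and $\du(Y,\s)$ through the vanishing of $i\colon\HFIm_r(Y,\s)\to\HFIinf_r(Y,\s)$ in the two parities $r\equiv d(Y,\s)$ and $r\equiv d(Y,\s)+1\pmod 2$, and then to make the passage from homology to chains explicit using the mapping cone structure of $\CFIm$. Writing an element of $\CFIm(\H,\s)=\Cone\bigl(\CFm(\H,\s)\xrightarrow{Q(1+\iota)}Q\cdot\CFm(\H,\s)[-1]\bigr)$ as $a+Qb$, the differential is $\partial a+Q\bigl((1+\iota)a+\partial b\bigr)$, so a cycle is a pair $(a,b)$ with $\partial a=0$ and $\partial b=(1+\iota)a$, while a boundary has the form $\partial c+Q(1+\iota)c$ together with cycles of $Q\cdot\CFm(\H,\s)$. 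I also record the degree conventions from the excerpt: a class $[a+Qb]$ with $a$ homogeneous of $\CFm$-degree $d$ sits in $\CFIm$-degree $d+1$, while $[Qb]$ with $b$ of $\CFm$-degree $d$ sits in $\CFIm$-degree $d$; since $\HFinf(Y,\s)\cong\F_2[U,U^{-1}]$ is supported in degrees $\equiv d(Y,\s)\pmod 2$ and $\HFIinf(Y,\s)\cong\F_2[Q,U,U^{-1}]/(Q^2)$ (so that the infinity-level cone splits as $\HFIinf(Y,\s)\cong\HFinf(Y,\s)\oplus Q\cdot\HFinf(Y,\s)[-1]$), the degree conventions place the ``$\CFm$-tower'' of $\HFIinf$ in degrees $\equiv d(Y,\s)+1$ and the ``$Q$-tower'' in degrees $\equiv d(Y,\s)$.

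For part~(a): given a homogeneous $\xi\in\HFIm_q(Y,\s)$ with $q\equiv d(Y,\s)+1\pmod 2$ and $i(\xi)\ne 0$, the class $i(\xi)$ lies in the $\CFm$-tower of $\HFIinf$, and since the projection $\HFIinf\to\HFinf$ is an isomorphism on that tower and commutes both with $i$ and with the projection $\HFIm\to\HFm$ from the exact triangle \eqref{pic:exact2}, the image of $\xi$ in $\HFm(Y,\s)$ is non-torsion. Hence every representing cycle of $\xi$ has the form $(v,w)$ with $\partial v=0$, $\partial w=(1+\iota)v$ and $[U^nv]\ne 0$ for all $n$; conversely any such pair produces a class $\xi$ with $i(\xi)\ne 0$ in $\CFIm$-degree $\deg_{\CFm}(v)+1$, which automatically has the correct parity because $[v]$ non-torsion forces $\deg_{\CFm}(v)\equiv d(Y,\s)\pmod 2$. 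The clause $U^n\xi\notin\operatorname{Im}(Q)$ appearing in Lemma~\ref{lem:newinv} is automatic here, since $U^n\xi$ projects to $[U^nv]\ne 0$. Thus $\dl(Y,\s)=\bigl(\max_v\deg_{\CFm}(v)+1\bigr)+1=\max_v\deg_{\CFm}(v)+2$, which is part~(a).

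For part~(b): given a homogeneous $\xi\in\HFIm_r(Y,\s)$ with $r\equiv d(Y,\s)\pmod 2$ and $i(\xi)\ne 0$, now $i(\xi)$ lies in the $Q$-tower of $\HFIinf$, so its image in $\HFinf(Y,\s)$ vanishes and hence the image $[x]\in\HFm(Y,\s)$ of $\xi$ is $U$-torsion; choosing a representing cycle $(x,y)$ and using that $\CFm(\H,\s)$ is free over $\F_2[U]$, there exist $m\ge 0$ and $z$ with $\partial z=U^mx$. A short computation in the cone then gives $U^m\xi=\bigl[\,Q\bigl(U^my+(1+\iota)z\bigr)\,\bigr]$, and since $U$ is invertible on $\HFIinf$ while $Q\cdot\HFinf[-1]\hookrightarrow\HFIinf$ is injective, the nonvanishing of $i(\xi)$ is equivalent to $[U^n(U^my+(1+\iota)z)]\ne 0$ for all $n$. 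It remains to read off degrees: if $x\ne 0$ the cycle $(x,y)$ sits in $\CFIm$-degree $\deg_{\CFm}(x)+1$, giving $\du(Y,\s)=\deg_{\CFm}(x)+3$; if $x=0$ the cycle reduces to $Qy$ with $y$ a cycle, sits in $\CFIm$-degree $\deg_{\CFm}(y)$, giving $\du(Y,\s)=\deg_{\CFm}(y)+2$. Maximizing over all such configurations, and noting that the displayed nonvanishing condition already forces the relevant parity, yields part~(b).

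The step I expect to be the main obstacle is organizational rather than conceptual: one must keep careful track of the two residue classes modulo $2$ and of the several degree shifts (the $[-1]$ in the cone, the $+1$ normalizing $\dl=q+1$, and the $Q$-versus-non-$Q$ degree convention), and verify in each direction that the chain-level data produced is exactly the data demanded by the statement --- in particular that in part~(b) the element $x$ need not be posited as a cycle but is automatically one by $U$-torsion-freeness of $\CFm(\H,\s)$, and that the sub-case $x=0$ of part~(b) is precisely the case of a cycle supported in the target of the cone.
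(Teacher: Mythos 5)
Your proposal is correct and follows essentially the same route as the paper: both arguments reduce to the characterization in Lemma~\ref{lem:newinv} via the map $i\colon\HFIm\to\HFIinf$, unwind the mapping-cone structure of $\CFIm$ to match cycles $[x+Qy]$ with the chain-level data $(v,w)$ or $(x,y,z,m)$, and then carry out the same degree bookkeeping (including the observation that $\partial x=0$ in part~(b) follows from $U$-torsion-freeness of $\CFm$, and the identity $U^m[x+Qy]=[Q(U^my+(1+\iota)z)]$). The only stylistic difference is that you organize the argument explicitly around the two residue classes and the $\HFinf$ versus $Q\cdot\HFinf$ decomposition of $\HFIinf$, whereas the paper works more directly from the exact triangle and the definitions.
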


\begin{proof} (a) We observe that the existence of the exact triangle 
\begin{equation}
\label{pic:exact3}
\begin{tikzpicture}[baseline=(current  bounding  box.center)]
\node(1)at(0,0){$\HFIm(Y,\s)$};
\node(2)at (-2,1){$\HFm(Y, \s)$};
\node(3)at (2,1){$Q \ccdot \HFm(Y,\s)[-1]$};
\path[->](2)edge node[above]{$Q(1+ \inv_*)$}(3);
\path[->](3)edge (1);
\path[->](1)edge(2);
\end{tikzpicture}
\end{equation}
implies that $\dl(Y,\s)$ is two more than the maximal degree of an element $\alpha$ in $ker(1+\iota_*)$ such that $U^n\alpha \neq 0 \in \HFm(Y,\s)$. (More concretely, we see that $[v + Qw]$ generates the first tail in $\HFIm(Y,\s)$.)

(b) Suppose that $Qa$ is a closed element of $\CFIm(Y,\s)$ with the property that $[U^nQa] \neq 0$ for all $n>0$, that is, such that $[Qa]$ lies in the second tail of $\HFIm(Y,\s)$. Notice that the element $a$ must also generate the $U$-tail in $\HFm(Y,\s)$ (for if $\partial b = U^{\ell}a$ for some $\ell$, then $\partial^{\iota} Qb = U^{\ell}Qa$ as well). Then by definition, $\du(Y,\s)$ is three more than the maximum grading of a closed, homogeneous element $x+Qy \in \CFIm(Y,\s)$ with the property that $[U^m(x+Qy)] = [Qa]$ for some $m\geq 0$. Since $0=\partial^{\iota}(x+Qy) = \partial x + Q((1+\iota)x + \partial y) $, we see that $\partial y = (1+\iota)x$. Furthermore, suppose we choose an element $z+Qc$ with the property that
\begin{align*}
\partial^{\iota}(z+Qc)&= U^m(x+Qy) + Qa \\
\partial z + (1+\iota)Qz + Q\partial c &=U^mx + Q(U^my+a)
\end{align*}
This implies that $U^mx = \partial z$ and $(1+\iota)z + \partial c = U^m y +a$. Rearranging, we see that $(1+\iota)z + U^my = a + \partial c$, or in particular $[(1+ \iota)z + U^m y] = [a]$ generates the $U$-tail in $\HFm(Y,\s)$. \end{proof}

\section{The graph TQFT for Heegaard Floer homology}
\label{sec:graphs}

Our proof of the connected sum formula for involutive Heegaard Floer homology will use the graph cobordism maps from \cite{Zemke2}. In this section, we review the maps involved, summarize important relations, and explain the interaction with the involution.

\subsection{Multiple basepoints} We will need to use a more general construction of Heegaard Floer complexes, which allows for multiple basepoints, as in \cite{Links, Zemke2}. Consider a closed, oriented, possibly disconnected three-manifold $Y$, together with a collection of basepoints
$$ \ws = \{w_1, \dots, w_k\} \subset Y,$$
with at least one basepoint on each component. This can be represented by a multi-pointed Heegaard diagram 
$$ \H = (\Sigma, \alphas, \betas, \ws),$$
with $\ws \subset \Sigma$ (and $\Sigma$ is possibly disconnected). One can then define Heegaard Floer complexes, generated by intersection points $\x \in \Ta \cap \Tb$, and keeping track of the basepoints in various ways. For our purposes, it suffices to consider the version where we have a single variable $U$, and the differential on the Floer complex is given by
$$ \del \mathbf{x} =\sum_{\mathbf{y}\in \mathbb{T}_\alpha\cap \mathbb{T}_\beta}\sum_{\substack{\phi\in \pi_2(\mathbf{x},\mathbf{y})\\\mu(\phi)=1}} \# \widehat{\mathcal{M}}(\phi) \cdot U^{n_{\mathbf{w}}(\phi)} \mathbf{y},$$
with
$$n_{\ws}(\phi) = n_{w_1}(\phi) + \dots + n_{w_k}(\phi).$$
For a $\spinc$ structure $\s$ on $Y$, we will denote the resulting complex by $\CFm(Y, \ws, \s)$.\footnote{This is not quite standard. In the literature, $\CFm$ is usually defined with one $U$ variable for each basepoint. More generally, e.g., as in \cite{Zemke2}, one can color the basepoints and associate a $U$ variable to each color. In this paper we use the trivial coloring.} There are related complexes $\CFinf = U^{-1} \CFm$, $\CFp = \CFinf / \CFm$, and $\CFhat =\CFm/ (U=0)$. We use $\CFo$ to denote any of these versions.

The conjugation map $\iota$ can be defined in the presence of multiple basepoints. Given a diagram $\H=(\Sigma, \alphas,\betas,\mathbf{w})$,  we form the conjugate $\overline{\H}=(-\Sigma, \betas,\alphas,\mathbf{w})$. The map $\iota$ is then defined as the canonical map $\eta$ composed with a change of diagrams map induced by Heegaard moves from $\overline{\H}$ to $\H$. The map $\iota$ is $\Z_2[U]$-equivariant and well defined up to chain homotopy.

Let us say a few words about disconnected manifolds. Suppose $Y_i$ ($i=1,2$) are three-manifolds equipped with $\spinc$ structures $\s_i$ and sets of basepoints $\ws_i$. Then, for the disjoint union $Y = Y_1 \sqcup Y_2$, we have a natural isomorphism
\begin{equation}
\label{eq:disjunion}
 \CFm(Y, \ws_1 \cup \ws_2, \s_1 \cup \s_2) \cong \CFm(Y_1, \ws_1, \s_1) \otimes_{\Z_2[U]} \CFm(Y_2, \ws_2, \s_2).
 \end{equation}
 Similar isomorphisms hold for the versions $\CFinf$ and $\CFhat$, with the tensoring done over $\Z_2[U, U^{-1}]$ and $\Z_2$, respectively. (No such formula is available for $\CFp$, however.) Furthermore, for $\circ \in \{-, \infty, \widehat{\phantom{a}}\}$, if $\iota_i$ denote the conjugation involutions on $\CFo(Y_i, \ws_i, \s_i)$, then, with respect to the isomorphisms of the type \eqref{eq:disjunion}, the conjugation involution $\iota$ for $Y$ is given by $\iota_1 \otimes \iota_2$.

\subsection{The maps \texorpdfstring{$\Phi$}{Phi}}
\label{sec:Phi}
We now describe the maps $\Phi$ that made an appearance in the statement of Theorem~\ref{thm:ConnSum}. Given a multi-pointed Heegaard diagram $\H$ as above, to every $w \in \ws$ we can associate a chain map
$$ \Phi_w \co \CFm(Y, \ws, \s) \to \CFm(Y, \ws, \s)$$
by the formula
\begin{equation}
\label{eq:Phiw} \Phi_w(\mathbf{x})=\sum_{\mathbf{y}\in \mathbb{T}_\alpha\cap \mathbb{T}_\beta}\sum_{\substack{\phi\in \pi_2(\mathbf{x},\mathbf{y})\\\mu(\phi)=1}} \# \widehat{\mathcal{M}}(\phi) \cdot n_w(\phi)U^{n_{\mathbf{w}}(\phi)-1}\mathbf{y}.\end{equation}

We can interpret the sum of $\Phi_w$ as ``the formal $U$-derivative'' of the differential $\del$. There is a caveat here, that this is the $U$-derivative when we apply $\del$ to intersection points, rather than to linear combinations of them that may contain nontrivial powers of $U$. More precisely, if one defines $\frac{d}{dU}$ on $\CFm$ by
$$ \frac{d}{dU} \co \sum_{\x \in \Ta \cap \Tb} U^{n_{\x}} \x \mapsto  \sum_{\x \in \Ta \cap \Tb} n_{\x} U^{n_{\x}-1} \x,$$
then
 \[\frac{d}{d U}\circ \partial+\partial\circ \frac{d}{d U}= \sum_{w \in \ws} \Phi_w.\]

The map $\frac{d}{dU}$ is not $U$-equivariant. Thus, we see that $\sum_{w \in \ws} \Phi_w$ is chain homotopic to zero, albeit not by an $U$-equivariant homotopy.

The maps $\Phi_w$ can also be defined on the complexes $\CFinf$, $\CFp$ and $\CFhat$, by similar  formulas. In the case of $\CFhat$, we set $U=0$ in the above formula, so we only pick up contributions from the differential that have $n_{\ws}(\phi)=1$.

\subsection{Overview of the graph cobordism maps}
In \cite{Zemke2}, the third author introduced a ``graph TQFT'' for Heegaard Floer homology. We say that a graph is cyclically ordered if, at every vertex, it comes equipped with a cyclic order of the edges  incident to that vertex. To a 4-dimensional cobordism $X$ with ends $(Y_1,\mathbf{w}_1)$ and $(Y_2,\mathbf{w}_2)$, an embedded, cyclically ordered graph $\Gamma\subset X$ interpolating the basepoints on the ends, and a $\spinc$ structure $\mathfrak{s}$ on $X$, there is a map
\[F_{W,\Gamma,\mathfrak{s}}^\circ \co CF^\circ(Y_1,\mathbf{w}_1,\mathfrak{s}|_{Y_1})\to CF^\circ(Y_2,\mathbf{w}_2,\mathfrak{s}|_{Y_2}).\]
These maps satisfy the standard $\spinc$ composition law and are defined up to $\Z_2[U]$-equivariant chain homotopy. In \cite{Zemke2}, the maps also required a choice of coloring of the basepoints and the graph $\Gamma$; in this paper we only consider the trivial coloring, which we suppress from the notation.

The maps $F_{W,\Gamma,\mathfrak{s}}^\circ$ are defined as a composition of handle attachment maps similar to the ones defined by Ozsv\'{a}th and Szab\'{o} in \cite{HolDiskFour}, as well as two new maps, introduced by the third author. The first new map is the free-stabilization map, which gives a  map between complexes when we add or remove a new basepoint.  The second map  is the relative homology map, corresponding to an action of certain relative homology groups when there are multiple basepoints. We will adopt slightly different notation than in \cite{Zemke2}. For the maps above, as constructed in \cite{Zemke2}, we will write $F_{W,\Gamma,\mathfrak{s}}^A$. We will have need for a variation, $F_{W,\Gamma,\mathfrak{s}}^B$, which we will define below.

\subsection{Free-stabilization maps}

The third author introduced free-stabilization maps in \cite[Section 6]{Zemke2}, which are maps corresponding to adding or removing basepoints. These are maps
\[S_w^{+}\co CF^\circ(Y,\mathbf{w},\mathfrak{s})\to CF^\circ(Y,\mathbf{w}\cup \{w\},\mathfrak{s})\] and
\[S_w^{-} \co CF^\circ(Y,\mathbf{w}\cup \{w\},\mathfrak{s})\to CF^\circ(Y,\mathbf{w},\mathfrak{s}).\] 
We will refer to them as the positive and negative free-stabilization maps. (One could also call $S_w^-$ a free-destabilization map.)
%As with the graph cobordism maps, in our notation we are suppressing a choice of coloring of the basepoints. For the purposes of this paper, we will only need the case when all of the basepoints are given the same color, i.e. all of the $U$ variables are identified before taking homology.

We have the following:

\begin{lemma}\label{lem:iotaandS_wcommute}The maps $\iota$ and $S_{w}^{\pm}$ satisfy \[\iota\circ S_{w}^{\pm}\simeq S_{w}^{\pm}\circ \iota.\]
\end{lemma}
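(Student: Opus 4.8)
The plan is to trace through the definitions of both $\iota$ and $S_w^\pm$ in terms of the elementary pieces out of which they are built, and to show that at each stage the relevant maps commute up to chain homotopy. Recall that $\iota = \Phi(\bar\H,\H)\circ\eta$, where $\eta$ is the canonical (tautological) identification $\CFo(\H,\s)\to\CFo(\bH,\s)$ coming from reversing the orientation of $\Sigma$ and switching the roles of $\alphas$ and $\betas$, and $\Phi(\bar\H,\H)$ is a change-of-diagram map realized by a sequence of Heegaard moves. First I would check that $\eta$ literally commutes with free-stabilization on the nose: adding a basepoint $w$ (together with the small circle pair that a free stabilization introduces) is a local operation supported away from the conjugation, so $\eta\circ S_w^{\pm} = S_w^{\pm}\circ\eta$ after identifying $\overline{\H\cup\{w\}}$ with $\bH\cup\{w\}$. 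This is the easy half and reduces the lemma to a statement about the change-of-diagram map $\Phi(\bar\H,\H)$.

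The heart of the argument is then the naturality of free-stabilization with respect to Heegaard moves, i.e., that $S_w^{\pm}$ commutes up to chain homotopy with the transition maps (isotopies, handleslides, (de)stabilizations of the ordinary kind) used to define $\Phi(\bar\H,\H)$. For a fixed choice of Heegaard moves connecting $\bH$ to $\H$, the corresponding moves connecting $\bH\cup\{w\}$ to $\H\cup\{w\}$ give a transition map $\Phi(\overline{\H\cup\{w\}},\H\cup\{w\})$, and one must show $\Phi(\bar\H,\H)\circ S_w^{\pm}\simeq S_w^{\pm}\circ\Phi(\overline{\H\cup\{w\}},\H\cup\{w\})$. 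This is precisely the kind of commutation established in \cite{Zemke2}: free-stabilization is proved there to commute with the handle-attachment and transition maps up to homotopy, and the triangle/quadrilateral maps defining the transition maps can be chosen to be supported away from the free stabilization region. I would invoke those results from \cite{Zemke2} rather than reprove them. Combining the two halves: $\iota\circ S_w^{\pm} = \Phi(\bar\H,\H)\circ\eta\circ S_w^{\pm} = \Phi(\bar\H,\H)\circ S_w^{\pm}\circ\eta \simeq S_w^{\pm}\circ\Phi(\overline{\H\cup\{w\}},\H\cup\{w\})\circ\eta = S_w^{\pm}\circ\iota$, where the last equality is the definition of $\iota$ on the stabilized diagram, using that $\iota$ is well defined up to chain homotopy independently of the chosen sequence of Heegaard moves.

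The main obstacle I anticipate is bookkeeping rather than conceptual: one must be careful that the Heegaard moves chosen to realize $\Phi(\bar\H,\H)$ can be taken disjoint from (or compatible with) the region where the free stabilization $S_w^{\pm}$ operates, and that the cyclic-ordering and basepoint data match up when comparing $\overline{\H\cup\{w\}}$ with $\bH\cup\{w\}$. Since $\iota$ is only well defined up to chain homotopy, we have freedom in choosing these moves, which should make the disjointness arrangement possible; the precise statements needed are the free-stabilization commutation lemmas of \cite[Sections 6 and beyond]{Zemke2}. I would also remark that the same argument, with $S_w^\pm$ replaced by the relative homology maps, underlies the analogous commutation statements needed later, so it is worth setting up the naturality framework cleanly here.
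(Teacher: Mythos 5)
Your proposal is correct and follows essentially the same route as the paper: decompose $\iota$ as the tautological map $\eta$ followed by a change-of-diagram map, observe that $\eta$ commutes with $S_w^{\pm}$ directly, and invoke the result from \cite{Zemke2} that $S_w^{\pm}$ commutes with change-of-diagram maps up to chain homotopy. The paper's proof is just a terser version of this exact argument.
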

\begin{proof}The map $\iota$ is a composition of two maps, the tautological map $\eta$, as well as a change of diagrams map. Since it is shown in \cite{Zemke2} that $S_{w}^{\pm}$ commutes with the change of diagrams map up to chain homotopy, it is sufficient to show that $S_{w}^{\pm}$ commutes with the canonical involution map, which is easily computed.
\end{proof}

We will use the following notation to denote the composition of free-stabilization maps (of the same type) at different basepoints:
$$ S^{\pm}_{w_1w_2 \dots w_k} = S^{\pm}_{w_1} S^{\pm}_{w_2} \dots S^{\pm}_{w_k}.$$

\subsection{Relative homology maps}\label{subsec:relhomologymaps}
In \cite[Section 5]{Zemke2}, the third author defines relative homology maps on the Heegaard Floer chain complexes. Given two basepoints $w,w'\in \mathbf{w}$ and a path $\lambda$ in $Y$ from $w$ to $w'$, there is a chain map\footnote{In the case of a general coloring considered in \cite{Zemke2}, the maps $A_\lambda$ are not chain maps, but instead chain homotopies between the $U$ actions corresponding to $w$ and $w'$. In our case, all $U$ variables are identified.}
\[A_\lambda \co CF^\circ(Y,\mathbf{w},\mathfrak{s})\to CF^\circ(Y,\mathbf{w},\mathfrak{s}).\] The map $A_\lambda$ is defined by first performing a homotopy of the path $\lambda$ so that it lies in the Heegaard surface $\Sigma$, and then defining $A_\lambda(\mathbf{x})$ by the formula
\[A_\lambda(\mathbf{x})=\sum_{\mathbf{y}\in \mathbb{T}_\alpha\cap \mathbb{T}_\beta}\sum_{\substack{\phi\in \pi_2(\mathbf{x},\mathbf{y})\\\mu(\phi)=1}} a(\lambda,\phi)\# \widehat{\mathcal{M}}(\phi)U^{n_{\mathbf{w}}(\phi)}\cdot \mathbf{y}\] where $a(\lambda,\phi)$ denotes the sum of changes in multiplicities of $\phi$ across just the $\mathbf{\alpha}$-curves (but not the $\beta$-curves), as one travels along the path $\lambda$. In \cite[Section~5]{Zemke2}, it is shown that the map $A_\lambda$ depends only on the relative homology class of $\lambda$ (up to chain homotopy) and commutes up to chain homotopy with the change of diagrams maps.

As a variation, one can also consider the maps $B_\lambda$, which count Maslov index one disks, but with a factor equal to the sum of the changes in multiplicities across any $\mathbf{\beta}$-curve as one traverses $\lambda$.  The maps $B_\lambda$ and the maps $A_\lambda$ are related, though are not equal or even chain homotopic in all cases. Indeed, to compute the endomorphism $A_{\lambda}+B_{\lambda}$, one counts the changes across all $\mathbf{\alpha}$- and $\mathbf{\beta}$-curves as one traverses $\lambda$. The sum telescopes and we are left with
\[A_\lambda(\mathbf{x})+B_{\lambda}(\mathbf{x})=\sum_{\mathbf{y}\in \mathbb{T}_\alpha\cap \mathbb{T}_\beta}\sum_{\substack{\phi\in \pi_2(\mathbf{x},\mathbf{y})\\\mu(\phi)=1}} (n_w(\phi)+n_{w'}(\phi))\# \widehat{\mathcal{M}}(\phi)U^{n_{\mathbf{w}}(\phi)}\cdot \mathbf{y},\] showing that
\[A_\lambda+B_{\lambda}=U ( \Phi_w +\Phi_{w'}),\] where $\Phi_w$  and $\Phi_{w'}$ are as defined in \eqref{eq:Phiw}.

The maps $F_{W,\Gamma,\mathfrak{s}}^\circ$ defined in \cite{Zemke2} were defined using the maps $A_\lambda$, but one could also define the maps by replacing each instance of $A_\lambda$ with $B_\lambda$, and the same proof of invariance holds. We define
\[F_{W,\Gamma,\mathfrak{s}}^A \qquad \text{ and }\qquad  F_{W,\Gamma,\mathfrak{s}}^B\] to be the graph cobordism maps defined with the maps $A_\lambda$ or $B_\lambda$ respectively. Hence the maps $F_{W,\Gamma,\mathfrak{s}}^A$ denote the maps defined in \cite{Zemke2}, which were denoted by $F_{W,\Gamma,\mathfrak{s}}^\circ,$ in that context.

The maps $A_\lambda$ and $B_\lambda$ do not commute with the involution. Instead we have the following:

\begin{lemma}\label{lem:relativehomologyandiota}The maps $A_\lambda,B_\lambda$ and $\iota$ satisfy
\[A_\lambda\circ \iota\simeq \iota \circ B_\lambda.\]
\end{lemma}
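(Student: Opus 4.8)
The plan is to unwind the definition of $A_\lambda$, $B_\lambda$, and $\iota$, and trace through what each map does at the level of Heegaard diagrams, reducing everything to the tautological conjugation map $\eta$, for which the commutation is a direct computation. Recall that $\iota = \Phi(\bH,\H)\circ\eta$, where $\eta\co \CFo(\H,\s)\to\CFo(\bH,\s)$ is the canonical (tautological) identification and $\Phi(\bH,\H)$ is a change-of-diagrams map. The key point is that conjugation interchanges the roles of the $\alpha$- and $\beta$-curves: a domain $\phi\in\pi_2(\x,\y)$ on $\H$ becomes a domain $\bar\phi$ on $\bH$ with the multiplicities of $\phi$ across the $\alpha$-curves of $\H$ reappearing as multiplicities across the $\beta$-curves of $\bH$ (since in $\bH$ the $\alpha$-curves and $\beta$-curves have swapped names), and similarly with $\alpha$ and $\beta$ interchanged in the other direction. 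Because $a(\lambda,\phi)$ is defined as the sum of changes in multiplicities across the $\alpha$-curves, while the defining factor for $B_\lambda$ records changes across the $\beta$-curves, this swap is precisely what should turn $A_\lambda$ into $B_\lambda$ under $\eta$, and vice versa.

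More concretely, the steps I would carry out are: (1) Show that $A_\lambda$ on $\CFo(\bH,\s)$ corresponds, under the tautological map $\eta$, to $B_\lambda$ on $\CFo(\H,\s)$ — i.e., $\eta\circ B_\lambda = A_\lambda\circ\eta$ as maps $\CFo(\H,\s)\to\CFo(\bH,\s)$. This is an on-the-nose computation, not just up to homotopy: the holomorphic curve counts $\#\widehat{\M}(\phi)$ agree after conjugating the complex structure, and the combinatorial coefficients match because $\alpha$-multiplicities on $\H$ are $\beta$-multiplicities on $\bH$. One must also check that the path $\lambda$ and its relative homology class are unaffected by the conjugation (the path lives in $Y$, which is unchanged, and after a homotopy into $\Sigma$ versus $-\Sigma$ one picks up no sign issue over $\Z_2$). (2) Show that $A_\lambda$ commutes up to chain homotopy with the change-of-diagrams map $\Phi(\bH,\H)$. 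This is essentially already in \cite[Section 5]{Zemke2}, where it is proved that $A_\lambda$ commutes up to chain homotopy with change-of-diagrams maps; one just needs to note that $\Phi(\bH,\H)$ is such a map. (3) Compose: $A_\lambda\circ\iota = A_\lambda\circ\Phi(\bH,\H)\circ\eta \simeq \Phi(\bH,\H)\circ A_\lambda\circ\eta = \Phi(\bH,\H)\circ\eta\circ B_\lambda = \iota\circ B_\lambda$, where the middle step uses (2) and the next uses (1).

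The main obstacle I anticipate is step (1): being careful about exactly how the path $\lambda$ and its isotopy into the Heegaard surface interact with the orientation reversal $\Sigma\mapsto -\Sigma$, and verifying that the relative homology class used to define $A_\lambda$ on $\bH$ really does pull back to the one used for $B_\lambda$ on $\H$ under the identification of intersection points. One should also make sure that the definition of $a(\lambda,\phi)$ — "changes in multiplicities across the $\alpha$-curves as one travels along $\lambda$" — transforms correctly; this requires keeping track of which curves are labeled $\alpha$ versus $\beta$ in $\H$ versus $\bH$, and is the kind of bookkeeping where a sign or an $\alpha/\beta$ confusion could creep in. Since we work over $\Z_2$, signs are not an issue, but the $\alpha\leftrightarrow\beta$ swap must be tracked precisely. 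Everything else reduces to results already established in \cite{Zemke2} and to the elementary behavior of $\eta$.
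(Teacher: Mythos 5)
Your proposal is correct and follows the same approach as the paper: decompose $\iota$ as the tautological map $\eta$ composed with a change-of-diagrams map, observe that $\eta$ intertwines $A_\lambda$ and $B_\lambda$ on the nose because conjugation swaps the roles of the $\alpha$- and $\beta$-curves, and invoke the result from \cite{Zemke2} that the relative homology maps commute up to chain homotopy with change-of-diagrams maps. The paper's proof is essentially your steps (1)--(3), stated more briefly.
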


\begin{proof}The map $\iota$, as defined in \cite{HMinvolutive}, is defined as a composition of two maps. The first is a ``canonical'' involution which maps between two separate complexes, and the second is a change of diagrams map. If $\eta$ denotes the canonical involution, then we have the equality
\[A_\lambda\circ \eta=\eta \circ B_\lambda,\] because the involution switches the roles of the $\mathbf{\alpha}$- and $\mathbf{\beta}$-curves. In \cite{Zemke2}, it is shown that the maps $A_\lambda$ commute with the change of diagrams maps up to chain homotopy. The same argument works to show that the maps $B_\lambda$ commute up to chain homotopy with the change of diagrams maps.
\end{proof}

\subsection{Summary of the relations between maps appearing in the graph TQFT}

In this subsection, we collect some important relations that the free-stabilization maps, relative homology maps, and the maps $\Phi_w$ satisfy. Unless otherwise noted, the symbol $\simeq$ will  mean $\Z_2[U]$-equivariantly chain homotopic.

If $\lambda$ is a path from $w$ to $w'$, then we have 
(after identifying $U_w$ and $U_{w'}$ to a common variable $U$)
\begin{equation}\partial A_\lambda+ A_\lambda\partial=\partial B_\lambda+B_\lambda\partial=0\tag{R1} \label{eq:R1}, 
\end{equation}

\begin{equation} A_\lambda^2\simeq B_\lambda^2\simeq U \tag{R2} \label{eq:R2},
\end{equation}

\begin{equation}A_\lambda+B_\lambda=U\Phi_w+U\Phi_{w'} \tag{R3} \label{eq:R3},
\end{equation}
and

\begin{equation}A_\lambda\Phi_w+\Phi_w A_\lambda=B_\lambda\Phi_w+\Phi_w B_\lambda\simeq 1 \tag{R4} \label{eq:R4}.
\end{equation}
The proofs can be found in \cite[Lemma 5.3]{Zemke2}, \cite[Lemma 5.11]{Zemke2}, Subsection~\ref{subsec:relhomologymaps} of the present paper, and \cite[Lemma 14.10]{Zemke2}, respectively.

The map $\Phi_w$ satisfies
\begin{equation} \Phi_w^2\simeq 0 \tag{R5} \label{eq:R5},
\end{equation}
$\Z_2[U]$-equivariantly. The proof can be found in \cite[Proposition 14.12]{Zemke2}.

If $w$ is not the only basepoint on the component of $Y$ containing $w$, then
\begin{equation}\Phi_w\simeq S_w^+S_w^-\tag{R6} \label{eq:R6}.\end{equation}The proof can be found in \cite[Remark 14.9]{Zemke2}.

If $\lambda$ is a path from $w$ to $w'$, and $\phi$ denotes the diffeomorphism induced by a finger move along $\lambda$ from $w$ to $w'$, then
\begin{equation}
S_{w}^- A_\lambda S_{w}^+\simeq \id  \label{eq:R7} \tag{R7},
\end{equation}
and 
\begin{equation}S_{w}^- A_\lambda S_{w'}^+\simeq \phi_*\label{eq:R8} \tag{R8}.
\end{equation}
The proofs of these two relations can be found in \cite[Lemma 7.7]{Zemke2} and \cite[Lemma 14.7]{Zemke2}, respectively. Similar relations hold for $B_{\lambda}$ instead of $A_{\lambda}$.

In addition, if $w$ and $w'$ are two different basepoints, then
\begin{equation}\label{eq:R9}
S_{w}^{\circ} S_{w'}^{\circ'}\simeq S_{w'}^{\circ'}S_{w}^{\circ} \tag{R9},\end{equation}  for $\circ,\circ'\in \{+,-\},$ and

\begin{equation}\label{eq:R10} \Phi_{w} S_{w'}^{\pm}\simeq S_{w'}^{\pm}\Phi_w \tag{R10}.\end{equation} Equation \eqref{eq:R9} is proven in \cite{Zemke2}. Equation \eqref{eq:R10} can be proven by differentiating the expression $0=\partial S_{w'}^{\pm}+S_{w'}^{\pm} \partial$ with respect to $U$.

If $w$ is a basepoint which is not an endpoint of a path $\lambda$, then
\begin{equation}A_\lambda S_{w}^{\pm} \simeq S_{w}^{\pm} A_\lambda \tag{R11} \label{eq:R11}.
\end{equation}
The proof can be found in \cite[Lemma 6.6]{Zemke2}. A similar relation holds for $B_{\lambda}$.

If $w$ is a basepoint, then we have
\begin{equation}S_w^- S_w^+\simeq 0 \tag{R12} \label{eq:R12},
\end{equation}
which is immediate from the formulas defining $S_w^{+}$ and $S_w^-$.

\subsection{The graph action map}
\label{sec:GraphAction}
For future reference, we mention that one of the ingredients of the graph cobordism maps is a so called ``graph action map'' for graphs embedded in a fixed three-manifold. A flowgraph $\mathcal{G}=(\Gamma,V_{\text{in}},V_{\text{out}})$ is a cyclically ordered, colored graph $\Gamma$ embedded in $Y^3$ with nonempty, disjoint subsets $V_{\text{in}}$ and $V_{\text{out}}$ of vertices that have valence one in $\Gamma$. According to \cite[Theorem B]{Zemke2}, there is a well defined map
\[A_{\mathcal{G},\mathfrak{s}}\co CF^\circ(Y,V_{\text{in}},\mathfrak{s})\to CF^\circ(Y,V_{\text{out}},\mathfrak{s}),\] which we call the graph action map. To define the graph action map, one takes a decomposition of the flow graph into elementary flow graphs and then defines a map for each piece. The maps for each piece are certain compositions of free-stabilization and relative homology maps. In particular, the map associated to an elementary flow graph with a vertex which has valence at least three requires an ordering of the edges adjacent to the vertex, though the map is invariant under cyclic permutation of the edges. We refer to  \cite[Section 7]{Zemke2} for more details. (The graph action maps there, denoted $A^V_{\mathcal{G},\mathfrak{s}}$, are more general: They depend also on the choice of a subset $V$ of the vertices of $\mathcal{G}$, such that $V_{\text{in}} \subseteq V$. In this paper we only consider the case $V=V_{\text{in}}$, and drop $V$ from the notation, as in \cite[Theorem B]{Zemke2}.)

\subsection{Relation between the involution and the graph cobordism maps}

The involution interacts in a predictable way with the graph cobordism maps.  The maps $F_{W,\Gamma,\mathfrak{s}}^{A}$ and $F_{W,\Gamma, \mathfrak{s}}^B$ are defined as a composition of 1-,2- and 3-handle maps, as well free-stabilization maps and relative homology maps. As shown in \cite{HMinvolutive}, the 1-,2- and 3-handle maps commute with the involution up the chain homotopy. As the graph cobordism maps are defined as a composition of these maps as well as the free-stabilization and relative homology maps, immediately following from Lemmas~\ref{lem:iotaandS_wcommute} and \ref{lem:relativehomologyandiota}, we have the following:

\begin{proposition}\label{prop:iotacommuteswithgraphcobordismmaps}If $(W,\Gamma)\co(Y_1,\mathbf{w}_1)\to (Y_2,\mathbf{w}_2)$ is cyclically ordered, trivially colored graph cobordism, and $\mathfrak{s}$ is a 4-dimensional $\spinc$ structure on $W$, then the following diagram commutes up to equivariant chain homotopy:
\[\begin{tikzcd} CF^\circ(Y_1,\mathbf{w}_1,\mathfrak{s}_1)\arrow{r}{F_{W,\Gamma,\mathfrak{s}}^A}\arrow{d}{\iota}&CF^\circ(Y_2,\mathbf{w}_2,\mathfrak{s}_2)\arrow{d}{\iota}\\
CF^\circ(Y_1,\mathbf{w}_1,\overline{\mathfrak{s}}_1)\arrow{r}{F_{W,\Gamma,\overline{\mathfrak{s}}}^B}& CF^\circ(Y_2,\mathbf{w}_2,\overline{\mathfrak{s}}_2)
\end{tikzcd}.\] Here $\mathfrak{s}_1$ and $\mathfrak{s}_2$ denote the restriction of $\mathfrak{s}$ to $Y_1$ and $Y_2$ respectively.
\end{proposition}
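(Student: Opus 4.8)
The plan is to decompose the graph cobordism map $F^A_{W,\Gamma,\s}$ into its elementary pieces and check conjugation-equivariance piece by piece, relying on the invariance of this decomposition up to equivariant chain homotopy. Recall from \cite{Zemke2} that $F^A_{W,\Gamma,\s}$ is built as a composition of $1$-, $2$-, and $3$-handle maps (in the sense of \cite{HolDiskFour}), free-stabilization maps $S^{\pm}_w$, and relative homology maps $A_\lambda$. Replacing each $A_\lambda$ by $B_\lambda$ throughout produces $F^B_{W,\Gamma,\s}$, with the same handle, stabilization, and change-of-diagrams pieces. So it suffices to match the conjugation behavior of each elementary building block.

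First I would record what happens to each type of building block under $\iota$. For the handle maps, this is exactly the statement proved in \cite{HMinvolutive} that the $1$-, $2$-, and $3$-handle maps commute with $\iota$ up to chain homotopy; since $\iota$ carries the $A$-side of a diagram to the $B$-side and vice versa, passing $\iota$ past a handle map converts it into the corresponding handle map on the conjugate diagram, with the appropriate $\spinc$ structure $\overline{\s}$. For the free-stabilization maps, Lemma~\ref{lem:iotaandS_wcommute} gives $\iota \circ S^{\pm}_w \simeq S^{\pm}_w \circ \iota$. For the relative homology maps, Lemma~\ref{lem:relativehomologyandiota} gives precisely $A_\lambda \circ \iota \simeq \iota \circ B_\lambda$, which is exactly what is needed: conjugating across a relative homology map interchanges $A_\lambda$ and $B_\lambda$. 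Thus, reading the composition defining $F^A_{W,\Gamma,\s}$ from the $Y_1$ end to the $Y_2$ end, we can push a copy of $\iota$ from the top of the diagram through each elementary map in turn; at each step the $\iota$ commutes with (or, for $A_\lambda$, swaps to $B_\lambda$ in) the block, at the cost of a chain homotopy. After passing through all the blocks, we arrive at the composition defining $F^B_{W,\Gamma,\overline{\s}}$ followed by $\iota$, which is the assertion of the proposition.

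The only subtlety — and the step I expect to require the most care — is bookkeeping: making sure that after pushing $\iota$ through a handle or stabilization map, the resulting block really is the corresponding block for the conjugate diagram and the conjugate $\spinc$ structure $\overline{\s}$, and that the cyclic orderings and basepoint data match up so that the composition of the conjugate blocks is literally the decomposition of $F^B_{W,\Gamma,\overline{\s}}$ used in \cite{Zemke2}. This is a diagram-chasing argument rather than a computation, and it goes through because the change-of-diagrams maps appearing in $\iota = \Phi(\bH,\H)\circ\eta$ are exactly the ones shown in \cite{Zemke2} to commute up to homotopy with all of $S^{\pm}_w$, $A_\lambda$, and $B_\lambda$, and because the handle, stabilization, and relative homology maps for $\H$ and $\bH$ are interchanged by $\eta$. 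One also uses that composing finitely many equivariant chain homotopies yields an equivariant chain homotopy, so the accumulated error from the successive steps is still a single $\Z_2[U]$-equivariant chain homotopy. Assembling these observations gives the commutative square up to equivariant chain homotopy, completing the proof.
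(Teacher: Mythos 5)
Your proposal matches the paper's proof: both decompose $F^A_{W,\Gamma,\s}$ into handle maps, free-stabilizations, and relative homology maps, invoke the result from \cite{HMinvolutive} that the $1$-, $2$-, and $3$-handle maps commute with $\iota$, apply Lemmas~\ref{lem:iotaandS_wcommute} and \ref{lem:relativehomologyandiota} for the other pieces, and compose the resulting equivariant chain homotopies. This is exactly the argument the paper gives (it states it even more tersely, simply citing those two lemmas and the handle-map result).
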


\section{Absolute Gradings and graph cobordisms}
\label{sec:absolute}
 
 In \cite{HolDiskFour}, Ozsv\'{a}th and Szab\'{o} assign an absolute $\Q$-grading to the groups $HF^\circ(Y,w,\s)$, when $Y$ is connected and $c_1(\s)$ is torsion, such that the cobordism maps defined in \cite{HolDiskFour} satisfy a grading change formula. In this section, we extend their absolute $\Q$-grading to 3-manifolds which are disconnected, and may have multiple basepoints, and we compute the grading change induced by graph cobordism maps.
 
 Given a flow graph $\Gamma \co V_0\to V_1$, we define the reduced Euler characteristic
 $\wt{\chi}(\Gamma)$ by the formula
 \[\wt{\chi}(\Gamma)=\chi(\Gamma)-\tfrac{1}{2}(|V_0|+|V_1|).\]
 
 \begin{proposition}\label{prop:gradingchangeformula}There is an absolute $\Q$-grading $\wt{\gr}$ on the chain complexes $CF^\circ(Y,\mathbf{w},\t)$, when $c_1(\t)$ torsion, for arbitrary 3-manifolds $Y$, lifting the relative Maslov grading. The graph cobordism maps $F^\circ_{W,\Gamma,\s}$ are graded, and satisfy
 \begin{equation}\wt{\gr}(F^\circ_{W,\Gamma,\s}(\mathbf{x}))-\wt{\gr}(\mathbf{x})=\frac{ c_1(\s)^2-2\chi(W)-3\sigma(W)}{4}+\wt{\chi}(\Gamma),\label{eq:maingradingformula}\end{equation} for any homogeneous $\x$.  The variable $U$ acts with degree $-2$. The gradings are normalized so that $HF^-(S^3,w)$ ($S^3$ with a single basepoint) has top degree generator in degree $-2$.
 \end{proposition}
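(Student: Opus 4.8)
Looking at Proposition 4.1 (the absolute grading change formula for graph cobordism maps), I need to plan a proof.

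The plan is to build the absolute grading in stages, reducing everything to the known case of Ozsv\'ath--Szab\'o's cobordism maps for connected three-manifolds with a single basepoint, and then checking the grading change under each of the building-block maps (handle attachments, free-stabilizations, relative homology maps) that compose the graph cobordism maps. I will organize the argument as follows.

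\medskip

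\textbf{Step 1: Define the grading on multi-pointed, possibly disconnected diagrams.} For a connected $Y$ with a single basepoint, the grading $\wt{\gr}$ is the usual Ozsv\'ath--Szab\'o absolute grading (normalized so that the top generator of $\HF^-(S^3,w)$ sits in degree $-2$). To pass to multiple basepoints, I will use the free-stabilization maps: given $(Y,\ws)$ with $Y$ connected, pick any single basepoint $w_0 \in \ws$ and declare $\wt{\gr}$ on $\CFo(Y,\ws,\t)$ to be the grading for which the composition of negative free-stabilizations $S^-_{\ws \setminus w_0}$ (removing all basepoints except $w_0$) is grading-preserving. One must check this is well-defined independent of the order of stabilizations and of the choice of $w_0$; independence of order follows from relation \eqref{eq:R9}, and independence of $w_0$ follows because the two resulting single-basepoint complexes are related by a change-of-diagrams map, which is grading-preserving. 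For disconnected $Y = \bigsqcup Y_i$, I extend additively using the tensor-product decomposition \eqref{eq:disjunion}, with the convention that grading adds under tensor product; here one uses that $\HF^-(S^3 \sqcup S^3)$ should have top degree $-4$, consistent with the $[-2]$ shift in the connected sum formula.

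\medskip

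\textbf{Step 2: Compute the grading shift of each elementary map.} The graph cobordism map $F^\circ_{W,\Gamma,\s}$ is, by construction in \cite{Zemke2}, a composition of: (i) the 1-, 2-, and 3-handle maps of \cite{HolDiskFour}; (ii) positive and negative free-stabilizations $S^\pm_w$; and (iii) relative homology maps $A_\lambda$. For (i), the grading shift is the Ozsv\'ath--Szab\'o formula $\frac{c_1(\s)^2 - 2\chi(W) - 3\sigma(W)}{4}$ applied to the relevant elementary cobordism (this already handles the product cobordism $Y \times I$ carrying $\Gamma$, which contributes $\wt{\chi}(\Gamma)$ once the graph pieces are accounted for). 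For (ii), I claim $S^+_w$ shifts degree by $-\tfrac12$ and $S^-_w$ by $+\tfrac12$: this can be read off from the explicit construction of the free-stabilization map, since adding a basepoint corresponds to attaching a free index-zero/three handle pair to the Heegaard surface and the new intersection point carries a specific grading; the normalization is forced by consistency with Step 1 (the composition removing a basepoint is grading-preserving after the shift, i.e., this is essentially the definition made consistent). For (iii), the relative homology map $A_\lambda$ shifts degree by $-1$: its defining formula \eqref{eq:Phiw}-style sum counts Maslov index one disks with the same $U$-powers as $\partial$, but the coefficient $a(\lambda,\phi)$ does not change the homological grading, so $A_\lambda$ is a degree $-1$ map just like $\partial$.

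\medskip

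\textbf{Step 3: Assemble and match with $\wt{\chi}(\Gamma)$.} Given a graph cobordism $(W,\Gamma)$, decompose $\Gamma$ into elementary flow graphs as in \cite[Section 7]{Zemke2}. Each elementary piece contributes its local Euler characteristic defect: a single edge (a path) contributes $0$ via relation \eqref{eq:R7} (so $S^-_w A_\lambda S^+_w \simeq \id$ is grading-preserving, matching $\wt\chi$ of an interval $= 1 - \tfrac12(1+1) = 0$); a valence-$k$ vertex contributes according to how many free-stabilizations and $A_\lambda$'s appear in its definition; and a cap/cup (adding or removing a basepoint) contributes $\pm\tfrac12$. Summing these local contributions telescopes to $\wt\chi(\Gamma) = \chi(\Gamma) - \tfrac12(|V_0| + |V_1|)$, since each internal vertex of valence $k$ contributes $1 - \tfrac{k}{2}$ worth of shift and each boundary vertex contributes the appropriate half-integer. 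Combined with the handle-map contribution $\frac{c_1(\s)^2 - 2\chi(W) - 3\sigma(W)}{4}$ from (i), this gives \eqref{eq:maingradingformula}.

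\medskip

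\emph{The main obstacle} I anticipate is Step 3: verifying that the bookkeeping of half-integer shifts from free-stabilizations, together with the integer shifts from relative homology maps, reassembles \emph{exactly} into the clean topological quantity $\wt{\chi}(\Gamma)$ for an arbitrary cyclically ordered graph, independently of the chosen elementary decomposition. This requires a careful induction on the structure of the decomposition and repeated use of relations \eqref{eq:R7}-\eqref{eq:R11} to see that the answer is decomposition-independent; invariance of the total grading shift under the relations \eqref{eq:R1}-\eqref{eq:R12} is the crux. A secondary subtlety is pinning down the exact half-integer normalizations in Step 2 so that Step 1's definition is genuinely consistent --- in particular confirming the $S^+_w : -\tfrac12$, $S^-_w : +\tfrac12$ convention against the $S^3$ normalization and the tensor-product additivity.
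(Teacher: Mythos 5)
Your overall strategy (define the grading, compute the shift of each building-block map, reassemble the half-integers into $\wt\chi(\Gamma)$) is the same as the paper's. But there are two concrete problems in your Step~1/Step~2 that would derail the argument, and the paper avoids them by taking a different route for the definition of $\wt{\gr}$.

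First, your half-integer shifts are wrong and internally inconsistent. In Step~1 you declare the grading on multi-pointed complexes by requiring the negative free-stabilizations to be grading-preserving, i.e.\ $\deg S^-_w = 0$; but in Step~2 you assert $\deg S^-_w = +\tfrac12$, a direct contradiction of your own Step~1. Moreover neither convention is correct. The paper's normalization (Lemma~\ref{lem:gradingchangeflowgraphs}) is that \emph{both} $S^+_w$ and $S^-_w$ shift degree by $+\tfrac12$, and this is forced by the relations you cite: relation \eqref{eq:R7}, $S^-_w A_\lambda S^+_w \simeq \id$, with $\deg A_\lambda=-1$ forces $\deg S^-_w + \deg S^+_w = +1$; and relation \eqref{eq:R6}, $\Phi_w \simeq S^+_wS^-_w$, with $\deg \Phi_w = +1$ (since $\frac{d}{dU}$ raises degree by $2$ and $\partial$ lowers it by $1$) forces the same. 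Your proposed $(-\tfrac12,+\tfrac12)$ gives $\deg(S^-_wA_\lambda S^+_w)=-1$ and $\deg(S^+_wS^-_w)=0$, both contradictions. With the wrong shifts, the per-vertex bookkeeping in Step~3 does not telescope to $\wt\chi(\Gamma)$: for the trivalent graph $\mathcal G_G$ used for the connected sum, your convention gives total shift $-\tfrac52$ (or $-1$, depending on which of your two contradictory rules one uses) instead of the required $-\tfrac12$.

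Second, defining the grading in Step~1 \emph{through} the free-stabilization maps is essentially circular: to know the resulting grading is consistent you need to already know the degree shift of $S^\pm_w$ and $A_\lambda$, which is what you then try to verify. The paper sidesteps this by defining $\wt{\gr}$ directly on multi-pointed, disconnected three-manifolds via framed-link surgery presentations from disjoint unions of $S^3$ (extending \cite[Theorem 7.1]{HolDiskFour}), with the base-case normalization $\wt{\gr}_0(\xi_{\text{top}})=-2+\tfrac12\sum_i(|\ws_i|-1)$ on disjoint unions of spheres; invariance is then shown by the excision argument, independently of $S^\pm_w$. The degree shifts of $S^\pm_w$ (both $+\tfrac12$) and $A_\lambda$ ($-1$) are \emph{derived} from this definition, not imposed. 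Once you replace your normalization with the paper's, the Step~3 telescope does come out to $\wt\chi(\Gamma)$: one $A_\lambda$ per edge gives $-|E|$, two $S^\pm$ per interior vertex gives $+1$ each, one $S^\pm$ per boundary vertex gives $+\tfrac12$ each, summing to $|V|-\tfrac12(|V_0|+|V_1|)-|E|=\wt\chi(\Gamma)$.
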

 
We will break up the proof into several steps, first establishing the absolute $\Q$-grading, then computing the grading change due to the various maps in the construction of the graph cobordism maps.

\begin{lemma}\label{lem:pathcobordismgradingchange}There is an absolute $\Q$-grading $\wt{\gr}$ on the chain complexes $CF^\circ(Y,\mathbf{w},\t)$, when $c_1(\t)$ torsion, for arbitrary 3-manifolds $Y$, lifting the relative Maslov grading. If $(W,\gammas)$ is a path cobordism (a possibly disconnected cobordism, with nonempty ends in each component, and paths from $Y_1$ to $Y_2$) then the maps $F^\circ_{W,\gammas,\s}$ are graded, and satisfy
 \begin{equation}\wt{\gr}(F^\circ_{W,\gammas,\s}(\mathbf{x}))-\wt{\gr}(\mathbf{x})=\frac{ c_1(\s)^2-2\chi(W)-3\sigma(W)}{4}+\wt{\chi}(\gammas)\end{equation} for any homogeneous $\x$.  The variable $U$ acts with degree $-2$. The gradings are normalized so that $HF^-(S^3,w)$ ($S^3$ with a single basepoint) has top degree generator in degree $-2$.
\end{lemma}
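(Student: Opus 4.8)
The plan is to build the absolute grading $\wt{\gr}$ on multi-pointed, possibly disconnected Heegaard Floer complexes by bootstrapping from the Ozsv\'ath--Szab\'o grading for connected three-manifolds with a single basepoint, and then to pin down the grading-change formula by verifying it on the generators of the cobordism maps, i.e.\ the free-stabilization maps, the relative homology maps $A_\lambda$, and the $1$-/$2$-/$3$-handle maps. First I would fix the grading on $(Y,w)$ with $Y$ connected and $w$ a single basepoint to be the one from \cite{HolDiskFour}, normalized so that the top generator of $HF^-(S^3,w)$ sits in degree $-2$. To add basepoints, note that the free-stabilization map $S_w^+$ (together with $S_w^-$) identifies $CF^\circ(Y,\mathbf{w},\t)$ with $CF^\circ(Y,\mathbf{w}\cup\{w\},\t)$ up to the homotopies in the relation set, so I would \emph{declare} $\wt{\gr}$ on $CF^\circ(Y,\mathbf{w}\cup\{w\},\t)$ so that $S_w^+$ shifts degree by the prescribed amount; the reduced Euler characteristic of a single new valence-one vertex contributes $\wt{\chi}=0-\tfrac12 = -\tfrac12$, which matches the known half-integer grading shift of free stabilization from \cite{Zemke2}. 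One must check this is well defined, i.e.\ independent of the order in which basepoints are added and compatible with $S_w^-$; this follows from relations \eqref{eq:R9} and \eqref{eq:R12} and the fact that $S_w^-S_w^+ \simeq 0$ while $S_w^+S_w^-\simeq \Phi_w$ preserves grading. For disconnected $Y = Y_1 \sqcup Y_2$, I would define the grading on the tensor product \eqref{eq:disjunion} additively; since each component already has at least one basepoint, this is forced by the connected, single-basepoint case together with free stabilization, and one checks consistency via the disjoint-union naturality of $S_w^\pm$.

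Next I would establish the grading-change formula \eqref{eq:maingradingformula} for a path cobordism by decomposing $F^\circ_{W,\gammas,\s}$ into its constituent pieces. Writing $(W,\gammas)$ as a composition of elementary cobordisms carrying the paths, the cobordism map is a composition of: (i) $1$-, $2$-, and $3$-handle maps (on the underlying connected cobordism pieces, with basepoints held fixed), (ii) positive and negative free-stabilization maps, and (iii) relative homology maps $A_\lambda$ realizing the paths. For (i), the OS grading-change formula from \cite{HolDiskFour} gives exactly $\tfrac14(c_1(\s)^2 - 2\chi - 3\sigma)$ with no $\wt{\chi}$ contribution, since a handle attachment contributes a product cobordism on the graph part (the path is just pushed through), and $\wt{\chi}$ of a disjoint union of arcs with both endpoints accounted for is $0$. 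For (ii), as built into the definition of $\wt{\gr}$ above, $S_w^{\pm}$ shifts by $\mp\tfrac12$, matching $\wt{\chi}$ of adding/removing a valence-one vertex. For (iii), the key point is that $A_\lambda$ is grading-preserving: from its defining formula it counts Maslov index one disks with the same power $U^{n_{\mathbf w}(\phi)}$ as the differential, hence has the same degree as $\partial$, namely $-1$; but as an endomorphism it is a chain map, so relative to the internal grading it is degree $-1$ only in the sense that it is homogeneous — actually one sees $A_\lambda$ raises Maslov grading by $-1$ relative to $\partial$... more precisely, since $A_\lambda$ satisfies the Leibniz-type relation \eqref{eq:R1} and counts index-one disks weighted by $\alpha$-multiplicity changes, it is a degree $-1$ map in the same way $\partial$ is, and after the grading shift conventions it contributes exactly the $\wt{\chi}$ term for an edge joining two vertices, which is $1 - \tfrac12(1+1) = 0$. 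Assembling these, the total shift is additive over the decomposition and yields \eqref{eq:maingradingformula} with $\wt{\chi}(\gammas)$ being the sum of the elementary contributions.

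The main obstacle, and the step requiring the most care, is \textbf{well-definedness and invariance of $\wt{\gr}$}: I must show that the grading on $CF^\circ(Y,\mathbf{w},\t)$ obtained by adding basepoints does not depend on the choices made (which basepoint to add first, which paths to use to relate configurations), and that the resulting grading genuinely lifts the relative Maslov grading already present on $CF^\circ(Y,\mathbf{w},\t)$. Concretely, any two ways of reducing $(Y,\mathbf{w})$ to $(Y,w_0)$ with a single basepoint differ by a sequence of free (de)stabilizations and finger-move diffeomorphisms, and I need that the associated composite grading shifts agree; this is where relations \eqref{eq:R7}, \eqref{eq:R8}, \eqref{eq:R9}, \eqref{eq:R11}, \eqref{eq:R12} get used, together with the fact that a finger-move diffeomorphism $\phi_*$ is grading-preserving (its graph part is a single edge, $\wt{\chi}=0$, and the underlying cobordism is a product). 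A secondary subtlety is that several maps ($\iota$, $S_w^\pm A_\lambda S_w^+$, etc.) are only equalities \emph{up to chain homotopy}, and chain homotopies are themselves graded maps of a fixed degree; I would record once and for all that all the homotopies appearing in relations \eqref{eq:R1}--\eqref{eq:R12} are homogeneous of the expected degree, so that grading statements pass through. The remaining verifications — the explicit grading shift of $S_w^\pm$ and the grading-preservation of $A_\lambda$ — are local computations on the Heegaard diagram of the type carried out in \cite{Zemke2} and I would cite or reproduce those rather than re-derive them in full.
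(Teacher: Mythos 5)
Your approach of bootstrapping from the connected single-basepoint case and ``declaring'' the grading via $S_w^+$ is genuinely different from the paper's, which instead mimics Ozsv\'ath--Szab\'o's triangle-map definition directly on surgery presentations over \emph{disjoint unions of $S^3$'s with multiple basepoints} (and then checks invariance via excision and blow-ups/handleslides). The paper's route is the one that actually resolves your ``main obstacle'': well-definedness comes for free from the surgery formalism, rather than needing to be chased through all the free-stabilization relations.

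There are, however, concrete errors in your proposal. First, the grading shift of $S_w^\pm$ is $+\tfrac{1}{2}$ for \emph{both} maps, not $\mp\tfrac{1}{2}$ as you claim. A sanity check: from relation \eqref{eq:R7}, $S_w^- A_\lambda S_w^+ \simeq \id$, and since $A_\lambda$ counts Maslov index one disks it has degree $-1$; the composite must be degree $0$, forcing $\deg S_w^+ + \deg S_w^- = +1$, and symmetry plus relation \eqref{eq:R6} ($\Phi_w \simeq S_w^+ S_w^-$, with $\Phi_w$ of degree $+1$) pins both to $+\tfrac{1}{2}$. Your choice $\mp\tfrac{1}{2}$ is internally consistent only because you also assert, incorrectly, that $\Phi_w$ ``preserves grading.'' Second, your proposal to define the grading on a disjoint union $Y_1 \sqcup Y_2$ as the naive additive tensor-product grading is off by $2$: the correct identification is $CF^-(Y_1\sqcup Y_2,w_1,w_2)\cong CF^-(Y_1,w_1)\otimes_{\Z_2[U]} CF^-(Y_2,w_2)[-2]$ (this is Remark~\ref{rem:absolute6}). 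With your two conventions one easily produces a contradiction with the grading formula: for the $1$-handle cobordism from $S^3\sqcup S^3$ to $S^3$, the formula predicts a shift of $+\tfrac{1}{2}$, whereas your normalizations give $-4 \to -\tfrac{5}{2}$, a shift of $+\tfrac{3}{2}$. Third, the paragraph about $A_\lambda$ conflates the grading of the relative homology map itself (degree $-1$) with $\wt{\chi}$ of a flowgraph consisting of a single edge plus its two endpoint vertices (which is $0$ only after accounting for the adjacent $S_w^\pm$'s). Finally, as noted above, your plan for well-definedness is only a sketch; the paper's actual proof sidesteps it by defining $\wt{\gr}$ for arbitrary $(Y,\ws)$ via a framed link $\mathbb{L}\subset\sqcup_i S^3$ and the triangle-counting formula, and then citing the excision and blow-up/handleslide arguments from \cite{HolDiskFour} for independence of choices, which is not something your scheme replaces.
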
 
 
 \begin{proof} First note that for paths $\gammas$, we have $\wt{\chi}(\gammas)=0$. 
 
 Our argument is a slight modification of the proof of \cite[Theorem 7.1]{HolDiskFour}. We first define the absolute grading for disjoint unions of $S^3$, with various collections of basepoints. Suppose that $Y=\sqcup_i S^3_i$, is a disjoint union of copies of $S^3$, and that $\mathbf{w}=\sqcup_i \mathbf{w}_i$ is a collection of basepoints with $\mathbf{w}_i\subset S^3_i$. The $\Z_2[U]$-module $HF^-(Y,\mathbf{w},\t_0)$ is isomorphic to 
 \[ H_*(T^n; \Z_2) \otimes \Z_2[U],\] where $n=\sum_{i}(|\mathbf{w}_i|-1 )$. As such, $HF^-(Y,\ws,\t_0)$ has a top degree generator with respect to the relative Maslov grading. We specify a grading $\wt{\gr}_0$ on disjoint unions of spheres by declaring the top degree generator $\xi_{\text{top}}$ to have grading\footnote{In some of the Heegaard Floer literature, for example in \cite{MOS}, adding an extra $w$ basepoint to a three-manifold is taken to result in tensoring with a two-dimensional vector space $V$ supported in homological gradings $0$ and $-1$. Our conventions here are different: the vector space is supported in gradings $\frac{1}{2}$ and $-\frac{1}{2}$. This is more natural from our viewpoint, since it ensures that the connected sum isomorphisms defined in Section \ref{sec:proof} are grading preserving.}
\begin{equation}\wt{\gr}_0(\xi_{\text{top}})=-2+\tfrac{1}{2}\sum_{i}(|\mathbf{w}_i|-1 ).\label{eq:declaregradinginspheres}
\end{equation}

Using $\wt{\gr}_0$, we can now specify the grading on an arbitrary 
 based 3-manifold $(Y,\mathbf{w})$. Write $(Y,\mathbf{w})=\sqcup_i (Y_i, \mathbf{w}_i)$, where each $Y_i$ is connected. We pick a framed link $\mathbb{L}\subset \sqcup_i S^3$, such that surgery on $\mathbb{L}$ produces $Y$. We then pick a triple $(\Sigma, \alphas,\betas,\gammas,\mathbf{w})$ which is subordinate to a bouquet for $\mathbb{L}$. By assumption, $(\Sigma, \alphas,\betas,\mathbf{w})$ represents $(\sqcup_i  S^3_i, \mathbf{w}_i)$, $(\Sigma, \betas,\gammas,\mathbf{w})$ represents $(\sqcup_i (S^1\times S^2)^{\# k_i}, \mathbf{w})$, for certain integers $k_i$, and $(\Sigma, \alphas,\gammas,\ws)$ represents $Y$. If $\psi\in \pi_2(\mathbf{x}_0,\mathbf{x}_1,\mathbf{y})$ is a homology triangle  with $\mathbf{x}_1$ in the same relative grading as the top degree generator $\widehat{HF}(\Sigma, \betas,\gammas,\mathbf{w})$ we define
 \begin{equation}\wt{\gr}_{Y,\mathbb{L},\s}(\mathbf{y})=\wt{\gr}_0(\x_0)-\mu(\psi)+2n_{\mathbf{w}}(\psi)+\frac{c_1(\s)^2-2\chi(W)-3\sigma(W)}{4},\label{eq:gradingformula}\end{equation}  where $W=W(\mathbb{L})$ and $\s=\s_{\mathbf{w}}(\psi)$ is a $\spinc$ structure on $W$ with $\t=\s|_{Y}$. We then define
 \[\wt{\gr}_{Y,\mathbb{L},\s}(\y\cdot U^i)=\wt{\gr}_{Y,\mathbb{L},\s}(\y)-2i,\] for arbitrary elements of $CF^-$.
 
 The proof in \cite{HolDiskFour} goes through essentially without change to show that this formula is independent of the choice of triangle $\psi$ with $\s_{\mathbf{w}}(\psi)=\s$, as well as the bouquet of $\mathbb{L}$ or the diagram $(\Sigma, \alphas,\betas,\gammas,\mathbf{w})$ subordinate to it. Hence we get well defined absolute gradings $\wt{\gr}_{Y,\mathbb{L},\s}$ on $CF^-(Y,\mathbf{w},\t)$.
 
To show that $\wt{\gr}_{Y,\mathbb{L},\s}$ is independent of the choice of framed link $\mathbb{L}$ and $\spinc$ structure $\s$, we use the excision property of the gradings $\wt{\gr}_{Y,\mathbb{L},\s}$ (\cite[Lemma 7.6]{HolDiskFour}) to reduce the question to the case when $Y=\sqcup_i S^3_i$. If surgery on two framed links in $\sqcup_i S^3_i$ produces $\sqcup_i S^3_i$, then the links must be related by a sequence of blow-ups, blow-downs, or handleslides. Similarly the $\spinc$ structures on the 4-dimensional cobordisms must also be related by blow-ups or blow-downs. The proof from \cite{HolDiskFour} that the absolute gradings are invariant under these moves goes through without change. Hence the gradings $\wt{\gr}_{Y,\mathbb{L},\s}$ are independent of the choice of $\mathbb{L}$ and $\s$.

We now check that the absolute grading $\wt{\gr}_{\sqcup_i S_i^3}$ defined using a surgery diagram agrees with the grading $\wt{\gr}_0$ we defined in Equation~\eqref{eq:declaregradinginspheres}. We note that we can take $\mathbb{L}$ to be the empty link. Picking $(\Sigma, \alphas,\betas,\gammas,\mathbf{w})$ to be such that $\gammas$ is a small Hamiltonian isotopy of the $\betas$ curves, and  picking $\psi$ to be a small triangle homology class, we see that the gradings $\wt{\gr}_{\sqcup_i S_i^3}$ agree with the ones we declared earlier.

We finally must check the grading formula. We first observe that for a path cobordism $(W,\gammas)$ which is composed entirely of 2-handles, the formula follows identically to how it did in \cite{HolDiskFour}. Similarly 1-handles with both ends in the same component of $Y_1$, or 3-handles which are non-separating, both induce grading change $+\tfrac{1}{2}$. Noting that if $\gammas$ is a collection of paths in $W$ from $Y_1$ to $Y_2$, then $\wt{\chi}(\gammas)=0$, the grading formula is satisfied for any path cobordism $(W,\gammas)$ which can be obtained by 1-handles which don't connect distinct components, 2-handles, and non-separating 3-handles. 

We now consider 1-handles which connect distinct components of $Y_1$.  Let $\mathbb{S}_0\subset Y_1$ be a 0-sphere corresponding to a 1-handle attachment of this form. Write $Y_1$ as surgery on a framed link $\mathbb{L}\subset \sqcup_i S_i^3$, and let $\mathbb{S}_0'\subset \sqcup_i S^3_i$ be the  0-sphere in $\sqcup_i S^3_i$ that turns into $\mathbb{S}_0$ after surgery. Suppose that $(\Sigma, \alphas,\betas,\gammas,\mathbf{w})$ is a triple which is subordinate to $\mathbb{L}$,  and suppose that $\mathbb{S}_0'\subset \Sigma\setminus (\alphas\cup \betas\cup \gammas\cup \mathbf{w})$. Remove a disk from $\Sigma$ at each point in $\mathbb{S}_0'$, and connect the resulting boundary components by an annulus in the surgered manifold $(\sqcup_i S_i^3)(\mathbb{S}_0')$ to get a surface $\Sigma'$ in $(\sqcup_i S_i^3)(\mathbb{S}_0')$. Pick homologically nontrivial simple closed curves $\alpha_0,\beta_0,$ and $\gamma_0$ on the annulus on $\Sigma'$ such that all are Hamiltonian isotopies of each other, and each pair of the $\alpha_0,\beta_0$ and $\gamma_0$ curves intersect each other twice. Note that
\[(\Sigma',\alphas\cup \{\alpha_0\},\betas\cup \{\beta_0\},\gammas\cup \{\gamma_0\},\mathbf{w})\] is subordinate to a bouquet for the link $\mathbb{L}\subset (\sqcup_i S_i^3)(\mathbb{S}'_0)$, and surgery on $\mathbb{L}$ yields $Y_1(\mathbb{S}_0)=Y_2$. Let 
\[x_0^+\in \alpha_0\cap \beta_0,\qquad  x_1^+\in \beta_0\cap \gamma_0, \text{ and} \qquad y^+\in \alpha_0\cap \gamma_0\] all denote the higher graded intersection point. If $\psi\in \pi_2(\x_0,\x_1,\y)$, is a triangle with $\x_1$ in the top relative grading of $\widehat{HF}(\sqcup_i(S^1\times S^2)^{\# k_i},\mathbf{w}_i,\s_0)$, then we can form a stabilized triangle $\psi'\in\pi_2(\x_0\times x^+_0,\x_1\times x_1^+ ,\y\times y^+)$, formed by stabilizing $\psi$ with a small triangle between the points $x_0^+,x_1^+$ and $y^+$. Now using Equation \eqref{eq:gradingformula}, we have simply that
\[\wt{\gr}_{Y(\mathbb{S}_0)}(\y\times y^+)-\wt{\gr}_{Y}(\y)=\wt{\gr}_0(\x_0\times x_0^+)-\wt{\gr}_0(\x_0).\] By the definition of the absolute grading $\wt{\gr}_0$ in Equation~\eqref{eq:declaregradinginspheres}, we see that the above equation is exactly $+\frac{1}{2}$, agreeing with the grading change formula.

The maps for separating 3-handles induce grading change $+\tfrac{1}{2}$ by the exact same argument as in the previous paragraph.

Since every path cobordism can be decomposed into 1-handle attachments, followed by 2-handle attachments, followed by 3-handle attachments (this follows from the proof of \cite[Theorem 8.9]{JuhaszSutured}, that every special cobordism has a nice Morse function) the maps $F_{W,\gammas,\s}^\circ$ have the stated grading change formula.
\end{proof}

\begin{lemma}\label{lem:gradingchangeflowgraphs}If $\Gamma \co V_0\to V_1$ is a flow graph in a three-manifold $Y$, as in Section~\ref{sec:GraphAction}, then the graph action map
\[A_\Gamma \co CF^\circ(Y,V_0,\s)\to CF^\circ(Y,V_1,\s),\] has induced grading change $\wt{\chi}(\Gamma)$.
\end{lemma}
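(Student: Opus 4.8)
The plan is to reduce the claim to the elementary flow graphs out of which the graph action map $A_\Gamma$ is assembled in \cite[Section~7]{Zemke2}. The first step is to note that the reduced Euler characteristic is additive under concatenation: if a flow graph $\Gamma\co V_0\to V_2$ is obtained by gluing $\Gamma_1\co V_0\to V_1$ to $\Gamma_2\co V_1\to V_2$ along the shared vertex set $V_1$, then $\chi(\Gamma)=\chi(\Gamma_1)+\chi(\Gamma_2)-|V_1|$, and hence $\wt{\chi}(\Gamma)=\wt{\chi}(\Gamma_1)+\wt{\chi}(\Gamma_2)$. Since $A_\Gamma$ is by construction (homotopic to) the composition $A_{\Gamma_2}\circ A_{\Gamma_1}$, and grading changes add under composition of homogeneous maps, it suffices to verify $\wt{\gr}(A_\Gamma(\x))-\wt{\gr}(\x)=\wt{\chi}(\Gamma)$ when $\Gamma$ is an elementary flow graph.

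The second step is to compute the grading change of each building block that enters the elementary graph action maps. Change-of-diagram maps preserve grading, being continuation and triangle maps. The relative homology maps $A_\lambda$ count Maslov index one disks with weight $U^{n_{\ws}(\phi)}$, exactly as $\del$ does, so they lower grading by $1$; equivalently this follows from $A_\lambda^2\simeq U$ (relation \eqref{eq:R2}) together with $U$ acting with degree $-2$. (The same holds for $B_\lambda$.) Finally, each free-stabilization map $S_w^{\pm}$ shifts grading by $+\tfrac12$: for $S_w^+$ one has $S_w^+(\x)=\x\times\theta^+$ with $\theta^+$ the higher of the two new intersection points, and the grading normalization of this paper---recorded in the footnote to \eqref{eq:declaregradinginspheres}, where each extra basepoint raises the top generator of a disjoint union of spheres by $\tfrac12$---places $\x\times\theta^+$ exactly $\tfrac12$ above $\x$; concretely one applies $S_w^+$ to the top generator of $HF^-(S^3,w)$, compares with \eqref{eq:declaregradinginspheres}, and uses that $S_w^+$ is grading-homogeneous. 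The map $S_w^-$ is treated symmetrically, consistent with $S_w^+S_w^-\simeq\Phi_w$ (relation \eqref{eq:R6}), since $\Phi_w$ raises grading by $1$.

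The third step is to tally the contributions for each elementary flow graph. An edge joining an input vertex to an output vertex has $\wt{\chi}=1-\tfrac12(1+1)=0$, and its graph action map is a change-of-diagram map, or a map of the form $S_w^{-}A_\lambda S_{w'}^{+}$ as in relations \eqref{eq:R7} and \eqref{eq:R8}, with grading change $\tfrac12-1+\tfrac12=0$. A trivalent elementary flow graph---a central valence-three vertex joined to three leaves, of which either one or two are inputs---has $\wt{\chi}=1-\tfrac32=-\tfrac12$; its net change of basepoint number is $\pm1$, so its graph action map is, up to grading-preserving change-of-diagram maps, a composition of exactly one free-stabilization $S_w^{\pm}$ and one relative homology map $A_\lambda$, giving grading change $\tfrac12-1=-\tfrac12$. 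Vertices of valence $\ge 4$ are decomposed into trivalent vertices, so additivity from Step~1 completes the verification.

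The step I expect to be the main obstacle is the bookkeeping in Steps~2 and~3: reading off precisely from \cite[Section~7]{Zemke2} which free-stabilization and relative homology maps occur in each elementary graph action map, and---more delicately---pinning down the individual grading shift $+\tfrac12$ of each $S_w^{\pm}$ separately (as opposed to merely the sum of the two shifts, which is already forced to equal $1$ by $S_w^+S_w^-\simeq\Phi_w$), using this paper's grading convention. Everything else is either formal, or contained in \cite{Zemke2} and the relations (R1)--(R12).
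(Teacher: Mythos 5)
Your overall strategy is the same as the paper's: establish that each free-stabilization $S_w^{\pm}$ raises grading by $\tfrac12$ and each $A_\lambda$ lowers it by $1$ (your Step~2 matches the paper's computation, using the $+\tfrac12$-per-basepoint normalization of \eqref{eq:declaregradinginspheres}), then count how many of each appear in the graph action map. The difference is in the counting step, and that is where your proposal has a genuine gap.

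In Step~3 you claim that a trivalent elementary flow graph, because its ``net change of basepoint number is $\pm1$,'' has graph action map equal (up to grading-preserving maps) to a composition of \emph{exactly one} free-stabilization and \emph{one} relative homology map. This does not follow, and it is not what the definition in \cite[Section~7]{Zemke2} gives. For the elementary trivalent graph $\mathcal{G}_F$ (interior vertex $v$, inputs $\{w\}$, outputs $\{w_1,w_2\}$) the map is $S^-_{wv}\,A_{e_2}A_{e_1}A_{e_3}\,S^+_{w_1w_2v}$: five free-stabilizations (one per $V_0$-vertex, one per $V_1$-vertex, and two for the interior vertex $v$) and three relative homology maps (one per edge). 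The grading change is $\tfrac{5}{2} - 3 = -\tfrac12$, which agrees with $\wt{\chi}$, but your stated decomposition is wrong and your inference from the basepoint count is a non sequitur. (Such a simplification can be extracted from Lemma~\ref{lem:graphactioncomp2}, but there the map becomes a \emph{sum} of two compositions, each happening to have grading change $-\tfrac12$, so even that route does not support ``exactly one $S^{\pm}$ and one $A_\lambda$'' as stated.)

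The paper avoids the elementary-flow-graph reduction entirely and performs the count once, globally: in the composition defining $A_\Gamma$ there is one $A_\lambda$ per edge, one $S^+$ and one $S^-$ per interior vertex, one $S^-$ per vertex of $V_0$, and one $S^+$ per vertex of $V_1$, giving a total of $|V(\Gamma)| - \tfrac12(|V_0|+|V_1|) - |E(\Gamma)| = \wt{\chi}(\Gamma)$. Your Step~1 additivity argument is fine and would let you get by with verifying the formula on elementary pieces, but in Step~3 you should tally the actual constituents of the elementary trivalent map as above rather than asserting the incorrect one-stabilization-one-$A_\lambda$ form.
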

\begin{proof}We first show that $S_{w}^{+}$ and $S_{w}^-$ both induce grading changes of $+\tfrac{1}{2}$ and $A_\lambda$ induces a grading change $-1$.

To see that $S_w^+$ and $S_w^-$ both induce grading change $+\tfrac{1}{2}$, one adapts the argument in Lemma~\ref{lem:pathcobordismgradingchange} that 1-handles and 3-handles which join or separate components of $Y$ induce grading change $+\tfrac{1}{2}$. If $(Y,\ws)$ is a 3-manifold with $w\not\in \ws$, one first picks a link $\mathbb{L}$ and a diagram $(\Sigma, \alphas,\betas,\gammas,\ws)$ which is subordinate to a bouquet for $\mathbb{L}$ such that surgery on $\mathbb{L}$ produces $Y$, which also has $w\in \Sigma\setminus (\alphas\cup\betas\cup\gammas\cup\ws)$. One then performs free-stabilization on the triple $(\Sigma,\alphas,\betas,\gammas,\ws)$ to get a triple which can be used to compute the absolute grading for $(Y,\ws\cup \{w\})$. Adapting the argument for 1-handles and 3-handles which connect or separate (respectively) components of $Y$, our declaration of the grading on disjoint unions of spheres in Equation~\eqref{eq:declaregradinginspheres} immediately implies that $S_{w}^+$ and $S_{w}^-$ both have grading change $+\tfrac{1}{2}$.

The relative homology maps change degree by $-1$ since they count Maslov index 1 disks.

We now check the grading formula for graph action maps. Suppose that $\Gamma \co V_0\to V_1$ is a flow graph. Note that $\wt{\chi}(\Gamma)$ and $A_\Gamma$ are both invariant under subdivision of $\Gamma$. Given a Cerf decomposition of $\Gamma$, the graph action map $A_\Gamma$ is a composition of free-stabilization maps, as well as relative homology maps. In the composition, there is one relative homology map for each edge of $\Gamma$, hence the relative homology maps contribute total grading change of $-|E(\Gamma)|$. For each vertex $w\in V(\Gamma)\setminus (V_0\cup V_1)$, there is an $S_w^+$ and an $S_w^-$, each of which has grading change $+\tfrac{1}{2}$. Each vertex in $w\in V_0$ contributes one $S_w^-$ term, and each vertex $w\in V_1$ contributes one $S_w^+$ term. Hence the total grading change from all vertices and edges is
\[|V(\Gamma)|-\tfrac{1}{2}(|V_0|+|V_1|)-|E(\Gamma)|,\] which is exactly $\wt{\chi}(\Gamma)$.\end{proof}
\begin{lemma}\label{lem:0-,4-handlegradingchange}The 0- and 4-handle maps defined in \cite[Section 11]{Zemke2} are both zero graded.
\end{lemma}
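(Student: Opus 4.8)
The plan is to show the $0$- and $4$-handle maps from \cite[Section 11]{Zemke2} fit the grading change formula \eqref{eq:maingradingformula} with the appropriate topological inputs, and to check that these inputs all vanish. A $0$-handle cobordism $W$ from $(Y,\ws)$ to $(Y \sqcup S^3, \ws \cup \{w\})$ (with a single new basepoint $w$ and the connecting graph $\Gamma$ a single vertex) is $(Y \times I)$ with a $4$-ball glued on, so $\chi(W) = 1$, $\sigma(W) = 0$, and the unique $\spinc$ structure has $c_1(\s) = 0$; the dual statement holds for the $4$-handle. The graph $\Gamma$ here consists of a single vertex at $w$ with no edges, regarded as a flow graph with $V_0 = \emptyset$ (or $V_1 = \emptyset$), so $\wt\chi(\Gamma) = \chi(\Gamma) - \tfrac12(|V_0| + |V_1|) = 1 - \tfrac12 = \tfrac12$. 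Plugging into \eqref{eq:maingradingformula} would give a predicted grading change of $\tfrac{0 - 2 - 0}{4} + \tfrac12 = 0$. So the claim is consistent with, and in fact a special case of, Proposition~\ref{prop:gradingchangeformula}; but since that proposition is being proved via these lemmas, I must verify the $0$- and $4$-handle grading change directly.

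First I would recall the explicit definition of the $0$- and $4$-handle maps from \cite[Section 11]{Zemke2}: the $0$-handle map $\CF^\circ(Y,\ws,\s) \to \CF^\circ(Y \sqcup S^3, \ws \cup \{w\}, \s)$ sends $\x \mapsto \x \times \theta$, where $\theta$ is the top generator of $\CF^\circ(S^3, w)$, using a Heegaard diagram for $S^3$ with a single basepoint and no $\alpha$ or $\beta$ curves (the empty diagram on $S^2$), and the $4$-handle map is the adjoint, projecting onto the $\theta$-component. Then I would compute the absolute grading directly using the normalization from Lemma~\ref{lem:pathcobordismgradingchange}: the grading $\wt\gr_0$ on disjoint unions of spheres from \eqref{eq:declaregradinginspheres} assigns the top generator of $\CF^-(S^3, w)$ (one basepoint) the degree $-2 + \tfrac12(1-1) = -2$, which is exactly the normalization baked into the proposition. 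Crucially, our convention \eqref{eq:declaregradinginspheres} places the extra two-dimensional vector space $V$ from an added basepoint in gradings $\pm\tfrac12$, so that $\CF^-(S^3, w_1, w_2) \cong V \otimes \Z_2[U]$ with top generator in degree $-2 + \tfrac12$; the consequence is that the generator $\theta$ of $\CF^-(S^3, w)$ used in the $0$-handle map sits in degree $0$ relative to the empty-basepoint sphere, i.e.\ adding the $S^3$-summand with its single basepoint via $\x \mapsto \x \times \theta$ is grading preserving. Hence the $0$-handle map is zero-graded, and by adjointness (or by the orientation-reversal symmetry and duality of Proposition~\ref{prop:orientation}-type arguments in \cite{Zemke2}) so is the $4$-handle map.

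The main point to get right — and the step I expect to require the most care — is the bookkeeping of the basepoint-vs-sphere normalization: one must be sure that the $\wt\gr_0$ declared in \eqref{eq:declaregradinginspheres} is internally consistent so that the generator $\theta \in \CF^\circ(S^3, w)$ has the same absolute grading whether we regard it as living on a one-basepoint $S^3$ on its own (degree $-2$ for $\CF^-$) or as contributing a factor in a disjoint union, and then verify that under the identification $\CF^\circ(Y \sqcup S^3, \ws \cup \{w\}) \cong \CF^\circ(Y,\ws) \otimes_{\Z_2[U]} \CF^\circ(S^3,w)$ from \eqref{eq:disjunion} the map $\x \mapsto \x \otimes \theta$ shifts grading by $\wt\gr_0(\theta_{S^3,w}) - (\text{grading of the empty sphere's generator})$. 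Because the empty-diagram sphere carries no basepoint, it has no generator in our setup; the cleanest route is instead to compare directly with the general absolute-grading formula \eqref{eq:gradingformula} applied to the $0$-handle cobordism (empty link, trivial triple, small triangle) exactly as in the proof of Lemma~\ref{lem:pathcobordismgradingchange}, where one reads off that the grading change equals $\wt\gr_0(\x_0 \times \theta) - \wt\gr_0(\x_0)$, and then invoke \eqref{eq:declaregradinginspheres} to see this difference is $0$ (the $\tfrac12$ from the added basepoint being exactly cancelled by the $\tfrac{-2\chi(W)}{4} = \tfrac{-2}{4}$ contribution of the $4$-ball). The $4$-handle case is identical by turning the cobordism upside down. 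This completes the proof modulo the routine identification of the maps with those of \cite[Section 11]{Zemke2}.
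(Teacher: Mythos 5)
Your overall strategy is the same as the paper's: recall that the $0$-handle map sends $\x \mapsto \x \times \theta$, then adapt the stabilized-triangle argument from Lemma~\ref{lem:pathcobordismgradingchange} (the one used for separating/connecting handles) to read off the grading change as $\wt\gr_0(\x_0 \times \theta) - \wt\gr_0(\x_0)$, and finally invoke the declaration \eqref{eq:declaregradinginspheres}. That is exactly how the paper argues, and your conclusion that the difference is $0$ is correct.

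However, the justification you give in the final parenthetical is wrong, and the middle paragraph has the same confusion. You claim there is a cancellation between a ``$+\tfrac12$ from the added basepoint'' and a ``$-\tfrac12$ from $\tfrac{-2\chi(W)}{4}$ for the $4$-ball.'' Both of these contributions are in fact $0$, not $\pm\tfrac12$: in \eqref{eq:gradingformula}, $W = W(\mathbb{L})$ is the $2$-handle surgery cobordism from $\sqcup S^3$ to the three-manifold, and passing from $(Y,\ws)$ to $(Y \sqcup S^3, \ws \cup \{w\})$ just replaces $W$ by $W \sqcup (S^3 \times I)$, leaving $\chi$, $\sigma$, and $c_1^2$ unchanged; the $4$-ball only enters $\chi(W)$ in the statement of Proposition~\ref{prop:gradingchangeformula}, which is what you are trying to prove, so it has no place in the computation via \eqref{eq:gradingformula}. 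Similarly, a new basepoint on a \emph{new} component of a disjoint union of spheres contributes $|\ws_{n+1}|-1 = 0$ to the sum in \eqref{eq:declaregradinginspheres}; your invocation of ``$V$ in degrees $\pm\tfrac12$'' pertains to adding a basepoint to an \emph{existing} component, which is a different operation (it is the free-stabilization $S_w^\pm$, which does shift grading by $+\tfrac12$). Because of this, the phrase ``$\theta$ sits in degree $0$ relative to the empty-basepoint sphere'' does not actually make sense in the theory (every component must carry a basepoint), and the real reason $\wt\gr_0(\x_0 \times \theta) = \wt\gr_0(\x_0)$ is simply that \eqref{eq:declaregradinginspheres} has a single global $-2$ and a per-component term $\tfrac12(|\ws_i|-1)$ that vanishes on the new $(S^3, w)$. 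This is worth getting right, since conflating the new-component case with the new-basepoint-on-old-component case is exactly the kind of bookkeeping error that breaks the $+\tfrac12$ count for $S^\pm_w$ and connecting $1$-handles elsewhere in Section~\ref{sec:absolute}.
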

\begin{proof} This follows from an adaptation of our argument from Lemma~\ref{lem:pathcobordismgradingchange} that 1-handles and 3-handles which connect or separate components induce maps which are $+\tfrac{1}{2}$ graded, and our declaration of the absolute grading for disjoint unions of spheres in Equation~\eqref{eq:declaregradinginspheres}.
\end{proof}

Having computed the grading change due to all maps involved, we can now verify the full grading change formula for the graph cobordism maps:

\begin{proof}[Proof of Proposition~\ref{prop:gradingchangeformula}] The absolute gradings were defined in Lemma~\ref{lem:pathcobordismgradingchange}, where we proved the grading change formula for path cobordisms. We now aim to extend it to all graph cobordisms.

Applying \cite[Lemma 12.6]{Zemke2}, and \cite[Lemma 13.1]{Zemke2}, we can decompose an arbitrary graph cobordism as 
\[(W,\Gamma)=(W_4,\Gamma_4)\circ (W_{\gammas},\gammas)\circ (W_\Gamma,\Gamma_N)\circ (W_1,{\gammas}_1)\circ (W_0,\Gamma_0),\] such that the following hold:

\begin{itemize}
\item $(W_4,\Gamma_4)$ and $(W_0,\Gamma_0)$ are 4-handle and 0-handle cobordisms respectively, and $\Gamma_i$ consists of one arc in $B^4$, connecting a vertex on the interior of $B^4$ to a vertex on the boundary;
\item $(W_{\gammas}, \gammas)$ is a path cobordism;
\item $(W_\Gamma,\Gamma_N)$ is a graph cobordism with $W_\Gamma\cong N\times I$, for a 3-manifold $N$;
\item $(W_1,{\gammas}_1)$ is a path cobordism obtained by adding only 1-handles.
\end{itemize}

Applying the composition law for graph cobordisms  \cite[Theorem E]{Zemke2}, the map $F_{W,\Gamma,\s}$ is the composition of the graph cobordism maps for each of the cobordisms listed above.

 It is thus sufficient to verify that both sides of Equation \eqref{eq:maingradingformula} are additive under composition, and that the formula is true for the cobordisms listed in the previous paragraph. Additivity of the left side of Equation \eqref{eq:maingradingformula} is trivial. Additivity of 
\[\frac{c_1(\s)^2-2\chi(W)-3\sigma(W)}{4}\] is straightforward. Additivity of of $\wt{\chi}(\Gamma)$ under concatenation of flow graphs can be seen as follows: If $\Gamma \co V_0\to V_1$ and $\Gamma' \co V_1\to V_2$ are two flowgraphs, then letting $\Gamma'\circ \Gamma$ denote their concatenation, we have
\[\wt{\chi}(\Gamma)+\wt{\chi}(\Gamma')=\chi(\Gamma)+\chi(\Gamma')-\tfrac{1}{2}(|V_0|+2|V_1|+|V_2|)=\chi(\Gamma)+\chi(\Gamma')-|V_1|-\tfrac{1}{2}(|V_0|+|V_2|)\]
\[=\chi(\Gamma\circ \Gamma')-\tfrac{1}{2}(|V_0|+|V_2|)=\wt{\chi}(\Gamma'\circ \Gamma).\]

Note that by \cite[Lemma 12.3]{Zemke2}, the map associated to $(W_\Gamma,\Gamma_N)$ is just the graph action map for the graph $\Gamma_N$, projected into the ends of $W_\Gamma\cong N\times I$. Hence by Lemma \ref{lem:gradingchangeflowgraphs}, the grading formula holds for $(W_\Gamma, \Gamma_N)$.

Hence it remains to verify Equation \eqref{eq:maingradingformula} for the following cobordisms: 0-handle cobordisms, 4-handle cobordisms, path cobordisms formed with 1-handles, flow graphs, and path cobordism maps. We have already verified the formula for path cobordism maps in Lemma~\ref{lem:pathcobordismgradingchange} (and hence 1-handle maps, as a special case). Similarly we checked the grading change formula for flow graphs in Lemma \ref{lem:gradingchangeflowgraphs}.

The final step is to check that Equation \eqref{eq:maingradingformula} is true for 0- and 4-handle cobordisms. According to Lemma~\ref{lem:0-,4-handlegradingchange}, the maps associated to these are zero graded. So we need to show that the right hand side of Equation \eqref{eq:maingradingformula} is zero for these cobordisms. Topologically, these both increase $\chi(W)$ by $1$, but don't affect $c_1(\s)$ or $\sigma(W)$. Hence we need to show that for the graph $\Gamma$ associated to a 0-handle or a 4-handle cobordism, we have $\wt{\chi}(\Gamma)=\tfrac{1}{2}$. The 0-handle and 4-handle cobordisms are 4-balls with a graph which has one vertex on the interior, and a single edge connecting it to the boundary 3-sphere, which has a single vertex on it. In particular, we have for both a 0-handle and a 4-handle that \[|V_0|+|V_1|=V(\Gamma)=1\] and hence
$\wt{\chi}(\Gamma)=\tfrac{1}{2},$ completing the proof that the grading change due to a 0-handle or 4-handle map agrees with Equation \eqref{eq:maingradingformula}.

Since the graph cobordisms are a composition of the above maps, and each of the constituent maps satisfies the grading formula in \eqref{eq:maingradingformula}, and \eqref{eq:maingradingformula} is additive under composition, the grading change formula now follows for arbitrary graph cobordisms $(W,\Gamma)$.
 \end{proof}
 
\begin{remark} \label{rem:absolute6} We have adopted the convention of \cite{HolDiskFour} that $HF^-(S^3,w,\s_0)$ has maximally graded element in grading $-2$. As such, the canonical isomorphism \eqref{eq:disjunion} between $CF^-(Y_1\sqcup Y_2,w_1,w_2,\s_1\sqcup \s_2)$ and $CF^-(Y_1,w_1,\s_1)\otimes_{\Z_2[U]} CF^-(Y_2,w_2,\s_2)$ is not grading preserving, and instead we have
\[CF^-(Y_1\sqcup Y_2,w_1,w_2,\s_1\sqcup \s_2)\cong CF^-(Y_1,w_1,\s_1)\otimes_{\Z_2[U]} CF^-(Y_2,w_2,\s_2)[-2].\]
\end{remark}

\begin{remark}
Proposition~\ref{prop:gradingchangeformula} is phrased for the setting considered in this paper, with a single $U$ variable for all basepoints. However, the proposition also holds for arbitrary colorings of the basepoints; the proof goes through without change.
\end{remark}

\section{A first version of the connected sum formula}
\label{sec:proof}

Building up to Theorem~\ref{thm:ConnSum}, in this section we give a preliminary identification of the conjugation involution on the connected sum:

\begin{proposition} 
\label{prop:ConnSum}
Suppose $Y_1$ and $Y_2$ are three-manifolds equipped with spin structures $\s_1$ and $\s_2$. Let $\iota_1$ and $\iota_2$ denote the conjugation involutions on the Floer complexes $\CFm(Y_1, \s_1)$ and $\CFm(Y_2,\s_2)$. For $i=1,2$, let also $\Phi_i$ denote the formal derivative of the differential on $\CFm(Y_i, \s_i)$ with respect to the $U$ variable, as in Section~\ref{sec:Phi}. Then, there is an equivalence
$$\CFm(Y_1 \# Y_2, \s_1 \# \s_2) \simeq \CFm(Y_1,\s_1) \otimes_{\Z_2[U]} \CFm(Y_2, \s_2)[-2],$$
with respect to which the conjugation involution $\iota$ on $\CFm(Y_1 \# Y_2,$ $ \s_1 \# \s_2)$ corresponds to  
$$\iota_1 \otimes \iota_2 + U(\Phi_1 \iota_1 \otimes \Phi_2 \iota_2).$$ 
\end{proposition}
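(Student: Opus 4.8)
The plan is to realize $Y_1\# Y_2$ by a graph cobordism from $Y_1\sqcup Y_2$, where the conjugation involution is already understood (it is $\iota_1\otimes\iota_2$ by the disjoint union formula), and then to transport $\iota$ across that cobordism. The crucial point is that $\iota$ does \emph{not} commute with graph cobordism maps: by Proposition~\ref{prop:iotacommuteswithgraphcobordismmaps} it converts the $A$-version of such a map into the $B$-version, and by \eqref{eq:R3} the discrepancy between $A_\lambda$ and $B_\lambda$ is exactly $U(\Phi_w+\Phi_{w'})$. This is what will ultimately produce the correction term $U(\Phi_1\iota_1\otimes\Phi_2\iota_2)$.

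More precisely, I would fix basepoints $w_1\in Y_1$ and $w_2\in Y_2$, and let $(W,\Gamma)$ be the pair-of-pants cobordism from $(Y_1\sqcup Y_2,\{w_1,w_2\})$ to $(Y_1\# Y_2,\{w\})$, where $W$ is $(Y_1\sqcup Y_2)\times I$ with a $4$-dimensional $1$-handle joining the two components and $\Gamma$ is a cyclically ordered tripod joining $w_1,w_2$ to $w$ through the neck; let $\s$ restrict to $\s_1,\s_2$ on the ends. Using Proposition~\ref{prop:gradingchangeformula} one checks that $F:=F^A_{W,\Gamma,\s}$ is grading preserving, and together with \eqref{eq:disjunion} and Remark~\ref{rem:absolute6} it recovers the Ozsv\'ath--Szab\'o connected sum isomorphism \eqref{eq:KunnethCF} on homology; since the complexes are free over $\Z_2[U]$, $F$ is a chain homotopy equivalence, with some homotopy inverse $G$. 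Under \eqref{eq:disjunion}, the conjugation involution on $\CFm(Y_1\sqcup Y_2,\{w_1,w_2\})$ is $\iota_1\otimes\iota_2$, and $\Phi_{w_1},\Phi_{w_2}$ act as $\Phi_1\otimes\id$ and $\id\otimes\Phi_2$, so $\Phi_{w_1}\Phi_{w_2}=\Phi_1\otimes\Phi_2$.

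Applying Proposition~\ref{prop:iotacommuteswithgraphcobordismmaps} (with $\bar\s=\s$, since $\s$ is spin) gives $\iota\circ F^A\simeq F^B\circ(\iota_1\otimes\iota_2)$, so transporting $\iota$ back along $F$ identifies it with $G\circ F^B\circ(\iota_1\otimes\iota_2)$. Everything then reduces to the identity
\[
F^B \;\simeq\; F^A\circ\bigl(\id+U\,\Phi_{w_1}\Phi_{w_2}\bigr),
\]
for then $G\circ F^B\circ(\iota_1\otimes\iota_2)\simeq(\id+U\Phi_{w_1}\Phi_{w_2})(\iota_1\otimes\iota_2)=\iota_1\otimes\iota_2+U(\Phi_1\iota_1\otimes\Phi_2\iota_2)$, as claimed. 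To prove this identity I would first use \eqref{eq:R9}--\eqref{eq:R12} to bring $F^A$ into a model form $S^-_{w_1}\circ A_\lambda\circ H_1$, where $H_1$ is the $1$-handle map and $\lambda$ runs through the neck from $w_1$ to $w_2$. On the one hand, \eqref{eq:R3} gives $F^B-F^A\simeq S^-_{w_1}\circ U(\Phi_{w_1}+\Phi_{w_2})\circ H_1$; since after the $1$-handle both basepoints lie on the connected manifold $Y_1\# Y_2$, \eqref{eq:R6} and \eqref{eq:R12} yield $S^-_{w_1}\Phi_{w_1}\simeq S^-_{w_1}S^+_{w_1}S^-_{w_1}\simeq 0$, and \eqref{eq:R10} then leaves $F^B-F^A\simeq U\,\Phi_{w_2}\,S^-_{w_1}\,H_1$. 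On the other hand, since $H_1$ is supported away from $w_1,w_2$ it commutes up to homotopy with $\Phi_{w_1},\Phi_{w_2}$, and applying \eqref{eq:R4} at both endpoints of $\lambda$ followed by \eqref{eq:R6}, \eqref{eq:R12}, \eqref{eq:R10} shows $F^A\circ U\Phi_{w_1}\Phi_{w_2}\simeq U\,\Phi_{w_2}\,S^-_{w_1}\,H_1$ as well, which proves the identity.

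The hard part will be this last computation. It requires pinning down the tripod graph action map precisely (its cyclic ordering and its free-stabilization factors) so that the model form of $F^A$ is justified, and then juggling \eqref{eq:R4}--\eqref{eq:R12} carefully enough that the single-$\Phi$ error terms die via \eqref{eq:R6} and \eqref{eq:R12} while exactly the product term $U\Phi_{w_1}\Phi_{w_2}=U(\Phi_1\otimes\Phi_2)$ survives, rather than the sum $U(\Phi_1\otimes\id)+U(\id\otimes\Phi_2)$. One should also verify via Proposition~\ref{prop:gradingchangeformula} that all grading shifts match the $[-2]$ in the statement, and confirm $G\circ F^A\simeq\id$ (equivalently, that $F$ is an equivalence), which follows from \eqref{eq:KunnethCF} and the freeness of the complexes.
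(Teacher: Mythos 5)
Your strategy is the same as the paper's: realize $Y_1\# Y_2$ via a trivalent tripod graph cobordism from $Y_1\sqcup Y_2$, invoke Proposition~\ref{prop:iotacommuteswithgraphcobordismmaps} to write $\iota\circ F^A\simeq F^B\circ(\iota_1\otimes\iota_2)$, and then measure $F^B-F^A$ via the $\Phi$-relations. Where the paper routes through Lemma~\ref{lem:relatetypeAtypeB} (the type-$B$ map equals the type-$A$ map with reversed cyclic order), the commutator relation of Figure~\ref{fig::2}, and the graph manipulation of Figure~\ref{fig::11}, you propose to apply \eqref{eq:R3} directly to a simplified expression for the graph action map. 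This is a legitimate shortcut: the paper's Lemma~\ref{lem:relatetypeAtypeB} is itself proved by applying \eqref{eq:R3}, \eqref{eq:R6}, \eqref{eq:R12} to the graph action map, so you are unwinding the same arithmetic. Your argument that $F^A$ is a chain homotopy equivalence (a quasi-isomorphism of free $\Z_2[U]$-complexes) also suffices, bypassing the explicit homotopy inverse constructed in Proposition~\ref{prop:maincomputation}.

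The one concrete slip is the claimed model form $S^-_{w_1}\circ A_\lambda\circ H_1$ for $F^A$. The graph action map for the tripod does not reduce to a single $S^-\circ A_\lambda$: by the paper's own computation (Lemma~\ref{lem:graphactioncomp1}) it simplifies to $\phi_*S^-_{w_2}A_{e_2*e_1}+U\phi_*\Phi_{w_1}S^-_{w_2}$, with an unavoidable extra $U\Phi$ summand. That summand cannot be removed by \eqref{eq:R9}--\eqref{eq:R12} alone; it is only chain homotopic to zero if one already has $U\Phi\simeq 0$, which is Proposition~\ref{prop:UPhi=0} and appears later (and, if invoked here, would trivialize the very correction term the statement is tracking). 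Fortunately your conclusion survives this slip: the extra summand is identical in the $A$- and $B$-versions, so it cancels in $F^B-F^A$, and it is annihilated by $S^-\Phi\simeq 0$ when you compute $F^A\circ U\Phi_{w_1}\Phi_{w_2}$. Thus the identity $F^B\simeq F^A\circ(1+U\Phi_{w_1}\Phi_{w_2})$ still holds, but the step as written would not stand: either carry the extra term through both sides and verify the cancellation, or derive the model form honestly along the lines of Lemmas~\ref{lem:graphactioncomp1}--\ref{lem:graphactioncomp2}.
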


There are two steps in the proof of Proposition~\ref{prop:ConnSum}. The first is to give an alternate proof of the K\"{u}nneth theorem for ordinary Heegaard Floer homology. We describe chain homotopy equivalences between $CF^\circ(Y_1\sqcup Y_2, w_1,w_2,\mathfrak{s}_1\sqcup \mathfrak{s}_2)$ and $CF^\circ(Y_1\# Y_2, w, \mathfrak{s}_1\# \mathfrak{s}_2)$, which are defined as cobordism maps for 1-handle or 3-handle cobordisms with a trivalent graph connecting the basepoints in the three ends, using the graph TQFT constructed by the third author in \cite{Zemke2}. The second step is to consider how the involution interacts with the chain homotopy equivalences between $CF^\circ(Y_1\sqcup Y_2, w_1,w_2,\mathfrak{s}_1\sqcup \mathfrak{s}_2)$ and $CF^\circ(Y_1\# Y_2, w, \mathfrak{s}_1\# \mathfrak{s}_2)$.

We note that the strategy of using maps induced by 1-handle cobordisms to prove connected sum formulas has been used in other contexts. An early example can be found in \cite{FukayaConnSums}, in the setting of instanton homology. Such cobordisms have also been considered in the context of monopole Floer homology in \cite{KLT5} and \cite{LinConnSums}.

It is also worthwhile to compare our graph cobordism maps to the maps  considered  in \cite{HolDiskTwo} for the original proof of the connected sum formula for Heegaard Floer homology. Ozsv\'{a}th and Szab\'{o} construct a map from $CF^-(Y_1,\s_1)\otimes_{\Z_2[U]} CF^-(Y_2,\s_2)$ to $CF^-(Y_1\# Y_2,\s_1\# \s_2)$ as a composition of many 1--handle maps and a single triangle map. Firstly, it is not immediately obvious how to interpret their map in terms of 4--dimensional cobordisms. Also, there is an asymmetry between the roles of $Y_1$ and $Y_2$ in the construction of their map, in the ordering that they appear in the triangle map. Indeed the map $\iota$ can be seen to switch the roles of $Y_1$ and $Y_2$ in their connected sum map. In parallel, there are two choices of cyclic ordering at the vertex in the natural graph cobordism for the connected sum cobordism. Paralleling the interaction of $\iota$ with the Ozsv\'{a}th-Szab\'{o} connected sum map, we will see that $\iota$ interacts with the graph cobordism maps by switching the cyclic ordering at the trivalent vertex.

\subsection{A new proof of the connected sum formula for ordinary Heegaard Floer homology}
In \cite{HolDiskTwo}, Ozsv\'{a}th and Szab\'{o} prove that 
\begin{equation}
\label{eq:OSconnsum}
HF^-(Y_1\# Y_2,\mathfrak{s}_1\# \mathfrak{s}_2)\cong H_*(CF^-(Y_1,\mathfrak{s}_1)\otimes_{\Z_2[U]} CF^-(Y_2,\mathfrak{s}_2)).\end{equation} They describe a map between the two complexes, constructed as a composition of 1-handle maps, and a triangle map, which they show to be a quasi-isomorphism. In this section, we provide an alternate proof of this fact, using graph cobordism maps. Our proof will be a slight improvement, since we will also exhibit a $\Z_2[U]$-equivariant homotopy inverse instead of only proving that our maps are quasi-isomorphisms. We consider graph cobordism maps $F^A$ and $G^A$ associated to the graph cobordisms shown in Figure~\ref{fig::3}. The underlying 4-manifold in both cases is simply a 1-handle joining the two components, or a 3-handle splitting them. These are shown in Figure~\ref{fig::3}.

\begin{figure}[ht!]
\centering
\includegraphics[scale=1.2]{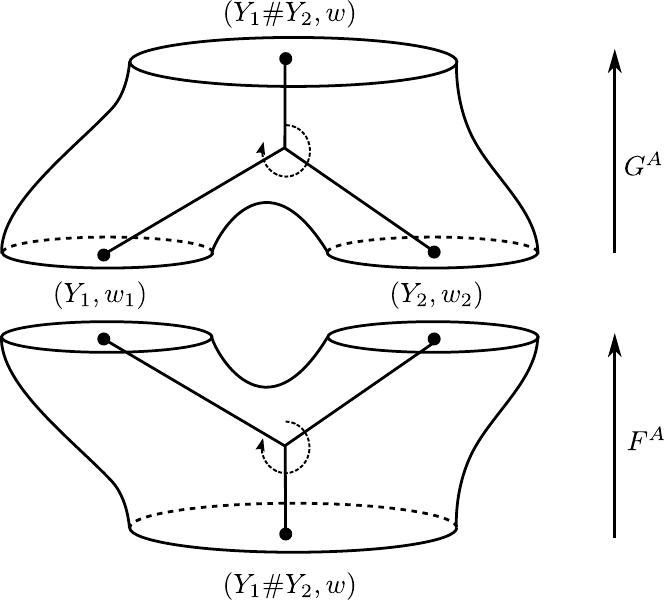}
\caption{The maps $F^A$ and $G^A$ are defined by the 1-handle or 3-handle cobordisms shown, with embedded trivalent graph with given cyclic ordering.\label{fig::3}}
\end{figure}

\begin{proposition}\label{prop:maincomputation}The graph cobordism maps 
\[F^A \co CF^\circ(Y_1\# Y_2, w,\mathfrak{s}_1\# \mathfrak{s}_2)\to CF^\circ(Y_1\sqcup Y_2, w_1,w_2,\mathfrak{s}_1\sqcup \mathfrak{s}_2) \] and 
\[G^A \co CF^\circ(Y_1\sqcup Y_2, w_1,w_2,\mathfrak{s}_1\sqcup \mathfrak{s}_2)\to CF^\circ(Y_1\# Y_2, w,\mathfrak{s}_1\# \mathfrak{s}_2)\] for the graph cobordisms shown in Figure~\ref{fig::3} are $\Z_2[U]$-equivariant homotopy inverses of each other. \end{proposition}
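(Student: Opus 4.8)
The plan is to compute both composites $G^A \circ F^A$ and $F^A \circ G^A$ explicitly in terms of the elementary pieces (free-stabilization maps, relative homology maps, and the $1$-/$3$-handle maps) and show that each is chain homotopic to the identity. First I would fix the Heegaard-diagrammatic picture: both $F^A$ and $G^A$ are given by a single $1$-handle (resp.\ $3$-handle) cobordism decorated with the trivalent graph $\Gamma$ of Figure~\ref{fig::3}. Using the definition of the graph cobordism map as a composition of a handle map with the graph action map for $\Gamma$, and the fact (cited in Section~\ref{sec:GraphAction}) that the graph action map for a path-like graph is a composition of $S^{\pm}_w$'s and $A_\lambda$'s, I would write, say,
\[
F^A \simeq S_{w_1}^{-} S_{w_2}^{-} A_{\lambda} S_w^{+} \circ (\text{$1$-handle map}),
\]
for an appropriate path $\lambda$ through the connect-sum region joining $w$ to, effectively, the two new basepoints, and similarly $G^A$ with the roles of positive/negative free-stabilizations and the $1$-/$3$-handle maps reversed. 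The precise bookkeeping of which basepoint is stabilized where, and in what cyclic order at the trivalent vertex, must match Figure~\ref{fig::3}.

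The heart of the argument is then a purely algebraic manipulation using the relations (R1)--(R12) from Section~\ref{sec:graphs} (and the fact that handle maps for canceling handles compose to the identity up to homotopy, as in \cite{HolDiskFour, Zemke2}). For $G^A \circ F^A$, the $3$-handle following the $1$-handle cancels, and one is left with a composite of free-stabilization and relative homology maps on $CF^\circ(Y_1 \# Y_2, w)$; I would commute the $S^{\pm}$'s past each other via \eqref{eq:R9}, slide the $A_\lambda$ past the stabilizations not at its endpoints via \eqref{eq:R11}, and finally collapse $S_w^- A_\lambda S_w^+ \simeq \id$ using \eqref{eq:R7}, together with $S_w^- S_w^+ \simeq 0$ from \eqref{eq:R12} to kill the remaining cross terms. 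For $F^A \circ G^A$, one instead gets an endomorphism of $CF^\circ(Y_1 \sqcup Y_2, w_1, w_2)$; here the relation \eqref{eq:R8}, $S_w^- A_\lambda S_{w'}^+ \simeq \phi_*$ with $\phi$ the finger-move diffeomorphism, identifies the relevant composite with a diffeomorphism map that is itself homotopic to the identity (the finger move is isotopic to the identity), and again \eqref{eq:R12} eliminates extraneous terms. Throughout, the grading computations of Section~\ref{sec:absolute}, in particular Proposition~\ref{prop:gradingchangeformula} and Remark~\ref{rem:absolute6}, ensure the maps land in the expected gradings with the $[-2]$ shift, so no spurious $U$-powers can appear.

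The main obstacle I expect is the combinatorial bookkeeping in the decomposition of $F^A$ and $G^A$ into elementary pieces: one must be careful about the cyclic ordering at the trivalent vertex (the paper emphasizes that $\iota$ swaps this ordering, so the two orderings genuinely give different-looking composites), about which of the three ends the positive versus negative free-stabilizations are applied to, and about the choice of path $\lambda$ relative to the connect-sum sphere. A secondary subtlety is verifying that the homotopies produced are genuinely $\Z_2[U]$-equivariant — this is where it matters that we use the relations (R2), (R7), (R8), (R11), (R12) in their $\Z_2[U]$-equivariant forms rather than the non-equivariant nullhomotopy of $\sum_w \Phi_w$. Once the correct elementary decomposition is pinned down, the relation-chasing is routine, and both composites reduce to $\id$ up to equivariant chain homotopy, establishing that $F^A$ and $G^A$ are $\Z_2[U]$-equivariant homotopy inverses.
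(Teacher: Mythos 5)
There is a genuine gap, and in fact a few intertwined ones. First, the decomposition of $F^A$ and $G^A$ into elementary pieces is misstated: the graph of Figure~\ref{fig::3} is trivalent, so (as in Section~\ref{sec:GraphAction}) its graph action map involves one relative homology map per \emph{edge}, i.e.\ a composition $S^-_{wv} A_{e_2}A_{e_1}A_{e_3} S^+_{w_1w_2v}$ with \emph{three} $A$'s and a free-stabilization at the interior trivalent vertex $v$, not the single $A_\lambda$ in your proposed expression. When one actually simplifies this composition (the paper does this in Lemmas~\ref{lem:graphactioncomp1} and~\ref{lem:graphactioncomp2}), one finds an unavoidable cross term of the form $U\Phi_{w_1}$: for instance $A_{\mathcal{G}_F}\simeq A_{e_2*e_1}S_{w_2}^+\phi_*+U S_{w_2}^+\Phi_{w_1}\phi_*$. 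These $U\Phi$ terms are precisely what eventually feeds into the correction $U(\Phi_1\iota_1\otimes\Phi_2\iota_2)$ in Proposition~\ref{prop:ConnSum}, and they do not simply vanish by \eqref{eq:R12}; they are killed in the composite only because $\Phi_{w_1}^2\simeq 0$ (relation \eqref{eq:R5}) and by the commutator relation \eqref{eq:R4}, neither of which appears in your outline.

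Second, the claim that "the $3$-handle following the $1$-handle cancels" is false at the level of chain maps. Write $G_3$ and $F_1$ for the $3$- and $1$-handle maps. The underlying $4$-manifold of $F_1\circ G_3$ is a product, but the Heegaard Floer cobordism maps of a (split-then-rejoin) $3$-handle/$1$-handle pair are \emph{not} homotopy inverse to each other: in the matrix model of Lemma~\ref{lem:diffmodelcomp} one has $G_3=(0\ \ 1)$, $F_1=(1\ \ 0)^T$, hence $F_1G_3=\bigl(\begin{smallmatrix}0&1\\0&0\end{smallmatrix}\bigr)$ and $G_3F_1=0$. Neither is the identity, and in fact $(F_1G_3)^2=0$. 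The paper's proof genuinely needs the explicit Heegaard-diagrammatic computation in Lemma~\ref{lem:diffmodelcomp} — counting holomorphic disks on the neck-stretched connected-sum diagram to get $\partial_{\widehat{\mathcal H}}=\bigl(\begin{smallmatrix}\partial&U_{w_1}+U_{w_2}\\0&\partial\end{smallmatrix}\bigr)$ and $A_\lambda=\bigl(\begin{smallmatrix}0&U_{w_1}\\1&0\end{smallmatrix}\bigr)$ — in order to establish the identities $A_\lambda F_1 G_3 A_\lambda\simeq A_\lambda+UF_1G_3$ and $G_3A_\lambda F_1\simeq 1$ that close the argument. This analytic ingredient is entirely absent from your plan, and without it the purely algebraic relation-chasing you describe cannot reach the conclusion: relations (R7)--(R12) alone do not tell you anything about $F_1G_3$ or $G_3F_1$.

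So the high-level plan (decompose into handle/stabilization/relative-homology pieces and simplify with (R1)--(R12)) is sound and matches the paper's, but the execution as written would stall: you need (i) the correct trivalent graph action maps with their $U\Phi$ correction, (ii) $\Phi^2\simeq 0$ and \eqref{eq:R4} to cancel those corrections, and (iii) the model disk count of Lemma~\ref{lem:diffmodelcomp} to evaluate the remaining handle compositions, none of which appear in your proposal.
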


Proposition~\ref{prop:maincomputation} readily implies \eqref{eq:OSconnsum}, in view of the formula \eqref{eq:disjunion} for disjoint unions.

\begin{remark}Proposition~\ref{prop:maincomputation} holds for all flavors of Heegaard Floer homology, whereas the K\"{u}nneth theorem proven by Ozsv\'{a}th and Szab\'{o} is stated only for the $-,\infty$ and hat flavors. One should note that, of course, $CF^+(Y_1\sqcup Y_2,w_1,w_2,\mathfrak{s}_1\sqcup \mathfrak{s}_2)$ is not the tensor product of $CF^+(Y_1,w_1,\mathfrak{s}_1)$ and $CF^+(Y_2,w_2,\mathfrak{s}_2)$.
\end{remark}

The proof of Proposition~\ref{prop:maincomputation} will be to explicitly compute $F^A\circ G^A$ and $G^A\circ F^A$ by computing the cobordism map associated to the composition of the two cobordisms.

A trick that we will repeatedly use is the effect of attempting to slide an edge of $\Gamma$ across another edge of $\Gamma$. To this effect, we have the following.

\begin{lemma}\label{lem:replacetwoarcswithconcatenation}Suppose that $w,w'$ and $w''$ are distinct basepoints in $Y^3$, and that $e_1$ is an edge from $w$ to $w'$ and $e_2$ is an edge from $w'$ to $w''$. Let $e_1* e_2$ denote the concatenation of $e_1$ and $e_2$. Then, we have that
\[A_{e_1}A_{e_2}=A_{e_1*e_2}A_{e_2}+U=A_{e_1}A_{e_1*e_2}+U,\] where $*$ denotes concatenation of arcs.
\end{lemma}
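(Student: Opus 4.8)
The plan is to compute $A_{e_1}A_{e_2}$ directly from the disk-counting formula for the relative homology maps, the same way the telescoping identity in Subsection~\ref{subsec:relhomologymaps} (relation \eqref{eq:R3}) was obtained. Recall that for a path $\lambda$ the map $A_\lambda$ counts Maslov index one disks weighted by $a(\lambda,\phi)$, the total change in the multiplicity of $\phi$ across the $\alpha$-curves as one traverses $\lambda$. The key observation is additivity of this weight under concatenation of paths: $a(e_1 * e_2, \phi) = a(e_1,\phi) + a(e_2,\phi)$. This is what I would establish first (it is essentially immediate, since ``sum of changes across $\alpha$-curves'' is additive over subdivisions of the arc; one only has to arrange that both $e_1$ and $e_2$ are homotoped into the Heegaard surface compatibly, which is permissible since $A_\lambda$ depends only on the relative homology class of $\lambda$).

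Next, I would write out $A_{e_1}\circ A_{e_2}$ and compare it to $A_{e_1 * e_2}\circ A_{e_2}$. The composite $A_{e_1}A_{e_2}$ counts pairs of disks $(\phi_1,\phi_2)$ forming a broken flowline, weighted by $a(e_1,\phi_1)a(e_2,\phi_2)$, together with the $U$-power $U^{n_{\ws}(\phi_1)+n_{\ws}(\phi_2)}$. Using $a(e_1,\phi_1)=a(e_1*e_2,\phi_1)-a(e_2,\phi_1)$ from additivity (and noting that over $\Z_2$ all signs are irrelevant, so $a(e_1,\phi_1)=a(e_1*e_2,\phi_1)+a(e_2,\phi_1)$), the composite splits as $A_{e_1*e_2}A_{e_2}$ plus an error term $\sum a(e_2,\phi_1)a(e_2,\phi_2)(\cdots)$, i.e.\ essentially $A_{e_2}\circ A_{e_2} = A_{e_2}^2$. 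By relation \eqref{eq:R2}, $A_{e_2}^2 \simeq U$; in fact I would aim for the stronger statement that the identity $A_{e_1}A_{e_2}=A_{e_1*e_2}A_{e_2}+U$ holds on the nose (not merely up to homotopy), since relation \eqref{eq:R2} is proven in \cite[Lemma 5.11]{Zemke2} at the chain level when the path is put in a standard position — alternatively, one can simply state the lemma with $\simeq$ in place of $=$ for the $U$ term and that is all that is used downstream. The symmetric identity $A_{e_1}A_{e_2}=A_{e_1}A_{e_1*e_2}+U$ follows the same way, pulling the substitution $a(e_2,\phi_2)=a(e_1*e_2,\phi_2)+a(e_1,\phi_2)$ through the second factor instead.

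The main obstacle, such as it is, is bookkeeping rather than conceptual: one must be careful that the three maps $A_{e_1}$, $A_{e_2}$, $A_{e_1*e_2}$ are all computed on the \emph{same} Heegaard diagram, with $e_1$, $e_2$ and their concatenation all simultaneously isotoped into $\Sigma$ in a compatible way, so that the weights genuinely add disk-class by disk-class; this is exactly the kind of ``attempting to slide an edge of $\Gamma$ across another edge'' maneuver described in the paragraph preceding the lemma, and it is handled by the naturality/invariance statements for $A_\lambda$ already recorded in Subsection~\ref{subsec:relhomologymaps}. A secondary subtlety is the precise role of the $U$-exponents in the error term: the broken-disk count for $A_{e_2}^2$ has total area $n_{\ws}(\phi_1)+n_{\ws}(\phi_2)$, matching the $U$-power appearing in $A_{e_1}A_{e_2}$, so no grading shift intervenes and one indeed gets the clean ``$+\,U$'' on the right. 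Once additivity of $a(\lambda,\phi)$ is in hand, the rest is a one-line manipulation.
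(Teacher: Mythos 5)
Your argument is correct and is essentially the paper's: both hinge on the operator identity $A_{e_1*e_2}=A_{e_1}+A_{e_2}$ (which you derive from additivity of $a(\lambda,\phi)$ under concatenation, and the paper cites as holding ``by construction''), followed by $A_{e_2}^2\simeq U$ from relation \eqref{eq:R2}. The paper simply performs the substitution at the operator level in one line rather than at the level of weighted disk counts, and --- as you correctly flag --- states the conclusion with $=$ where $\simeq$ is all that is actually proven there or needed downstream.
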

\begin{proof} By construction we have $A_{e_1*e_2}=A_{e_1}+A_{e_2}$. By \eqref{eq:R2} we have 
\[A_{e_1}A_{e_1}\simeq A_{e_2}A_{e_2}\simeq U.\] Hence we have 
\[A_{e_1*e_2}A_{e_2}=(A_{e_1}+A_{e_2})A_{e_2}=A_{e_1}A_{e_2}+A_{e_2}^2=A_{e_1}A_{e_2}+U.\] The first equality of the lemma now follows. The second follows in an identical manner.
\end{proof}

We need to perform two computations. Specifically, we need to come up with a convenient expression for the graph action maps (as in Section~\ref{sec:GraphAction}) induced by the two flowgraphs $\mathcal{G}_F$ and $\mathcal{G}_G$ shown in Figure~\ref{fig::8}.

\begin{figure}[ht!]
\centering
\includegraphics[scale=1.2]{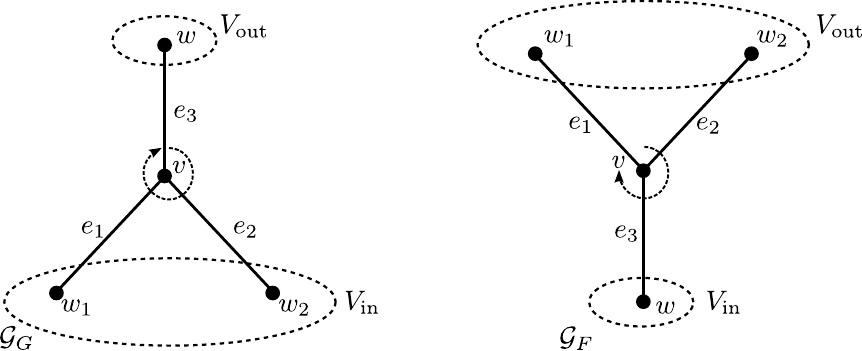}
\caption{Two flowgraphs $\mathcal{G}_G$ and $\mathcal{G}_F$, with vertices and edges labelled, which we consider in Lemmas~\ref{lem:graphactioncomp1} and \ref{lem:graphactioncomp2}.\label{fig::8}}
\end{figure}

\begin{lemma}\label{lem:graphactioncomp1}Suppose $\mathcal{G}_G=(\Gamma, V_{\text{in}}=\{w_1,w_2\}, V_{\text{out}}=\{w\})$ is the cyclically ordered flow graph shown in Figure~\ref{fig::8}, embedded in a three-manifold $Y$. Let $\phi$ be the diffeomorphism of $Y$ obtained by moving the basepoint $w_1$ to $w$ along the path $e_3*e_1$. Then
\[A_{\mathcal{G}_G}\simeq \phi_* S^-_{w_2}A_{e_2*e_1}+U\phi_*\Phi_{w_1} S^-_{w_2}.\]
\end{lemma}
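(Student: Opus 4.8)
The strategy is to unwind the definition of the graph action map $A_{\mathcal{G}_G}$ from \cite[Section 7]{Zemke2} by choosing a convenient Cerf decomposition of the flow graph $\mathcal{G}_G$, writing $A_{\mathcal{G}_G}$ as the corresponding composition of free-stabilization maps $S^\pm_\bullet$ and relative homology maps $A_\bullet$, and then simplifying that composition using the relations \eqref{eq:R1}--\eqref{eq:R12}. Concretely, $\mathcal{G}_G$ has input vertices $w_1, w_2$, output vertex $w$, an internal trivalent vertex (call it $v$) at which the edges are cyclically ordered, and edges $e_1, e_2, e_3$ as labelled in Figure~\ref{fig::8} (with $e_1$ the edge from $v$ to $w$, and $e_2, e_3$ the edges joining $v$ to $w_1, w_2$ respectively, according to the cyclic order). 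First I would decompose $\mathcal{G}_G$ into an elementary flow graph at the trivalent vertex followed by the appropriate free-stabilizations, and read off
\[ A_{\mathcal{G}_G} \simeq S^+_{w} \, A_{e_1} A_{e_3} \, A_{e_2} \, S^-_{w_1} S^-_{w_2}, \]
or whatever precise ordering the cyclic-order convention of \cite[Section 7]{Zemke2} dictates at $v$; pinning down this ordering carefully is the first thing to get right.

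Next I would absorb the $S^+_w$ and one of the relative homology maps along $e_1, e_3$ into a diffeomorphism map. Using relation \eqref{eq:R8}, $S^-_w A_\lambda S^+_{w'} \simeq \phi_*$ for a finger move along $\lambda$, applied in reverse (or its analogue for $S^+$ on the far side), together with the concatenation identity of Lemma~\ref{lem:replacetwoarcswithconcatenation}, I can rewrite the product of $A_{e_1}$ and $A_{e_3}$ as a term built from $A_{e_3 * e_1}$ (equivalently $A_{e_1 * e_3}$, up to the $+U$ error from \eqref{eq:R2}) times a diffeomorphism map $\phi_*$, where $\phi$ moves $w_1$ to $w$ along $e_3 * e_1$ — this is exactly where the claimed $\phi$ enters. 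The commutation relations \eqref{eq:R9}, \eqref{eq:R10}, \eqref{eq:R11} let me move $S^-_{w_2}$ and $\Phi_{w_1}$ past the maps they don't interact with, so that the $\phi_*$, the surviving $S^-_{w_2}$, and the relative homology maps along the remaining edges line up into $\phi_* S^-_{w_2} A_{e_2 * e_1}$. The terms produced by the $+U$ corrections from Lemma~\ref{lem:replacetwoarcswithconcatenation} and \eqref{eq:R2} collect, after applying \eqref{eq:R6} (or rather its internal ingredient $\Phi_{w_1} \simeq S^+_{w_1} S^-_{w_1}$ — but $w_1$ is being destabilized, so more care is needed) and \eqref{eq:R4}, into the single term $U \phi_* \Phi_{w_1} S^-_{w_2}$.

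The main obstacle, and where the bookkeeping is most delicate, is tracking the cyclic-order convention at the trivalent vertex $v$ and its effect on the order in which the relative homology maps $A_{e_1}, A_{e_2}, A_{e_3}$ appear — since these maps do not commute in general, getting the order wrong changes the answer — and then correctly identifying the ``leftover'' $U$-multiple term: one must verify that the accumulated error from repeated use of \eqref{eq:R2} and Lemma~\ref{lem:replacetwoarcswithconcatenation} is precisely $U\phi_*\Phi_{w_1}S^-_{w_2}$ and not some other $U$-scaled combination of free-stabilization and relative homology maps. Here I expect to use \eqref{eq:R4} in the form $A_\lambda \Phi_w + \Phi_w A_\lambda \simeq 1$ to recognize certain products as $\Phi_{w_1}$ up to chain homotopy, and \eqref{eq:R3} to trade $A + B$-type expressions for $U\Phi$-type expressions when needed. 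Throughout, all identities are only claimed up to $\Z_2[U]$-equivariant chain homotopy, consistent with the $\simeq$ in the statement, so I need not worry about strict equality — only that each relation invoked is available in the chain-homotopy category, which \eqref{eq:R1}--\eqref{eq:R12} guarantee.
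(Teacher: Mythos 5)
Your high-level strategy is the right one and matches the paper's: unwind the definition of the graph action map into free-stabilizations and relative homology maps, push a pair of free-stabilizations and a relative homology map through \eqref{eq:R8} to manufacture $\phi_*$, and collect the $U$-correction terms from Lemma~\ref{lem:replacetwoarcswithconcatenation} into $U\phi_*\Phi_{w_1}S^-_{w_2}$. You also correctly flag which relations do the work: Lemma~\ref{lem:replacetwoarcswithconcatenation}, \eqref{eq:R6}, \eqref{eq:R7}, \eqref{eq:R8}, and the commutation relations \eqref{eq:R9}--\eqref{eq:R11}.

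However, your starting formula for the graph action map is wrong, and this is not a minor bookkeeping slip. For the elementary (strong ribbon) flow graph of type 2 in \cite[Section 7]{Zemke2}, the convention is: first \emph{positive} free-stabilize at the outgoing vertices \emph{and at the interior vertex}, then apply the relative homology maps in the cyclic order at the interior vertex, then \emph{negative} free-stabilize at the incoming vertices and the interior vertex. So the correct starting point is
\[A_{\mathcal{G}_G}\simeq S^-_{w_1w_2v}\,A_{e_3}A_{e_1}A_{e_2}\,S^+_{vw},\]
reading composition right to left. Your proposal has the $S^+$ and $S^-$ factors on the wrong sides and, more seriously, omits the free-stabilizations at the interior vertex $v$ entirely. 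Without $S^+_v$ and $S^-_v$ the relative homology maps $A_{e_i}$ along edges incident to $v$ are not even defined (each $A_\lambda$ needs both endpoints of $\lambda$ to be basepoints), so the computation cannot proceed. The pair $S^+_v, S^-_v$ is also essential to the final manipulation: after Lemma~\ref{lem:replacetwoarcswithconcatenation} splits off the $U$-term, the main term is rearranged using \eqref{eq:R9}, \eqref{eq:R11} so that $S^-_v A_{e_3} S^+_w$ and $S^-_{w_1} A_{e_1} S^+_v$ become diffeomorphism maps via \eqref{eq:R8}, while in the $U$-term one must insert $1\simeq S^-_{w_1}A_{e_3*e_1}S^+_{w_1}$ from \eqref{eq:R7} and then use \eqref{eq:R6} to produce $\Phi_{w_1}$ — steps you gesture at (``more care is needed'') but do not carry out. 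Fixing the initial decomposition and then executing the chain of manipulations concretely, as in the paper, is what is needed; your outline, as written, would not yield a valid proof.
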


\begin{proof}In the terminology of \cite[Section 7]{Zemke2}, the graph $\mathcal{G}_C$ is an elementary strong ribbon flow graph of type 2. As such, the corresponding graph action map is obtained as the composition of positive free-stabilizations at the outgoing vertices and interior vertices, followed by the relative homology maps for the edges (in the cyclic order specified at the interior vertex), followed by negative free-stabilizations at the incoming vertices and interior vertices:
\[A_{\mathcal{G}_G}\simeq S^-_{w_1w_2v} A_{e_3}A_{e_1}A_{e_2} S^+_{vw}.\]
 We perform the following manipulation:
 \begin{align*}A_{\mathcal{G}_G}&\simeq S^-_{w_1w_2v} A_{e_3}A_{e_1}A_{e_2} S^+_{vw}&&\\
 &\simeq S^-_{w_1w_2v} A_{e_3}A_{e_1}A_{e_2*e_1} S^+_{vw}+U S_{w_1w_2v}^- A_{e_3} S^+_{vw}&& (\text{Lem.  \ref{lem:replacetwoarcswithconcatenation}})\\
 &\simeq (S^-_{w_1v} A_{e_3}A_{e_1} S^+_{vw}) (S^-_{w_2}A_{e_2*e_1})+U S_{w_1w_2}^- S^+_{w}&&\eqref{eq:R11}, \eqref{eq:R7}\\
 &\simeq (S^-_{v} A_{e_3}S^+_w S^-_{w_1}A_{e_1} S^+_{v}) (S^-_{w_2}A_{e_2*e_1})+U S_{w_1w_2}^- S^+_{w}&&\eqref{eq:R9}, \eqref{eq:R11}\\
 &\simeq \phi_* S^-_{w_2}A_{e_2*e_1}+U S^-_{w_1}S^+_{w}S_{w_2}^-&&\eqref{eq:R8}, \eqref{eq:R9}\\
 &\simeq \phi_* S^-_{w_2}A_{e_2*e_1}+U(S^-_{w_1} A_{e_3* e_1} S^+_{w_1}) S^-_{w_1}S^+_{w}S_{w_2}^-&&\eqref{eq:R7}\\
 &\simeq \phi_* S^-_{w_2}A_{e_2*e_1}+US^-_{w_1} A_{e_3* e_1} \Phi_{w_1} S^+_w S_{w_2}^-&&\eqref{eq:R6}\\
 &\simeq \phi_* S^-_{w_2}A_{e_2*e_1}+U(S^-_{w_1} A_{e_3* e_1} S^+_w)\Phi_{w_1}  S_{w_2}^-&&\eqref{eq:R10}\\
 &\simeq \phi_* S^-_{w_2}A_{e_2*e_1}+U\phi_*\Phi_{w_1}  S_{w_2}^-,&&\eqref{eq:R8}
 \end{align*}
as we wanted.

\end{proof}

\begin{lemma}\label{lem:graphactioncomp2} Suppose $\mathcal{G}_F=(\Gamma, V_{\text{in}}=\{w\},V_{\text{out}}=\{w_1,w_2\})$ is the flowgraph shown in Figure~\ref{fig::8}, embedded in a three-manifold $Y$. Let $\phi$ be the diffeomorphism of $Y$ obtained by moving $w$ to $w_1$ along the path $e_1* e_3$. Then
\[A_{\mathcal{G}_F}\simeq A_{e_2* e_1} S_{w_2}^+ \phi_*+U S_{w_2}^+\Phi_{w_1}\phi_*.\]
\end{lemma}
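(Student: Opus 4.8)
The plan is to mimic exactly the computation in Lemma~\ref{lem:graphactioncomp1}, but now for the flowgraph $\mathcal{G}_F$ that goes in the ``opposite direction,'' from the single incoming vertex $w$ to the two outgoing vertices $w_1, w_2$. As in the proof of the previous lemma, I would begin by identifying $\mathcal{G}_F$ as an elementary strong ribbon flow graph in the sense of \cite[Section 7]{Zemke2}, so that its graph action map is computed as a composition of positive free-stabilizations at the outgoing and interior vertices, then the relative homology maps for the edges (in the cyclic order at the interior vertex, read now in the opposite sense to the $\mathcal{G}_G$ case), then negative free-stabilizations at the incoming and interior vertices. Writing $v$ for the interior vertex, this produces an expression of the form
\[A_{\mathcal{G}_F}\simeq S^-_{wv}\, A_{e_2} A_{e_1} A_{e_3}\, S^+_{w_1 w_2 v},\]
up to fixing the precise order of the $A$'s according to the cyclic ordering in Figure~\ref{fig::8}.

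From there the argument is essentially the ``transpose'' of the string of manipulations in Lemma~\ref{lem:graphactioncomp1}. First I would use Lemma~\ref{lem:replacetwoarcswithconcatenation} to replace the product $A_{e_2}A_{e_1}$ (or whichever adjacent pair shares a vertex) by $A_{e_2*e_1}A_{e_1}$ plus a $U$-correction term, which peels off the split term. Then I would repeatedly apply the relations \eqref{eq:R7}--\eqref{eq:R11}: relation \eqref{eq:R11} to move free-stabilization maps past relative homology maps at basepoints not on the relevant arc, \eqref{eq:R9} to commute free-stabilizations, \eqref{eq:R7} to absorb an $S^-_w A_\lambda S^+_w$ into the identity, and \eqref{eq:R8} to recognize an $S^-_w A_{e_1*e_3} S^+_{w_1}$ as the diffeomorphism map $\phi_*$ for the finger move taking $w$ to $w_1$ along $e_1*e_3$. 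For the $U$-term I would further use \eqref{eq:R6} to rewrite $S^+_{w_1}S^-_{w_1}$ as $\Phi_{w_1}$ and \eqref{eq:R10} to commute $\Phi_{w_1}$ past a free-stabilization at a different basepoint, exactly as in the $\mathcal{G}_G$ computation. Assembling everything yields
\[A_{\mathcal{G}_F}\simeq A_{e_2*e_1}\, S^+_{w_2}\,\phi_* + U\, S^+_{w_2}\,\Phi_{w_1}\,\phi_*,\]
which is the claimed formula.

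The only real subtlety — and the step I would be most careful about — is bookkeeping the cyclic ordering at the trivalent vertex and the input/output roles of $w,w_1,w_2$, so that the order of the $A_{e_i}$'s in the initial expression, the choice of which pair to concatenate, and the final identification of $\phi_*$ all come out consistently with Figure~\ref{fig::8}. This is what makes $\mathcal{G}_F$ genuinely the ``reverse'' of $\mathcal{G}_G$ rather than an independent computation; morally one could also derive this lemma from Lemma~\ref{lem:graphactioncomp1} by reversing the cobordism (turning $1$-handle into $3$-handle, $S^\pm$ into $S^\mp$), but spelling out the string of relation-applications directly is cleaner and matches the style of the preceding proof. Everything else is a routine sequence of applications of the relations \eqref{eq:R6}--\eqref{eq:R11} already collected in the previous section, with no new analytic input required.
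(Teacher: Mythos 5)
Your proposal matches the paper's own proof, which simply observes that the computation is analogous to that of Lemma~\ref{lem:graphactioncomp1} and starts from the same expression $S^-_{wv} A_{e_2} A_{e_1} A_{e_3} S^+_{w_1w_2v}$, then applies Lemma~\ref{lem:replacetwoarcswithconcatenation} followed by the relations \eqref{eq:R6}--\eqref{eq:R11} to arrive at the stated formula. You have identified the right decomposition of $\mathcal{G}_F$, the right sequence of relations, and the role each plays, so this is essentially the same argument.
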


\begin{proof}The proof is analogous to the proof of Lemma~\ref{lem:graphactioncomp1}. By definition, we have
\[A_{\mathcal{G}_F}\simeq S^-_{wv} A_{e_2} A_{e_1} A_{e_3} S^+_{w_1w_2v}.\] Performing the analogous manipulations yields
\begin{align*}
A_{\mathcal{G}_F}&\simeq S^-_{wv} A_{e_2} A_{e_1} A_{e_3} S^+_{w_1w_2v}&&\\
&\simeq S_{wv}^- A_{e_2* e_1} A_{e_1} A_{e_3} S_{w_1w_2v}^++U S_{wv}^- A_{e_3} S_{w_1w_2v}^+&&(\text{Lem. } \ref{lem:replacetwoarcswithconcatenation})\\
&\simeq A_{e_2* e_1}(S_{wv}^- A_{e_1} A_{e_3} S_{w_1v}^+)S_{w_2}^++U S_{w}^-S_{w_1w_2}^+&&\eqref{eq:R11},\eqref{eq:R7}\\
&\simeq A_{e_2* e_1}(S_{v}^- A_{e_1} S^+_{w_1}S^-_wA_{e_3} S_{v}^+)S_{w_2}^++U S_{w}^-S_{w_1w_2}^+&&\eqref{eq:R9},\eqref{eq:R11}\\
&\simeq A_{e_2* e_1} \phi_* S_{w_2}^++U S_{w}^-S_{w_1}^+S^+_{w_2}&&\eqref{eq:R8}, \eqref{eq:R9}\\
&\simeq A_{e_2* e_1}  S_{w_2}^+\phi_*+U S_{w}^-S_{w_1}^+S_{w_2}^+&&\\
&\simeq A_{e_2* e_1}  S_{w_2}^+\phi_*+U S_{w}^-S_{w_1}^+(S_{w_1}^- A_{e_3* e_1} S_{w_1}^+)S_{w_2}^+&& \eqref{eq:R7}\\
&\simeq A_{e_2* e_1}  S_{w_2}^+\phi_*+U S_{w}^-\Phi_{w_1} A_{e_3* e_1} S_{w_1}^+S_{w_2}^+&&\eqref{eq:R6}\\
&\simeq A_{e_2* e_1}  S_{w_2}^+\phi_*+U \Phi_{w_1} (S_{w}^-A_{e_3* e_1} S_{w_1}^+)S_{w_2}^+&&\eqref{eq:R10}\\
&\simeq A_{e_2* e_1}  S_{w_2}^+\phi_*+U \Phi_{w_1} \phi_* S_{w_2}^+&&\eqref{eq:R8}\\
&\simeq  A_{e_2* e_1}  S_{w_2}^+\phi_*+U \Phi_{w_1}  S_{w_2}^+\phi_*,&&
\end{align*}
as we wanted.
\end{proof}

As a final step towards proving Proposition~\ref{prop:maincomputation}, we perform a computation of the disks counted by the differential on a diagram for $(Y_1\# Y_2, w_1,w_2)$. Suppose that $(\Sigma_1,\alphas_1,\betas_1,w_1)$ and $(\Sigma_2,\alphas_2,\betas_2,w_2)$ are diagrams for $(Y_1,w_1)$ and $(Y_2,w_2)$ respectively. We form the connected sum $\Sigma_1\# S^2\# \Sigma_2$ with curves $\alphas_1\cup \{\alpha_0\}\cup \alphas_2$ and $\betas_1\cup \{\beta_0\}\cup \betas_2$, where $(S^2,\alpha_0,\beta_0,p_1,p_2)$ is the diagram for $(S^3,p_1,p_2)$ shown in Figure~\ref{fig::13}. 

The following lemma is phrased in terms of the ``cylindrical reformulation'' of Heegaard Floer homology developed by Lipshitz in \cite{LipshitzCyl}. In \cite[Section~13]{LipshitzCyl}, Lipshitz shows that if $(\Sigma,\mathbf{\alpha},\mathbf{\beta},w)$ is a singly pointed Heegaard diagram, then for appropriate choices of almost complex structures, there are isomorphisms between the moduli spaces of holomorphic disks mapping into $\Sym^g(\Sigma)$ with boundary values on the tori $\mathbb{T}_\alpha$ and $\mathbb{T}_\beta$, originally considered by Ozsv\'{a}th and Szab\'{o}, and certain moduli spaces of holomorphic curves mapping into $\Sigma\times [0,1]\times \R$, with boundary values on $\mathbf{\alphas}\times \{1\}\times \R$ and $\mathbf{\betas}\times \{0\}\times \R$. A similar correspondence holds for multi-pointed Heegaard diagrams.

\begin{figure}[ht!]
\centering
\includegraphics[scale=1.3]{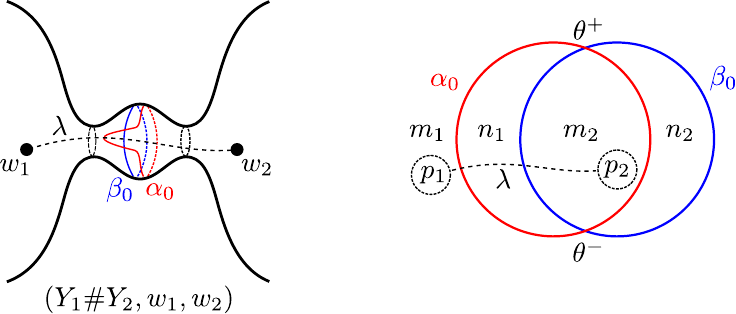}
\caption{On the left is a diagram $\widehat{\mathcal{H}}$ for $(Y_1\# Y_2, w_1,w_2)$ resulting from adding a one handle to $Y_1\sqcup Y_2$ near $w_1$ and $w_2$. On the right is a diagram for $(S^3, p_1,p_2)$, used for the 1-handle. The points $p_1$ and $p_2$ correspond to where the connected sum is taken.\label{fig::13}}
\end{figure}

\begin{lemma}\label{lem:diffmodelcomp}Suppose that $\mathcal{H}$ is a diagram for $(Y_1\sqcup Y_2,w_1,w_2)$ and $\widehat{\mathcal{H}}$ is the diagram formed by adding a 1-handle as in Figure~\ref{fig::13}. As Abelian groups, there is a decomposition 
\begin{equation}
\label{eq:thetas}
\CFm(\widehat{\mathcal{H}}) \cong \CFm(\mathcal{H})\langle \theta^+ \rangle \oplus  \CFm(\mathcal{H})\langle \theta^- \rangle.
\end{equation}
Then, for an almost complex structure on $(\Sigma_1 \# S^2\# \Sigma_2)\times [0,1]\times \R$ which is sufficiently stretched along the dashed circles in Figure~\ref{fig::13}, and with connected sum points $p_1,p_2\in S^2$ sufficiently close to $\alpha_0\setminus \beta_0$, there are identifications
\[\partial_{\widehat{\mathcal{H}}}=\begin{pmatrix} \partial_{\mathcal{H}}& U_{w_1}+U_{w_2}\\
0& \partial_{\mathcal{H}}
\end{pmatrix}\] and
\[A_\lambda=\begin{pmatrix}0 & U_{w_1}\\
1& 0
\end{pmatrix},\]
with respect to the decomposition \eqref{eq:thetas}. 
\end{lemma}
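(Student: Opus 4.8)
\textbf{Proof plan for Lemma~\ref{lem:diffmodelcomp}.}
The plan is to analyze the holomorphic curve count on the connected sum diagram $\widehat{\mathcal{H}}=(\Sigma_1\#S^2\#\Sigma_2,\alphas_1\cup\{\alpha_0\}\cup\alphas_2,\betas_1\cup\{\beta_0\}\cup\betas_2,w_1,w_2)$ using a neck-stretching argument along the two dashed circles in Figure~\ref{fig::13}, together with the cylindrical reformulation of Lipshitz \cite{LipshitzCyl}. First I would identify the generators: since $\alpha_0$ and $\beta_0$ meet in exactly two points, the intersection points of $\widehat{\mathcal{H}}$ are of the form $\x_1\times\theta^{\pm}\times\x_2$ where $\x_i\in\mathbb{T}_{\alpha_i}\cap\mathbb{T}_{\beta_i}$ and $\theta^\pm\in\alpha_0\cap\beta_0$ are the two intersection points, with $\theta^+$ the higher-graded one; this gives the direct sum decomposition \eqref{eq:thetas} as abelian groups (indeed as $\Z_2[U]$-modules), after matching gradings using Proposition~\ref{prop:gradingchangeformula} applied to the $1$-handle cobordism and its graph.

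The core of the argument is the standard ``sufficiently stretched neck'' analysis: when the almost complex structure is stretched along the dashed circles, any holomorphic curve in $\widehat{\mathcal{H}}$ of the relevant index degenerates into a broken curve, with pieces living in $\Sigma_1\times[0,1]\times\R$, in the $S^2$ region, and in $\Sigma_2\times[0,1]\times\R$, matched along Reeb chords at the cylindrical ends created by the neck. One shows that in the $S^2$ region, the only index-one curves contributing (for $p_1,p_2$ chosen close to $\alpha_0\setminus\beta_0$) are either the trivial one connecting $\theta^\pm$ to itself with no boundary degeneration, or the curves responsible for the off-diagonal entries. Concretely: a curve from $\theta^+$ to $\theta^-$ has a representative passing over exactly one of $p_1$ or $p_2$ (contributing $U_{w_1}$ or $U_{w_2}$ depending on which region of $S^2\setminus(\alpha_0\cup\beta_0)$ is covered), while there is no index-one curve from $\theta^-$ to $\theta^+$, nor one from $\theta^+$ to $\theta^+$ or $\theta^-$ to $\theta^-$ that covers a basepoint. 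This is the computation that yields the matrix for $\partial_{\widehat{\mathcal{H}}}$: the diagonal blocks are copies of $\partial_{\mathcal{H}}$ (curves supported in the $\Sigma_i$ factors, with $\theta^\pm$ carried along trivially), and the single off-diagonal block is $U_{w_1}+U_{w_2}$, coming from the two ways the $S^2$-piece can cover a basepoint while connecting $\theta^+$ to $\theta^-$. A Gromov compactness / gluing argument (as in \cite{LipshitzCyl} and the neck-stretching results there) guarantees that for a sufficiently stretched structure these broken configurations are in bijection with the honest curves, so the model is exact, not just ``up to homotopy.''

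The relative homology map $A_\lambda$ is handled by the same bookkeeping applied to the defining formula for $A_\lambda$ from Section~\ref{subsec:relhomologymaps}: one takes $\lambda$ to be the arc through the $1$-handle connecting $w_1$ to $w_2$, homotoped into $\Sigma_1\#S^2\#\Sigma_2$, and computes the weight $a(\lambda,\phi)$ (the change in $\alpha$-multiplicities along $\lambda$) for the curves surviving the neck stretch. The curve from $\theta^-$ to $\theta^+$ which is a disk supported in the $S^2$ region and covering neither basepoint (this is an index-one curve once we are counting $A_\lambda$-contributions rather than differential contributions, since the $\alpha_0$-crossing weight is nonzero) contributes the $1$ in the lower-left entry, and the analogous curve from $\theta^+$ to $\theta^-$ covering $p_1$ contributes $U_{w_1}$ in the upper-right entry; the diagonal entries vanish because $\lambda$ crosses $\alpha_0$ but the $\Sigma_i$-supported curves have $a(\lambda,\cdot)=0$.

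The main obstacle I anticipate is making the neck-stretching and transversality precise enough to get an exact (on-the-nose) identification of the differential rather than merely a chain homotopy: one must verify that no other Reeb-chord matchings or multiply-covered configurations in the $S^2$ neck region contribute, and that the almost complex structure can be chosen simultaneously stretched and generic. This is essentially the content of Lipshitz's degeneration arguments \cite[Section 13]{LipshitzCyl} adapted to the multi-pointed, connected-sum setting, and I would cite that machinery rather than redo it, focusing the written proof on the finite combinatorial check of which curves in the $S^2$ piece survive and which basepoints they cover.
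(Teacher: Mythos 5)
Your high-level strategy --- stretch the neck along the dashed circles, push the connect-sum points $p_1,p_2$ toward $\alpha_0\setminus\beta_0$, and use Lipshitz's cylindrical reformulation to reduce the differential count to a combinatorial check --- matches the paper's proof. The combinatorial check itself, however, which is the real content of the lemma, is carried out incorrectly.

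You have the off-diagonal directions reversed. With the $\theta^+$-sector listed first in \eqref{eq:thetas}, the nonzero off-diagonal entry of $\partial_{\widehat{\mathcal{H}}}$ is the upper-right one, mapping the $\theta^-$-sector to the $\theta^+$-sector; grading alone forces this, since $\partial$ drops degree by $1$, $\theta^+$ is the higher of the two generators, and $U$ drops degree by $2$, so $U_{w_i}\theta^+$ in $\partial\theta^-$ is the only consistent option (your assignment $\theta^+\to\theta^-$ would give a degree-$(-3)$ term in $\partial$). The same transposition error appears in your $A_\lambda$ matrix. You also misattribute the factors $U_{w_1},U_{w_2}$: the basepoints $w_1,w_2$ live in $\Sigma_1,\Sigma_2$, not in the $S^2$ piece (the points $p_1,p_2$ are connect-sum points, not basepoints of $\widehat{\mathcal{H}}$), so an $S^2$-supported curve cannot ``cover a basepoint.'' In the paper the upper-right entry $U_{w_1}+U_{w_2}$ arises from curves $\theta^-\to\theta^+$ whose $(\Sigma_1\sqcup\Sigma_2)$-piece is a Maslov-index-$2$ disk supported on exactly one of the $\Sigma_i$ and covering $w_i$; these are the disks underlying invariance under adding a basepoint, i.e.\ \cite[Proposition~6.5]{Links}, and the neck/point degeneration is what rules out contributions with nonconstant pieces on both $\Sigma_1$ and $\Sigma_2$. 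The true $\theta^+\to\theta^-$ count (the lower-left entry) is a pair of Maslov-index-$1$ disks with constant $\Sigma_i$-pieces supported on the $S^2$ region: they cancel mod $2$ in $\partial$, giving $0$, but only one has nonzero $\alpha_0$-crossing weight, giving $1$ for $A_\lambda$. This count is missing from your proposal. Finally, your remark that the relevant disk ``is an index-one curve once we are counting $A_\lambda$-contributions rather than differential contributions'' is conceptually off: $A_\lambda$ counts the same Maslov-index-$1$ disks as $\partial$; the two maps differ only through the weight $a(\lambda,\phi)$.
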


\begin{proof}Our proof involves explicitly counting holomorphic disks. Detailed proofs of similar results can be found in \cite{Links} and \cite{Zemke2}, so we only highlight the differences. If $\phi=\phi_1\# \phi_{S^2}\in \pi_2(\mathbf{x}\times x,\mathbf{y}\times y)$ is a relative homology class of disks on $\widehat{\mathcal{H}}$, with $\phi_1$ a class of disks on $\mathcal{H}$ and $\phi_{S^2}$ a class of disks on $(S^2,\alpha_0,\beta_0,p_1,p_2)$ then using \cite[Lemma 6.3]{Zemke2} as well as the combinatorial formula for the Maslov index, the Maslov index can be calculated as
\[\mu(\phi)=\mu(\phi_1)+\Delta(x,y),\] where $\Delta(x,y)$ denotes the drop in grading between $x$ and $y$ (here $x,y\in \alpha_0\cap \beta_0$). 

For disks with $\Delta(x,y)=1$, i.e. such that $x = \theta^+$ and $y=\theta^-$, by stretching the neck and extracting a weak limit to broken disks on $\mathcal{H}$ and broken disks on $(S^2,\alpha_0,\beta_0)$, we see that $\mu(\phi_1)=0$, and hence $\phi_0$ is a constant disk by transversality. There are two such disks which are counted, i.e. the ones with $n_1=1$ and $n_2=0$, or $n_2=1$ and $n_1=0$ (with $n_1,n_2,m_1$ and $m_2$ the labelling multiplicities in Figure~\ref{fig::13}). These contribute zero to the lower left entry of $\partial_{\widehat{\mathcal{H}}}$ (since they cancel, modulo 2), and 1 to the lower left entry of $A_{\lambda}$ (since only one is counted by $A_{\lambda}$).

We now consider the diagonal entries of $\partial_{\widehat{\mathcal{H}}}$, i.e. disks with $\Delta(x,y)=0$, so that $x = y=\theta^{\pm}$. For such disks, by stretching the neck and extracting a weak limit, we are forced to have $\mu(\phi_1)=1$, and hence $\phi_1$ must be represented by a genuine Maslov index one holomorphic strip. Further, since $\phi_1$ is a holomorphic strip on $(\Sigma_1\sqcup \Sigma_2)\times [0,1]\times \R$, we know that it can only be nonconstant in exactly one of the $\Sigma_i\times [0,1]\times \R$, because of the Maslov index. For the sake of demonstration, assume that $\phi_1$ is constant on $\Sigma_2\times [0,1]\times \R$ (so in particular $n_{w_2}(\phi_1)=0)$). The homology classes of disks which can have representatives along the diagonal components of $\partial_{\widehat{\mathcal{H}}}$ are thus of the form $\phi_1\#(n_1\cdot A+n_2 \cdot B)$, where $A$ is the Maslov index 2 $\alpha$-boundary degeneration with $n_{p_2}(A)=0$ on the diagram in Figure~\ref{fig::13}, and $B$ is the Maslov index 2 $\beta$-boundary degeneration with $n_{p_2}(B)=0$, and here $\phi_1$ is a homology class of disks on $(\Sigma_1\sqcup \Sigma_2)\times [0,1]\times \R$ which is supported on $\Sigma_1\times [0,1]\times \R$. Here also we have $n_1+n_2=m_1$. By a Gromov compactness argument (cf. \cite[Lemma 6.4]{Links}), the total number of holomorphic representatives over all disks of the form $\phi_1\#(n_1\cdot A+n_2 \cdot B)$ is equal (modulo 2) to the number of holomorphic representatives of $\phi_1$. Since the same holds in the case that $\phi_1$ is constant on $\Sigma_1$ instead of $\Sigma_2$, we thus know that the diagonal entries of $\partial_{\widehat{\mathcal{H}}}$ are just $\partial_{\mathcal{H}}$. By the argument from \cite[Lemma 14.3]{Zemke2}, by sending the connected sum points $p_1$ and $p_2$ towards points on $\alpha_0\setminus \beta_0$, we can ensure that the only holomorphic disks which have representatives are those of the form $\phi_1\# (m_1\cdot B+0\cdot A)$. Such disks contribute zero to the diagonal entries of $A_\lambda$ since they have zero change over the curve $\alpha_0$.

We now just need to compute the upper right entry of $\partial_{\widehat{\mathcal{H}}}$ and $A_{\lambda}$. Such disks have $\Delta(x,y)=-1$, so that $\x =\theta^-$ and $\y=\theta^+$. Thus, we are considering disks classes $\phi_1$ on $(\Sigma_1\sqcup \Sigma_2)\times [0,1]\times \R$ with $\mu(\phi_1)=2$. We first claim that as we degenerate the almost complex structure by sending $p_1$ and $p_2$ towards points on $\alpha_0\setminus\beta_0$, any limiting curves on $(\Sigma_1\sqcup \Sigma_2)\times[0,1]\times \R$ can only be supported on one of $\Sigma_1\times [0,1]\times \R$ or $\Sigma_2\times [0,1]\times \R$. If limiting curves existed which were supported on both, then by transversality, both must be Maslov index 1 disks. Since there are only finitely many disks on either $\Sigma_i\times [0,1]\times \R$ of Maslov index one, and the broken limits on $S^2\times [0,1]\times \R$ must satisfy a matching condition with respect to the disks on $(\Sigma_1\sqcup \Sigma_2)\times [0,1]\times \R$, by sending $p_1$ and $p_2$ to points on $\alpha_0\setminus \beta_0$, and applying an open mapping principle as in  \cite[Lemma 14.3]{Zemke2}, we can ensure that $n_1\ge m_2$ and $n_2\ge m_1$. On the other hand, one can compute that
\[\Delta(x,y)=n_1+n_2-m_1-m_2,\] which is $-1$ by assumption, thus resulting in a contradiction. Hence by degenerating the connected sum points, we can assume that $\phi_1$ is supported on exactly one of $\Sigma_1\times [0,1]\times \R$ or $\Sigma_2\times [0,1]\times \R$, and has Maslov index 2. Such disks are counted in \cite[Proposition 6.5]{Links}, in the context of invariance of Heegaard Floer homology under adding basepoints. For almost complex structures sufficiently stretched, with connected sum points sufficiently close to points on $\alpha_0\setminus \beta_0$, there are two possibilities for such classes which have holomorphic representatives. One possibility is for $m_1=1$ and $n_1=n_2=m_2=0$, and the other possibility is for $m_2=1$ and $n_1=n_2=m_1=0$. Each possibility admits one (modulo 2) representative, and the contribution to $\partial_{\widehat{\mathcal{H}}}$ is $U_{w_1}+U_{w_2}$ in the upper right corner, and the contribution to $A_{\lambda}$ is $U_{w_1}$ in the upper right corner.
\end{proof}

We can now give a proof of Proposition~\ref{prop:maincomputation}, which implies the K\"{u}nneth formula \eqref{eq:OSconnsum}.

\begin{proof}[Proof of Proposition~\ref{prop:maincomputation}] We will compute directly that $F^A\circ G^A\simeq \id$ and $G^A\circ F^A\simeq \id$.

We first compute $G^A\circ F^A$. Using the composition law for graph cobordisms, as well as the fact that the graph cobordism maps for 4-manifolds of the form $W=Y\times I$ with embedded graph are just the graph action maps (\cite[Theorem E]{Zemke2}, and \cite[Lemma 12.3]{Zemke2}, resp.), we observe that the composition $G^A\circ F^A$ can be written as a graph action map for a graph like $\mathcal{G}_F$, shown in Figure~\ref{fig::8}, followed by a 3-handle map, then a 1-handle map, and finally followed by a graph action map for a graph like $\mathcal{G}_G$, shown in Figure~\ref{fig::8}.  The graph action maps of the two relevant trivalent graphs were computed in Lemmas~\ref{lem:graphactioncomp1} and \ref{lem:graphactioncomp2}.

Notice that the formulas for $A_{\mathcal{G}_F}$ and $A_{\mathcal{G}_G}$ both involve a diffeomorphism $\phi$, which sends a basepoint $w$ to another basepoint $w_1$, or vice-versa. This diffeomorphism can be absorbed into the 1-handle or 3-handle cobordism map, and upon identifying $w$ with $w_1$, we thus have that
\[G^A\circ F^A=(S_{w_2}^- A_{\lambda}+U\Phi_{w_1}S_{w_2}^-)F_1 G_3(A_{\lambda} S_{w_2}^++U S_{w_2}^+\Phi_{w_1}),\] where $G_3$ denotes the 3-handle map, $F_1$ denotes the 1-handle map, and $\lambda$ is the path shown in Figure~\ref{fig::13}. Let $H$ denote the composition $H=F_1\circ G_3$. Note that $H$ is a cobordism map for a 3-handle followed by a 1-handle.

 Multiplying out the above equation, we get
\begin{align*}G^A\circ F^A& \simeq S_{w_2}^-A_\lambda H A_\lambda S_{w_2}^++US_{w_2}^- A_\lambda H S_{w_2}^+ \Phi_{w_1}\\
&\qquad +U \Phi_{w_1} S_{w_2}^- H A_\lambda S_{w_2}^++ U^2 \Phi_{w_1} S_{w_2}^- H S_{w_2}^+ \Phi_{w_1}\\
&\simeq S_{w_2}^- A_\lambda H A_{\lambda} S_{w_2}^++U^2 \Phi_{w_1}^2 S_{w_2}^- H S_{w_2}^+\\
&\simeq S_{w_2}^- A_\lambda H A_{\lambda} S_{w_2}^+\\
&\simeq S_{w_2}^- A_\lambda F_1G_3 A_\lambda S_{w_2}^+
\end{align*}

The second equality follows by noting that the middle two terms of the right side of the top equation are equal, since the $S_{w_2}^{\pm}$ terms commute with the handle attachment maps $F_1$ and $G_3$, and hence can be moved so that a $S_{w_2}^- A_\lambda S_{w_2}^+$ term appears. Upon applying relation \eqref{eq:R7} to replace these terms with $1$, the middle two terms on the right side of the first equality are equal and sum to zero. Similarly in the last expression of the first equation, the $\Phi_{w_1}$ term commutes with everything, so one can rearrange it to get the last term in the second equation. The third equality follows from the fact that $\Phi_{w_1}^2\simeq 0$. The final equality follows since $H=F_1G_3$ when there are enough basepoints to form the composition.

Hence to show that $G^A\circ F^A\simeq 1$, it is sufficient to show that
\[S_{w_2}^- A_\lambda F_1 G_3 A_\lambda S_{w_2}^+\simeq 1.\] To this end, we observe that in the matrix decomposition of higher and lower generators in the diagram of Figure~\ref{fig::13}, using Lemma~\ref{lem:diffmodelcomp} we have
\[A_\lambda F_1 G_3 A_\lambda= \begin{pmatrix}
0& U\\
1& 0\end{pmatrix} \begin{pmatrix} 1\\0\end{pmatrix} \begin{pmatrix}0& 1 \end{pmatrix}\begin{pmatrix}0& U\\
1& 0
\end{pmatrix}=\begin{pmatrix} 0&0\\
1& 0\end{pmatrix}=A_\lambda+U F_1G_3,\] and hence we know that $A_\lambda F_1G_3 A_\lambda\simeq A_\lambda+UF_1 G_3$.

We now compute that
\begin{align*}S_{w_2}^- A_\lambda F_1 G_3 A_\lambda S_{w_2}^+&\simeq S_{w_2}^- (A_\lambda+U F_1 G_3) S_{w_2}^+\\
&\simeq S_{w_2}^- A_\lambda S_{w_2}^+ +US_{w_2}^-H S_{w_2}^+\\
&\simeq 1+U S_{w_2}^- S_{w_2}^+ H\\
&\simeq 1, \end{align*} as we wanted. The first equality follows from the computation in the previous paragraph, the second equality follows from the fact that $H=F_1G_3$,  the third equality follows since the free-stabilization maps commute with cobordism map $H$, and the fourth equality follows since $S_{w_2}^- S_{w_2}^+\simeq 0$. This completes our proof that $G^A\circ F^A\simeq 1$.

We now proceed in a similar manner to compute $F^A\circ G^A$. Analogously to the previous computation, we have that
\[F^A\circ G^A\simeq G_{3}(A_{\lambda} S_{w_2}^++U S_{w_2}^+\Phi_{w_1})(S_{w_2}^- A_{\lambda}+U\Phi_{w_1}S_{w_2}^-) F_{1},\] where $\lambda$ is the path shown in Figure~\ref{fig::13}, and $F_1$ is the 1-handle map, and $G_3$ is the 3-handle map.

Multiplying out the above expression, we see that
\begin{align*}F^A\circ G^A&\simeq G_{3} (A_{\lambda} S_{w_2}^++U S_{w_2}^+\Phi_{w_1})(S_{w_2}^- A_{\lambda}+U\Phi_{w_1}S_{w_2}^-) F_{1}\\
&G_3( A_\lambda S_{w_2}^+S_{w_2}^- A_\lambda+UA_{\lambda} S_{w_2}^+\Phi_{w_1} S_{w_2}^-\\
&\qquad +U S_{w_2}^+\Phi_{w_1} S_{w_2}^- A_{\lambda}+ U^2S_{w_2}^+\Phi_{w_1}^2 S_{w_2}^-) F_1\\
&\simeq G_3( A_\lambda \Phi_{w_2} A_{\lambda}+U A_{\lambda} \Phi_{w_2} \Phi_{w_1}+U\Phi_{w_2}\Phi_{w_1} A_{\lambda}+0)F_1.
\end{align*}

We claim that the above composition simplifies to $G_3 A_\lambda F_1$. We compute each of the three terms in the above expression individually, and check that they add up to $G_3 A_\lambda F_1$. We note that using \eqref{eq:R4}, we have
\[G_3A_\lambda \Phi_{w_2} A_{\lambda} F_1\simeq G_3 A_\lambda F_1+ G_3 \Phi_{w_2} A_{\lambda}^2 F_1\simeq G_3 A_\lambda F_1+  U G_3 \Phi_{w_2} F_1\simeq G_3 A_\lambda F_1+ U \Phi_{w_2} G_3F_1\]
\[\simeq G_3A_\lambda F_1+0.\]

The other terms yield
\[UG_3( A_{\lambda} \Phi_{w_2} \Phi_{w_1}+\Phi_{w_2}\Phi_{w_1} A_{\lambda})F_1\simeq U G_3( \Phi_{w_1} +\Phi_{w_2} A_\lambda \Phi_{w_1} +\Phi_{w_2} A_{\lambda} \Phi_{w_1} +\Phi_{w_2})F_1\]
\[\simeq U G_3(\Phi_{w_1} +\Phi_{w_2}) F_1\simeq U \Phi_{w_1} G_3 F_1+ U\Phi_{w_2} G_3 F_1\simeq 0.\]

Adding these contributions together, we see that $F^A\circ G^A\simeq G_3 A_{\lambda} F_1$. Again using Lemma~\ref{lem:diffmodelcomp}, we see that with respect to a particular almost complex structure, we have
\[A_\lambda=\begin{pmatrix}0&U\\
1& 0\end{pmatrix},\] and hence we have that
\[G_3A_\lambda F_1=\begin{pmatrix}0&1
\end{pmatrix}\begin{pmatrix}0&U\\1&0\end{pmatrix} \begin{pmatrix}
1\\0\end{pmatrix}=1,\] completing the proof.
\end{proof}

\subsection{A formula for the involution on the connected sum}

In this section we prove Proposition~\ref{prop:involutiveconnectedsumform}, a formula for the connected sum on the involution in terms of the chain homotopy equivalences $F^A$ and $G^A$ described in the previous subsection. Proposition~\ref{prop:ConnSum} will follow readily from this.

As we discussed in Subsection~\ref{subsec:relhomologymaps}, the graph cobordism maps considered in \cite{Zemke2} use the relative homology maps $A_\lambda$ which count holomorphic disks with a multiplicity which depends on how they interact with the $\alphas$-curves. One could instead count disks based on how they interact with the $\betas$-curves to define maps $B_\lambda$, and use these to define similar graph cobordism maps. In this section, we need to consider both types of maps, and we will write
 \[F_{W,\Gamma,\mathfrak{s}}^B\] for the maps defined with the relative homology maps $B_\lambda$, and we will write $F_{W,\Gamma,\mathfrak{s}}^A$ for the maps defined with the maps $A_\lambda$, which were the ones considered in \cite{Zemke2}.

Let $F^B$ and $G^B$ be the graph cobordism maps for the graph cobordisms shown in Figure~\ref{fig::3}, using the maps $B_\lambda$.

Applying Proposition~\ref{prop:iotacommuteswithgraphcobordismmaps}, we immediately get the following:

\begin{proposition}The following diagrams commute up to $\Z_2[U]$-equivariant chain homotopy:

\[\begin{tikzcd} CF^\circ(Y_1\# Y_2,w,\mathfrak{s}_1\# \mathfrak{s}_2)\arrow{r}{F^A}\arrow{d}{\iota}&CF^\circ(Y_1\sqcup Y_2,\{w_1,w_2\},\mathfrak{s}_1\sqcup \mathfrak{s}_2)\arrow{d}{\iota}\\
CF^\circ(Y_1\# Y_2,w,\overline{\mathfrak{s}}_1\# \overline{\mathfrak{s}}_2)\arrow{r}{F^B}& CF^\circ(Y_1\sqcup Y_2,\{w_1,w_2\},\overline{\mathfrak{s}}_1\sqcup \overline{\mathfrak{s}}_2)
\end{tikzcd},\]

\[\begin{tikzcd}CF^\circ(Y_1\sqcup Y_2,\{w_1,w_2\},\mathfrak{s}_1\sqcup \mathfrak{s}_2)\arrow{d}{\iota}\arrow{r}{G^A}& CF^\circ(Y_1\# Y_2,w,\mathfrak{s}_1\# \mathfrak{s}_2)\arrow{d}{\iota}\\
 CF^\circ(Y_1\sqcup Y_2,\{w_1,w_2\},\overline{\mathfrak{s}}_1\sqcup \overline{\mathfrak{s}}_2)\arrow{r}{G^B}& CF^\circ(Y_1\# Y_2,w,\overline{\mathfrak{s}}_1\# \overline{\mathfrak{s}}_2).\end{tikzcd}\] All of the above complexes are colored so that all $U$ variables are identified. Furthermore, $F^A$ and $G^A$ are homotopy inverses, and $F^B$ and $G^B$ are also homotopy inverses of each other.
\end{proposition}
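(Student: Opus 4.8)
The plan is to read both commutative squares directly off Proposition~\ref{prop:iotacommuteswithgraphcobordismmaps}, and then to obtain the final sentence about homotopy inverses by re-running the proof of Proposition~\ref{prop:maincomputation} with the relative homology maps $B_\lambda$ in place of the maps $A_\lambda$.

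First I would fix the relevant cobordisms. Write $F^A=F^A_{W_F,\Gamma_F,\mathfrak{s}}$ and $F^B=F^B_{W_F,\Gamma_F,\mathfrak{s}}$, where $(W_F,\Gamma_F)$ is the cyclically ordered, trivially colored graph cobordism of Figure~\ref{fig::3} realizing $F$ (a single $3$-handle splitting $Y_1\#Y_2$, carrying an embedded trivalent graph), and $\mathfrak{s}$ is a $\spinc$ structure on $W_F$ restricting to $\mathfrak{s}_1\#\mathfrak{s}_2$ on one end and to $\mathfrak{s}_1\sqcup\mathfrak{s}_2$ on the other. Since $W_F$ is built from a single handle, such a $\spinc$ structure is unique, and since the $\mathfrak{s}_i$ come from spin structures it is self-conjugate, so $\overline{\mathfrak{s}}=\mathfrak{s}$; in particular $\mathfrak{s}_1\#\mathfrak{s}_2=\overline{\mathfrak{s}}_1\#\overline{\mathfrak{s}}_2$, the bottom-row complexes of the first square coincide with the top-row complexes, and the bottom edge is genuinely the map $F^B$. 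The same bookkeeping applies to $G$, using the $1$-handle cobordism $(W_G,\Gamma_G)$ of Figure~\ref{fig::3}. With this in hand, Proposition~\ref{prop:iotacommuteswithgraphcobordismmaps} applied to the trivially colored, cyclically ordered graph cobordism $(W_F,\Gamma_F)$ is precisely the assertion that the first square commutes up to $\Z_2[U]$-equivariant chain homotopy, and applied to $(W_G,\Gamma_G)$ it gives the second square.

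For the last sentence, $F^A$ and $G^A$ are $\Z_2[U]$-equivariant homotopy inverses by Proposition~\ref{prop:maincomputation}, so it remains to treat $F^B$ and $G^B$. Here I would observe that the entire proof of Proposition~\ref{prop:maincomputation}---Lemmas~\ref{lem:graphactioncomp1}, \ref{lem:graphactioncomp2}, \ref{lem:replacetwoarcswithconcatenation}, \ref{lem:diffmodelcomp}, and the ensuing algebra---uses only the handle maps, the free-stabilization maps $S_w^{\pm}$, the maps $\Phi_w$, and the relations \eqref{eq:R1}--\eqref{eq:R12}, with every instance of the relative homology map being $A_\lambda$. Each of these ingredients has a verbatim analogue with $B_\lambda$ replacing $A_\lambda$: the $B$-versions of \eqref{eq:R1}, \eqref{eq:R2}, \eqref{eq:R4}, \eqref{eq:R7}, \eqref{eq:R8}, \eqref{eq:R11} are recorded in Section~\ref{sec:graphs}, Lemma~\ref{lem:replacetwoarcswithconcatenation} holds for $B$ by the identical argument, and the $B$-analogue of Lemma~\ref{lem:diffmodelcomp} follows from the same neck-stretching computation, now recording multiplicities across the $\betas$-curves rather than the $\alphas$-curves (and, in the final step, sending the connected-sum points $p_1,p_2$ toward a point of $\beta_0\setminus\alpha_0$ rather than of $\alpha_0\setminus\beta_0$). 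Running the identical chain of chain homotopies then gives $F^B\circ G^B\simeq\id$ and $G^B\circ F^B\simeq\id$. (Alternatively, once both squares are known, one can argue formally: conjugating $F^A\circ G^A\simeq\id$ by $\iota$ and using that $\iota$ is a homotopy equivalence, since $\iota^2\simeq\id$, yields the same conclusion.) Everything here is routine given the apparatus of Sections~\ref{sec:graphs}--\ref{sec:proof}; the one point that requires genuine attention is the $B_\lambda$ version of Lemma~\ref{lem:diffmodelcomp}, i.e.\ checking that exchanging the roles of the $\alphas$- and $\betas$-curves in the model diagram of Figure~\ref{fig::13} leaves the relevant holomorphic disk counts unchanged, which it does by the evident symmetry of that diagram.
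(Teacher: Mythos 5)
Your proposal matches the paper's proof essentially verbatim: the two squares are obtained by specializing Proposition~\ref{prop:iotacommuteswithgraphcobordismmaps} to the graph cobordisms $(W_F,\Gamma_F)$ and $(W_G,\Gamma_G)$ of Figure~\ref{fig::3}, the $F^A$--$G^A$ inverse claim is Proposition~\ref{prop:maincomputation}, and the $F^B$--$G^B$ claim comes from replacing every $A_\lambda$ by $B_\lambda$ throughout that proof. Your extra remarks (the self-conjugacy of the $\spinc$ structure, and the alternative of conjugating $F^A\circ G^A\simeq\id$ by $\iota$) are correct but not needed, since the proposition and Proposition~\ref{prop:iotacommuteswithgraphcobordismmaps} are stated for arbitrary $\spinc$ structures.
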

\begin{proof}The fact that the diagrams commute up to chain homotopy follows from Lemma~\ref{prop:iotacommuteswithgraphcobordismmaps}. The fact that $F^A$ and  $G^A$ are equivariant homotopy inverses of each other follows from Lemma~\ref{prop:maincomputation}. That $F^B$ and $G^B$ are homotopy inverses follows from the proof of Lemma~\ref{prop:maincomputation} by simply changing all of the $A$'s to $B$'s.
\end{proof}

We now need to consider the difference between the cobordism maps $F^A_{W,\Gamma,\mathfrak{s}}$ and $F^B_{W,\Gamma,\mathfrak{s}}$. We have the following:

\begin{lemma}\label{lem:relatetypeAtypeB} If $(W,\Gamma)$ is a graph cobordism such that the cyclically ordered graph $\Gamma$ has only vertices of valence at most three. Then
\[F^B_{W,\Gamma,\mathfrak{s}}=F^A_{W,\overline{\Gamma},\mathfrak{s}},\] where $\overline{\Gamma}$ is the cyclically ordered graph which has the same underlying graph as $\Gamma$, but with all cyclic orders reversed.
\end{lemma}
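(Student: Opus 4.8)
## Proof Proposal

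The plan is to reduce the statement to a comparison of the elementary building blocks out of which the graph cobordism maps $F^A$ and $F^B$ are assembled, and to check the identity on each block. Recall from the construction in \cite{Zemke2} that, after a suitable decomposition (as used in the proof of Proposition~\ref{prop:gradingchangeformula}), the map $F^A_{W,\Gamma,\mathfrak{s}}$ is a composition of $1$-, $2$- and $3$-handle maps, $0$- and $4$-handle maps, free-stabilization maps $S_w^{\pm}$, and relative homology maps $A_\lambda$; the map $F^B_{W,\Gamma,\mathfrak{s}}$ is obtained from the \emph{same} decomposition by replacing each $A_\lambda$ with $B_\lambda$ and leaving every other constituent unchanged. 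The handle maps and free-stabilization maps therefore contribute identically to $F^A_{W,\Gamma,\mathfrak{s}}$ and $F^B_{W,\Gamma,\mathfrak{s}}$, so the entire content of the lemma lies in the relative homology maps and in how reversing the cyclic orders at the vertices of $\Gamma$ rearranges those maps.

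First I would isolate the role of the cyclic ordering. In the definition of the graph action map for an elementary flow graph, the ordering of the edges incident to an interior vertex of valence three dictates the order in which the relative homology maps $A_{e}$ are composed; reversing the cyclic order reverses that composition order. Since $A_{e_1*e_2} = A_{e_1} + A_{e_2}$ and likewise $B_{e_1*e_2} = B_{e_1} + B_{e_2}$, and since the endpoints of the relevant edges are the basepoints at which we free-stabilize (so that relation~\eqref{eq:R7}, \eqref{eq:R8} apply identically in the $A$ and $B$ worlds), the algebraic manipulations performed in Lemmas~\ref{lem:graphactioncomp1} and \ref{lem:graphactioncomp2}, and more generally in \cite[Section~7]{Zemke2}, go through verbatim with $B$ in place of $A$. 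The one genuine input needed is the relation between $A_\lambda$ and $B_\lambda$ for a single edge: for an edge $e$ between two basepoints, the canonical involution $\eta$ exchanges the $\alphas$- and $\betas$-curves, and it is precisely this exchange — read now not through $\eta$ but through the hypothesis that $\Gamma$ has only vertices of valence at most three — that lets one match $F^B_{W,\Gamma,\mathfrak{s}}$ with $F^A_{W,\overline{\Gamma},\mathfrak{s}}$.

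Concretely, the key steps in order are: (1) fix a decomposition of $(W,\Gamma)$ as in the proof of Proposition~\ref{prop:gradingchangeformula}, so that $F^A_{W,\Gamma,\mathfrak{s}}$ and $F^B_{W,\Gamma,\mathfrak{s}}$ are written as the same word in handle maps, free-stabilizations, and relative homology maps, differing only in $A_\lambda$ versus $B_\lambda$; (2) observe that all constituents except the relative homology maps agree, so it suffices to treat the flow-graph pieces $(W_\Gamma,\Gamma_N)$, i.e.\ the graph action maps $A_\Gamma$ and $B_\Gamma$; (3) for each elementary flow graph in a Cerf decomposition of $\Gamma$ — necessarily one with interior vertices of valence at most three, by the hypothesis — verify directly from the defining composition in \cite[Section~7]{Zemke2} that replacing every $A_e$ by $B_e$ has the same effect as keeping $A_e$ but reversing the cyclic order at each vertex; (4) assemble these, using the additivity of the construction under concatenation of flow graphs and the composition law \cite[Theorem~E]{Zemke2}, noting that reversing the cyclic order of $\Gamma$ reverses it on each piece compatibly. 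Step (3) is the crux: for a trivalent vertex with incident edges $e_1, e_2, e_3$ in cyclic order, the graph action map involves the product $A_{e_{\sigma(1)}} A_{e_{\sigma(2)}} A_{e_{\sigma(3)}}$ in the specified order, and one must see — invoking relations \eqref{eq:R1}–\eqref{eq:R4} and the concatenation identity $A_{e*e'} = A_e + A_{e'}$ — that the $B$-version of this product equals the $A$-version taken in the opposite order; this is exactly parallel to the role of $\eta$ in Lemma~\ref{lem:relativehomologyandiota}.

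The main obstacle I anticipate is bookkeeping around the valence-three vertices: one must be careful that the correspondence "swap $A \leftrightarrow B$" versus "reverse cyclic order" is the correct one and not, say, a different rearrangement (such as reversing and also conjugating by a diffeomorphism), and that it is compatible with the free-stabilizations at the interior vertex that bracket the relative homology maps. The valence restriction in the hypothesis is precisely what makes this manageable: at a vertex of valence two there is nothing to reverse, and at a vertex of valence three the two cyclic orders are the only two possibilities, so there is a clean dichotomy. For higher valence the combinatorics of matching an edge-ordering reversal to the $A/B$ swap would be genuinely more delicate, which is why the lemma is stated only in the valence-$\le 3$ case — and that is the only case needed for the trivalent connected-sum graphs of Figure~\ref{fig::3}.
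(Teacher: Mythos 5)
Your reduction strategy matches the paper's: both pass to the graph action map via the composition law, then to elementary flow graphs via functoriality, dispose of the valence-$\le 2$ case, and isolate the trivalent vertex as the crux. But the crucial step (3) is asserted, not carried out, and the sketch of how it would go is misleading in a way that matters.

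The analogy to $\eta$ in Lemma~\ref{lem:relativehomologyandiota} is not the right mechanism here. That lemma is proved by a genuine conjugation: $A_\lambda\circ\eta=\eta\circ B_\lambda$ holds by inspection because $\eta$ literally swaps the $\alpha$- and $\beta$-curves. In the present lemma there is no conjugating map available --- you are comparing $F^A_{W,\Gamma,\s}$ and $F^B_{W,\Gamma,\s}$ on the \emph{same} diagram, and the identity $B_\Gamma = A_{\overline{\Gamma}}$ has to be extracted algebraically. The paper does this by computing two \emph{differences} and showing they coincide. First, expanding each $A_{e_i} = B_{e_i} + U\Phi_v + U\Phi_{w_i}$ via \eqref{eq:R3} inside $S^-_{wv} A_{e_2}A_{e_1}A_{e_3}S^+_{w_1w_2v}$ and systematically killing cross-terms using $\Phi_u = S_u^+S_u^-$ and $S_u^-S_u^+ = 0$ (together with \eqref{eq:R4}) yields $A_\Gamma \simeq B_\Gamma + US_w^-S_{w_1w_2}^+$. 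Second, the commutator relation $A_{\lambda_1}A_{\lambda_2} + A_{\lambda_2}A_{\lambda_1}\simeq U$ from \cite[Lemma 5.9]{Zemke2} (which, as you implicitly observe, follows from \eqref{eq:R2} and $A_{e_1*e_2}=A_{e_1}+A_{e_2}$) gives $A_{\overline{\Gamma}} \simeq A_\Gamma + US_w^-S_{w_1w_2}^+$. Equating the two correction terms yields $B_\Gamma = A_{\overline{\Gamma}}$. Your toolbox --- ``relations \eqref{eq:R1}--\eqref{eq:R4} and the concatenation identity'' --- is not quite adequate: you also need \eqref{eq:R6} and \eqref{eq:R12}, and more to the point, the two-sided comparison through the common correction term $US_w^-S_{w_1w_2}^+$ is the idea that actually closes the argument, and it does not appear in your sketch.

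Two smaller points. Appealing to Lemmas~\ref{lem:graphactioncomp1} and \ref{lem:graphactioncomp2} with $B$ in place of $A$ only reproduces the analogous formulas for $B_{\mathcal{G}_G}$ and $B_{\mathcal{G}_F}$; it does not by itself compare them to $A_{\overline{\mathcal{G}_G}}$ or $A_{\overline{\mathcal{G}_F}}$, so this is not a shortcut. And the paper reduces all four trivalent elementary flow graph types (Figure~\ref{fig::14}) to $\mathcal{G}_F$ by re-choosing the Cerf decomposition (Figure~\ref{fig::15}) before doing the computation; your proposal should either do the same or note that the calculation must be repeated for each type.
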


\begin{proof}By the composition law for graph cobordism maps, it is sufficient to show that the claim is true for the graph action map. Since the graph action map is functorial under composition, it is sufficient to show the claim for the elementary flow graphs described in \cite[Section 7]{Zemke2}; see Figure 8 in that paper. For elementary flowgraphs with no vertices of valence greater than two, using \eqref{eq:R3}, \eqref{eq:R6} and \eqref{eq:R12}, it is an easy computation that the type-$A$ graph action map and type-$B$ graph action map agree. Hence it is sufficient to show the claim for an elementary flow graph with a single trivalent vertex. There are four elementary flow graphs which have a trivalent vertex, as shown in Figure~\ref{fig::14}. Since the graph action map is invariant under the decomposition of the graph into elementary flow graphs, by changing the decomposition as for example in Figure~\ref{fig::15}, we can assume that the trivalent, elementary flow graph is the graph $\Gamma = \mathcal{G}_F$ from Figure~\ref{fig::8}.

\begin{figure}[ht!]
\centering
\includegraphics[scale=1.2]{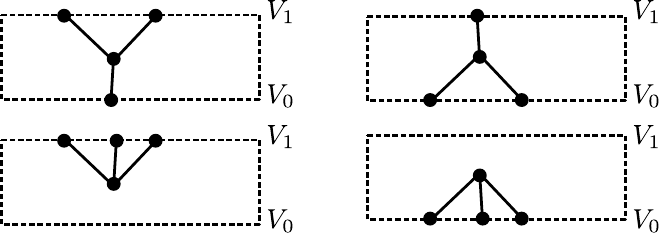}
\caption{The four types of elementary flow graphs with a trivalent vertex.\label{fig::14}}
\end{figure}
\begin{figure}[ht!]
\centering
\includegraphics[scale=1.2]{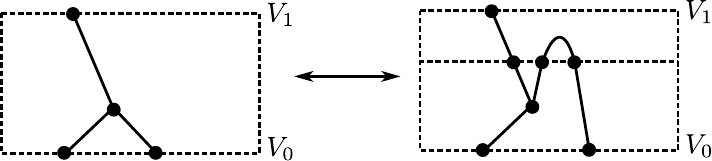}
\caption{A manipulation to change an elementary flow graph with a trivalent vertex, into a composition of different one of the trivalent elementary flowgraphs shown in Figure~\ref{fig::14}, composed with another elementary flowgraph which has no interior vertices.\label{fig::15}}
\end{figure}

 By definition, the graph action map for $\Gamma=\mathcal{G}_F$ is
\[A_\Gamma=S^-_{wv} A_{e_2}A_{e_1}A_{e_3} S_{w_1w_2v}^+.\] We compute using Relation \eqref{eq:R3}, and then repeatedly applying the relations $\Phi_u=S^+_u S_u^-$ and $S^-_u S_u^+=0$, to get
\begin{align*}A_\Gamma&\simeq S^-_{wv} A_{e_2}A_{e_1}A_{e_3} S_{w_1w_2v}^+\\
&\simeq S^-_{wv} (B_{e_2}+U\Phi_v+U\Phi_{w_2})(B_{e_1}+U\Phi_v+U\Phi_{w_1})(B_{e_3}+U\Phi_v+U \Phi_{w}) S_{w_1w_2v}^+\\
&\simeq S^-_{wv} (B_{e_2}+U\Phi_{w_2})(B_{e_1}+U\Phi_v)(B_{e_3}) S_{w_1w_2v}^+\\
&\simeq S_{wv}^- B_{e_2}B_{e_1}B_{e_3} S_{w_1w_2v}^+ +US_{wv}^-B_{e_2}\Phi_v B_{e_3}  S_{w_1w_2v}^+\\
&\qquad +US_{wv}^-  \Phi_{w_2}B_{e_1}B_{e_3}S_{w_1w_2v}^++U^2S_{wv}^- \Phi_{w_2} \Phi_v B_{e_3}S_{w_1w_2v}^+\\
&\simeq  B_\Gamma+U S_{wv}^-\Phi_v B_{e_2} B_{e_3}  S_{w_1w_2v}^++US_{wv}^- B_{e_3} S_{w_1w_2v}^+\\
&\qquad +US_{wv}^- B_{e_1}B_{e_3}\Phi_{w_2} S_{w_1w_2v}^++0\\
&\simeq B_\Gamma+US_{wv}^- B_{e_3} S_{w_1w_2v}^+\\
&\simeq B_\Gamma+U S_{w}^- S_{w_1w_2}^+.
\end{align*}
For example, to go from the second line above to the third, in the first parenthesis we eliminated the term $U\Phi_{v}$ using the relation $S^-_v \Phi_{v} = S^-_v S^+_v S^-_v = 0$, and we similarly eliminated terms from the other two parentheses. To eliminate the term $U^2S_{wv}^- \Phi_{w_2} \Phi_v B_{e_3}S_{w_1w_2v}^+$ from the fifth line, we used the relation \eqref{eq:R4} and the cancellations of $\Phi_{w_2}$ and $\Phi_v$ against the corresponding positive free-stabilizations. To obtain the last line from the previous one, we used the analogue of \eqref{eq:R7} for the $B$ maps.

Let us now consider the cyclically ordered graph $\overline{\Gamma}$ which is the graph $\Gamma$ with cyclic orders reversed. In \cite[Lemma 5.9]{Zemke2}, the commutator of two relative homology maps is computed. In the case that $\lambda_1$ and $\lambda_2$ share exactly one vertex, the formula reads $A_{\lambda_1}A_{\lambda_2}+A_{\lambda_2}A_{\lambda_1}\simeq U$. Using this, we see that
\begin{align*}A_{\overline{\Gamma}}&\simeq S^-_{wv} A_{e_1}A_{e_2}A_{e_3} S_{w_1w_2v}^+\\
&\simeq  S^-_{wv} A_{e_2}A_{e_1}A_{e_3} S_{w_1w_2v}^++U S^-_{wv} A_{e_3} S_{w_1w_2v}^+\\
&\simeq S^-_{wv} A_{e_2}A_{e_1}A_{e_3} S_{w_1w_2v}^++U S^-_{w} S_{w_1w_2}^+.
 \end{align*}
 
 Combining this with our computation of $B_\Gamma$, we thus conclude that $B_\Gamma=A_{\overline{\Gamma}}$, completing the proof.
\end{proof}

If $\Gamma$ is a graph, using \cite[Lemma 5.9]{Zemke2}, which computes the commutator of two relative homology maps, the difference between $A_\Gamma$ and $A_{\overline{\Gamma}}$ can be described by the relation shown in Figure~\ref{fig::2}.

\begin{figure}[ht!]
\centering
\includegraphics[scale=1.2]{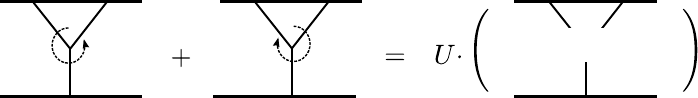}
\caption{The effect of changing the cyclic ordering of the edge adjacent to a trivalent vertex. This relation follows immediately from the commutator of two relative homology maps, computed in \cite[Lemma 5.9]{Zemke2}.\label{fig::2}}
\end{figure}

We are finally in position to compute the involution on connected sums.

\begin{proposition}\label{prop:involutiveconnectedsumform}If $G^A:CF^\circ(Y_1\sqcup Y_2,w_1,w_2,\mathfrak{s}_1\sqcup \mathfrak{s}_2)\to CF^\circ(Y_1\# Y_2, w, \mathfrak{s}_1\# \mathfrak{s}_2)$ denotes the homotopy equivalence defined earlier, then
\[\iota G^A+G^A (\iota_1\otimes \iota_2)\simeq U G^A (\Phi_1\otimes \Phi_2)(\iota_1\otimes \iota_2),\] where $\Phi_1=\Phi_{w_1}$ and $\Phi_2=\Phi_{w_2}$.
\end{proposition}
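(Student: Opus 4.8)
The plan is to combine the commutation relation from Proposition~\ref{prop:iotacommuteswithgraphcobordismmaps} with the comparison between type-$A$ and type-$B$ graph cobordism maps from Lemma~\ref{lem:relatetypeAtypeB}. First I would apply Proposition~\ref{prop:iotacommuteswithgraphcobordismmaps} to the graph cobordism $(W,\Gamma)$ defining $G^A$, which is a $3$-handle cobordism with the embedded trivalent graph of Figure~\ref{fig::3}. Since the spin structures are self-conjugate, $\overline{\mathfrak{s}}_i = \mathfrak{s}_i$, so the conclusion reads $\iota \circ G^A \simeq G^B \circ (\iota_1 \otimes \iota_2)$, using that the involution on the disjoint union $Y_1 \sqcup Y_2$ is $\iota_1 \otimes \iota_2$ with respect to the isomorphism \eqref{eq:disjunion}. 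So the entire problem reduces to computing $G^B$ in terms of $G^A$.

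Next I would invoke Lemma~\ref{lem:relatetypeAtypeB}: the trivalent graph $\Gamma$ for $G^A$ has all vertices of valence at most three, so $G^B = F^A_{W,\overline{\Gamma},\mathfrak{s}}$, i.e. $G^B$ is the type-$A$ graph cobordism map for the same underlying cobordism but with the cyclic order at the trivalent vertex reversed. Then I would use the relation depicted in Figure~\ref{fig::2} (equivalently, the commutator formula $A_{\lambda_1}A_{\lambda_2} + A_{\lambda_2}A_{\lambda_1} \simeq U$ from \cite[Lemma 5.9]{Zemke2}, as already deployed inside the proof of Lemma~\ref{lem:relatetypeAtypeB}) to express $A_{\overline{\Gamma}}$ in terms of $A_\Gamma$. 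Swapping the two edges $e_1, e_2$ adjacent to the trivalent vertex in the graph action map $A_{\mathcal{G}_G} \simeq S^-_{w_1 w_2 v} A_{e_3} A_{e_1} A_{e_2} S^+_{vw}$ picks up an extra term $U\,S^-_{w_1 w_2 v} A_{e_3} S^+_{vw}$; tracing this through the same manipulations as in Lemma~\ref{lem:graphactioncomp1} (using \eqref{eq:R6}, \eqref{eq:R10}, \eqref{eq:R7}, \eqref{eq:R8}), the correction term becomes $U \phi_* \Phi_{w_1} S^-_{w_2}$ composed appropriately with the handle map, so that $G^B \simeq G^A + U\,G^A \circ (\Phi_{w_1} S^-_{w_2} \text{-type correction})$. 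Carefully bookkeeping the free-stabilizations against the handle attachment maps, the net effect should simplify to $G^B \simeq G^A \circ (1 + U\,\Phi_1 \otimes \Phi_2)$, or equivalently $G^B \simeq G^A + U\, G^A(\Phi_1 \otimes \Phi_2)$, where $\Phi_i = \Phi_{w_i}$ acts on the $i$-th tensor factor. Substituting into $\iota \circ G^A \simeq G^B \circ (\iota_1 \otimes \iota_2)$ and rearranging gives $\iota G^A + G^A(\iota_1 \otimes \iota_2) \simeq U\, G^A(\Phi_1 \otimes \Phi_2)(\iota_1 \otimes \iota_2)$, as desired.

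The main obstacle I anticipate is the bookkeeping in the third step: one must verify that the correction term arising from reversing the cyclic order at the trivalent vertex — which a priori involves $\Phi_{w_1}$ and free-stabilizations $S^\pm_{w_2}$ living on the disjoint-union side — passes correctly through the $3$-handle cobordism map and lands as $U(\Phi_1 \otimes \Phi_2)$ under the identification \eqref{eq:disjunion}. This requires that $\Phi_{w_2}$ on the disjoint union restricts to $\Phi_2$ on the second tensor factor and that $S^+_{w_2} S^-_{w_2} \simeq \Phi_{w_2}$ via \eqref{eq:R6} applied after the handle map reintroduces enough basepoints; one should check that the commutations \eqref{eq:R10} and the fact that free-stabilization maps commute with handle attachment maps up to homotopy are enough to move everything into the stated form. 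This is essentially the same computation already carried out in the proof of Lemma~\ref{lem:relatetypeAtypeB}, where $A_\Gamma \simeq B_\Gamma + U S^-_w S^+_{w_1 w_2}$ was shown, so the work is to repackage that identity in the form involving $\Phi_1 \otimes \Phi_2$ rather than a raw composition of free-stabilizations, which is a matter of applying \eqref{eq:R6} in reverse once the basepoint $w$ has been separated back into $w_1$.
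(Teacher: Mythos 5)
Your proposal is correct and follows essentially the same route as the paper: first $\iota G^A \simeq G^B(\iota_1\otimes\iota_2)$ from Proposition~\ref{prop:iotacommuteswithgraphcobordismmaps}, then $G^B = G^A + U F^A_{W,\Gamma_0,\mathfrak{s}}$ from Lemma~\ref{lem:relatetypeAtypeB} together with the cyclic-reordering relation of Figure~\ref{fig::2}, and finally $F^A_{W,\Gamma_0,\mathfrak{s}} = G^A(\Phi_1\otimes\Phi_2)$. The only cosmetic difference is that where you propose to verify the last identity by retracing the algebraic manipulations of Lemma~\ref{lem:graphactioncomp1} with \eqref{eq:R6}--\eqref{eq:R10}, the paper uses the graph manipulation of Figure~\ref{fig::11} together with \eqref{eq:R6}, which is the same content.
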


\begin{proof} By Proposition~\ref{prop:iotacommuteswithgraphcobordismmaps}, we know that
\[\iota G^A\simeq G^B(\iota_1\otimes \iota_2).\] Using Lemma~\ref{lem:relatetypeAtypeB}, we know that $G^B=G^A_{\text{opp}}$, where $G^A_{\text{opp}}$ is the graph cobordism map for the graph cobordism with opposite cyclic orders. Using the relation of the graph action map shown in Figure~\ref{fig::2}, we see that
\[G^B=G^A+U F_{W,\Gamma_0,\mathfrak{s}}^A,\] where $(W,\Gamma_0,\mathfrak{s})$ is the graph cobordism with the same underlying 4-manifold as the one defined $G^A$ and $G^B$ (a 3-handle cobordism), but where $\Gamma_0$ is the graph obtained by removing a neighborhood of the trivalent vertex from the graph used to define $G^A$. Using the manipulation of graphs shown in Figure~\ref{fig::11}, together with the formula \eqref{eq:R6}, we see that
\[F_{W,\Gamma_0,\mathfrak{s}}^A=G^A(\Phi_1\otimes \Phi_2).\]

\begin{figure}[ht!]
\centering
\includegraphics[scale=1.2]{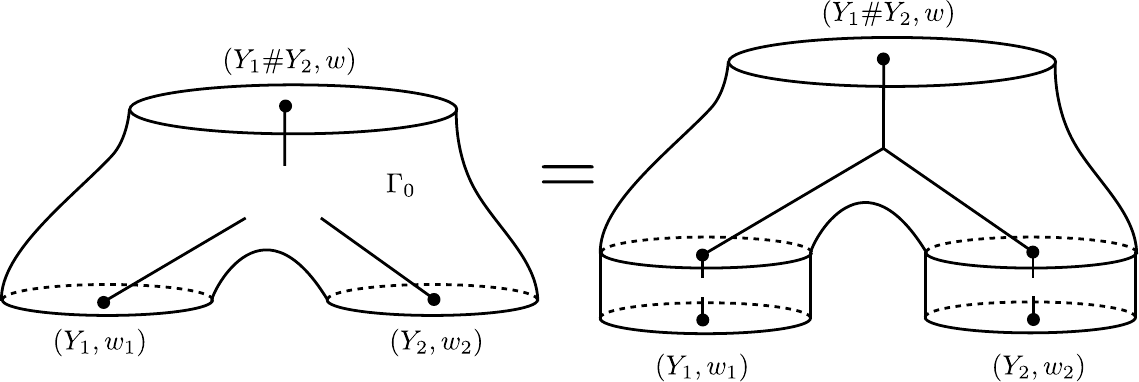}
\caption{A graph manipulation that shows that $F^A_{W,\Gamma_0,\mathfrak{s}}=G^A(\Phi_1\otimes \Phi_2)$. Note that adding trivial strands doesn't affect the graph cobordism map by \cite[Lemma 7.11]{Zemke2}.\label{fig::11}}
\end{figure}

Hence, putting these terms together, we see that
\[\iota G^A\simeq G^B(\iota_1\otimes \iota_2)\simeq G^A(\iota_1\otimes \iota_2)+UG^A(\Phi_1\otimes \Phi_2)(\iota_1\otimes \iota_2),\] completing the proof.
\end{proof}

\begin{proof}[Proof of Proposition~\ref{prop:ConnSum}]
Ignoring the absolute gradings, the theorem is a restatement of Proposition~\ref{prop:involutiveconnectedsumform}, combined with the isomorphism \eqref{eq:disjunion}. Using Proposition~\ref{prop:gradingchangeformula}, one calculates that the maps $F^A$ and $G^A$ have absolute grading change of zero. Furthermore, we have that \[CF^-(Y_1\sqcup Y_2,w_1,w_2,\s_1\sqcup \s_2)\cong CF^-(Y_1,w_1,\s_1)\otimes_{\Z_2[U]} CF^-(Y_2,w_2,\s_2)[-2],\] as absolutely graded $\Z_2[U]$-modules; cf. Remark~\ref{rem:absolute6}.
\end{proof}

\section{Proof of the main theorem}
\label{sec:nullchainhomotopy}
In this section we prove Theorem~\ref{thm:ConnSum}, the connected sum formula for involutive Heegaard Floer homology. We will deduce it from Proposition~\ref{prop:ConnSum}, by showing that the term $U(\Phi_1 \iota_1 \otimes \Phi_2 \iota_2)$ is $U$-equivariantly chain homotopic to zero. 

We start with a few algebraic results. The following lemma is well-known, but we include a proof for completeness.

\begin{lemma}[Classification of free, finitely generated chain complexes over a PID]\label{lem:classify-cc-overPID} If $(C,\partial)$ is a free, finitely generated chain complex over a PID, then $C$ is isomorphic to the direct sum of complexes of the form $R$ (with vanishing differential) or complexes of the form $(R\xrightarrow{\phantom{a}\times p\phantom{a}} R)$ for $p\in R$.
\end{lemma}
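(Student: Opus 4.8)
The plan is to argue by induction on the total rank $r(C) := \sum_n \rk_R C_n$ (which is finite since $C$ is finitely generated over $R$, and $C=\bigoplus_n C_n$ with each $C_n$ free and finitely generated), repeatedly splitting off an acyclic two-step summand by means of the Smith normal form theorem. If every differential $\partial_n$ vanishes, then $C$ is a finite direct sum of copies of $R$ and there is nothing to prove; otherwise fix a degree $n$ with $\partial_n \co C_n \to C_{n-1}$ nonzero.

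First I would apply Smith normal form to the single homomorphism $\partial_n$: there are bases $e_1,\dots,e_a$ of $C_n$ and $f_1,\dots,f_b$ of $C_{n-1}$, and nonzero $p_1,\dots,p_k \in R$ with $k \geq 1$, such that $\partial_n e_i = p_i f_i$ for $i \leq k$ and $\partial_n e_i = 0$ for $i > k$. Since $R$ is a domain this gives $\ker \partial_n = \langle e_{k+1},\dots,e_a\rangle$. Let $A \subseteq C$ be the subcomplex with $A_n = \langle e_1,\dots,e_k\rangle$, $A_{n-1} = \langle f_1,\dots,f_k\rangle$, $A_m = 0$ otherwise, and differential $e_i \mapsto p_i f_i$; it is a subcomplex because $\partial_{n-1}\partial_n = 0$ and integrality force $\partial_{n-1}f_i = 0$ for $i \leq k$, and evidently $A \cong \bigoplus_{i=1}^{k}\bigl(R \xrightarrow{\phantom{a}\times p_i\phantom{a}} R\bigr)$, which is of the desired shape. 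The crucial point is that $A$ is a direct \emph{summand} of $C$ as a chain complex: take $C'$ spanned by $e_{k+1},\dots,e_a$ in degree $n$, by $f_{k+1},\dots,f_b$ in degree $n-1$, and by all of $C_m$ in every other degree, so that $C_m = A_m \oplus C'_m$ for all $m$. Then $C'$ is closed under $\partial$: in degrees other than $n+1,n,n-1$ this is immediate; $\partial_n(C'_n) = 0$; $\partial_{n-1}(C'_{n-1}) \subseteq C_{n-2} = C'_{n-2}$; and $\partial_{n+1}(C'_{n+1}) = \im \partial_{n+1} \subseteq \ker \partial_n = \langle e_{k+1},\dots,e_a\rangle = C'_n$ by $\partial_n\partial_{n+1} = 0$. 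Hence $C = A \oplus C'$ with $C'$ free, finitely generated, and $r(C') = r(C) - 2k < r(C)$, so the inductive hypothesis applies to $C'$ and we conclude.

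I expect the only point needing genuine attention — and it is conceptual rather than computational — to be that putting $\partial_n$ into Smith normal form changes the chosen basis of $C_n$, which a priori could spoil compatibility with $\partial_{n+1}$; the identity $\partial_n\partial_{n+1}=0$ is exactly what prevents this, since it forces $\im\partial_{n+1}$ into the span of precisely those basis vectors of $C_n$ that $A$ does not use. Everything else is routine bookkeeping over the three affected degrees $n+1$, $n$, $n-1$.
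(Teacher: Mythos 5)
Your proof is correct and takes a genuinely different route from the paper's. The paper argues in one global step: apply the stacked basis theorem to $\im\partial \subseteq \ker\partial$ to obtain a basis $z_1,\dots,z_n$ of $\ker\partial$ together with nonzero scalars $a_1,\dots,a_m$ such that $a_1z_1,\dots,a_mz_m$ is a basis of $\im\partial$, lift each $a_iz_i$ to some $c_i$ with $\partial c_i = a_i z_i$, and check directly that $\{c_i\}\cup\{z_j\}$ is a basis of $C$, yielding $C\cong\bigoplus_{i\le m}\bigl(R\xrightarrow{\times a_i}R\bigr)\oplus\bigoplus_{j>m}R$. You instead induct on total rank, applying Smith normal form to a single $\partial_n$ and splitting off a summand of two-step complexes; as you rightly emphasize, the identity $\partial_n\partial_{n+1}=0$ is exactly what guarantees the complement $C'$ is a subcomplex, so the induction closes. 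The two arguments rest on the same underlying algebra over a PID (Smith normal form and the stacked basis theorem are two faces of one structure theorem) but cut the complex along different seams. The paper's decomposition is more canonical: its scalars $a_i$ are the invariant factors of $\im\partial$ inside $\ker\partial$, i.e.\ of the homology $H_*(C)$, whereas your $p_i$'s are elementary divisors of the individual maps $\partial_n$ and do not admit such an invariant description. In exchange, your step-by-step argument is manifestly compatible with the homological grading at every stage (only three adjacent degrees are touched per step), whereas the paper's proof, as written, tacitly relies on being able to choose the stacked basis and the lifts $c_i$ homogeneous --- true, but worth a word.
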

\begin{proof}This follows from the stacked basis theorem for finitely generated modules over a PID (cf. \cite[Section 12.1, Theorem 4]{DummitFoote}) which says that if $M\subset N$ are free, finitely generated $R$-modules with $R$ a PID, then there are integers $m,n$ with $m\le n$, as well as nonzero elements $r_1,\dots, r_m\in R$ and $y_1,\dots, y_n\in N$ such that $y_1,\dots, y_n$ is a basis of $N$ and $r_1y_1,\dots, r_my_m$ is a basis for $M$.

 If $(C,\partial)$ is a free, finitely generated chain complex over $R$, we let $B=\im (\partial)$ and $Z=\ker (\partial)$. Pick a stacked basis $z_1,\dots,z_n$ of $Z$ such that there are nonzero $a_i\in R$ such that $a_1z_1,\dots, a_m z_m$ is a basis for $B$. Now pick elements $c_1,\dots, c_m$ such that $\partial( c_i)=a_i z_i$. We now claim that $c_1,\dots, c_m, z_1,\dots, z_n$ is a basis of $C$. Note that given the explicit description of the differential of these generators, the main claim follows once we establish that this is a basis. Suppose $x\in C$ is an arbitrary element. Then $\partial x\in B=\Span(a_1z_1,\dots, a_mz_m)$ and hence there are elements $r_1,\dots, r_m\in R$ such that
\[\partial(x)=r_1a_1 c_1+\dots r_ma_m c_m\] and hence $x-r_1c_1-\dots-r_mc_m\in Z=\ker (\partial)$. Hence in particular there are $r_1',\dots, r'_n\in R$ such that
\[x=r_1c_1+\cdots r_m c_m+r_1'z_1+\cdots r_n' z_n.\] Hence the set $c_1,\dots, c_1,z_1,\dots, z_n$ spans $C$. We need to now show that it is linearly independent. Suppose that $r_1,\dots, r_m,r_1',\dots, r_n'$ are elements of $R$ such that
\[r_1c_1+\cdots r_m c_m+r_1'z_1+\cdots r_n' z_n=0.\] Applying $\partial$ yields $r_1a_1z_1+\cdots +r_ma_mz_m=0$, implying that $r_i=0$, since $a_1z_1,\dots, a_mz_m$ is a basis for $B$. Hence $r_1'z_1+\cdots r_n'z_n=0$, implying that $r_i'=0$, since $z_1,\dots, z_n$ is a basis of $Z$.
\end{proof}

\begin{lemma}\label{lem:simplefactors} Let $M$ be a finitely generated module over $\Z_2[U]$ which admits a relative $\Z$-grading. If $M$ has a submodule isomorphic to $\Z_2[U]/(p(U))$ for some $p(U)\in \Z_2[U]$, then $p(U)=U^i$ for some $i\ge 0$.
\end{lemma}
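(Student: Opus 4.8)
The plan is to reduce everything to the torsion submodule of $M$ and to use that the only homogeneous ideals of $\Z_2[U]$ are $(0)$ and $(U^k)$, $k\ge 0$. First I would fix an absolute $\Z$-grading refining the given relative one, so that $M$ becomes an honest $\Z$-graded $\Z_2[U]$-module with $U$ acting in degree $-2$; since any homogeneous element of $\Z_2[U]$ is either $0$ or a monomial $U^k$, any homogeneous ideal is generated by monomials and hence equals $(0)$ or $(U^k)$ for the least admissible $k$.

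Next I would consider the torsion submodule $T(M)=\{m\in M : q(U)m=0 \text{ for some } q(U)\ne 0\}$. Because $\Z_2[U]$ is a domain and multiplication by $U$ preserves homogeneity, $T(M)$ is a \emph{graded} submodule: the homogeneous components of a torsion element are again torsion, since an annihilating polynomial annihilates them one degree at a time. As $\Z_2[U]$ is Noetherian and $M$ is finitely generated, $T(M)$ is finitely generated, so I may choose homogeneous generators $t_1,\dots,t_s$. For each $i$, the annihilator $\operatorname{Ann}(t_i)$ is a nonzero homogeneous ideal (homogeneity of $t_i$ forces each monomial occurring in an annihilating polynomial to kill $t_i$ separately), hence $\operatorname{Ann}(t_i)=(U^{k_i})$ by the previous step. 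Setting $K=\max_i k_i$ then gives $U^K\cdot T(M)=0$.

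Finally, let $N\subseteq M$ be a submodule with $N\cong \Z_2[U]/(p(U))$; we may and do assume $p(U)\ne 0$ (when $p(U)=0$ the submodule is free). Every element of $N$ is killed by $p(U)$, so $N$ is a torsion submodule, whence $N\subseteq T(M)$ and therefore $U^K\cdot N=0$. Applied to the cyclic generator $\bar 1\in \Z_2[U]/(p(U))$ this gives $U^K\in(p(U))$, i.e.\ $p(U)\mid U^K$ in $\Z_2[U]$; since the only divisors of $U^K$ there are $U^0,\dots,U^K$ (the sole unit of $\Z_2[U]$ being $1$), we conclude $p(U)=U^i$ for some $0\le i\le K$, as claimed.

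The one point requiring care — and the reason the proof cannot be a one-line homogenization — is that $N$ is \emph{not} assumed to be a graded submodule of $M$, so its cyclic generator need not be homogeneous. The argument therefore has to be routed through the graded object $T(M)$, whose uniform annihilator $U^K$ then kills $N$ regardless of how $N$ sits inside $M$.
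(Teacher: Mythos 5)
Your proof is correct, and it takes a genuinely different route from the paper's. The paper argues by contradiction: writing $p(U)=U^i(1+Uq(U))$ with $q\neq 0$, it uses the Chinese Remainder Theorem to extract a submodule isomorphic to $\Z_2[U]/(1+Uq(U))$, notes that $U$ acts invertibly there (with inverse $q(U)$), and then derives a contradiction by considering the maximum grading $m(x)$ appearing among the homogeneous components of a nonzero $x$ in that submodule: invertibility of $U$ forces $m(x)\le m(x)-2$. Your argument instead goes through the torsion submodule $T(M)$: you show $T(M)$ is graded (homogeneous components of a torsion element are torsion), use Noetherianity to get finitely many homogeneous generators, observe that the annihilator of a homogeneous element is a homogeneous ideal and hence $(U^{k})$, conclude $U^K\cdot T(M)=0$ for some $K$, and then read off $p(U)\mid U^K$. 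Both proofs hinge on the same essential point -- that the cyclic generator of the given submodule need not be homogeneous, so one cannot simply homogenize -- but you resolve it by a structural statement (graded torsion is bounded $U$-torsion), whereas the paper resolves it by a direct degree-drop argument on the specific submodule. Your version is a bit more conceptual and reusable; the paper's is more self-contained and avoids invoking the torsion submodule. One small remark: be slightly careful in the step asserting that homogeneous components of a torsion element are torsion -- the cleanest justification is the "peeling" argument (factor $q(U)=U^a\tilde q(U)$ with $\tilde q(0)=1$, look at the top-degree component of $q(U)m$ to conclude $U^a$ kills the top homogeneous piece, then subtract and induct), since the naive "degree by degree" phrasing can mix contributions from different $m_j$'s across powers of $U$.
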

\begin{proof} Suppose to the contrary that $M$ has a submodule (not necessarily a homogeneous submodule) isomorphic to $\Z[U]/(p(U))$ where $p(U)=U^i(1+Uq(U))$ with $q(U)\neq 0$. We can apply the Chinese Remainder Theorem to see that $M$ has a submodule isomorphic to
\[(\Z_2[U]/U^i)\oplus (\Z_2[U]/(1+Uq(U))),\] and hence in particular a submodule isomorphic to $\Z_2[U]/(1+Uq(U))$. The submodule isomorphic to $\Z_2[U]/(1+Uq(U))$ may not be a homogeneous submodule of $M$, but nonetheless, $U$ is invertible on this submodule since it has inverse $q(U)$. Given $x\in \Z_2[U]/(1+Uq(U))$ we can consider $m(x)\in \Z\cup \{-\infty\}$ defined to be the maximum grading of any nonzero homogeneous summand of $x$ (and $-\infty$ if $x=0$). Noting that $m(U\cdot x)\le m(x)-2$ and $m(q(U)\cdot x)\le m(x)$, but that $(U q(U))\cdot x=x$ for $x$ in this summand, we arrive at a contradiction.
\end{proof}

\begin{proposition}\label{prop:UPhi=0} If $C$ is a free, finitely generated chain complex over $\Z_2[U]$ which is relatively $\Z$-graded (with $U$ having grading $-2$), then $U\Phi\simeq 0$ and the chain homotopy can be taken to be $U$-equivariant.
\end{proposition}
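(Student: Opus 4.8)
The plan is to diagonalize the differential into a direct sum of one- and two-step complexes, use the relative grading to pin down the shapes of the two-step pieces, and then exhibit the homotopy piece by piece.

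The first step is a reduction: the assertion ``$U\Phi$ is $\Z_2[U]$-equivariantly chain homotopic to $0$'' does not depend on which $\Z_2[U]$-basis of $C$ is used to define the formal derivative $\frac{d}{dU}$ (and hence $\Phi=\frac{d}{dU}\circ\partial+\partial\circ\frac{d}{dU}$). Indeed, with respect to any basis $\frac{d}{dU}$ is a connection, i.e. $\frac{d}{dU}(f c)=f' c+f\,\frac{d}{dU}(c)$ for $f\in\Z_2[U]$; hence the difference of the operators $\frac{d}{dU}$ attached to two bases is $\Z_2[U]$-linear, the associated $\Phi$'s differ by $\partial\circ L+L\circ\partial$ for a $\Z_2[U]$-linear $L$, and multiplying by $U$ shows the two resulting maps $U\Phi$ differ by an equivariant chain homotopy. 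So I am free to work in any convenient basis.

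The second step is the normal form. Fix an absolute $\Z$-grading refining the relative one, so that $\partial$ is homogeneous of degree $-1$. Adapting the proof of Lemma~\ref{lem:classify-cc-overPID} to the graded setting — working with homogeneous elements throughout, using that a graded submodule of a graded free $\Z_2[U]$-module is graded free, that $\partial$ maps onto $\operatorname{im}(\partial)$ in each degree, and that the only nonzero homogeneous elements of $\Z_2[U]$ are the monomials $U^k$ — produces a homogeneous basis $\{c_1,\dots,c_m,z_1,\dots,z_n\}$ of $C$ with $\partial c_i=U^{k_i} z_i$ for some $k_i\ge 0$ and $\partial z_j=0$. This is the step where the hypothesis is used in an essential way: without a grading the elementary divisors can be arbitrary polynomials $p_i(U)$, and then the proposition is false (for instance, for $C=(\Z_2[U]\xrightarrow{\,\times(1+U)\,}\Z_2[U])$ one finds $U\Phi$ is not equivariantly nullhomotopic, since that would force $(1+U)\mid U$). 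I expect this graded bookkeeping to be the only real obstacle; an alternative is to keep the ungraded normal form of Lemma~\ref{lem:classify-cc-overPID} and instead invoke Lemma~\ref{lem:simplefactors} applied to $H_*(C)$ to conclude that each $p_i$ must be a power of $U$.

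The last step is a direct computation in this basis. Since $\frac{d}{dU} c_i=\frac{d}{dU} z_j=0$, we get $\Phi(c_i)=\frac{d}{dU}(U^{k_i} z_i)=\bar{k}_i\, U^{k_i-1} z_i$ and $\Phi(z_j)=0$, where $\bar{k}_i\in\{0,1\}$ is $k_i\bmod 2$; hence $U\Phi(c_i)=\bar{k}_i\,\partial c_i$ and $U\Phi(z_j)=0$. Define a $\Z_2[U]$-linear $H\colon C\to C$ by $H(c_i)=\bar{k}_i\, c_i$, $H(z_i)=0$, and $H(z_j)=0$. Then $H$ is $\Z_2[U]$-equivariant and, on the summand $\Z_2[U] c_i\oplus\Z_2[U] z_i$, one has $\partial H(c_i)+H\partial(c_i)=\bar{k}_i U^{k_i} z_i+U^{k_i} H(z_i)=\bar{k}_i\,\partial c_i=U\Phi(c_i)$ and $\partial H(z_i)+H\partial(z_i)=0=U\Phi(z_i)$, while on each zero-differential summand $\Z_2[U] z_j$ everything vanishes. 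Thus $\partial H+H\partial=U\Phi$, giving the desired equivariant nullhomotopy.
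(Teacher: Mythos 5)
Your proof is correct and follows essentially the same route as the paper: decompose $C$ into one- and two-step summands via the structure theorem over the PID $\Z_2[U]$, use the grading (via Lemma~\ref{lem:simplefactors}, or equivalently a graded normal form) to force the nonzero differentials to be $U$-powers $U^{k_i}$, and then exhibit the explicit $U$-equivariant homotopy that scales each generator $c_i$ by $k_i \bmod 2$. Your opening reduction, showing that the class of $U\Phi$ up to $U$-equivariant homotopy is independent of the choice of $\Z_2[U]$-basis used to define $\frac{d}{dU}$, is a point the paper leaves implicit, and it neatly justifies passing to the normal-form basis.
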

\begin{proof} The proof uses the classification of finitely generated free chain complexes over a PID (Lemma \ref{lem:classify-cc-overPID}). By that lemma, we can write the chain complex $C$ as a direct sum of complexes of the form $\Z_2[U]$ (with vanishing differential) and complexes of the form $\Z_2[U]\xrightarrow{\times p(U)} Z_2[U]$, where $p(U)\in \Z_2[U]$. Using Lemma \ref{lem:simplefactors}, the assumption that $C$ is relatively graded forces each $p(U)$ to be of the form $U^i$ for some $i\ge 0$. Since $\Phi$ decomposes over direct sums in the natural way, it is thus sufficient to show the claim in the case that $C$ is one of the elementary chain complexes described above. If $C=\Z_2[U]$, with vanishing differential, the map $\Phi$ vanishes, so the claim is trivially true.

It remains to consider the case when $C$ is a two-step complex. Suppose $C$ has two generators $a$ and $b$, and differential $\del$ given by $\del a = U^n b$, $\del b=0$. Schematically, we write 
\[\partial=\big(\begin{tikzcd}a\arrow{r}{U^{n}}& b\end{tikzcd}\big),\]

By definition we have $\Phi(a) = n U^{n-1} b$ and $\Phi(b)=0$ which we will write as
\[\Phi=\big(\begin{tikzcd}a\arrow{r}{nU^{n-1}}& b\end{tikzcd}\big)\] and hence
\[U\Phi=\big(\begin{tikzcd}a\arrow{r}{nU^{n}}& b\end{tikzcd}\big).\]Define a map $H:C\to C$ as
\[H=\big(\begin{tikzcd}a& b\arrow[loop right]{l}{n}\end{tikzcd}\big).\]
 An easy computation shows that
\[U\Phi=\partial H+H\partial.\] We note that the map $H$ is also $U$-equivariant.

We now wish to show that $H$ can be taken to be a 0-graded map, with respect to the relative $\Z$-grading. Note that $\partial$ and $U\Phi$ are both $-1$ graded maps. Write $H=\sum_{k\in \Z} H_k$, where $H_k$ lowers the grading of a homogeneous element by $k$. We note that each $H_k$ is $U$-equivariant since $U$ shifts grading by $-2$. Since $\partial$ changes the grading by $-1$, we note that
\[(\partial H+H\partial)_k=\partial H_{k-1} +H_{k-1} \partial.\] Hence
\[U\Phi=(U\Phi)_{1}=(\partial H+H\partial)_{1}=\partial H_0+H_0\partial,\]
so we can replace $H$ by $H_0$ to get the desired homotopy.
\end{proof}

Now let $Y$ be a three-manifold equipped with a $\spinc$ structure $\s$. If $c_1(\s)$ is torsion, then the complexes $CF^-(Y,\s)$ have a relative $\Z$-grading, as shown in \cite{HolDisk}. The following is 
an immediate corollary to Proposition~\ref{prop:UPhi=0}. 

\begin{corollary}
\label{cor:CFM}
If $c_1(\s)$ is torsion, e.g. if $Y$ is a rational homology sphere, then $U\Phi\simeq 0$ on $\CFm(Y, \s)$, $U$-equivariantly.
\end{corollary}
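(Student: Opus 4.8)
The plan is to apply Proposition~\ref{prop:UPhi=0} directly, with $C = \CFm(Y,\s)$. First I would record the two structural facts needed to verify the hypotheses of that proposition. For any admissible pointed Heegaard diagram $\H = (\Sigma, \alphas, \betas, w)$ representing $(Y,\s)$, the complex $\CFm(\H, \s)$ is freely generated over $\Z_2[U]$ by the finite set of intersection points $\x \in \Ta \cap \Tb$ representing $\s$; hence it is a free, finitely generated $\Z_2[U]$-module, as required. Next, I would invoke the fact, due to Ozsv\'ath--Szab\'o \cite{HolDisk} and already cited in the paragraph preceding the corollary, that when $c_1(\s)$ is torsion the complex $\CFm(Y,\s)$ carries a relative $\Z$-grading lifting the relative Maslov grading, with $U$ acting with degree $-2$. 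This is precisely the grading hypothesis of Proposition~\ref{prop:UPhi=0}.

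With these two points in place, the conclusion is immediate: the endomorphism $\Phi$ occurring in the statement is the formal $U$-derivative of the differential from Section~\ref{sec:Phi} (in the single-basepoint situation, $\Phi = \Phi_w$), so Proposition~\ref{prop:UPhi=0} supplies a $U$-equivariant map $H \co \CFm(Y,\s) \to \CFm(Y,\s)$ with $U\Phi = \partial H + H \partial$, which is exactly the assertion that $U\Phi \simeq 0$ over $\Z_2[U]$. The special case of a rational homology sphere follows because then every $\spinc$ structure has torsion first Chern class.

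There is no real obstacle here; the statement is genuinely a direct corollary. The only item meriting a sentence of verification is that the hypotheses of Proposition~\ref{prop:UPhi=0} — freeness, finite generation over the PID $\Z_2[U]$, and a relative $\Z$-grading with $\deg U = -2$ — all hold for $\CFm(Y,\s)$ precisely when $c_1(\s)$ is torsion (when it is not, one obtains only a relative $\Z/d$-grading and the argument does not apply, but that case is not needed for the connected sum formula).
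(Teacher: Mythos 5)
Your proof is correct and follows exactly the paper's approach: the paper states this as an immediate corollary of Proposition~\ref{prop:UPhi=0}, with the preceding sentence supplying the key observation that $\CFm(Y,\s)$ is relatively $\Z$-graded when $c_1(\s)$ is torsion. Your careful verification of the freeness and finite-generation hypotheses is a slight elaboration of what the paper leaves implicit, but the argument is the same.
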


\begin{remark}Care should be taken when applying Proposition \ref{prop:UPhi=0}. We list the following caveats:
\begin{enumerate}
\item The maps $\Phi$ are chain homotopic to zero over $\Z_2$ (cf. Section~\ref{sec:Phi}), but usually not $U$-equivariantly chain homotopic to zero. 
\item The maps $U_w \Phi_w$ will not be equivariantly chain homotopic to zero in the presence of multiple basepoints on a given component. Indeed if one identifies $U_w$ with $U_{w'}$ for $w'$ another basepoint on the same component as $w$, then it is easily seen that $U_w \Phi_w$ does not even vanish on homology.
\item Finally, Corollary~\ref{cor:CFM} fails when $\s$ is not torsion, as we no longer have a relative $\Z$-grading. For example, one can consider $Y=S^1\times S^2$ and a $\spinc$ structure $\s$ with $c_1(\s)=2$. A model for $CF^-(S^1\times S^2,\s)$ is $(a\xrightarrow{1-U} b)$. In this case the map $U\Phi$ is not $U$-equivariantly chain homotopic to zero; indeed, if we tensor with $\Z_2[U]/(U-1)$, then the resulting map acts nontrivially on homology.
\end{enumerate}
\end{remark}

\begin{proof}[Proof of Theorem~\ref{thm:ConnSum}]
Note that $c_1(\s)$ is torsion for any spin structure $\s$. In view of Corollary~\ref{cor:CFM}, if $\s_1$ and $\s_2$ are two spin structures on $Y_1$ and $Y_2$, we have a $U$-equivariant chain homotopy $U(\Phi_1 \otimes 1) \simeq 0$, and therefore
$$ U(\Phi_1 \iota_1 \otimes \Phi_2 \iota_2) = U(\Phi_1 \otimes 1) \circ (\iota_1 \otimes \Phi_2 \iota_2) \simeq 0.$$
The statement about the involution on $\CFm(Y_1 \# Y_2, \s_1 \# \s_2)$ in Theorem~\ref{thm:ConnSum} now follows from Proposition~\ref{prop:ConnSum}. The results for $\CFhat$ and $\CFinf$ are a consequence of the one for $\CFm$.
\end{proof}

\section{Inequalities}
\label{sec:inequalities}

In this section we prove the inequalities relating the involutive correction terms of connected sums to the involutive correction terms of the summands.

\begin{proof}[Proof of Proposition \ref{prop:inequalities}]
By Proposition \ref{prop:orientationsigns}, it suffices to prove the first and third inequalities. We begin with the first. Suppose that $v_1$ is a maximally-graded homogeneous element of $(\CFm(Y_1,\s_1), \del_1)$ with the property that $[U^n v_1]\neq 0$ for any $n$ and $(1+\iota_1)v_1=\partial_1 w_1$.  Recall from Lemma \ref{lemma:reformulation} that $\gr(v_1) +2 = \dl(Y_1,\s_1)$. Similarly let $v_2 \in (\CFm(Y_2,\s_2), \del_2)$ be a a maximally-graded homogeneous element with the property that $[U^nv_2]\neq 0$ for any $n$ and $(1+\iota_2)v_2=\partial_2 w_2$, so that $\gr(v_2)+2 = \dl(Y_2, \s_2)$. Now consider the element $v= v_1\otimes v_2 \in \CFm(Y_1\s_1)\otimes_{\Z_2[U]}\CFm(Y_2,\s_2)[-2]$. Let the differential on this complex be denoted $\del$. We see that $\partial(v_1 \otimes v_2) = 0$, and furthermore we have $[U^n(v_1 \otimes v_2)]\neq 0$ for all $n$. After applying Theorem~\ref{thm:ConnSum}, we can take the involution on the tensor product to be $\iota = \iota_1\otimes \iota_2$.  Set $w = v_1 \otimes w_2 + w_1 \otimes \iota_2(v_2)$. Then we see
\begin{align*}
\del(w) &= v_1 \otimes (1+\iota_2)v_2 + (1+\iota_1)v_1 \otimes \iota_2(v_2) \\
&= v_1 \otimes v_2 + \iota_1(v_1)\otimes \iota_2(v_2) \\
&= (1+\iota)(v)
\end{align*}
We conclude that $\dl(Y_1\#Y_2, \s_1\#\s_2)$ is at least the grading of $\gr(v)+2 = (\gr(v_1) + \gr(v_2) +2) +2 = (\gr(v_1) + 2) + (\gr(v_2) +2)$, which is exactly $\dl(Y_2, \s_1) +\dl(Y_2, \s_2)$. This proves the first inequality.

Now let us consider the third inequality. Let $v_1$ and $w_1$ be as above, so that $\gr(v_1) +2 = \dl(Y_1, \s_1)$. Choose $x_2, y_2,z_2$ elements of $\CFm(Y_2, \s_2)$ maximally graded such that at least one of $x_2$ and $y_2$ is nonzero, $\partial y_2 = (1+ \iota_2)x_2$, there exists $m$ such that,  $\partial_2 z_2 = U^m x_2$, and $[U^n(U^my_2 + (1+\iota_2)z_2)] \neq 0$ for any $n \geq 0$. Then by Lemma \ref{lemma:reformulation} either $\du(Y_2,\s_2) = \gr(x_2)+3$ or $\du(Y_2,\s_2) = \gr(y_2)+2$, whichever is defined. Now in the complex $\CFm(Y_1, \s_1)\otimes_{\Z_2[U]} \CFm(Y_2,\s_2)[-2]$, set %
\begin{align*}
x &= v_1 \otimes x_2 \\
y &= w_1 \otimes x_2 + \iota_1(v_1) \otimes y_2 \\
z &= v_1 \otimes z_2
\end{align*}
Notice that if $x=0$, then in particular $x_2 = 0$, implying that $y_2 \neq 0$, so $y = \iota_1v_1 \otimes y_2$ is nonzero. Hence at least one of $x$ and $y$ is nonzero. Then we see that $\partial(z) = \partial(v_1 \otimes z_2) = U^m(v_1 \otimes x_2) = U^m x$. Furthermore, we observe that
\begin{align*}
\partial(y) &= (1+\iota_1)v_1 \otimes x_2 + \iota_1(v_1) \otimes (1+\iota_2)x_2 \\
&= v_1 \otimes x_2 + \iota_1(v_1) \otimes \iota_2(x_2)\\
&= (1+\iota)x.
\end{align*}
Here the first step uses the fact that $\partial_2 x_2 = 0$ (since $U^m \partial_2 x_2 = \partial (U^m x_2) = \partial_2 z_2 =0$, and $\CFm(Y,\s)$ is a free complex over $\F_2[U]$). Finally, we need to check that $[U^m y + (1 + \iota)z]$ is a generator of the $U$-tail in $\HFm(Y_1 \# Y_2, \s_1 \# \s_2)$. But observe that
\begin{align*}
U^m y + (1+\iota)z &= U^m(w_1 \otimes x_2) + U^m(\iota_1v_1 \otimes y_2) + v_1\otimes z_2 + \iota_1(v_1)\otimes \iota_2(z_2) \\
&= \iota_1 v_1 \otimes U^m y_2 + \iota_1v_1 \otimes (z_2 + \iota_2z_2) + (v_1+\iota_1v_1)\otimes z_2 + w_1\otimes U^m x_2 \\
&= \iota_1 v_1 \otimes (U^m y_2 + (z_2 + \iota_2 z_2)) + \del(w_1 \otimes z_2)
\end{align*}

Since $[v_1]$ (and therefore also $[\iota(v_1)]$) is a generator of the $U$-tail in $\HFm(Y_1, \s_1)$ and $[U^my_2 + (z_2 + \iota_2 z_2)]$ is a generator of the $U$-tail in $\HFm(Y_2, \s_2)$, we conclude that $[y + (1 + \iota)z]$ is a generator for the $U$-tail in $\HFm(Y_1 \# Y_2, \s_1 \# \s_2)$. Therefore, $\dl(Y_1 \# Y_2, \s_1 \# \s_2)$ is at least three plus the grading of $x$, or at least two plus the grading of $y$ if $x$ is zero. But if $x\neq 0$, we have $\gr(v_1 \otimes x_2)+3 = \gr(v_1) + \gr(x_2) + 3+2 = (\gr(v_1) +2) + (\gr(x_2)+3)$, which is exactly $\dl(Y_1, \s_1)+ \du(Y_2, \s_2)$. Similarly if $y\neq 0$, we obtain the same inequality.\end{proof}

\section{\texorpdfstring{$\inv$}{inv}-complexes}
\label{sec:I}
In this section we do some algebraic constructions, with the goal of defining the group $\Inv$ that appears in Theorem~\ref{thm:Inv}, and of proving that theorem. 

\subsection{Definitions}

\begin{definition}
\label{def:icx}
An {\em $\inv$-complex} is a pair $(C, \inv)$, consisting of
\begin{itemize}
\item a $\Z$-graded, finitely generated, free chain complex $C$ over the ring $\Z_2[U]$, where $\operatorname{deg}(U)=-2$; moreover, we ask that there is a graded isomorphism
\begin{equation}
\label{eq:Utail}
U^{-1}H_*(C) \cong \Z_2[U, U^{-1}];
\end{equation}
\item a grading-preserving chain homomorphism $\iota \co C \to C$, such that $\iota^2$ is chain homotopic to the identity.
\end{itemize}
\end{definition}

Observe that condition \eqref{eq:Utail} in Definition~\ref{def:icx} is equivalent to asking that 
\begin{equation}
\label{eq:dec}
H_*(C) = \Z_2[U]_{(d)} \oplus C_{\red},
\end{equation}
where $\Z_2[U]_{(d)}$ is the infinite $U$-tail supported in degrees $d$, $d-2$, $\dots$, and $C_{\red}$ is a torsion $\Z_2[U]$ module. The requirement that the isomorphism \eqref{eq:Utail} is graded translates into $d$ being an even integer.

\begin{example}
If $Y$ is a homology sphere, then $\CFm(Y)[-2]$, equipped with the conjugation involution, is an $\inv$-complex.
\end{example}

To every $\inv$-complex $(C, \inv)$, we can associate an {\em involutive complex}
\begin{equation}
\label{eq:invcx}
I^-(C, \iota) := \Cone\bigl(C_* \xrightarrow{\phantom{o} Q (1+\inv) \phantom{o}} Q \ccdot C_* [-1]\bigr),
 \end{equation}
which is a free complex over $\Ring = \Z_2[Q, U]/(Q^2)$, and is the analogue of $\CFIm(Y)$. 

Next, we introduce two equivalence relations on $\inv$-complexes.  Recall that the symbol $\simeq$ denotes a $\Z_2[U]$-equivariant chain homotopy.

\begin{definition}
\label{def:E}
Two pairs $(C, \inv)$ and $(C', \inv')$ are called {\em equivalent} if there exist chain homotopy equivalences
$$ F \co C \to C', \ \ G \co C' \to C$$
that are homotopy inverses to each other, and such that 
$$F \circ \inv \simeq \inv' \circ F,  \ \ \ G \circ \inv' \simeq \inv \circ G.$$
\end{definition}

Observe that an equivalence of $\inv$-complexes induces a quasi-isomorphism between the respective involutive complexes \eqref{eq:invcx}.

\begin{example}
If $Y$ is a homology sphere, then different choices of Heegaard diagram $\H$ and Heegaard moves from $\bH$ to $\H$ yield different $\inv$-complexes $(\CFm(\H)[-2], \inv)$. However, these $\inv$-complexes are equivalent, and hence the corresponding involutive complexes $\CFIm(\H)$ are quasi-isomorphic; cf. \cite[Proposition 2.7]{HMinvolutive}. 
\end{example}

We also need a weaker form of equivalence, modelled on the relation on Heegaard Floer complexes  induced by a homology cobordism. 

\begin{definition}
\label{def:LE}
Two pairs $(C, \inv)$ and $(C', \inv')$ are called {\em locally equivalent} if there exist (grading-preserving) homomorphisms
$$ F \co C \to C', \ \ G \co C' \to C$$
such that 
$$F \circ \inv \simeq \inv' \circ F,  \ \ \ G \circ \inv' \simeq \inv \circ G,$$
and $F$ and $G$ induce isomorphisms on homology after inverting the variable $U$.
\end{definition}

\subsection{The multiplication}
Let $(C, \inv)$ and $(C', \inv')$ be $\inv$-complexes. We define their {\em product} by the formula
\begin{equation}
\label{eq:multi}
 (C, \inv) * (C', \inv') := (C \otimes_{\Z_2[U]} C', \ \inv \otimes \inv' ).
 \end{equation}

\begin{lemma}
\label{lem:eltInv}
The result of the multiplication \eqref{eq:multi} is an $\inv$-complex.
\end{lemma}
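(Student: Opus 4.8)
The plan is to verify each of the three conditions in Definition~\ref{def:icx} for the pair $(C \otimes_{\Z_2[U]} C', \iota \otimes \iota')$. First I would check that $C \otimes_{\Z_2[U]} C'$ is a $\Z$-graded, finitely generated, free chain complex over $\Z_2[U]$: this is routine, since the tensor product of free finitely generated modules is free and finitely generated, the grading is the sum of gradings (with $\deg U = -2$ still), and the differential is $\partial \otimes \id + \id \otimes \partial'$, which squares to zero over $\Z_2$. Second, I would verify the homological condition \eqref{eq:Utail}. Since inverting $U$ is exact, $U^{-1}H_*(C \otimes_{\Z_2[U]} C') \cong H_*(U^{-1}C \otimes_{\Z_2[U,U^{-1}]} U^{-1}C')$, and by the (graded) K\"unneth theorem over the field-like ring $\Z_2[U,U^{-1}]$ this is $U^{-1}H_*(C) \otimes_{\Z_2[U,U^{-1}]} U^{-1}H_*(C') \cong \Z_2[U,U^{-1}] \otimes_{\Z_2[U,U^{-1}]} \Z_2[U,U^{-1}] \cong \Z_2[U,U^{-1}]$ as graded modules. (Here I use that $\Z_2[U,U^{-1}]$ is a graded field, so there are no Tor terms.)

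Third, I would check that $(\iota \otimes \iota')$ is a grading-preserving chain map with $(\iota \otimes \iota')^2 \simeq \id$. Grading-preservation is immediate since $\iota$ and $\iota'$ are each grading-preserving; it is a chain map since $\iota$ and $\iota'$ are. For the homotopy-involution property, we have $(\iota \otimes \iota')^2 = \iota^2 \otimes (\iota')^2$. By hypothesis there are $\Z_2[U]$-equivariant chain homotopies $H$ with $\iota^2 + \id = \partial H + H \partial$ and $H'$ with $(\iota')^2 + \id = \partial' H' + H' \partial'$. A standard computation then shows that
\[
\iota^2 \otimes (\iota')^2 + \id \otimes \id = \partial_{\otimes} K + K \partial_{\otimes}
\]
for $K = H \otimes (\iota')^2 + \id \otimes H'$ (or a similar explicit formula), where $\partial_{\otimes} = \partial \otimes \id + \id \otimes \partial'$; this uses that $H$ commutes appropriately with the second-factor maps since everything in sight is $\Z_2[U]$-linear and the two tensor factors act on disjoint variables.

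There is no real obstacle here — the statement is essentially a bookkeeping exercise — but the one point that warrants care is the K\"unneth step for condition \eqref{eq:Utail}: one must confirm that the isomorphism is \emph{graded}, so that the infinite tower sits in even degrees, which follows because the towers in $U^{-1}H_*(C)$ and $U^{-1}H_*(C')$ are each generated in even degree and the grading on the tensor product is additive. The formula for the homotopy $K$ should be written out explicitly and verified by a direct (if tedious) computation, which I would only sketch.
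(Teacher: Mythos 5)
Your proposal is correct and takes essentially the same route as the paper's (very brief) proof: the paper just notes that condition \eqref{eq:Utail} follows by inverting $U$ and applying the K\"unneth formula, which is exactly your second step, and the remaining points are the routine verifications you outline. Your explicit homotopy $K = H \otimes (\iota')^2 + \id \otimes H'$ does check out over $\Z_2$ (the Koszul signs that would otherwise intrude are trivial), so the one place you flagged for care does not cause trouble.
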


\begin{proof}
All of the required properties are easy to verify; for example, the fact that $C \otimes_{\Z_2[U]} C'$ satisfies \eqref{eq:Utail} follows from the corresponding statements for $C$ and $C'$, by inverting $U$ and then using the K\"unneth formula. 
\end{proof}

\begin{lemma}
\label{lem:3parts}
Let $(C, \inv)$ and $(C', \inv')$ be $\inv$-complexes.

$(a)$ If we change $(C, \inv)$ or $(C', \inv')$ by an equivalence in the sense of Definition~\ref{def:E}, then their product also only changes by an equivalence.

$(b)$ If we change $(C, \inv)$ or $(C', \inv')$ by a local equivalence in the sense of Definition~\ref{def:LE}, then their product changes by a local equivalence.
\end{lemma}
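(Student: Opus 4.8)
The statement to prove is Lemma~\ref{lem:3parts}, which asserts that the multiplication of $\inv$-complexes respects both equivalence and local equivalence. Here is my plan.

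\medskip

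\noindent\textbf{Plan of proof.} The key observation is that both statements follow from a single general principle: if $F \co C \to C'$ and $G \co C' \to C$ are grading-preserving chain maps satisfying $F \circ \iota \simeq \iota' \circ F$ and $G \circ \iota' \simeq \iota \circ G$, then the tensored maps $F \otimes \id_D \co C \otimes_{\Z_2[U]} D \to C' \otimes_{\Z_2[U]} D$ and $G \otimes \id_D$ satisfy the analogous intertwining relations with $\iota \otimes \iota_D$ and $\iota' \otimes \iota_D$. First I would verify this: tensoring a $\Z_2[U]$-equivariant chain homotopy $H$ with $\id_D$ over $\Z_2[U]$ (which is flat, indeed $D$ is free) produces a $\Z_2[U]$-equivariant chain homotopy $H \otimes \id_D$, and since $(F \otimes \id_D) \circ (\iota \otimes \iota_D) = (F \circ \iota) \otimes \iota_D$ and $(\iota' \otimes \iota_D) \circ (F \otimes \id_D) = (\iota' \circ F) \otimes \iota_D$, the homotopy $H \otimes \iota_D$ (where $H$ witnesses $F\circ\iota \simeq \iota'\circ F$) does the job. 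The same works with the roles of the two tensor factors swapped, so one also has $\id_C \otimes F'$ intertwining appropriately when $(D,\iota_D)$ is changed instead.

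\medskip

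\noindent\textbf{Part (a).} Suppose $(C,\iota)$ is replaced by an equivalent $(C',\iota')$, witnessed by mutually homotopy-inverse $F, G$ with the intertwining relations. I would set $\tilde F = F \otimes \id_D$ and $\tilde G = G \otimes \id_D$ on $C \otimes_{\Z_2[U]} D$ and $C' \otimes_{\Z_2[U]} D$. Since $F \circ G \simeq \id$ and $G \circ F \simeq \id$ via $\Z_2[U]$-equivariant homotopies, tensoring those homotopies with $\id_D$ shows $\tilde F \circ \tilde G \simeq \id$ and $\tilde G \circ \tilde F \simeq \id$; and the intertwining relations for $\tilde F$, $\tilde G$ with $\iota \otimes \iota_D$ and $\iota' \otimes \iota_D$ follow from the general principle above. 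Hence $(C,\iota)*(D,\iota_D)$ and $(C',\iota')*(D,\iota_D)$ are equivalent. The case where the second factor is changed is symmetric.

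\medskip

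\noindent\textbf{Part (b).} Here $F, G$ are only assumed to induce isomorphisms on homology after inverting $U$. The only new point is to check that $\tilde F = F \otimes \id_D$ and $\tilde G = G \otimes \id_D$ still induce isomorphisms on $U^{-1}H_*$. This is where I expect the one genuine (though routine) step: inverting $U$ is exact, so $U^{-1}(C \otimes_{\Z_2[U]} D) \cong (U^{-1}C) \otimes_{\Z_2[U,U^{-1}]} (U^{-1}D)$, and by the K\"unneth formula over the PID $\Z_2[U,U^{-1}]$, $U^{-1}H_*(C\otimes_{\Z_2[U]} D) \cong (U^{-1}H_*(C)) \otimes_{\Z_2[U,U^{-1}]} (U^{-1}H_*(D))$ — here there is no $\Tor$ term since $U^{-1}H_*(D)$ is free over $\Z_2[U,U^{-1}]$ by \eqref{eq:Utail}. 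Under this identification $U^{-1}(\tilde F)_*$ becomes $(U^{-1}F_*) \otimes \id$, which is an isomorphism because $U^{-1}F_*$ is; similarly for $\tilde G$. The intertwining relations transfer exactly as in part (a). So $(C,\iota)*(D,\iota_D)$ and $(C',\iota')*(D,\iota_D)$ are locally equivalent, and again the other case is symmetric.

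\medskip

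\noindent\textbf{Main obstacle.} There is no deep obstacle; the lemma is essentially bookkeeping. The only thing requiring a moment's care is the K\"unneth identification of $U^{-1}H_*$ of a tensor product in part (b) and making sure the $\Tor$ term vanishes — which it does precisely because of the $U$-tail structure \eqref{eq:Utail} guaranteeing freeness over $\Z_2[U,U^{-1}]$ — together with keeping track of signs/ordering of tensor factors (trivial here since we work over $\Z_2$ and $\iota\otimes\iota'$ is defined factor-wise). I would state the general tensoring principle as a short preliminary paragraph and then dispatch (a) and (b) quickly.
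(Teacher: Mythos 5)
Your proof is correct and follows essentially the same approach as the paper: tensor $F$ and $G$ with the identity, observe that the intertwining failure factors as a fixed chain map tensored with a null-homotopic one, and (for part (b)) invoke flatness/K\"unneth after inverting $U$. The paper's write-up is terser (part (b) is dispatched with ``similar to part (a)''), and your explicit K\"unneth identification $U^{-1}H_*(C\otimes_{\Z_2[U]} D) \cong U^{-1}H_*(C) \otimes_{\Z_2[U,U^{-1}]} U^{-1}H_*(D)$ is a reasonable way to make that step precise.
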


\begin{proof}
(a) Without loss of generality, we focus on changing the second factor. Suppose we have an equivalence between $(C', \inv')$ and $(C'', \inv'')$, given by homotopy inverses $F \co C' \to C''$ and $G \co C'' \to C'$. By tensoring them with the identity on $C$, we get homotopy inverses
$$ 1 \otimes F \co  C \otimes_{\Z_2[U]} C' \to C \otimes_{\Z_2[U]} C'', \ \ \ 1 \otimes G \co  C \otimes_{\Z_2[U]} C'' \to C \otimes_{\Z_2[U]} C'.$$
We need to check that these interchange the involutions, up to chain homotopy. Indeed, we have
\[ (1 \otimes F)\circ ( \inv \otimes \inv' ) - (\inv \otimes \inv'' ) \circ (1 \otimes F) = \inv \circ ( F \circ \inv' - \inv'' \circ F) \simeq 0.\]
The same argument applies to $1 \otimes G$.

 (b) Similar to part (a). 
\end{proof}

\subsection{The group of \texorpdfstring{$\inv$}{inv}-complexes}
We now construct the group $\Inv$  that appears in Theorem~\ref{thm:Inv}. We let $\Inv$ be the set of involutive classes modulo the relation of local equivalence from Definition~\ref{def:LE}. We put a group structure on $\Inv$ in the following way. We let $$e:= (\Z_2[U]_{(0)}, \id)$$ be the unit, and use \eqref{eq:multi} to define the multiplication.

Furthermore, we construct the inverse of a pair $(C, \inv)$ by considering the dual complex: 
$$ C^{\vee} := \operatorname{Hom}_{\Z_2[U]}(C, \Z_2[U]),$$
equipped with the map $\inv^{\vee}$ induced by $\inv$. Concretely,  if we have a set of free generators $\S$ for $C$ (over $\Z_2[U]$), for every $\x \in \S$ we introduce a free generator $\x^{\vee}$ for $C^{\vee}$ (the map that takes $\x$ to $1$ and all other elements of $\S$ to zero). Let $\S^{\vee} = \{\x^{\vee} \mid \x \in \S\}.$ The grading of $\x^{\vee}$ is the negative of the grading of $\x$. Moreover, for $\x, \y \in \S$, the term $U^n \y$ appears in $\del \x$ for $C$ if and only if $U^n \x^{\vee}$ appears in $\del^{\vee} \y^{\vee}$ for $C^{\vee}$. Also,  $U^n \y$ appears in $\inv(\x)$ for $C$ if and only if $U^n \x^{\vee}$ appears in $\inv^{\vee}(\y^{\vee})$ for $C^{\vee}$.

 \begin{proposition}
 \label{prop:icx}
With these definitions, $\Inv$ is a well-defined Abelian group.
 \end{proposition}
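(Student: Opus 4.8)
The plan is to verify the group axioms one at a time, drawing on the algebraic results already established. First I would check that the multiplication $*$ is well-defined on local equivalence classes: this is exactly Lemma~\ref{lem:3parts}(b), together with Lemma~\ref{lem:eltInv} to know the product is again an $\inv$-complex, so nothing new is needed. Associativity of $*$ is inherited from the associativity of $\otimes_{\Z_2[U]}$ (with the involution $\iota\otimes\iota'\otimes\iota''$ well-defined up to the obvious chain homotopies), and commutativity follows from the flip map $C\otimes_{\Z_2[U]} C' \to C'\otimes_{\Z_2[U]} C$, which is a chain isomorphism intertwining $\iota\otimes\iota'$ with $\iota'\otimes\iota$ on the nose; this gives an honest equivalence, hence a local equivalence. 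For the identity, I would exhibit, for any $\inv$-complex $(C,\iota)$, a local equivalence between $(C,\iota) * (\Z_2[U]_{(0)}, \id)$ and $(C,\iota)$: the natural map $C\otimes_{\Z_2[U]}\Z_2[U]_{(0)} \to C$ is in fact a chain isomorphism once one recalls $\Z_2[U]_{(0)}$ is just $\Z_2[U]$ with zero differential placed in even gradings, and it commutes with the involutions strictly, so again this is an equivalence and a fortiori a local equivalence.

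The substantive point is that $(C^\vee,\iota^\vee)$ is an inverse, i.e.\ $(C,\iota)*(C^\vee,\iota^\vee)$ is locally equivalent to $e = (\Z_2[U]_{(0)},\id)$. First I would confirm $(C^\vee,\iota^\vee)$ is itself an $\inv$-complex: it is free and finitely generated by construction, it is $\Z$-graded (with $\deg \x^\vee = -\deg\x$), the condition $(\iota^\vee)^2\simeq\id$ is dual to $\iota^2\simeq\id$, and the $U$-tail condition \eqref{eq:Utail} holds because $U^{-1}H_*(C^\vee) \cong \Hom_{\Z_2[U,U^{-1}]}(U^{-1}H_*(C),\Z_2[U,U^{-1}])$ (the torsion part of $H_*(C)$ dualizes to something torsion, and a single free $\Z_2[U,U^{-1}]$-summand dualizes to a single free summand; the grading of the tail changes sign, which is still even). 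Then, to produce the local equivalence $e \to (C,\iota)*(C^\vee,\iota^\vee)$, I would use the "coevaluation" map $\Z_2[U] \to C\otimes_{\Z_2[U]} C^\vee$ sending $1 \mapsto \sum_{\x\in\S}\x\otimes\x^\vee$; and for the map in the other direction the "evaluation" $C\otimes_{\Z_2[U]}C^\vee \to \Z_2[U]$, $\x\otimes\y^\vee \mapsto \delta_{\x\y}$. The two things to check are: (i) these are chain maps and induce isomorphisms on $U^{-1}H_*$ — this is the standard fact that evaluation/coevaluation realize the duality pairing, and after inverting $U$ the reduced parts die and one is comparing the single $U$-tail of $C$ with that of $C^\vee$; (ii) they commute, up to $\Z_2[U]$-equivariant chain homotopy, with $\id$ on the $e$ side and with $\iota\otimes\iota^\vee$ on the other side. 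For (ii) the key identity is that $\sum_\x \iota(\x)\otimes\iota^\vee(\x^\vee)$ equals $\sum_\x \x\otimes\x^\vee$ up to a boundary: this is because $\iota\otimes\iota^\vee$, under the evaluation pairing, is adjoint to $\iota^2$, which is chain homotopic to the identity, so the coevaluation element is $(\iota\otimes\iota^\vee)$-invariant up to homotopy. I would spell this out using a chain homotopy $H$ with $\iota^2 - \id = \partial H + H\partial$ and tracking it through the tensor product.

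The main obstacle I anticipate is item (ii) in the inverse construction — making precise and correct the claim that the coevaluation element $\sum_\x \x\otimes\x^\vee$ is fixed up to equivariant chain homotopy by $\iota\otimes\iota^\vee$, and that this homotopy can be chosen grading-preserving. Concretely, one writes $\iota$ as a matrix over $\Z_2[U]$ in the basis $\S$, notes that $\iota^\vee$ is the transpose, and must check that $(\iota\otimes\iota^\vee)(\sum_\x\x\otimes\x^\vee) - \sum_\x\x\otimes\x^\vee$ is a boundary in $C\otimes_{\Z_2[U]}C^\vee$, using the homotopy $\iota^2\simeq\id$; a clean way is to identify $C\otimes_{\Z_2[U]}C^\vee \cong \End_{\Z_2[U]}(C)$ as chain complexes, under which the coevaluation element corresponds to $\id_C$ and $\iota\otimes\iota^\vee$ corresponds to $f\mapsto \iota f\iota$, so the needed statement becomes $\iota\,\id\,\iota = \iota^2 \simeq \id$, which is part of the hypothesis. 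Once phrased this way the verification is routine, and the remaining axioms are formal, so assembling these pieces completes the proof that $\Inv$ is a well-defined Abelian group. Everything about well-definedness on classes is already guaranteed by Lemma~\ref{lem:3parts}, which also handles the dual construction once we observe that applying a local equivalence $F\co C\to C'$ dualizes to a local equivalence $F^\vee\co (C')^\vee \to C^\vee$ in the opposite direction, so the inverse operation descends to $\Inv$.
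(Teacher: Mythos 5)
Your proposal follows the paper's proof essentially step for step: the same verification that the monoid axioms descend from Lemma~\ref{lem:3parts}, and the same key idea for inverses, namely the coevaluation $\gamma\colon 1\mapsto\sum_{\x\in\S}\x\otimes\x^\vee$ and the trace $\zeta$, together with the identification $C\otimes_{\Z_2[U]}C^\vee\cong\Hom_{\Z_2[U]}(C,C)$ under which the coevaluation element becomes $\id_C$ and $\iota\otimes\iota^\vee$ becomes conjugation by $\iota$, so that commuting with the involutions reduces to $\iota^2\simeq\id$. This is exactly what the paper does.

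The one real gap is in your item (i), where you assert that $\gamma$ and $\zeta$ become isomorphisms after inverting $U$ because ``this is the standard fact that evaluation/coevaluation realize the duality pairing.'' The categorical zigzag identities alone do not give this: the composite $\zeta\circ\gamma\colon\Z_2[U]\to\Z_2[U]$ is multiplication by $|\S|\bmod 2$ (the mod~$2$ dimension of $C$), which could a priori vanish, in which case neither map would be a quasi-isomorphism after localizing. The paper closes this by noting that the rank-one condition \eqref{eq:Utail} together with the decomposition in Lemma~\ref{lem:classify-cc-overPID} forces $|\S|$ to be odd (one free summand plus some two-step pieces, each contributing two generators), so $\zeta\circ\gamma=\id$ and hence both maps induce isomorphisms on $U^{-1}H_*$. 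You should include this parity observation; without it your (i) is unjustified. Everything else in your write-up matches the paper.
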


\begin{proof}
In Lemma~\ref{lem:eltInv} we have proved that the result of multiplication is an actual element of $\Inv$. Moreover, part (b) of Lemma~\ref{lem:3parts} shows the product is well-defined, up to local equivalence.

Next, let us verify that the multiplication is associative. If we have three $\inv$-complexes $(C, \iota), (C', \iota')$ and $(C'', \iota'')$, there is a natural isomorphism
$$ (C \otimes_{\Z_2[U]} C') \otimes_{\Z_2[U]} C'' \simeq C \otimes_{\Z_2[U]} (C' \otimes_{\Z_2[U]} C'').$$ 
The involutions $(\inv \otimes \inv') \otimes \inv''$ corresponds to $\inv \otimes (\inv' \otimes \inv'')$  under this isomorphism.

To see that the multiplication is commutative, we simply consider the isomorphisms between $C \otimes_{\Z_2[U]} C'$ and $C' \otimes_{\Z_2[U]} C$ given by reversing the factors. These produce equivalences of $\inv$-complexes.

Furthermore, it is clear that multiplication by the unit $e$ acts as the identity. 

Finally, we need to check that $(C, \inv) * (C^{\vee}, \inv^{\vee})$ is locally equivalent to the unit in $\Inv$. Pick a set $\S$ of free generators for $C$, and use the dual set $\S^{\vee}$ for $C^{\vee}$. We can define module maps
$$ \gamma \co  \Z_2[U]_{(0)} \to C \otimes_{\Z_2[U]} C^{\vee}; \ \ \  \ \ \ G(1) = \sum_{\x \in \S} \x \otimes \x^{\vee}$$
and
$$ \zeta \co C \otimes_{\Z_2[U]} C^{\vee} \to \Z_2[U]_{(0)}; \ \text{for } \x, \y \in \S, \  \x \otimes \y^{\vee} \mapsto \begin{cases} 1 & \text{if } \x = \y, \\
0 & \text{otherwise.} \end{cases}$$
It is straightforward to check that these are chain maps, and are isomorphisms on homology after inverting $U$. In fact, we can identify $ C \otimes_{\Z_2[U]} C^{\vee}$ with the complex
\begin{equation}
\label{eq:Hom}
 \Hom_{\Z_2[U]}(C, C).
 \end{equation}
Under this identification, $\gamma$ corresponds to the map $1 \mapsto \id$, and $\zeta$ is the trace. 
Observe that, because $H_*(C)$ has rank one over $\Z_2[U]$, the set $\S$ must have an odd number of elements, for Euler characteristic reasons. Thus, the composition $\zeta \circ \gamma$ gives the identity (coming from the cardinality of $\S$ mod $2$).

Also under the identification above, for homomorphisms $A \co C \to C$, $B \co C^\vee \to C^\vee$ (which can be represented by square matrices using the bases $\S$ resp. $\S^\vee$), the sum
$$ \sum_{\x \in \S} A(\x) \otimes B(\x^{\vee}),$$
as an element of \eqref{eq:Hom}, corresponds to the matrix $AB^T$ (where the $T$ superscript denotes the transpose).

To see that $\gamma$ commutes with the involutions up to chain homotopy, we need to show that
\begin{equation}
\label{eq:DS}
  \sum_{\x \in \S} \Bigl( \inv(\x) \otimes \inv^{\vee}(\x^{\vee}) \Bigr) -  \sum_{\x \in \S} \x \otimes \x^{\vee}
\end{equation}
is of the form $\del h$ for some $h \in C \otimes_{\Z_2[U]} C^{\vee}$, i.e., that it is zero in homology. Note that the matrix for $\inv^{\vee}$ (in terms of the set $\S^{\vee}$) is the transpose of the matrix for $\inv$ (in terms of $\S$). Therefore, in terms of the identification with \eqref{eq:Hom}, the expression~\eqref{eq:DS} is $ \inv^2 - 1.$ Since $\inv^2 \simeq 1$, the difference is chain homotopic to zero, and hence corresponds to a boundary element in the tensor product $C \otimes_{\Z_2[U]} C^{\vee}$.

With regard to the trace map $\zeta$, observe that this is the dual to $\gamma$. Since $\gamma$ commutes with the involutions up to chain homotopy, so does $\zeta$. This completes the proof that $(C, \inv) * (C^{\vee}, \inv^{\vee}) = e \in \Inv$.
\end{proof}

\subsection{Relation to homology cobordism}
With Proposition~\ref{prop:icx}  in place, we can now define the map
$$ h \co \Theta^3_{\Z} \to \Inv, \ \ Y \mapsto \bigl( \CFm(Y)[-2], \iota \bigr ),$$
as advertised in Theorem~\ref{thm:Inv}.

\begin{proof}[Proof of Theorem~\ref{thm:Inv}.]
To see that $h$ is well-defined as a map of sets, let $Y$ and $Y'$ be integral homology spheres that are homology cobordant. We need to check that they give rise to locally equivalent $\inv$-complexes. Indeed, if $W$ is the homology cobordism from $Y$ to $Y'$, then $W$ induces chain map 
$$F \co \CFm(Y) \to \CFm(Y'),$$
which is an isomorphism on homology after inverting $U$; cf. \cite[Section 9]{AbsGraded}. Furthermore, it follows from the proof of Proposition 4.9 in \cite{HMinvolutive} that $F$ commutes with the conjugation involutions up to chain homotopy. We can also construct a map $G$ in the other direction, by reversing the cobordism. Thus, we obtain the desired local equivalence of $\inv$-complexes.

The fact that $h$ is a homomorphism is immediate from Theorem~\ref{thm:ConnSum} and the definitions.  
\end{proof}

One can mimic many of the constructions in Heegaard Floer homology and apply them to elements of $\Inv$. For example, by taking the values of $d$ in \eqref{eq:dec} we obtain a homomorphism
$$ d_{\Inv} \co \Inv \to 2\Z.$$
Then, the homomorphism $d \co \Theta^3_{\Z} \to 2\Z$ (given by the usual correction term in Heegaard Floer homology) is simply the composition $d_{\Inv} \circ h$.

Recall that, starting from an element $[(C, \inv)] \in \Inv$, we can construct the involutive complex
$I^-(C, \iota)$ from \eqref{eq:invcx}, which is the analogue of $\CFIm(Y)$. By adapting the characterizations of $\dl$ and $\du$ from Lemma~\ref{lem:newinv}, we define maps (which are not homomorphisms)
$$\dl_{\Inv}, \du_{\Inv} \co \Inv \to 2\Z.$$
Then, the involutive correction terms $\dl, \du \co  \Theta^3_{\Z} \to 2\Z$ are the compositions $\dl_{\Inv} \circ h$ and $\du_{\Inv} \circ h$, respectively.

\section{Computations}
\label{sec:computations}

We now use Theorem~\ref{thm:ConnSum} to do some concrete computations. Note that Theorem~\ref{thm:ConnSum} is phrased in terms of the complexes $\CFm(Y_i, \s_i)$. However, in view of Lemma~\ref{lem:3parts} (a), we can replace $(\CFm(Y_i, \s_i), \inv)$ with any $\inv$-complex equivalent to it. This will yield the same involutive homologies and correction terms.

%The starting point in our calculations will always be Proposition~\ref{prop:surgeries2}, the stronger version of the large surgery formula. This gives a chain homotopy equivalence
%$$ \CFIm(S^3_p(K), [0]) \xrightarrow{\simeq} \AI_0^-$$
%that interchanges the conjugation maps $\iota$ and $\iota_0$, up to homotopy. Note that we do not know a priori that $(\AI_0^-, \inv_0)$ is an $\inv$-complex, because of the condition that $\iota_0^2 \simeq 1.$ (In fact, on the knot Floer complex, we do not have $\iota_K^2 \simeq 1$ as a skew-filtered map; cf. \cite[Sections 6.1 and 6.2]{KMInvolutive}.) However, we do know that $(\CFIm(S^3_p(K), [0]), \inv)$ is an $\inv$-complex. Proposition~\ref{prop:surgeries2} implies that we also have $\inv_0^2 \simeq 1$, and hence $(\AI_0^-, \inv_0)$ is an $\inv$-complex equivalent to $(\CFIm(S^3_p(K), [0]), \inv)$, in the sense of Definition~\ref{def:E}.

\subsection{Connected sums of the Brieskorn sphere \texorpdfstring{$\Sigma(2,3,7)$}{Sigma(2,3,7)}  with itself}

\begin{figure} 
\includegraphics{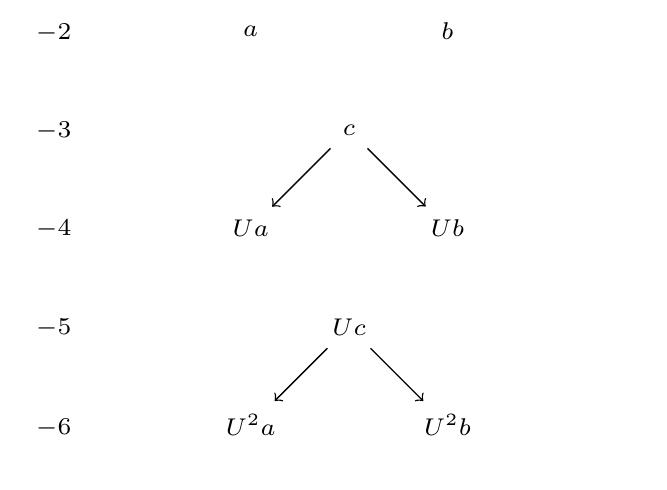}
\caption{The chain complex $\CFm(\Sigma(2,3,7))$. The homological gradings are shown on the left. The complex continues with further $U$-powers of the generators $a,b,c$.}
\label{fig:237}
\end{figure}

In this section, we compute the involutive correction terms of the connected sums $\#n(\Sigma(2,3,7))$ and prove Proposition \ref{prop:237sums}. As a warm-up, we compute the full involutive Heegaard Floer homology for $\Sigma(2,3,7)\#\Sigma(2,3,7)$.

\begin{figure}
\includegraphics[scale=.8]{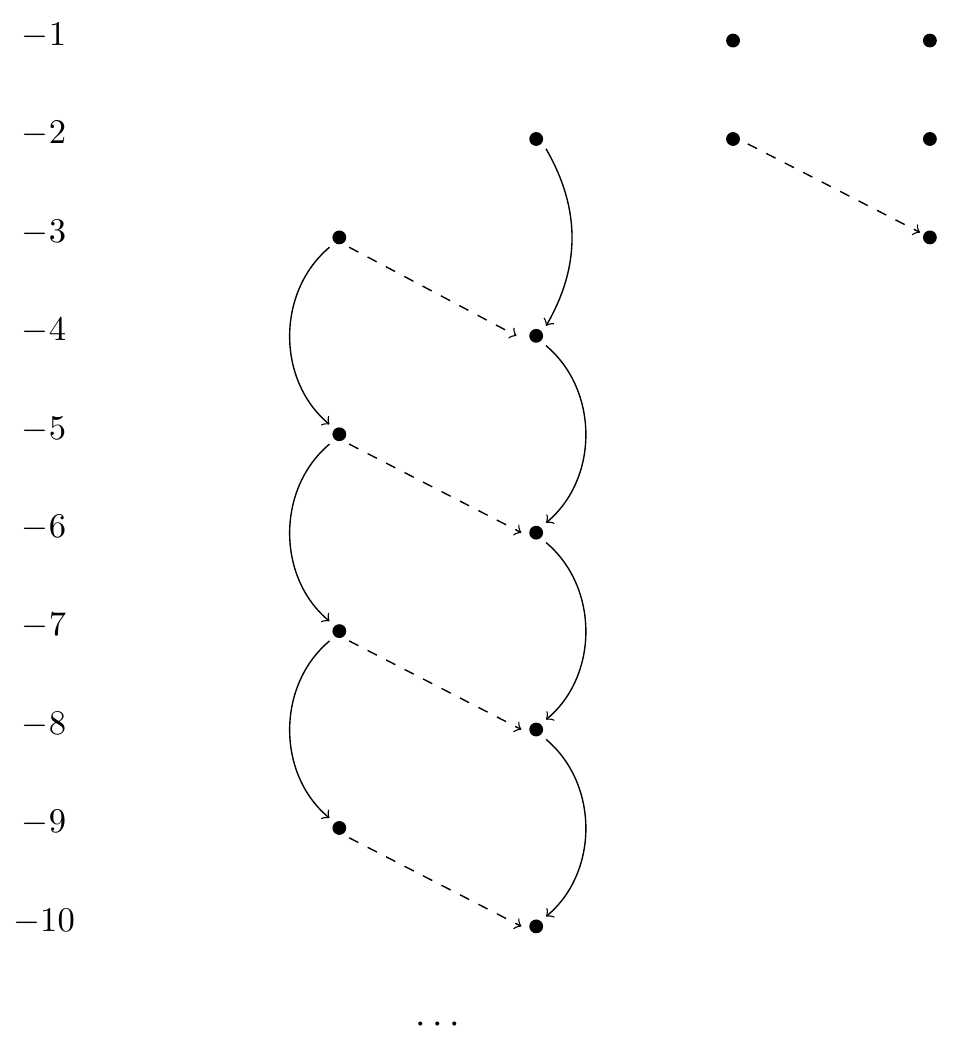}
\caption{A picture of $\HFIm(\Sigma(2,3,7)\# \Sigma(2,3,7))$. Curved solid arrows denote the action of the variable $U$, and dashed straight arrows denote the action of the variable $Q$.}
\label{fig:hfipic}
\end{figure}

\begin{lemma}\label{lem:Sigma(2,3,7)^2}
The involutive Heegaard Floer homology $\HFIm(\Sigma(2,3,7)\#\Sigma(2,3,7))$ is the module $\F_2[U]_{(-3)}\oplus \F_2[U]_{(-2)} \oplus (\F_2)^{\oplus 2}_{(-1)} \oplus (\F_2)_{(-2)}^{\oplus 2} \oplus (\F_2)_{(-3)}$ with $Q$ action as shown in Figure~\ref{fig:hfipic}. Therefore, the involutive correction terms of $\Sigma(2,3,7)\#\Sigma(2,3,7)$ are
\[ \dl(\Sigma(2,3,7)\#\Sigma(2,3,7))=-2, \qquad \du(\Sigma(2,3,7)\#\Sigma(2,3,7))= d(\Sigma(2,3,7)\#\Sigma(2,3,7)) = 0.\] 
\end{lemma}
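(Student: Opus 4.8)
The plan is to read off both the homology and the correction terms directly from the $\iota$-complex of $\Sigma(2,3,7)$, using the connected sum formula. Write $C=\CFm(\Sigma(2,3,7))$ together with its conjugation involution $\iota$; a finitely generated model is the one displayed in Figure~\ref{fig:237}, free over $\F_2[U]$ on three generators $a,b,c$ with $a$ the generator of the $U$-tail, and with $\iota$ the involution computed in \cite{HMinvolutive}. By Theorem~\ref{thm:ConnSum} and the disjoint union isomorphism \eqref{eq:disjunion}, the $\iota$-complex $\bigl(\CFm(\Sigma(2,3,7)\#\Sigma(2,3,7)),\iota\bigr)$ is equivalent to $\bigl(C\otimes_{\F_2[U]}C[-2],\,\iota\otimes\iota\bigr)$, and by Lemma~\ref{lem:3parts}(a) we may replace $C$ here by any chain homotopy equivalent $\iota$-complex. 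Hence, by the discussion following Theorem~\ref{thm:ConnSum}, $\CFIm(\Sigma(2,3,7)\#\Sigma(2,3,7))$ is, up to $\Ring$-equivariant quasi-isomorphism, the mapping cone
\[
\Cone\!\Bigl(C\otimes_{\F_2[U]}C[-2]\ \xrightarrow{\ Q(1+\iota\otimes\iota)\ }\ Q\cdot\bigl(C\otimes_{\F_2[U]}C\bigr)[-2][-1]\Bigr) ,
\]
so it suffices to understand $\bigl(C\otimes_{\F_2[U]}C,\,\iota\otimes\iota\bigr)$ and take the homology of this cone.

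First I would write out $C\otimes_{\F_2[U]}C$: it is free on the nine generators $x\otimes y$, $x,y\in\{a,b,c\}$, with differential obtained from Figure~\ref{fig:237} by the Leibniz rule, and with involution $\iota\otimes\iota$ read off from the matrix of $\iota$ on $\{a,b,c\}$. I would then simplify this $\iota$-complex by $\F_2[U]$-equivariant changes of basis and cancellation of acyclic summands, legitimate by Lemma~\ref{lem:3parts}(a), to obtain a small equivalent model in which one sees the generator of the $U$-tail of $\HFm(\Sigma(2,3,7)\#\Sigma(2,3,7))$, the reduced summands, and — crucially — the induced map $1+(\iota\otimes\iota)_\ast$ on homology, which is nontrivial, carrying the tail generator into the reduced part. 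The absolute gradings are pinned down by the $[-2]$ shift exactly as in Remark~\ref{rem:absolute6} and the proof of Proposition~\ref{prop:ConnSum}, so that $d(\Sigma(2,3,7)\#\Sigma(2,3,7))=2\,d(\Sigma(2,3,7))=0$ and the $U$-tail of $\HFm$ has top degree $-2$.

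Next I would compute the homology of this mapping cone as an $\Ring$-module, working with the small model. The two $U$-towers of $\HFIm$ arise as the $Q$-copy of the $\HFm$-tail (top degree $-2$) and as the part of the degree-shifted domain copy of the $\HFm$-tail on which $1+(\iota\otimes\iota)_\ast$ vanishes (top degree $-3$), while the finite summands and the $Q$-action are bookkept from how $1+(\iota\otimes\iota)_\ast$ mixes the reduced classes; this should reproduce the stated module $\F_2[U]_{(-3)}\oplus\F_2[U]_{(-2)}\oplus(\F_2)^{\oplus 2}_{(-1)}\oplus(\F_2)^{\oplus 2}_{(-2)}\oplus(\F_2)_{(-3)}$ with $Q$-action as in Figure~\ref{fig:hfipic}. (As a consistency check, one can instead feed $1+(\iota\otimes\iota)_\ast$ into the exact triangle of Proposition~\ref{propn:exact} and resolve the resulting extension problem.) Finally, for the correction terms I would apply the second characterization in Lemma~\ref{lem:newinv}: both towers map nontrivially into $\HFIinf\cong\F_2[Q,U,U^{-1}]/(Q^2)$ while the finite part is $U$-torsion and dies there, so the largest grading $\equiv d=0 \pmod 2$ on which $i\colon\HFIm\to\HFIinf$ is nonzero is $-2$, giving $\du=-2+2=0$, and the largest grading $\equiv d+1=1 \pmod 2$ on which $i$ is nonzero is $-3$, giving $\dl=-3+1=-2$; and $d=0$ is the top degree of the $\HFm$-tail. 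Alternatively, $\dl$ and $\du$ can be read directly off $\bigl(C\otimes_{\F_2[U]}C,\,\iota\otimes\iota\bigr)$ using Lemma~\ref{lemma:reformulation}, bypassing the full computation of $\HFIm$.

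The main obstacle is the bookkeeping in the second step. One must simultaneously track two grading shifts — the $[-2]$ coming from the connected sum and the $[-1]$ of the mapping cone — and, more essentially, the homology of the cone (in particular its reduced part and its $Q$-action) is \emph{not} determined by $\HFm(\Sigma(2,3,7)\#\Sigma(2,3,7))$ and the map $1+(\iota\otimes\iota)_\ast$ alone: chain-level information about which cycles of the form $(1+\iota\otimes\iota)v$ are boundaries is genuinely needed, which is exactly the subtlety isolated in Lemma~\ref{lemma:reformulation}. Carrying out the cancellation in $\bigl(C\otimes_{\F_2[U]}C,\,\iota\otimes\iota\bigr)$ so that $\iota\otimes\iota$ is transported correctly along the simplification is where the real care is required; once a correct small model is in hand, the homology computation and the extraction of $\dl,\du,d$ are routine.
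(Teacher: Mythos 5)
Your proposal is correct and follows essentially the same route as the paper: identify $\bigl(\CFm(\Sigma(2,3,7)\#\Sigma(2,3,7)),\iota\bigr)$ with $\bigl(\CFm(\Sigma(2,3,7))^{\otimes 2}[-2],\iota\otimes\iota\bigr)$ via Theorem~\ref{thm:ConnSum}, then compute the homology of the mapping cone by hand. The one difference in execution is that the paper does not bother to cancel acyclic summands in the nine-generator complex $C\otimes_{\F_2[U]}C$; it writes out $\delinv$ on all generators directly (the complex is small enough that this is not a burden), while you propose to reduce first and transport $\iota\otimes\iota$ along the reduction. Both are legitimate; your approach trades a few more lines of basis-change bookkeeping for a smaller cone to diagonalize. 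You also correctly identify the genuine subtlety — the reduced part and $Q$-action of $\HFIm$ are sensitive to which cycles $(1+\iota\otimes\iota)v$ bound at the chain level, not just to $1+(\iota\otimes\iota)_\ast$ on homology — and you correctly flag the shortcut via Lemma~\ref{lemma:reformulation} for extracting $\dl,\du$ without the full $\HFIm$; the paper notes exactly this alternative after the proof. One small imprecision: you describe the second tower as coming from ``the part of the $\HFm$-tail on which $1+(\iota\otimes\iota)_\ast$ vanishes.'' In fact $1+(\iota\otimes\iota)_\ast$ does not vanish on the generator $[a_1a_2]$ of the tail itself (it hits the reduced class $[a_1a_2+b_1b_2]$); it is only from $U[a_1a_2]$ onward that the image becomes a boundary, which is exactly why the second tower starts in degree $-3$ rather than $-1$ and why the $Q$-correction $Qa_1c_2+Qc_1b_2$ is needed in the paper's generator $[Ua_1a_2+Qa_1c_2+Qc_1b_2]$. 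This is precisely the chain-level dependence you flag, so your final answer would come out right, but the phrase ``on which $1+(\iota\otimes\iota)_\ast$ vanishes'' should be read as ``from which $(1+\iota\otimes\iota)$ hits a boundary.''
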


\begin{proof} The involutive Heegaard Floer homology of $\Sigma(2,3,7)$ is computed in  \cite[Section 6.8]{HMinvolutive}, using the large surgery formula. In fact, by applying the slightly stronger version of that formula, Proposition~\ref{prop:surgeries2}, we can find a model for the chain complex $\CFp(\Sigma(2,3,7))$ together with the involution $\iota$. (By ``model'', we mean a chain complex homotopy equivalent to it, by equivalences that interchange the involutions, up to homotopy.) In light of Proposition \ref{prop:orientation}, we may dualize this to obtain a model for $\CFm(\Sigma(2,3,7))$ with its involution (also called $\iota$). This gives the chain complex in Figure \ref{fig:237}. Following the computation in \cite[Section 6.8]{HMinvolutive}, we see that
$$\HFIm(\Sigma(2,3,7)) \simeq \F_2[U]_{(-3)} \oplus \F_2[U]_{(-2)}\oplus (\F_2)_{(-1)}$$
where the $\F_2[U]$-summands are generated by $[Ua + Qc]$ and $[Qa]=[Qb]$, and the remaining copy of $\F_2$ is generated by $[a+b]$.

\begin{figure}
\includegraphics{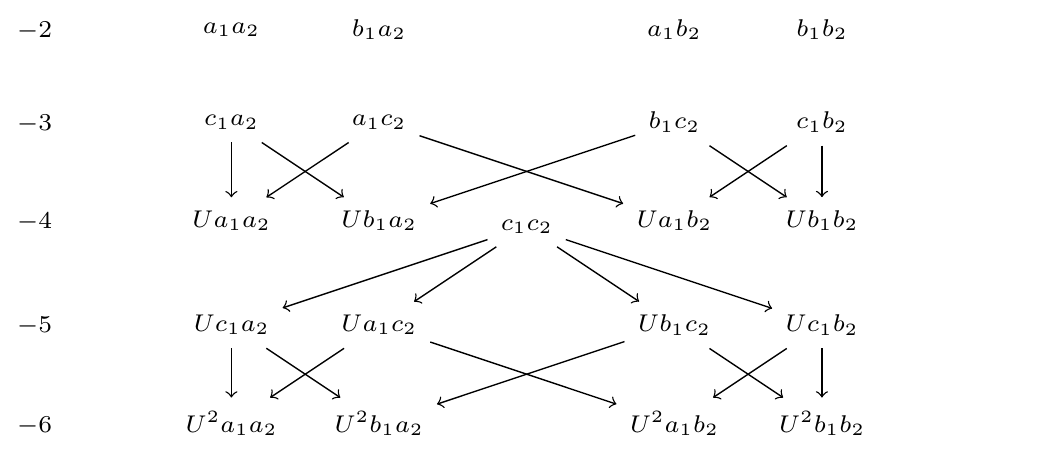}
\caption{The chain complex $\CFm(\Sigma(2,3,7)\#\Sigma(2,3,7))$. The complex continues with further $U$-powers of the nine generators.}
\label{fig:237twice}
\end{figure}

Now consider the connected sum $\Sigma(2,3,7)\#\Sigma(2,3,7)$. The chain complex $\CFm(\Sigma(2,3,7))$ $\otimes\CFm(\Sigma(2,3,7))[-2]$ appears in Figure \ref{fig:237twice}. By  Theorem~\ref{thm:ConnSum} we may take the involution to be $\iota\otimes\iota$, or reflection across the center line of the page in Figure \ref{fig:237twice}. Therefore the involution on $\CFm(\Sigma(2,3,7))\otimes\CFm(\Sigma(2,3,7))[-2]$ is given by
\begin{align*}
&\iota(a_1a_2) = b_1b_2 
\qquad \qquad\iota(a_1b_2) = b_1a_2 \\
&\iota(b_1a_2) = a_1b_2 
\qquad \qquad\iota(b_1b_2) = b_1b_2 \\
&\iota(a_1c_2) = b_1c_2 
\qquad \qquad\iota(b_1c_2) = a_1c_2 \\
&\iota(c_1a_2) = c_1b_2 
\qquad \qquad \iota(c_1b_2) = c_1a_2 \\
&\iota(c_1c_2) = c_1c_2
\end{align*}
We observe that the ordinary Heegaard Floer homology is $$\HFm(\Sigma(2,3,7)\#\Sigma(2,3,7)) = \F_2[U]_{(-2)}\oplus (\F_2)_{(-2)}^{\oplus 3} \oplus (\F_2)_{(-3)},$$ where $\F_2[U]_{(-2)}$ is generated by (say) $[a_1a_2]$, $(\F_2)_{(-2)}^{\oplus 3}$ is generated by $[a_1b_2+a_1a_2], [b_1a_2+a_1a_2]$, and $[b_1b_2+a_1a_2]$, and $(\F_2)_{(-3)}$ is generated by $[c_1a_2+c_1b_2+a_1c_2+b_1c_2]$. Notice that we have made a slightly arbitrary choice of generator for the $U$-tail; we could have chosen any of the elements $[a_1a_2]$, $[a_1b_2]$, $[b_1a_2]$, or $[b_1b_2]$, all of which are identified after multiplication with $U$, to wit: $U[a_1a_2]=U[a_1b_2]=U[b_1a_2]=U[b_1b_2]$. The involutive Heegaard Floer homology of this complex has differentials
\begin{align*}
&\delinv(a_1a_2)= \delinv(b_1b_2) = Q(a_1a_2+b_1b_2) \\
&\delinv(a_1b_2) = \delinv(b_1a_2) = Q(a_1b_2 + b_1a_2) \\
&\delinv(c_1a_2) = U(a_1a_2 + b_1a_2) + Q(c_1a_2+c_1b_2) \\
&\delinv(c_1b_2) = U(a_1b_2 + b_1b_2) + Q(c_1a_2+c_1b_2) \\
&\delinv(a_1c_2) =U(a_1a_2+a_1b_2) + Q(a_1c_2 + b_1c_2) \\
&\delinv(b_1c_2) = U(b_1a_2+b_1b_2)+ Q(a_1c_2+ b_1c_2) \\
&\delinv(c_1c_2) = U(c_1a_2 + c_1b_2 + a_1c_2 + b_1c_2) \\
&\delinv(Qa_1a_2) = \delinv(Qb_1b_2) = \delinv(Qa_1b_2) = \delinv(Qb_1a_2)=0\\
&\delinv(Qc_1a_2) = UQ(a_1a_2 + b_1a_2)\\
&\delinv(Qc_1b_2) = UQ(a_1b_2 + b_1b_2) \\
&\delinv(Qa_1c_2) =UQ(a_1a_2+a_1b_2)  \\
&\delinv(Qb_1c_2) = UQ(b_1a_2+b_1b_2) \\
&\delinv(Qc_1c_2) = UQ(c_1a_2 + c_1b_2 + a_1c_2 + b_1c_2) 
\end{align*}%
A straightforward computation now shows that the homology of this complex is $\F_2[U]_{(-3)}\oplus \F_2[U]_{(-2)} \oplus (\F_2)^{\oplus 2}_{(-1)} \oplus (\F_2)_{(-2)}^{\oplus 2} \oplus (\F_2)_{(-3)}$, where the tail $\F_2[U]_{(-3)}$ is generated by $[Ua_1a_2 +Qa_1c_2+Qc_1b_2]$, the tail $\F_2[U]_{(-2)}$ is generated by $[Qa_1a_2]$, the summand $(\F_2)_{(-1)}^{\oplus 2}$ is generated by $[a_1a_2+b_1b_2]$ and $[a_1b_2+b_1a_2]$, the summand $(\F_2)_{(-2)}^{\oplus 2}$ is generated by $[(c_1a_2+c_1b_2+a_1c_2+a_1b_2)]$ and $[Q(a_1b_2+a_1a_2)]$, and the summand $(\F_2)_{-3}$ is generated by $[Q(c_1a_2+c_1b_2+a_1c_2+a_1b_2)]$. This gives the $\F_2[U,Q]/(Q^2)$-module structure shown in Figure \ref{fig:hfipic}. We observe that $\dl(\Sigma(2,3,7)\#\Sigma(2,3,7))= -3+1=-2$ and $\du(\Sigma(2,3,7)\#\Sigma(2,3,7))=-2+2=0$. \end{proof}

Notice that if we were only interested in the involutive correction terms, instead of computing the entire involutive Heegaard Floer homology $\HFm(\Sigma(2,3,7)\#\Sigma(2,3,7))$, we could instead have given the following argument using Lemma \ref{lemma:reformulation}. The homogeneous elements $v$ of $\CFm(\Sigma(2,3,7))\otimes\CFm(\Sigma(2,3,7))[-2]$ with the property that $\del v = 0$ and $[U^nv]\neq 0$ for any $n$ are exactly the $U$-powers of the four elements $a_1a_2$, $a_1b_2$, $b_1a_2$, and $b_1b_2$, and sums of any three of these terms. Therefore, in order to compute $\dl(\Sigma(2,3,7)\#\Sigma(2,3,7))$, we need only find the smallest $U$-power of one of these eight generators for which $(1+\iota)(U^kv)$ is in the image of the ordinary differential $\del$. Now, the involution $\iota$ exchanges $a_1a_2$ with $b_1b_2$ and $a_1b_2$ with $b_1a_2$.  Therefore, $(1+\iota)$ applied to any of the eight possible generators of the tower is either $a_1a_2+b_1b_2$ or $a_1b_2 + b_1a_2$, neither of which is in the image of $\del$. However, $(1+\iota)(Ua_1a_2) = U(a_1a_2+b_1b_2) = \del(c_1a_2 + b_1c_2)$. We conclude that $\dl(\Sigma(2,3,7)\#\Sigma(2,3,7))= \gr(Ua_1a_2)+2 = -4+2=-2$. Now, in order to find $\du(\Sigma(2,3,7)\#\Sigma(2,3,7))$, observe that if $x=z=0$ and $y=a_1a_2$, we have $\del y = (1+\iota) x$, $\del z = x$, and $[U^n(y+(1+\iota)z)]\neq 0$ for any $n\geq 0$. Since the grading of $y$ is the highest grading in the entire complex, we conclude that $\du(\Sigma(2,3,7)\#\Sigma(2,3,7))=\gr(y)+2=-2+2=0$. 

The argument in the preceding paragraph generalizes to the case of $\#n\Sigma(2,3,7)$, allowing us to prove Proposition \ref{prop:237sums}.

\begin{proof}[Proof of Proposition \ref{prop:237sums}] Given $n$ copies of the complex $\CFm(\Sigma(2,3,7))$, index by subscripts as in the preceding example, so that the $i$th copy of the complex is generated over $\F_2[U]$ by $a_i, b_i, c_i$, and the copy of the chain complex $\CFm(\#n\Sigma(2,3,7))$ given by tensoring all $n$ copies of the original complex together is generated over $\F_2[U]$ by ordered $n$-tuples consisting of one of these three elements for each $i$. In this complex, the homogeneous elements $v$ of $\CFm(\#n\Sigma(2,3,7)))$ with the property that $\del v = 0$ and $[U^nv]\neq 0$ for any $n$ are exactly the $U$-powers of elements $v = v_1\cdots v_n$ where each $v_i$ is either $a_i$ or $b_i$, and sums of an odd number of such terms. These elements are in grading $-2$. The involution $\iota$ acts on such elements $v$ by replacing $a_i$ with $b_i$ and vice versa, and therefore exchanges these elements in pairs. Therefore, the map $(1+\iota)$ applied to a sum of an odd number of such terms is nonzero, and in particular not in the image of $\del$ (since no nonzero element in grading $-2$ in this complex is in the image of $\del$). However, we see that $(1+\iota)(Ua_1\cdots a_n) = U(a_1\cdots a_n+b_1\cdots b_n) = \del(\sum_{i=1}^n b_1\cdots b_{i-1} c_i a_{i+1}\cdots a_n)$. We conclude that $\dl(\#n\Sigma(2,3,7))= \gr(Ua_1 \cdots a_n)+2 = -4+2=-2$.

Similarly, to compute $\du(\#n\Sigma(2,3,7))$, observe that if $x=z=0$ and $y=a_1\cdots a_n$, then we have $\del y = (1+\iota) x$, $\del z = x$, and $[U^n(y+(1+\iota)z)]\neq 0$ for any $n\geq 0$. Since the grading of $y$ is the highest grading in the entire complex, we conclude that $\du(\#n\Sigma(2,3,7))=\gr(y)+2=-2+2=0$. \end{proof}

\subsection{A rational homology sphere with \texorpdfstring{$\dl \neq d \neq \du$}{dl not d not du}}

\begin{figure}
\includegraphics{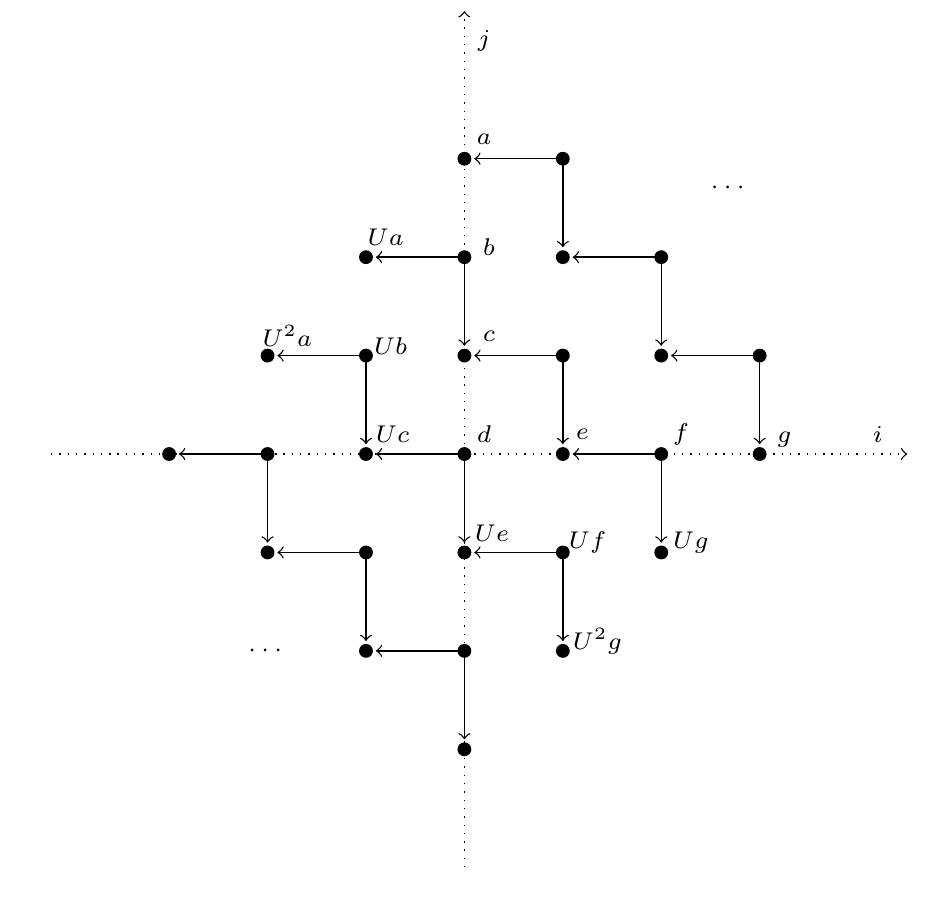}
\caption{A model for $\CFKinfty(T_{2,7})$.}
\label{fig:T27}
\end{figure}

In this section we use the connected sum formula to compute the involutive correction terms of the unique spin structure on $Y = S^3_{-3}(T_{2,7}) \# S^3_{-3}(T_{2,7}) \# S^3_{5}(-T_{2,11})$, and thus prove Proposition \ref{prop:alldifferent}. We also give the proof of Proposition~\ref{prop:undetermined}. Throughout this section, all three-manifolds that appear are $\Z_2$ homology spheres, and hence they admit with unique spin structures; therefore, as per Remark \ref{remark:suppress} we omit all references to these spin structures in the notation.

We begin by looking at $S^3_{-3}(T_{2,7})$. By Ni and Wu's computation of the  ordinary correction terms \cite[Proposition 1.6]{NiWu}, we see that
\begin{align*}
d(S^3_{-3}(T_{2,7})) &= -d(S^3_3(-T_{2,7})) \\
&= -d(L(3,1) +2V_0(-T_{2,7}))\\ 
&= -\left(\frac{3-1}{4} - 0\right) \\ 
&= -\frac{1}{2}.
\end{align*}

The chain complex $\CFKinfty(T_{2,7})$ appears in Figure \ref{fig:T27}.  Recall from \cite[Section 7]{HMinvolutive} that the map $\iota_K$ on $\CFKinfty(T_{2,7})$ is reflection across the line $y=x$. Next, recall that the complex $\CFKinfty(-T_{2,7})$ is dual to $\CFKinfty(T_{2,7})$. Therefore we can obtain $\CFKinfty(T_{2,7})$ by reversing all arrows and gradings in Figure \ref{fig:T27}, and the involution $\iota_K$ will continue to be a reflection across $y=x$ (by the same computation as in \cite{HMinvolutive}).

Now, since $3=g(T_{2,7})=g(-T_{2,7})$, we see that by Proposition~\ref{prop:surgeries2}, the Heegaard Floer complex $\CFp(S^3_3(-T_{2,7}))$ is isomorphic as a chain complex to $A_0^+(-T_{2,7})$, and the involution $\iota$ can be taken to be $\iota_K$. Furthermore, $A_0^+(-T_{2,7})$ is dual to the subcomplex $C\{i\leq 0 \text{ or } j\leq 0\}$ of $\CFKinfty(T_{2,7})$. But recall that in fact $\CFm(S^3_{-3}(T_{2,7}))$ is dual to $\CFp(S^3_3(-T_{2,7}))$, and by Proposition \ref{prop:orientation}, if we know the conjugation map $\iota$ on a representative of $\CFp(S^3_3(-T_{2,7}))$, then the conjugation map on $\CFm(S^3_{-3}(T_{2,7}))$ may be taken to be the dual map of $\iota$ (also called $\iota$). Therefore the chain homotopy class of $\CFm(S^3_{-3}(T_{2,7}))$ may be represented by the subcomplex of $\CFKinfty(T_{2,7})$ consisting of the second, third, and fourth quadrants, with involution consisting of reflection across the line $y=x$.

\begin{figure}
\includegraphics{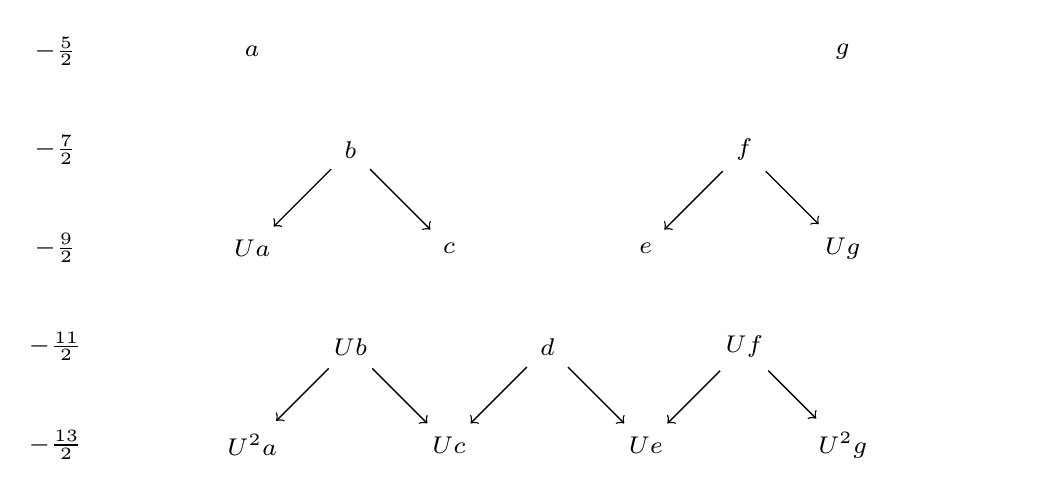}
\caption{The chain complex $\CFm(S^3_{-3}(T_{2,7}))$ arising from the surgery formula. The complex continues with further $U$-powers of the seven generators.}
\label{fig:2715}
\end{figure}

The preceding paragraph implies that $\CFm(S^3_{-3}(T_{2,7}))$ is the complex shown in Figure \ref{fig:2715}. The involution on the complex is horizontal reflection across the centerline of the page. We may substantially reduce the size of this complex as follows. First, we do a $U$-equivariant change of basis in which $c$ is replaced with $c+Ua$, $e$ is replaced with $e+Ug$, and $d$ is replaced with $d'=d+Ub+Uf$. Notice that this change of basis is $\iota$-equivariant. This leaves us with the complex in Figure \ref{fig:2715complexbasischange}. Deleting the acyclic summands, we obtain the complex in Figure \ref{fig:2715complexreduced}. Note that these changes are equivalences of $\inv$-complexes, so do not affect the final result.

\begin{figure}
\includegraphics{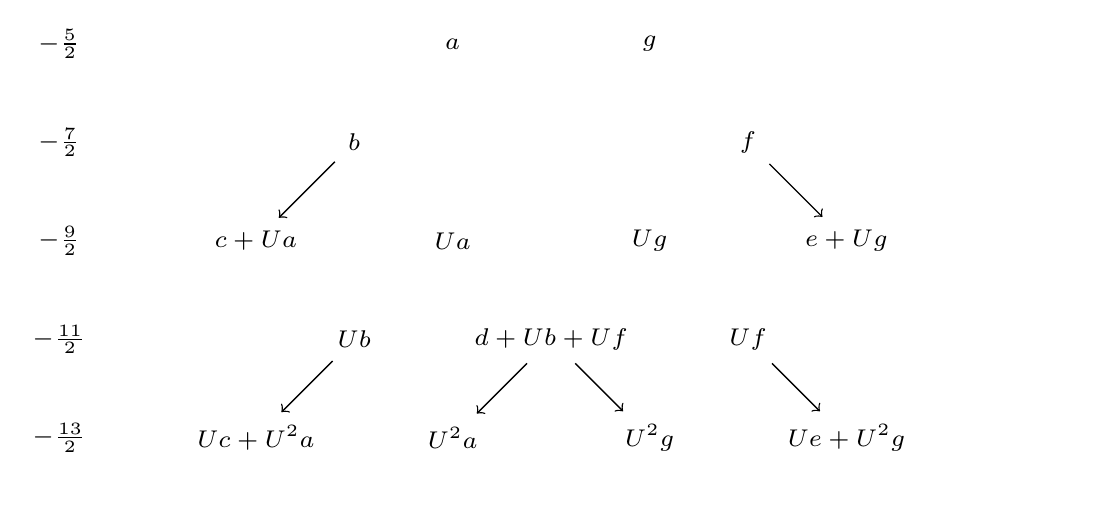}
\caption{A change of basis for $\CFm(S^3_{-3}(T_{2,7}))$.}
\label{fig:2715complexbasischange}
\end{figure}

\begin{figure}
\includegraphics{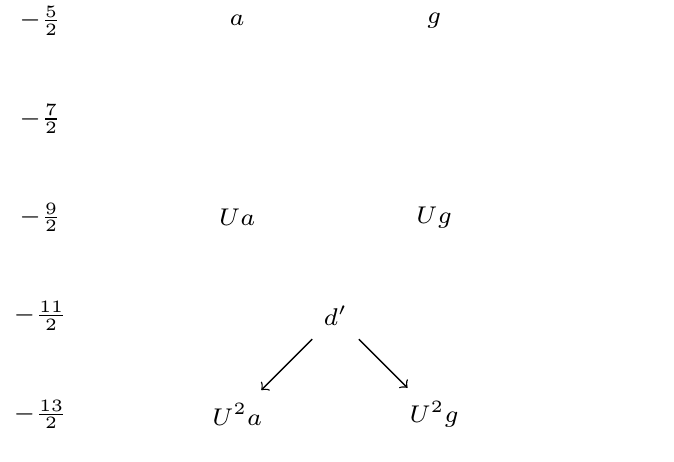}
\caption{The chain complex $\CFm(S^3_{-3}(T_{2,7}))$ with acyclic summands removed, and with $d' = d+Ub+Uf$.}
\label{fig:2715complexreduced}
\end{figure}
 
We now compute the correction terms of $S^3_{-3}(T_{2,7})$. This computation will not be used in the proof of Proposition~\ref{prop:alldifferent}, but it will be needed later in the proof of Proposition \ref{prop:undetermined}. We include it at this point in the narrative to build the reader's familiarity with arguments using Lemma \ref{lemma:reformulation}.

\begin{lemma} The involutive correction terms of $S^3_{-3}(T_{2,7})$ are 
\[ \dl(S^3_{-3}(T_{2,7}))=-\frac{9}{2}, \qquad \du(S^3_{-3}(T_{2,7}))=d(S^3_{-3}(T_{2,7}))=-\frac{1}{2}.\]
\end{lemma}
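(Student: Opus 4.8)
The plan is to work with the explicit reduced model for $\CFm(S^3_{-3}(T_{2,7}))$ displayed in Figure~\ref{fig:2715complexreduced}, together with its conjugation involution $\iota$, which acts by horizontal reflection across the center line of that figure. The reduction from Figure~\ref{fig:2715} to Figure~\ref{fig:2715complexreduced} was carried out through equivalences of $\inv$-complexes, so this small model computes the same involutive homology and correction terms, and it can be fed directly into Lemma~\ref{lemma:reformulation}. We also use the value $d(S^3_{-3}(T_{2,7})) = -\tfrac12$ established above: it says that the infinite $U$-tail of $\HFm$ begins in grading $-\tfrac12$, and combined with $\dl \le d \le \du$ and $\dl \equiv \du \equiv d \pmod{2\Z}$ it confines both involutive correction terms to the coset $-\tfrac12 + 2\Z$.

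For $\dl$ the strategy is to apply part (a) of Lemma~\ref{lemma:reformulation}. One first lists, in each grading $-\tfrac12 - 2k$ with $k \ge 0$, the homogeneous cycles $v$ with $[U^n v] \ne 0$ for all $n$, that is, those representing a generator of the $U$-tail of $\HFm$; in the small model these are $U^k$ times a fixed tail generator, possibly corrected by the finitely many torsion cycles sitting in that grading. For each such $v$ one computes $(1+\iota)v$: since $\iota$ is a reflection, $(1+\iota)$ annihilates the symmetric part of $v$, so $(1+\iota)v$ is supported on the antisymmetric directions, and the question reduces to how many powers of $U$ are needed before it lands in $\operatorname{Im}\partial$. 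A direct inspection of Figure~\ref{fig:2715complexreduced} should show that $(1+\iota)v$ is not a boundary for any $v$ of grading larger than $-\tfrac{13}{2}$, while some $v$ in grading $-\tfrac{13}{2}$ does satisfy $(1+\iota)v \in \operatorname{Im}\partial$; by Lemma~\ref{lemma:reformulation}(a) this yields $\dl = -\tfrac{13}{2} + 2 = -\tfrac92$.

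For $\du$ the strategy is to apply part (b) of Lemma~\ref{lemma:reformulation}. Because $\du \ge d = -\tfrac12$ holds automatically, it is enough to exhibit one admissible triple $(x,y,z)$ realizing the value $-\tfrac12$ and to check that no admissible configuration produces a larger grading. Taking $x = 0$ --- so that $y$ and $z$ are cycles with $[U^n(U^m y + (1+\iota)z)] \ne 0$ for all $n$ --- and choosing $y, z$ near the top of the complex should realize $\du = -\tfrac12$, and the opposite bound is again a finite check that excludes admissible $x$, or admissible $y$ when $x = 0$, of higher grading. As an independent confirmation one could instead invoke Proposition~\ref{prop:orientationsigns}, which gives $\du(S^3_{-3}(T_{2,7})) = -\dl(S^3_3(-T_{2,7}))$, and then establish that the positive surgery $S^3_3(-T_{2,7})$ has $\dl = d = \tfrac12$, using the mirror model coming from Figure~\ref{fig:T27} in the same way as above.

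The hard part is the bookkeeping inside the explicit complex, not any conceptual issue: one must, grading by grading, correctly enumerate all homogeneous cycles whose class generates the $U$-tail (including their corrections by torsion cycles), and then decide in each case whether $(1+\iota)v$ --- respectively the combination $U^m y + (1+\iota)z$ --- lies in the image of the differential, while carefully tracking the $+1$ grading shifts occurring in the mapping-cone description that underlies Lemma~\ref{lemma:reformulation}. All of the conceptual input --- the model for $\CFm(S^3_{-3}(T_{2,7}))$ with its involution, obtained from Propositions~\ref{prop:surgeries2} and~\ref{prop:orientation} --- is already in place from the preceding discussion.
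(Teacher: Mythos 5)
Your proposal is correct and matches the paper's own argument: the paper likewise works in the reduced model of Figure~\ref{fig:2715complexreduced}, identifies the tail-generating cycles as the $U$-powers of $a$ and $g$ (in grading $-\tfrac{5}{2}$), checks that $(1+\iota)a=a+g$ and $U(a+g)$ are not boundaries while $U^2(a+g)=\partial d'$, and realizes $\du$ by the triple $x=z=0$, $y=a$. The finite grading-by-grading verification you outline is exactly the content of that computation.
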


\begin{proof} Consider the copy of $\CFm(S^3_3(T_{2,7}))$ in Figure \ref{fig:2715complexreduced}, with involution given as usual by reflection across the page. We see that the $\F_2[U]$ summand of $\HFm(S^3_{-3}(T_{2,7}))$ is generated by either $[a]$ or $[g]$, which become equal after multiplication by $U$. Therefore $\gr(a)=\gr(g)=-\frac{5}{2}$ and $\gr(d')=-\frac{11}{2}$. Now, the homogeneous elements $v$ for which $\del v=0$ and $[U^nv]\neq 0$ for any $n$ are exactly the $U$-powers of $a$ and $g$.  We observe that $(1+\iota)a=(1+\iota)g=a+g$ and $(1+\iota)(Ua)= (1+\iota)Ug = U(a+g)$ are not in the image of $\del$, but $(1+\iota)U^2a=U^2(a+g) = \del(d')$. Therefore we conclude that $\dl(S^3_{-3}(T_{2,7}))=\gr(U^2a)+2=-\frac{13}{2}+2 = -\frac{9}{2}$. As to the upper involutive correction term, notice that if $x=z=0$ and $y=a$, the conditions of Lemma \ref{lemma:reformulation} are satisfied. So $\du(S^3_{-3}(T_{2,7}))=\gr(a)+2 = -\frac{1}{2}$. \end{proof}

Now we turn our attention to $S^3_{-3}(T_{2,7})\#S^3_{-3}(T_{2,7})$. We let the two copies of the chain complex $\CFm(S^3_{-3}(T_{2,7}))$ be labelled as in Figures \ref{fig:2715complexreduced} and \ref{fig:2715complexreducedsecondcopy}, for convenience. Tensoring these complexes together, we obtain the chain complex shown in Figure \ref{fig:2715twocopies}. We can now compute the involutive correction terms of this connected sum.

\begin{figure}
\includegraphics{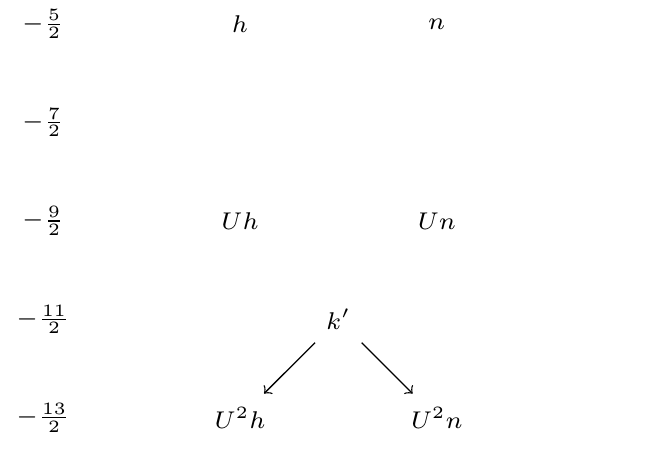}
\caption{A second copy of the chain complex $\CFm(S^3_{-3}(T_{2,7}))$, with different notation for convenience.}
\label{fig:2715complexreducedsecondcopy}
\end{figure}

\begin{figure}
\includegraphics{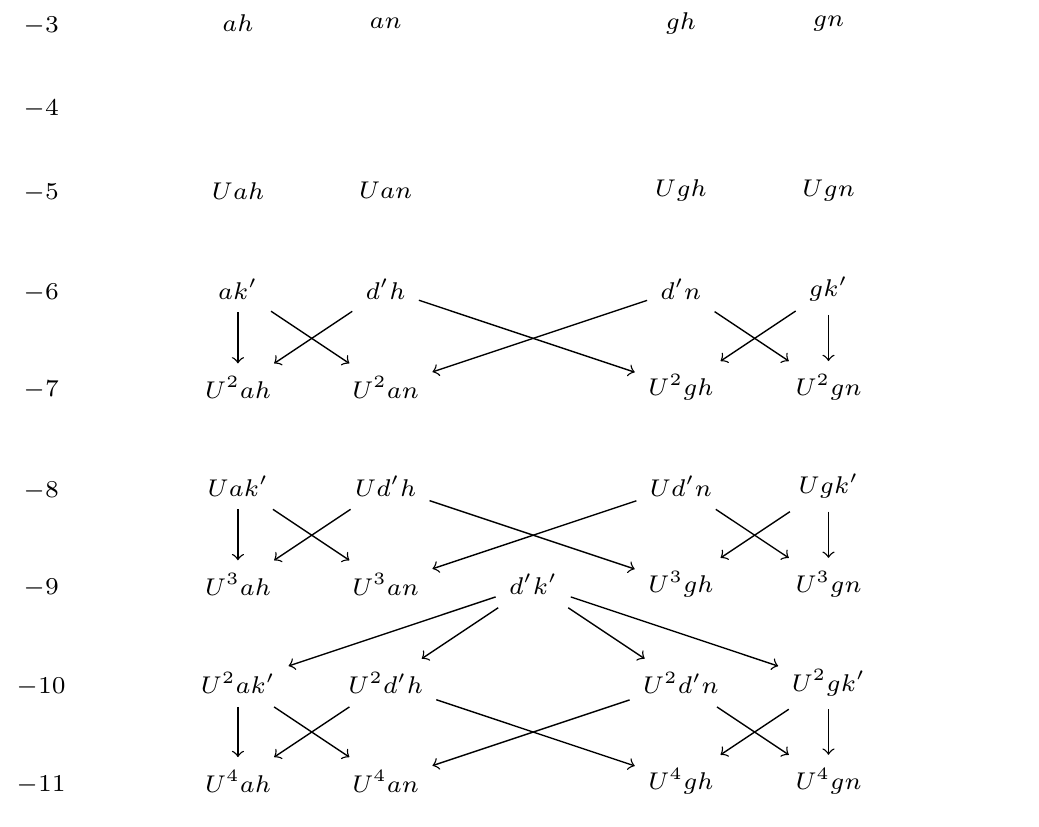}
\caption{The chain complex $\CFKm(S^3_{-3}(T_{2,7})\#S^3_{-3}(T_{2,7}))$. The complex continues with additional $U$-powers of the nine generators.}
\label{fig:2715twocopies}
\end{figure}

\begin{lemma} \label{lem:twocopies} The involutive correction terms of $Y_1 = S^3_{-3}(T_{2,7})\#S^3_{-3}(T_{2,7})$ are 
\begin{align*}&\dl(Y_1) = -5 \\ &\du(Y_1) = d(Y_1) = -1. \end{align*}
\end{lemma}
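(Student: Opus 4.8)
The plan is to argue directly from Lemma~\ref{lemma:reformulation}, using the explicit model of $\CFm(S^3_{-3}(T_{2,7})) \otimes \CFm(S^3_{-3}(T_{2,7}))[-2]$ depicted in Figure~\ref{fig:2715twocopies}, with the involution taken to be $\iota \otimes \iota$ by Theorem~\ref{thm:ConnSum}. First I would fix notation: the reduced model of $\CFm(S^3_{-3}(T_{2,7}))$ has three free generators $a, g, d'$ over $\F_2[U]$, with $\gr(a) = \gr(g) = -5/2$, $\gr(d') = -11/2$, differential $\partial d' = U^2(a+g)$ (and $\partial a = \partial g = 0$), and involution $\iota$ swapping $a \leftrightarrow g$ and fixing $d'$. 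Tensoring the two copies, I would identify the nine generators $a_1a_2, a_1g_2, g_1a_2, g_1g_2$ (all in grading $-3$), $a_1d_2', d_1'a_2', g_1d_2', d_1'g_2'$ (grading $-6$), and $d_1'd_2'$ (grading $-9$), record the nine differentials (each $d_i'd_j'$-type generator hitting an appropriate $U^2$-multiple of a grading-$(-3)$ generator), and note that $\iota\otimes\iota$ acts by simultaneously swapping subscript-$1$ and subscript-$2$ letters, so it swaps $a_1a_2 \leftrightarrow g_1g_2$, $a_1g_2 \leftrightarrow g_1a_2$, fixes the pattern on the $d'$-generators similarly.

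Next, for $\dl(Y_1)$: the cycles $v$ with $[U^nv] \neq 0$ for all $n$ are exactly the $U$-powers of $a_1a_2, a_1g_2, g_1a_2, g_1g_2$ and sums of an odd number of these; all lie in grading $-3$. Applying $(1+\iota)$ to any such $v$ gives a nonzero element of grading $-3$, which cannot be a boundary (the bottom of the $U$-tail sits at grading $-3$ and no grading-$(-3)$ element is hit by $\partial$); the same holds after multiplying by $U$. But $(1+\iota)(U^2 a_1a_2) = U^2(a_1a_2 + g_1g_2) = \partial(d_1'd_2')$ — here I should double-check that $\partial(d_1'd_2') = U^2(a_1+g_1)\otimes d_2' + d_1'\otimes U^2(a_2+g_2)$ expands correctly; I may instead need $(1+\iota)$ of a sum such as $U^2(a_1a_2 + a_1g_2)$ or need to add a correction term $w = d_1'\otimes(\text{something})$ — in any case one checks that $U^2$ of one of these cycles becomes exact after applying $(1+\iota)$, while $U^1$ does not. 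This yields $\dl(Y_1) = \gr(U^2 a_1a_2) + 2 = -7 + 2 = -5$.

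Finally, for $\du(Y_1)$: taking $x = z = 0$ and $y = a_1a_2$ (the maximally graded generator, grading $-3$), the conditions $\partial y = (1+\iota)x = 0$, $\partial z = U^0 x = 0$, and $[U^n(y + (1+\iota)z)] = [U^n(a_1a_2)] \neq 0$ are all satisfied, and since $\gr(a_1a_2)$ is the top grading in the complex, Lemma~\ref{lemma:reformulation}(b) gives $\du(Y_1) = \gr(a_1a_2) + 2 = -1$; the value $d(Y_1) = -1$ follows from the ordinary connected sum formula $d(Y_1) = 2d(S^3_{-3}(T_{2,7})) = 2\cdot(-1/2) = -1$, consistent with $\dl \le d \le \du$. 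The main obstacle I anticipate is purely bookkeeping: correctly enumerating the nine differentials in the tensor product model and verifying that the relevant $(1+\iota)$-images are (or are not) boundaries — in particular confirming that no grading-$(-3)$ or grading-$(-5)$ element is exact, and pinning down exactly which $U$-power and which cycle first becomes a boundary after applying $1+\iota$. Once that linear-algebra check is done, both correction terms drop out of Lemma~\ref{lemma:reformulation} immediately, exactly as in the warm-up computation for $\#2\,\Sigma(2,3,7)$.
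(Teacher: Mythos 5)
Your plan is correct and follows essentially the same route as the paper's proof (working directly from Lemma~\ref{lemma:reformulation} with the reduced tensor-product model and $\iota = \iota_1 \otimes \iota_2$). The one detail you flagged does need the correction you anticipated: $\partial(d_1'd_2') = U^2(a_1+g_1)d_2' + U^2 d_1'(a_2+g_2)$ is not the element you want, and the correct witness is $w = a_1 d_2' + d_1' g_2$ (written $ak' + d'n$ in the paper's notation for the two tensor factors), which indeed satisfies $\partial w = U^2(a_1a_2 + g_1g_2) = (1+\iota)(U^2 a_1a_2)$; the remainder of the bookkeeping goes through exactly as you outlined.
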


\begin{proof} Since the ordinary correction term is additive, $d(S^3_{-3}(T_{2,7})\#S^3_{-3}(T_{2,7}))=-\frac{1}{2}+-\frac{1}{2}=-1$. Consider the copy of $\CFm(S^3_{-3}(T_{2,7})\#S^3_{-3}(T_{2,7}))$ in Figure \ref{fig:2715twocopies}. The involution on the tensor product may be taken to be the tensor product of the involutions, or reflection across the centerline of the page. Notice that the $\F_2[U]$ summand in the homology of this complex is generated by any of $[ah]$, $[an]$, $[gh]$, and $[gn]$, so $\gr(ah)=\gr(an)=\gr(gh)=\gr(gn) = -3$. Therefore, the homogeneous elements $v$ with the property that $\del(v)=0$ and $[U^nv]\neq 0$ for any $n$ are exactly the $U$-powers of $ah$, $an$, $gh$, and $gn$, and sums of three of these terms. We observe that $1+\iota$ applied to any of these eight generators is either $ah+gn$ or $an+gh$, neither of which is in the image of $\del$. Similarly, $(1+\iota)$ applied to $U$ times any of these generators is either $U(ah+gn)$ or $U(an+gh)$, neither of which is in the image of $\del$. However, we have $(1+\iota)(U^2ah) = U^2ah+U^2gn = \del(ak'+d'n)$. Therefore \[\dl(S^3_{-3}(T_{2,7})\#S^3_{-3}(T_{2,7})) = \gr(U^2ah)+2 = -7+2=-5.\] 

Now we turn our attention to the upper involutive correction term. Notice that if $x=z=0$ and $y=ah$, the conditions of Lemma \ref{lemma:reformulation} are satisfied. Therefore \[\du(S^3_{-3}(T_{2,7})\#S^3_{-3}(T_{2,7}))=\gr(ah)+2=-1.\qedhere \] \end{proof}

\begin{figure}
\includegraphics{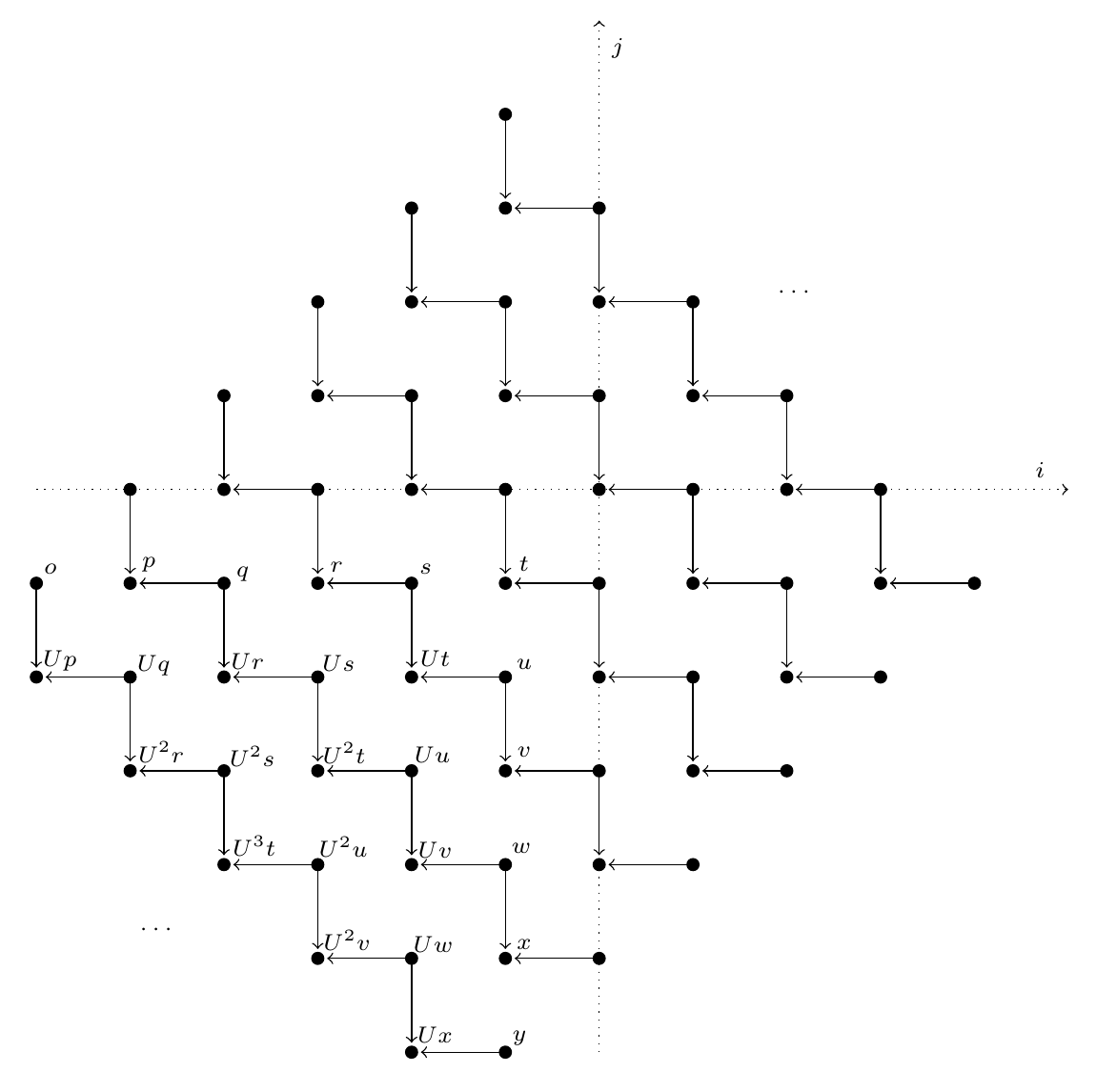}
\caption{A model for $\CFKinfty(-T_{2,11})$.}
\label{fig:-T211}
\end{figure}

We now consider the three-manifold $Z=S^3_5(-T_{2,11})$. By Ni and Wu's computation of the ordinary correction terms of surgeries \cite[Proposition 1.6]{NiWu} , we see that
\[
d(S^3_{5}(-T_{2,11})) = d(L(5,1) - 2V_0(-T_{2,11})) = \left(\frac{5-1}{4} - 0\right) = 1.
\]

The chain complex $\CFKinfty(-T_{2,11})$ is shown in Figure \ref{fig:-T211}. Recall from \cite[Section 7]{HMinvolutive} that the involution is a reflection across the line $y=x$. Now, since $5=g(-T_{2,11})$, there is an isomorphism of chain complexes between the subcomplex $A_0^-(-T_{2,11}) \subset \CFKinfty(-T_{2,11})$ and $\CFm(S^3_5(-T_{2,11}))$, and the involution $\iota$ may be taken to be the map $\iota_K$, which is a reflection across the line $y=x$. (Note that the large surgery formula, Proposition 6.8 in \cite{HMinvolutive}, and its strengthening, Proposition~\ref{prop:surgeries2}, were phrased for $\CFp$. However, the same arguments carry over for $\CFm$ essentially unchanged.) It follows that $\CFm(S^3_{5}(-T_{2,11}))$ may be represented by the complex in Figure \ref{fig:21123complex}, with involution given by horizontal reflection. As previously, we may reduce the size of this complex via a change of basis which is equivariant with respect to both $U$ and $\iota$. We  replace $r$ with $r+Ut$, $v$ with $v+Ut$, $q$ with $q+Us$, $y$ with $y+Uu$, $p$ with $p+Ur+U^2t$, $x$ with $x+Uv+U^2t$, $o$ with $o'=o+Uq+U^2s$, and $y$ with $y'=y+Uw+U^2r$. This gives the complex in Figure \ref{fig:21123complexbasischange}. Cancelling acyclic summands, we obtain the chain complex in Figure \ref{fig:21123complexreduced}. Let us now compute the involutive correction terms of $S^3_{5}(-T_{2,11})$.

\begin{figure}
\includegraphics{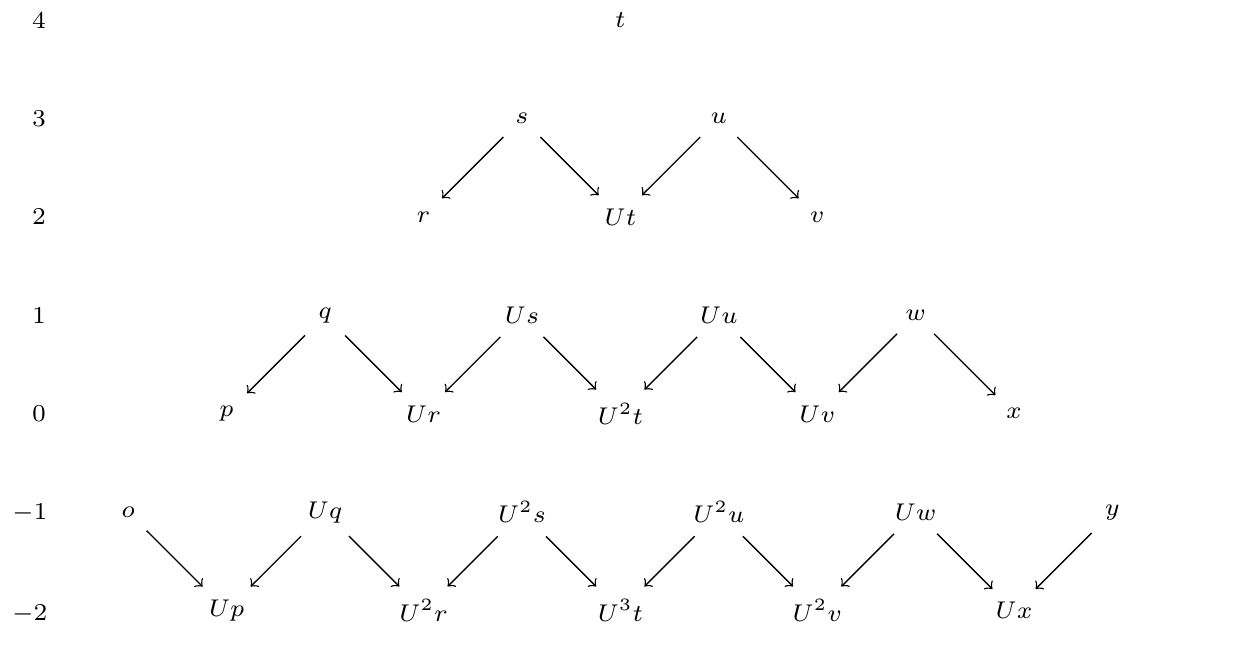}
\caption{The chain complex $\CFm(S^3_{5}(-T_{2,11}))$ arising from the surgery formula. The complex continues with additional $U$-powers of the eleven generators.}
\label{fig:21123complex}
\end{figure}

\begin{figure}
\includegraphics{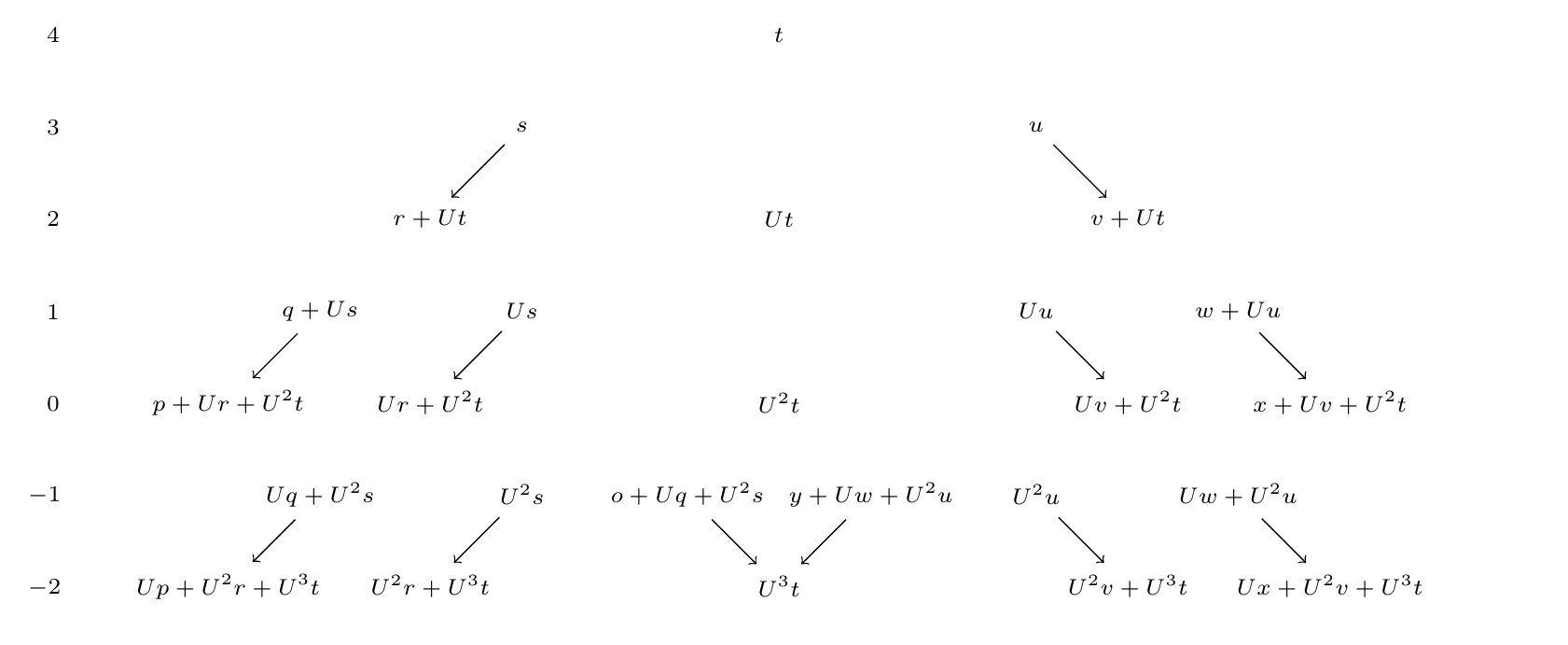}
\caption{A change of basis for $\CFm(S^3_{5}(-T_{2,11}))$.}
\label{fig:21123complexbasischange}
\end{figure}

\begin{lemma} The involutive correction terms of $Z=S^3_5(-T_{2,11})$ are
\[ \dl(Z) = d(Z) = 1 \qquad \du(Z) = 7.\]
\end{lemma}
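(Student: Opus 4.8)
The value $d(Z)=1$ has already been recorded above via Ni and Wu's surgery formula, so what remains is to compute $\dl(Z)$ and $\du(Z)$. The plan is to apply Lemma~\ref{lemma:reformulation} directly to the reduced model for $\CFm(S^3_5(-T_{2,11}))$ from Figure~\ref{fig:21123complexreduced}, on which—by the $\iota$-equivariance of the changes of basis used to produce it—the conjugation involution $\iota$ is horizontal reflection across the centerline of the page, exactly as in the computations for $\Sigma(2,3,7)\#\Sigma(2,3,7)$ and $S^3_{-3}(T_{2,7})$ above. The first step is to locate the cycle $v$ generating the $\F_2[U]$-tail of $\HFm(Z)$; by our grading normalization it lies in degree $d(Z)-2=-1$, and no cycle generating the tail sits in higher degree.

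For $\dl(Z)$ I would observe that, in contrast to the earlier two examples, the tail generator $v$ can be chosen on the diagonal fixed by the reflection (the generator of $\CFKinfty(-T_{2,11})$ with $i=j$), so that $(1+\iota)v$ is a boundary; Lemma~\ref{lemma:reformulation}(a) then gives $\dl(Z)=\gr(v)+2=d(Z)=1$. For $\du(Z)$, since the answer will exceed $d(Z)$, the configuration in Lemma~\ref{lemma:reformulation}(b) realizing it must have $x\ne 0$: if $x=0$ one only obtains a $y$ whose $U$-powers eventually represent the tail generator, and taking the maximal such $y$ yields merely $\du=d$. So I would instead exhibit a homogeneous cycle $x$ of the largest feasible degree—expected to be $4$, coming from the top of the relevant truncation of $A_0^-(-T_{2,11})$—together with $y$, $z$ and an exponent $m\ge 0$ satisfying $\partial y=(1+\iota)x$, $\partial z=U^m x$, and $[U^n(U^m y+(1+\iota)z)]\neq 0$ for all $n$ (i.e. $U^m y+(1+\iota)z$ generates the tail). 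Then $\du(Z)=\gr(x)+3=7$.

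The only real work is the bookkeeping: writing down the differential and the reflection on the reduced complex, checking that the chosen $v$, $x$, $y$, $z$ satisfy the stated conditions, and—the subtler point—verifying maximality, namely that no cycle generating the tail sits in degree $>-1$ with $(1+\iota)(\cdot)$ bounding, and that no admissible triple beats $\gr(x)+3=7$. As in the proof of Lemma~\ref{lem:twocopies}, once the reduced complex is explicit these are finite, routine verifications. As a consistency check one can also note that, by Proposition~\ref{prop:orientationsigns}, these values are equivalent to $\du(-Z)=-1$ and $\dl(-Z)=-7$ for $-Z=S^3_{-5}(T_{2,11})$.
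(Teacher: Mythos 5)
Your overall plan matches the paper's proof: find the tail generator and apply Lemma~\ref{lemma:reformulation}(a) to get $\dl$, then find a maximally graded admissible triple $(x,y,z)$ with $x\neq 0$ and apply Lemma~\ref{lemma:reformulation}(b) to get $\du$. The grading of the top cycle $x$ is indeed $4$ (the element $t$ in Figure~\ref{fig:21123complexreduced}), so $\du(Z)=\gr(t)+3=7$, and the observation that $x=0$ can only yield $\du\le d$ is a correct way to see $x$ must be nonzero.

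However, your identification of the tail generator in the $\dl$ computation is wrong. You claim $v$ can be taken to be ``the generator of $\CFKinfty(-T_{2,11})$ with $i=j$,'' i.e.\ the single diagonal generator. In the reduced complex that element is $t$, sitting in degree $4$, and it is $U$\nobreakdash-torsion in homology: $\del o'=\del y'=U^3t$, so $U^3[t]=0$. The actual tail of $\HFm(Z)$ is generated by $o'+y'$ in degree $-1$, a sum of two \emph{off-diagonal} generators that $\iota$ interchanges. Your conclusion still goes through, but for a different reason than the one you give: $o'+y'$ is $\iota$\nobreakdash-invariant because it is a symmetric sum, not because it lies on the fixed diagonal, so $(1+\iota)(o'+y')=0$ and $\dl(Z)=\gr(o'+y')+2=1$. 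You should also note that the proposal as written defers the actual verification — the explicit cycles $v$, $x=t$, $y=0$, $z=o'$, $m=3$, the check that $U^3y+(1+\iota)z=o'+y'$ generates the tail, and the maximality arguments — and this bookkeeping is the content of the lemma, not a formality, even if it is routine once the reduced complex is written down.
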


\begin{proof} Consider the copy of $\CFm(S^3_5(-T_{2,11}))$ appearing in Figure \ref{fig:21123complexreduced}, with involution given by reflection across the centerline of the page as usual. We observe that the $\F_2[U]$ summand in $\HFm(S^3_{5}(-T_{2,11}))$ is generated by $[o'+y']$. Since $d(S^3_5(-T_{2,11}))=1$, we must have $\gr(o')=\gr(y')=-1$. Indeed, the homogenous elements $v$ of $\CFm(S^3_5(-T_{2,11}))$ with the property that $\del v =0$ and $[U^nv]\neq 0$ for all $n$ are exactly the $U$-powers of $o'+y'$. Furthermore, since $(1+\iota)(o'+y')=0$ is in the image of $\del$, by Lemma \ref{lemma:reformulation}, we see that $\dl(S^3_5(-T_{2,11})) =\gr(o'+y')+2 = 1$. As to the upper involutive correction term, observe that if $x=t$, $y=0$, and $z=o'$, we have $\del y=0=(1+\iota)x$, $\del z = U^3x$, and $[U^n(U^3y + (1+\iota)z)] = [U^n(o'+y')]\neq 0$ for any $n$. Therefore, since $t$ is the maximally graded element in the complex, we see that $\du(S^3_5(-T_{2,11})) = \gr(t)+3 = 4+3=7$. \end{proof}

\begin{figure}
\includegraphics{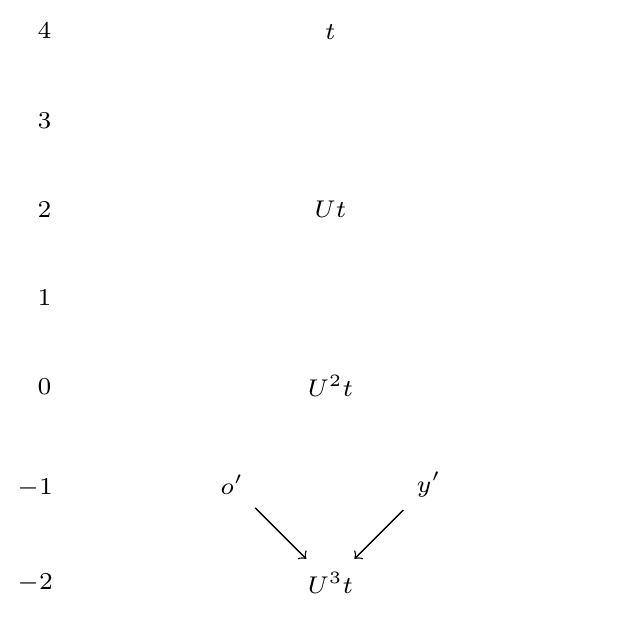}
\caption{The complex $\CFm(S^3_{5}(-T_{2,11}))$ with acyclic summands removed, $o' = o+Uq+U^2s$, and $y'=y+Uw+U^2r$.}
\label{fig:21123complexreduced}
\end{figure}

\begin{figure}
\includegraphics[scale=.85]{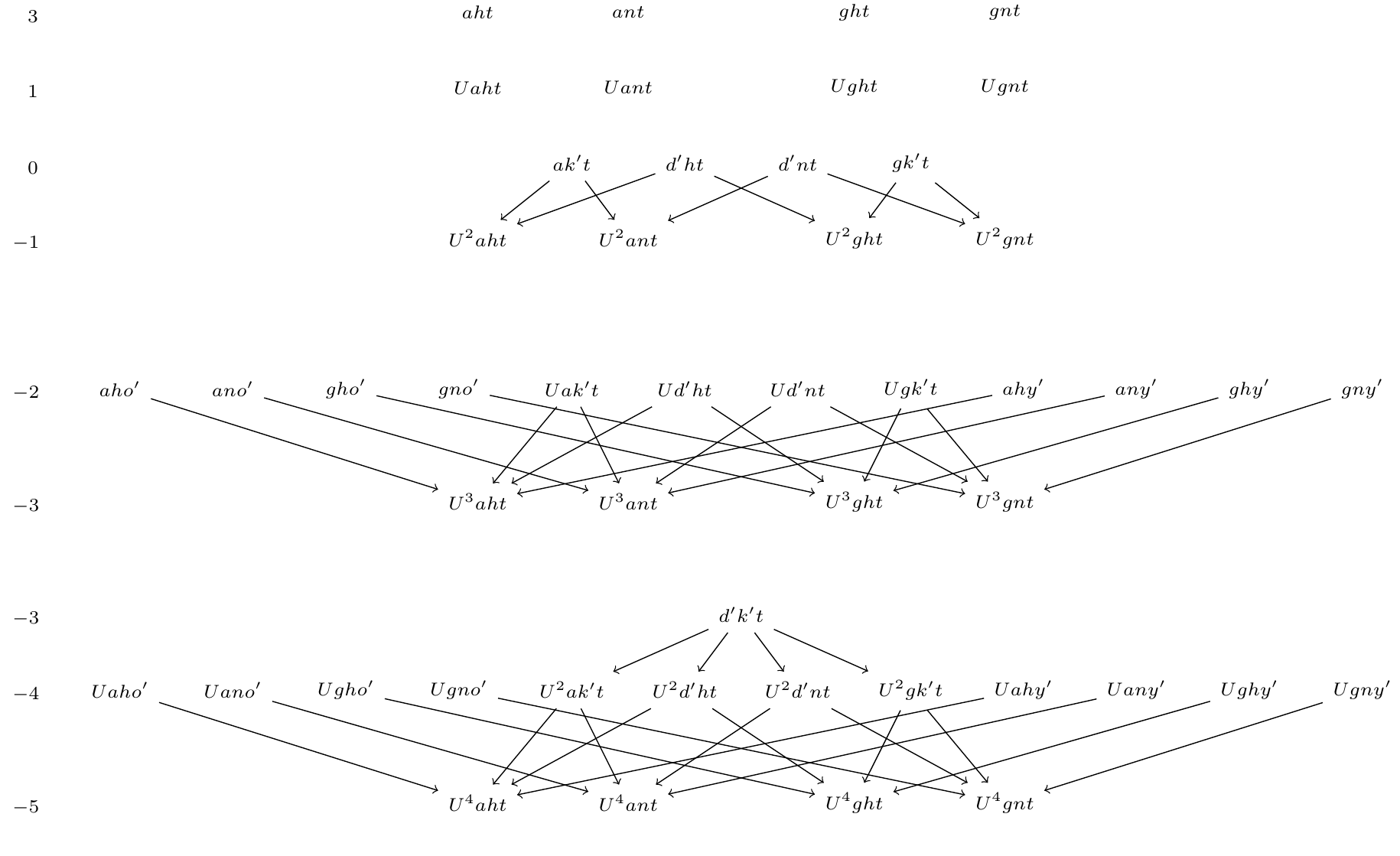}
\vskip 2mm
\includegraphics[scale=.85]{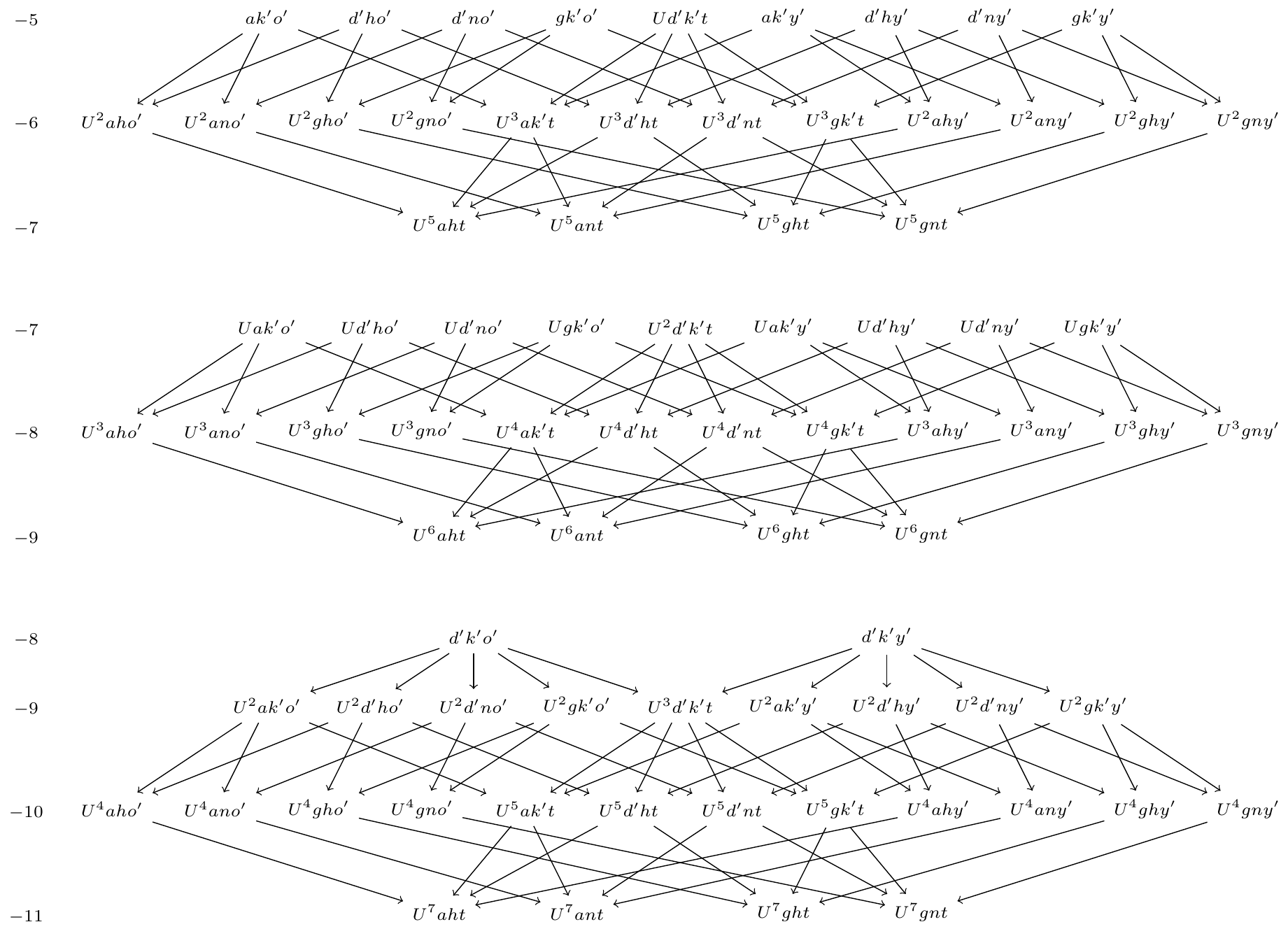}
\caption{The complex $\CFm(S^3_{-3}(T_{2,7})\#S^3_{-3}(T_{2,7})\#S^3_{5}(-T_{2,11}))$, decomposed into pieces that are not connected by differentials.}
\label{fig:totalcomplex}
\end{figure}

Now we are ready to prove Proposition \ref{prop:alldifferent}, verifying that the three-manifold $Y = S^3_{-3}(T_{2,7})\#S^3_{-3}(T_{2,7})\#S^3_5(-T_{2,11})$ has $\dl \neq d\neq \du$ in its unique spin structure.

\begin{proof}[Proof of Proposition \ref{prop:alldifferent}] Consider the three-manifold $Y = S^3_{-3}(T_{2,7})\#S^3_{-3}(T_{2,7})\#S^3_5(-T_{2,11})$. By additivity of the ordinary correction term, we see that $d(Y) = 0$. By tensoring the complexes in Figures \ref{fig:2715twocopies} and \ref{fig:21123complexreduced} over $\F_2[U]$, we obtain the model for $\CFm(Y)$ in Figure \ref{fig:totalcomplex}. The involution on the tensor product in Figure \ref{fig:totalcomplex} may be taken to be the tensor product of the involutions on the complexes in Figures~\ref{fig:2715twocopies} and \ref{fig:21123complexreduced}, which in particular is reflection across the centerline of the page. We now use Lemma \ref{lemma:reformulation} to  give a computation of the involutive correction terms. We begin by computing the lower involutive correction term. The $\F_2[U]$ summand in $\HFm(Y)$ is generated by the homology classes of any of the homogeneous elements $aho'+ahy'$, $ano'+any'$, $gho'+ghy'$, $gno'+gny'$, and furthermore by any sum of three of these elements. It is also generated by any sum of one of these eight generators with a subset of $aho'+ano'+Uak't$, $gho'+gno'+Ugk't$, $aho'+gho'+Ud'ht$, and $ano'+gno'+Ud'nt$, all of which become trivial in homology after multiplication by $U^2$. Suppose that $v$ is any of these possible generators. Then $(1+\iota) v$ is equal to the sum of one of $aho'+ahy'+gny'+gny'$ or $ano'+any'+gho'+ghy'$ with some subset of $aho'+ano'+Uak't + gho'+gno'+Ugk't$ and $aho'+gho'+Ud'ht +ano'+gno'+Ud'nt$. None of these possible elements $(1+\iota)v$ is in the image of $\del$.

However, let 
\begin{align*}
v  &= U\bigl( (gho'+ghy') + (aho'+ano'+Uak't) + (aho'+gho'+Ud'ht) \bigr) \\
&= Uano' +  Ughy' + U^2 ak't + U^2 d'ht
\end{align*}
 and $w = d'k't$. Then $\del v = 0$, $[U^nv]\neq 0$ for all $n$, and furthermore,
\begin{align*}
\del(w)&= U^2 ak't + U^2 d'ht + U^2 d'nt + U^2 gk't\\
       &=(1+\iota)v.
\end{align*}

\noindent We conclude that $\dl(Y)= \gr(v)+2 = -4+2=-2$.

Now we turn our attention to the upper involutive correction term. Let $x=U^2aht$, $z=aho'$, and $y=d'ht+gk't$. Notice that $\gr(x)=-1$ and $\gr(y)=0$. Then we have $\del y = U^2(aht + gnt)= (1+\iota)x$ and $\del z = U^3(aht)=Ux$. Furthermore, we claim \[[U^n(Uy+(1+\iota)z)]= [U^n(U(d'ht+gk't) + aho'+gny')]\neq 0\] for all $n$. We will show this by proving that for $n\geq 2$, this element is equivalent to $[U^n(aho'+ahy')]$, all $U$-powers of which are nonzero (for example, because $a$, $h$, and $o'+y'$ are generators of the $\F_2[U]$-tail in their original chain complexes). Indeed, we compute:
\begin{align*}
[U^3(d'ht +gk't) + U^2(aho'+gny')] &= [U^3(d'ht +gk't) + U^2(aho'+gny')]\\  & \qquad +[\del(d'hy')] + [\del(gk'y')]\\
&=[U^3(d'ht +gk't) + U^2(aho'+gny')]\\ &  \qquad +[U^3d'ht+U^2ahy'+U^2ghy'] \\ &  \qquad + [U^3gk't+U^2ghy'+U^2gny']\\
&=[U^2(aho'+ahy')]
\end{align*}

This shows that $\du(Y)$ is at least the grading of $U^2aht$ plus $3$. It remains to be shown that $x,y,z$ are the maximally graded set of elements in $\CFm(Y)$ satisfying the requirements of Lemma \ref{lemma:reformulation}. Suppose there is some other $\wt{x}, \wt{y},\wt{z}$ with at least one of $\wt{x}$ and $\wt{y}$ nonzero such that $\del(\wt{x})=(1+\iota)\wt{y}$, $\del \wt{z} = U^m\wt{x}$ for some $m\geq 0$, and $[U^n(U^m\wt{y}+ (1+\iota)\wt{z})]\neq 0$ for all $n\geq 0$, with the property that either $\gr(\wt{x}) > -1$ or $\gr(\wt{y}) > 0$. The condition $[U^n(U^m\wt{y}+ (1+\iota)\wt{z})]\neq 0$ for all $n$ implies that the gradings of $\wt{y}$ and $\wt{z}$ are even. But there are no nonempty even gradings greater than zero in this chain complex, so we conclude that $\wt{y}=0$. Then there must exist $\wt{z}$ such that $[U^n(1+\iota){\wt{z}}]\neq 0$ for any $n\geq 0$. But we have already determined which homogeneous elements of the chain complex generate the $\F_2[U]$-tail in $\HFm(Y)$, and none of them is in the image of $(1+\iota)$. Therefore this is impossible. We conclude that our previous $x,y,z$ are maximally graded homogeneous elements satisfying the requirements of Lemma \ref{lemma:reformulation}, and we see that $\dl(Y) = \gr(U^2aht)+3=-1+3=2$. \end{proof}

Finally, we turn our attention to Proposition~\ref{prop:undetermined}.

\begin{figure}
\includegraphics{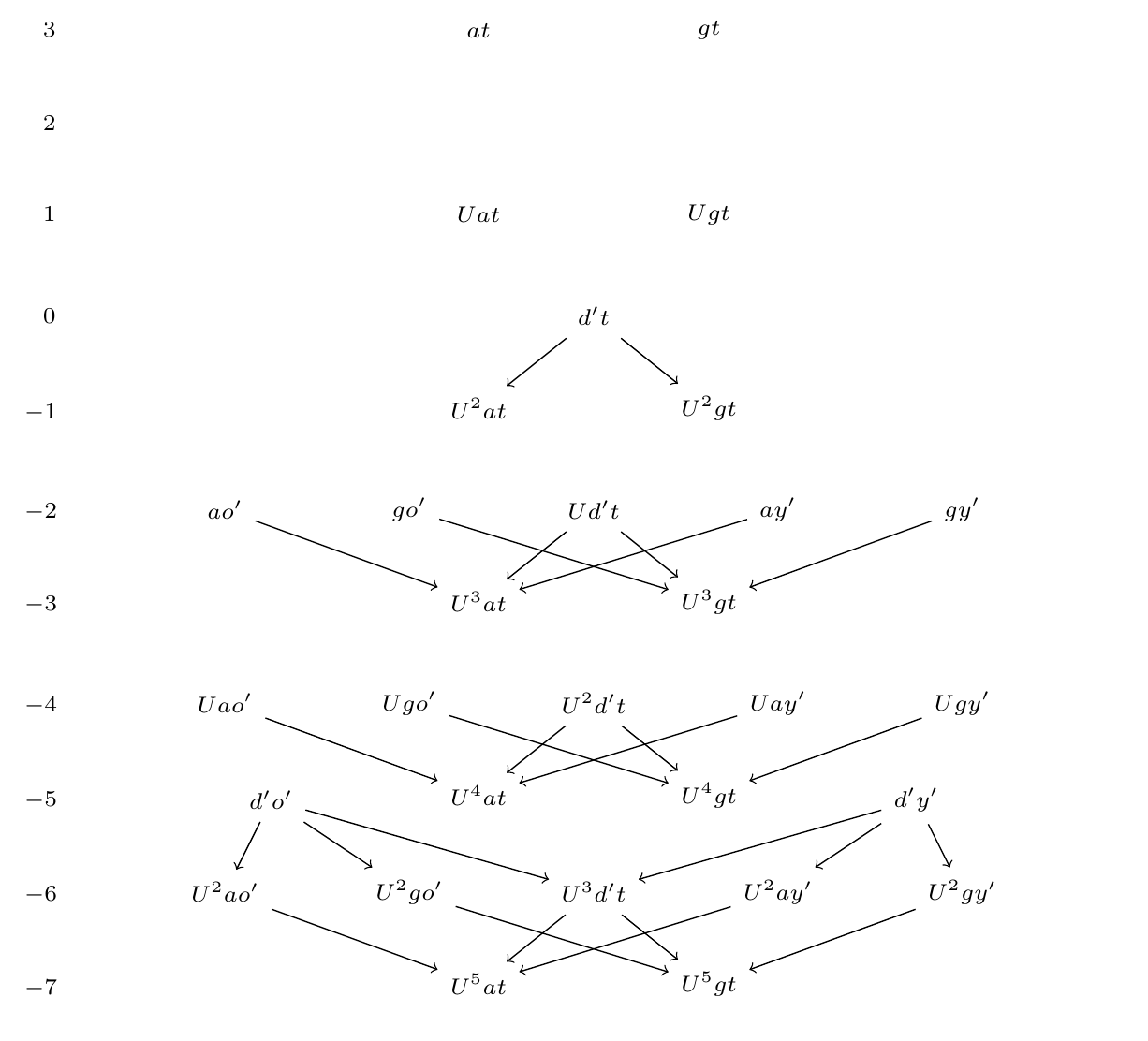}
\caption{A copy of $\CFm(S^3_{-3}(T_{2,7})\# (-L(3,1)) \# S^3_{5}(-T_{2,11}))$. The involution is reflection across the centerline of the page.}
\label{fig:271521123lens}
\end{figure}

\begin{proof}[Proof of Proposition~\ref{prop:undetermined}] Recall that 
\begin{align*}
&Y_1 = S^3_{-3}(T_{2,7})\#S^3_{-3}(T_{2,7}),\\
&Y_2 = S^3_{-3}(T_{2,7})\#(-L(3,1)),\\
&Z=S^3_{5}(-T_{2,11}).
\end{align*}
We begin by computing the correction terms of $Y_2$ and showing they are the same as the correction terms of $Y_1$, which were computed in Lemma~\ref{lem:twocopies}. Recall that $d(-L(3,1))= -d(L(3,1))=-\frac{3-1}{4}=-\frac{1}{2}$. The chain homotopy equivalence class of $\CFm(-L(3,1))$ may be represented by $\Z_2[U][\frac{1}{2}]$ with $\del = 0$; the only possible involution on this complex is $\iota=\Id$. This implies that $\CFm(Y_2) = \CFm(S^3_{-3}(T_{2,7}))[\frac{1}{2}]$ and that this identification respects the involutions on the complexes. In particular,
\begin{align*}
\dl(Y_2) &= \dl(S^3_{-3}(T_{2,7}))-\frac{1}{2} = -\frac{9}{2}-\frac{1}{2} = -5 \\
\du(Y_2) &= \du(S^3_{-3}(T_{2,7})) - \frac{1}{2} = -\frac{1}{2} - \frac{1}{2} = -1. 
\end{align*}
These are exactly the correction terms of $Y_1$ computed in Lemma~\ref{lem:twocopies}. Now we turn our attention to $Y_2 \# Z$. A copy of the chain complex $\CFm(Y_2\#Z) = \CFm(Y_2)\otimes\CFm(Z)[-2] = \CFm(S^3_{-3}(T_{2,7}))\otimes \CFm(Z)[-\frac{3}{2}]$ appears in Figure \ref{fig:271521123lens}; it is the tensor product of the complexes in Figures \ref{fig:2715complexreduced} and \ref{fig:21123complexreduced} with a grading shift for the lens space factor. The involution on $\CFm(S^3_{-3}(T_{2,7}))\otimes \CFm(Z)[-\frac{3}{2}]$ may be taken to be the tensor product of the involutions on the individual complexes, or reflection across the centerline of the page. Notice that $d(Y_2\#Z)= d(Y_2)+d(Z) = -1+1 =0$. We now use Lemma~\ref{lemma:reformulation} to compute the correction terms of this complex. First, note that $d(Y_2\#Z)= d(Y_2)+d(Z) = -1+1 =0$.

Let us begin with $\dl(Y_2\#Z)$. Observe that the homogeneous elements $v$ in $\CFm(Y_2\# Z)$ with the property that $[U^nv]\neq 0$ are exactly the four terms $ao'+ay'$, $go'+gy'$, $ao'+d't+gy'$, $go'+d't+ay'$ and their $U$-powers. In particular, the element $v = ao' + Ud't + gy'$ has the property that $[U^nv]\neq 0$ for all $n\geq 0$ and $(1+\iota)v=0$. Furthermore, $\gr(v)+2 = -2+2 = 0 = d(Y_2\#Z) = 0$. Since this is the largest possible value for $\dl(Y_2\#Z)$, we conclude that $\dl(Y_2\#Z) = 0 = d(Y_2\#Z)$.

Now let us compute $\du(Y_2\#Z)$. We see that if $y=d't$, $x=U^2at$, and $z= ao'$, then $\del y = U^2at + U^2gt$, $\del z = U^3at = Ux$, and furthermore $[U^n(Uy + (1+\iota)z)] = [U^n(Ud't + ao' + gy')]\neq 0$ for all $n\geq 0$. Therefore $\du(Y_2\#Z)$ is at least $\gr(y)+2 = 0 +2 = 2$. Suppose there is some other set of elements $\wt{x},\wt{y},\wt{z}$ satisfying the same relations such that either $\gr(\wt{y})>\gr(y)=0$ or $\gr(\wt{x})>\gr(x)=-1$. Then since $\wt{y}$ must lie in an even grading, and there are no nontrivial even-graded elements in degrees higher than $0$ in the complex, $\wt{y}=0$. This implies that there is some $\wt{z}$ such that $[U^n(1+\iota)\wt{z}]\neq 0$ for all $n\geq 0$. However, in the preceding paragraph we gave a complete list of elements $v$ in the chain complex such that $[U^n v] \neq 0$ for all $n\geq 0$, and no such element is in the image of $(1+\iota)$. Therefore our original $x,y,z$ have the maximal possible gradings for a triple satisfying the conditions of Lemma~\ref{lemma:reformulation}, and $\du(Y_2\#Z)=2$.

Now recall that the correction terms of $Y_1\#Z$ were computed in Proposition~\ref{prop:alldifferent}. We see that while $Y_1$ and $Y_2$ agree in all their correction terms, we have
\[
\dl(Y_1\#Z) = -2 \neq 0= \dl(Y_2 \# Z).
\]
We conclude that the involutive correction terms of a connected sum are not determined by those of the summands.
\end{proof}

\section{Comparison to Seiberg-Witten theory}
\label{sec:SW}
As explained in \cite[Section 3]{HMinvolutive}, involutive Heegaard Floer homology corresponds to $\Z_4$-equivariant Seiberg-Witten Floer homology. Specifically, for a rational homology sphere $Y$ equipped with a spin structure $\s$, the variant $\HFIp(Y, \s)$ is conjectured to be isomorphic to the $\Z_4$-equivariant Borel homology of the Seiberg-Witten Floer spectrum $\swf(Y, \s)$ constructed by the second author in \cite{Spectrum}. With regard to the minus version $\HFIm(Y, \s)$, up to a grading shift, this should correspond to the $\Z_4$-equivariant coBorel homology of $\swf(Y, \s)$. In other words, if we look at the cochain complex dual to $\CFIm(Y, \s)$, its cohomology should be the Borel cohomology $\tH^*_{\Z_4}(\swf(Y); \Z_2)$.

For connected sums, we expect that
$$\swf(Y_1 \# Y_2, \s_1 \# \s_2) \cong \swf(Y_1, \s_1) \wedge \swf(Y_2, \s_2).$$
Let $X_1 = \swf(Y_1, \s_1)$ and $X_2 = \swf(Y_2, \s_2)$. Although $X_1$ and $X_2$ are suspension spectra, by taking suitable suspensions we can assume without loss of generality that they are pointed spaces (with a $\Z_4$-action). Given a pointed $\Z_4$-space $X$, we denote by $\tC^*_{\Z_4}(X)$ the reduced singular cochain complex of $X \wedge_{\Z_4} (E\Z_4)_+$, with $\Z_2$ coefficients. We have
\begin{equation}
\label{eq:dgm}
 \tC^*_{\Z_4}(X_1 \wedge X_2) \cong \tC^*_{\Z_4}(X_1) \otimes_{\tC^*_{\Z_4}(S^0)}  \tC^*_{\Z_4}(X_2).
 \end{equation}
The differential graded algebra $\tC^*_{\Z_4}(S^0) \cong C^*(B\Z_4)$ is quasi-isomorphic to 
$$A:= \Z_2[Q, W]/(dW=Q^2),$$ with $W$ and $Q$ of degree $1$, and $W^2$ corresponding to the usual variable $U$. Note that the cohomology of $A$ is our ring $\Ring = \Z_2[Q, U]/(Q^2)$.

In view of \eqref{eq:dgm}, we may expect a rephrasing of our connected sum formula (Theorem~\ref{thm:ConnSum}) in terms of taking the tensor product of differential graded modules over $A$. However, in the context of involutive Heegaard Floer homology, it is not clear to us how to construct such modules starting from the data $(\CFm(Y, \s),\inv)$.

One could also look for a reformulation of the connected sum formula in terms of $\Ainf$-structures, along the lines of \cite{LinConnSums}. Indeed, every dga (such as $A$) can be viewed as an $\Ainf$-algebra, with trivial higher operations $\mu_n$ for $n\geq 3$. By Kadeishvili's theorem \cite{Kadeishvili}, every $\Ainf$-algebra is $\Ainf$-equivalent to its homology, equipped with suitable higher operations. In our case, one can check that $A$ is $\Ainf$-equivalent to the $\Ainf$-algebra $\RRing$, which is the ring $\Ring = H^*(A)$ with the only nontrivial higher operations being
$$ \mu_4(QU^a, QU^b, QU^c, QU^d) = U^{a+b+c+d+1}.$$

Kadeishvili's theorem also applies to dg modules, saying that they are $\Ainf$-equivalent to their homology. Therefore, we can rephrase \eqref{eq:dgm} as
\begin{equation}
\label{eq:ai}
  \tH^*_{\Z_4}(X_1 \wedge X_2) \cong \tH^*_{\Z_4}(X_1)\ \tilde\otimes_{\RRing} \ \tH^*_{\Z_4}(X_2),
  \end{equation}
where the Borel cohomology groups are equipped with certain higher operations, and thus viewed as $\Ainf$-modules over $\RRing$. The symbol $\tilde\otimes$ denotes the $\Ainf$ tensor product of two $\Ainf$-modules. 

There is also a version of \eqref{eq:ai} for Borel homology instead of cohomology:
\begin{equation}
\label{eq:ai2}
  \tH_*^{\Z_4}(X_1 \wedge X_2) \cong \tH_*^{\Z_4}(X_1)\ \tilde\otimes_{\RRing} \ \tH_*^{\Z_4}(X_2),
  \end{equation}

One consequence of \eqref{eq:ai2} is the existence of an Eilenberg-Moore spectral sequence
$$ E^2_{*,*} \cong \Tor^{\Ring}_{*,*}\Bigl(  \tH_*^{\Z_4}(X_1),  \tH_*^{\Z_4}(X_2) \Bigr) \ \Rightarrow \  \tH_*^{\Z_4}(X_1 \wedge X_2).$$
We refer to \cite[Section 8.2]{McCleary} for an explanation of how the Massey products determine the higher differentials of the Eilenberg-Moore spectral sequence, and to \cite[Lemma 1]{LinConnSums} for the relevant result in terms of $\Ainf$ structures. In the context of $\pin$ monopole Floer homology, an Eilenberg-Moore spectral sequence was established by Lin in  \cite[Corollary 1]{LinConnSums}.

Formula \eqref{eq:ai2} suggests that a similar isomorphism should hold for involutive Heegaard Floer homology:
\begin{equation}
\label{eq:conj1}
\HFIm(Y_1 \# Y_2, \s_1 \# \s_2) \cong \HFIm(Y_1, \s_1)\ \tilde\otimes_{\RRing} \ \HFIm(Y_2, \s_2).
\end{equation}
Of course, for this one would need to endow the involutive Heegaard Floer groups with higher operations, turning them into modules over the $\Ainf$-algebra $A$. We leave this as an open problem. It is possible that constructing such operations involves higher chain homotopies, and thus runs into higher naturality issues that are not accessible with current Heegaard Floer technology.

If \eqref{eq:conj1} were true, one would also obtain a spectral sequence of Eilenberg-Moore type:
\begin{equation}
\label{eq:EM}
 E^2_{*,*} \cong \Tor^{\Ring}_{*,*}\Bigl( \HFIm(Y_1, \s_1),  \HFIm(Y_2, \s_2) \Bigr) \ \Rightarrow \  \HFIm(Y_1 \# Y_2, \s_1 \# \s_2).
 \end{equation}

Note that, by purely algebraic considerations, there is a K\"{u}nneth spectral sequence (cf. \cite[Theorem 2.20]{McCleary}):
$$ E^2_{*,*} \cong \Tor^{\Ring}_{*,*}\Bigl( \HFIm(Y_1, \s_1),  \HFIm(Y_2, \s_2) \Bigr) \ \Rightarrow \  
H_*(CFI^-(Y_1,\s_1)\otimes_{\mathcal{R}} CFI^-(Y_2,\s_2)).$$

This has the same $E^2$ page as the conjectured spectral sequence \eqref{eq:EM}, but the modules they converge to are potentially different. According to Theorem~\ref{thm:ConnSum}, $\HFIm(Y_1 \# Y_2, \s_1 \# \s_2)$ is the homology of the mapping cone
\[\Cone \bigl (CF^-(Y_1)\otimes CF^-(Y_2)\xrightarrow{Q(1+\iota_1\otimes \iota_2)} Q\cdot (CF^-(Y_1)\otimes CF^-(Y_2)) \bigr).\]

On the other hand, $CFI^-(Y_1,\s_1)\otimes_{\mathcal{R}} CFI^-(Y_2,\s_2)$ is easily seen to be isomorphic to the mapping cone
\[\Cone\bigl (CF^-(Y_1)\otimes CF^-(Y_2)\xrightarrow{Q(\iota_1\otimes 1+1\otimes \iota_2)} Q\cdot (CF^-(Y_1)\otimes CF^-(Y_2)) \bigr).\] 

One can define a chain map between these two cones, as the identity on the first copy of $CF^-(Y_1)\otimes CF^-(Y_2)$ and $1\otimes \iota_2$ on the second copy, together with a chain homotopy between $\inv_1 \otimes 1$ and $\inv_1 \otimes \inv_2^2$ along the diagonal. The five lemma shows that the resulting map is a quasi-isomorphism over $\Z_2[U]$. However, the map is not $Q$-equivariant, so it does not give an $\mathcal{R}$-module quasi-isomorphism. Still, in many simple examples, the mapping cones turn out to have the same homology, as $\Ring$-modules. We leave it as another open problem to determine whether this is true in general, or to find a counterexample.

\bibliographystyle{custom} 
\bibliography{biblio} 

\end{document}